\newcommand{\Q}{\mathbb{Q}}
\newcommand{\A}{\mathbb{A}}
\newcommand{\R}{\mathbb{R}}
\providecommand{\C}{\mathbb{C}}
\renewcommand{\C}{\mathbb{C}}
\newcommand{\Z}{\mathbb{Z}}
\newcommand{\Hom}{\mathrm{Hom}}
\newtheorem{theo}{Theorem}[subsection]
\newtheorem{lemm}[theo]{Lemma}
\newtheorem*{prob}{Problem}
\newtheorem{coro}[theo]{Corollary}
\newtheorem{defi}[theo]{Definition}
\newtheorem{prop}[theo]{Proposition}
\newtheorem{rema}[theo]{Remark}
\numberwithin{equation}{subsection}
\begin{document}

\baselineskip=16pt

\author{Olivier Taïbi}

\title{On global rigid inner forms}

\maketitle

\begin{abstract}
We give an explicit construction of global Galois gerbes constructed more
abstractly by Kaletha to define global rigid inner forms. This notion is crucial
to formulate Arthur's multiplicity formula for inner forms of quasi-split
reductive groups. As a corollary, we show that any global rigid inner form is
almost everywhere unramified, and we give an algorithm to compute the resulting
local rigid inner forms at all places in a given finite set. This makes global
rigid inner forms as explicit as global pure inner forms, up to computations in
local and global class field theory.
\end{abstract}

\newpage
\setcounter{tocdepth}{2}
\tableofcontents
\newpage

\section{Introduction}

Let $F$ be a number field, and $G$ a connected reductive group over $F$.
Following seminal work of Labesse-Langlands \cite{LabLan} and Shelstad,
Langlands, Kottwitz and Arthur \cite{ArthurUnip} conjectured a multiplicity
formula for discrete automorphic representations for $G$, in terms of
Arthur-Langlands parameters $\psi : L_F \times \mathrm{SL}_2(\C) \rightarrow
{}^L G$. The formulation of this conjecture on automorphic multiplicities
requires a precise version of the local Arthur-Langlands correspondence for
$G_{F_v} := G \times_F F_v$ at all places $v$ of $F$, describing individual
elements of local packets using the theory of endoscopy. For this it is
necessary to endow each $G_{F_v}$ with a \emph{rigidifying datum}. For places
$v$ such that $G_{F_v}$ is quasi-split, that is for all but finitely many
places of $F$, this can take the form of a Whittaker datum $\mathfrak{w}_v$. If
$G$ is quasi-split, then one can choose a global Whittaker datum
$\mathfrak{w}$, and it is expected that taking localizations $\mathfrak{w}_v$
of $\mathfrak{w}$ yields a coherent family of precise versions of the local
Arthur-Langlands correspondence. This coherence is crucial for the automorphic
multiplicity formula to hold. For example this is the setting used in
\cite{Arthur} and \cite{Mokunitary}. Note that even though a choice of global
Whittaker datum is necessary to express the formula for automorphic
multiplicities, these multiplicities are canonical, as one can easily deduce
from \cite[Theorem 4.3]{Kalgeneric}.

In general the connected reductive group $G$ might not be quasi-split, and $G$
is only an inner form of a unique quasi-split group. Recall (see
\cite{BorelCorvallis}) that two connected reductive groups have isomorphic
Langlands dual groups if and only if they are inner forms of each other. Vogan
\cite{VoganLL} and Kottwitz conjectured a formulation of the local Langlands
correspondence in the case where $G_{F_v}$ is a \emph{pure} inner form of a
quasi-split group. In this case a rigidifying datum is a quadruple $(G^*_v,
\Xi_v, z_v, \mathfrak{w}_v)$ where $G^*_v$ is a connected reductive quasi-split
group over $F_v$, $\Xi_v : (G^*_v)_{\overline{F_v}} \rightarrow
G_{\overline{F_v}}$ is an isomorphism, and $z_v \in Z^1(F_v, G^*_v)$ is such
that for any $\sigma \in \mathrm{Gal}(\overline{F_v}/F_v)$ we have $\Xi_v^{-1}
\sigma(\Xi_v) = \mathrm{Ad}(z_v(\sigma))$. If globally $G$ is a pure inner form
of a quasi-split group, one can choose a similar global quadruple $(G^*, \Xi,
z, \mathfrak{w})$, and localizing at all places of $F$ seems to yield a
coherent family of rigidifying data. Away from a finite set $S$ of places of
$F$, the restriction $z_v$ of $z$ to a decomposition group
$\mathrm{Gal}(\overline{F_v}/F_v)$ is cohomologically trivial, and writing it as
a coboundary yields an isomorphism $\Xi'_v : G^*_{F_v} \simeq G_{F_v}$
well-defined up to conjugation by $G(F_v)$, which endows $G_{F_v}$ with a
Whittaker datum $(\Xi'_v)_*(\mathfrak{w}_v)$ in a canonical way. Furthermore, up
to enlarging $S$ this can be done integrally, that is over a finite étale
extension of $\mathcal{O}(F_v)$, so that $\Xi'_v$ is an isomorphism between the
canonical models of $G^*$ and $G$ over $\mathcal{O}(F_v)$.

Unfortunately not all connected reductive groups can be realized as pure inner
forms of quasi-split groups, due to the fact that $H^1(F, G) \rightarrow H^1(F,
G_{\mathrm{ad}})$ can fail to be surjective. The simplest example is certainly
the group of elements having reduced norm equal to $1$ in a non-split
quaternion algebra, an inner form of $\mathrm{SL}_2$, considered in
\cite{LabLan}. To circumvent this problem, Kaletha defined larger Galois
cohomology groups in \cite{Kalri} for the local case and in \cite{Kalgri} for
the global case. More precisely, he constructed central extensions (Galois
gerbes bound by commutative groups in the terminology of \cite{LanRap})
\[ 1 \rightarrow P_v \rightarrow \mathcal{E}_v \rightarrow
\mathrm{Gal}(\overline{F_v}/F_v) \rightarrow 1 \]
in the local case, $v$ any place of $F$, and
\[ 1 \rightarrow P \rightarrow \mathcal{E} \rightarrow
\mathrm{Gal}(\overline{F}/F) \rightarrow 1 \]
in the global case. Here $P_v$ and $P$ are inverse limits of finite algebraic
groups defined over $F_v$ or $F$, and we have denoted by $P_v \rightarrow
\mathcal{E}_v$ the extension denoted by $u \rightarrow W$ in \cite{Kalri}, to
emphasize the analogy between the local and global cases. The central extensions
are obtained from certain classes $\xi_v \in H^2(F_v, P_v)$, $\xi \in H^2(F,
P)$. Using these central extensions Kaletha defined, for $Z$ a finite central
algebraic subgroup of $G$, certain groups of $1$-cocycles
\[ Z^1(P_v \rightarrow \mathcal{E}_v, Z(\overline{F_v}) \rightarrow
G(\overline{F_v})) \supset Z^1(F_v, G_{F_v}) \]
\[ Z^1(P \rightarrow \mathcal{E}, Z(\overline{F}) \rightarrow G(\overline{F}))
\supset Z^1(F, G). \]
Kaletha also proposed precise formulations of the local Langlands conjecture and
Arthur multiplicity formula, using rigidifying data $(G^*_v, \Xi_v, z_v,
\mathfrak{w}_v)$ (resp.\ $(G^*, \Xi, z, \mathfrak{w})$) where now $z_v$ (resp.\
$z$) belongs to this larger group of $1$-cocycles. For $Z$ large enough, for
example if $Z$ contains the center of the derived subgroup of $G$, the map
\[ H^1(P \rightarrow \mathcal{E}, Z(\overline{F}) \rightarrow G(\overline{F}))
\rightarrow H^1(F, G_{\mathrm{ad}}) \]
is surjective, and so any $G$ can be endowed with such a rigidifying datum
$(G^*, \Xi, z, \mathfrak{w})$. From such a global rigidifying datum, one
obtains local rigidifying data by localization. Each localization $z_v =
\mathrm{loc}_v(z)$ of $z$ is defined by pulling back via a morphism of central
extensions
\begin{equation} \label{e:locdiagintro}
\begin{tikzcd}
1 \arrow[r] & P_v \arrow[r] \arrow[d] & \mathcal{E}_v \arrow[r] \arrow[d] & \mathrm{Gal}(\overline{F_v}/F_v) \arrow[r] \arrow[d] & 1 \\
1 \arrow[r] & P \arrow[r]             & \mathcal{E} \arrow[r]             & \mathrm{Gal}(\overline{F}/F) \arrow[r]               & 1
\end{tikzcd}
\end{equation}
and extending coefficients from $G^*(\overline{F})$ to $G^*(\overline{F_v})$.

In this paper we give an explicit, bottom-up realization of the central
extension
\[ 1 \rightarrow P \rightarrow \mathcal{E} \rightarrow
\mathrm{Gal}(\overline{F}/F) \rightarrow 1 \]
constructed in \cite{Kalgri}. Here ``bottom-up'' means that our construction is
naturally an inverse limit of central extensions of
$\mathrm{Gal}(\overline{F}/F)$ by finite algebraic groups $P_k$, with $P =
\varprojlim_{k \geq 0} P_k$. We also give bottom-up realizations of
localization morphisms \eqref{e:locdiagintro} and generalized Tate-Nakayama
morphisms for tori (\cite[Theorem 3.7.3]{Kalgri}, which generalizes
\cite{Tatecohotori}), as well as compatibilities between them. We also show
(Proposition \ref{p:xiiscanclass}) that our construction recovers the
``canonical class'' defined abstractly in \cite[\S 3.5]{Kalgri}. Apart from
giving alternative proofs of some results in \cite{Kalgri}, our construction
has the benefit that it allows one to compute with global rigid inner forms
``at finite level'', that is using a \emph{finite} Galois extension of the base
field $F$. In particular, we deduce that global rigid inner forms are almost
everywhere unramified (Proposition \ref{p:unrae}), a fact which is obvious for
pure inner forms, but surprisingly not for rigid inner forms. In the future our
construction could be used to prove further properties of Kaletha's canonical
class.

Our direct construction is also useful for explicit applications using Arthur's
formula for automorphic multiplicities. Computing spaces of automorphic forms,
along with action of a Hecke algebra, is much easier for definite groups, which
are not quasi-split. Once such spaces are computed, one would like to interpret
Hecke eigenforms as being related to (ersatz) motives, and Arthur's multiplicity
formula makes this relation precise (see \cite{TaiMult} for some cases for which
rigid inner forms are needed). For this it is necessary to compute localizations
of rigidifying data, more precisely to solve the following problem.

\begin{prob}
Given a connected reductive group $G$ over a number field $F$, find
\begin{itemize}
\item a global rigidifying datum $\mathcal{D} = (G^*, \Xi, z, \mathfrak{w})$,
\item a finite set $S$ of places of $F$ containing all archimedean places and
all non-archimedean places $v$ such that $G_{F_v}$ is ramified,
\item a reductive model of $\underline{G}$ over the ring $\mathcal{O}_{F,S}$ of
$S$-integers in $F$ such that for any $v \not\in S$, the localization
$\mathcal{D}_v$ of $\mathcal{D}$ at $v$ is unramified with respect to the
integral model $\underline{G}_{\mathcal{O}_{F_v}}$ of $G_{F_v}$,
\item for each $v \in S$, an explicit description of the localization
$\mathcal{D}_v$ of $\mathcal{D}$ at $v$.
\end{itemize}
\end{prob}

Above ``unramified'' means that $\mathrm{loc}_v(z) \in B^1(F_v, G)$, and that
the resulting isomorphism $\Xi_v' : G^*_{F_v} \simeq G_{F_v}$, which is
well-defined up to composing with conjugation by an element of $G(F_v)$,
identifies the conjugacy class of $\mathfrak{w}_v$ with a Whittaker datum for
$G_{F_v}$ compatible with the integral model $\underline{G}_{\mathcal{O}(F_v)}$,
in the sense of \cite{CasSha}. At almost all places this is implied by the fact
that $\mathfrak{w}_v$ is compatible with the canonical model of $G^*$ and the
fact that $\mathrm{loc}_v(z) \in Z^1(F_v^{\mathrm{unr}}/F_v, G^*)$, but for
applications it is desirable to keep $S$ as small as possible. For $v \in S$,
the meaning of ``explicit description of $\mathcal{D}_v$'' is somewhat vague. In
the case where $\mathrm{loc}_v(z)$ is cohomologically trivial this simply means
a Whittaker datum for $G_{F_v}$. In general it means describing the localization
$\mathcal{D}_v$ in a purely local fashion, so that it could be compared to a
reference rigidifying datum. We give detailed steps to solve this problem in
section \ref{s:effloc}, reducing the computation of localizations at places in
$S$ to computations in local and global class field theory. We give an example
in section \ref{s:example} in a case where $G$ is a definite inner form of
$\mathrm{SL}_2$ over $F=\Q(\sqrt{3})$ which is split at all finite places, and
for $S$ the set of archimedean places, that is in ``level one''. It can be
generalized effortlessly, and without additional computations, to the analogous
inner forms of $\mathrm{Sp}_{2n}$ over $F$, for arbitrary $n \geq 2$.

Let us explain why this problem does not appear to be directly solvable using
constructions in \cite{Kalgri}, which might be surprising when one considers the
case of pure inner forms, as it is straightforward to restrict a
$1$-cocycle to a decomposition group. For explicit computations one can only
work with finite extensions of $F$, and finite Galois modules. Although the
localization maps \eqref{e:locdiagintro} are canonical, unfortunately they do
not arise from \emph{canonical} morphism of central extensions of Galois groups
by \emph{finite} Galois modules, because of the possible non-vanishing of
$H^1(F_v, P_k)$, where $P = \varprojlim_k P_k$. Similarly, the possible
non-vanishing of $H^1(F, P_k)$ means that inflation morphisms
\begin{equation} \label{e:infdiagintro}
\begin{tikzcd}
1 \arrow[r] & P_{k+1} \arrow[r] \arrow[d] & \mathcal{E}_{k+1} \arrow[r] \arrow[d] & \mathrm{Gal}(\overline{F}/F) \arrow[r] \arrow[d] & 1 \\
1 \arrow[r] & P_k \arrow[r]             & \mathcal{E}_k \arrow[r]             & \mathrm{Gal}(\overline{F}/F) \arrow[r]               & 1
\end{tikzcd}
\end{equation}
are not defined canonically, where $\mathcal{E}_k$ is the central extension
obtained using a $2$-cocycle in the cohomology class of the image of $\xi$ in
$H^2(F, P_k)$. For applications to generalized Tate-Nakayama isomorphisms,
Kaletha shows that these ambiguities are innocuous using a clever indirect
argument (Lemma 3.7.10 in \cite{Kalgri}) in cohomology (but only in cohomology).
Note that in the local case, Kaletha gave an explicit construction of the
inflation maps analogous to \eqref{e:infdiagintro}: see \cite[\S 4.5]{Kalri}.

Our construction is a global analogue. The main difficulty lies in formulating
and proving the analogue of \cite[Lemma 4.4]{Kalri} (which draws on \cite[\S
VI.1]{Langlandsdebuts}) in the global case. First we reinterpret \cite[Lemma
4.4]{Kalri} using a modification $\mathrm{AW}^2$ of the Akizuki-Witt map on
$2$-cocycles (\cite[Ch.\ XV]{ArtinTate}) occurring in the construction of Weil
groups attached to class formations. We study this modification systematically
in section \ref{s:AW}, in particular we observe that it is more flexible while
retaining the interpretation in terms of central extensions. It is not difficult
to establish the analogue of \cite[Lemma 4.4]{Kalri} where local fundamental
cocycles are replaced by global fundamental cocycles. However, in Tate-Nakayama
isomorphisms these global fundamental cocycles control Galois cohomology groups
such as $H^1(E/F, T(\A_E)/T(E))$, where $T$ is a torus over $F$ split by the
finite Galois extension $E/F$, whereas we are interested in cohomology groups
such as $H^1(E/F, T(E))$. These are controlled by \emph{Tate cocycles} defined
by Tate in \cite{Tatecohotori}, essentially as a consequence of the
compatibility between local and global fundamental $2$-cocycles. Unfortunately
these do not seem to have an interpretation using the Akizuki-Witt map, and this
makes the global case more challenging. We give an ad hoc definition of a
certain map $\mathrm{AWES}^2$ in Definition \ref{d:AWES2}, which is compatible
with the corestriction map in Eckmann-Shapiro's lemma for modules which are
\emph{twice} induced. This definition is crucial for the main technical result
of this article, Theorem \ref{t:constructionTate}, constructing a family of Tate
cocycles compatible under $\mathrm{AWES}^2$, as well as local-global
compatibility with local fundamental cocycles. We give a second proof as
preparation for the algorithm in section \ref{s:effloc}. Once this is proved, we
construct Kaletha's generalized Tate-Nakayama morphisms at the level of cocycles
in section \ref{s:genTN}, and prove compatibilities with respect to inflation
and localization. In particular we obtain an explicit version of Kaletha's
localization maps at finite level and for cocycles. Although these explicit
localization maps are not canonical, as they depend on a number of choices
detailed in the paper to form cocycles, they are compatible with inflation and
so yield a localization map between towers of central extensions (see
Proposition \ref{p:towerlocalglobal}).

As mentioned above, a consequence is that global rigid inner forms are
unramified away from a finite set (Proposition \ref{p:unrae}), which is not
obvious from the definition using cohomology classes. After the first version of
this paper was written, we found a short proof of this ramification property
using only Kaletha's characterization of the canonical class in \cite[\S
3.5]{Kalgri}. This proof is included in section \ref{s:ramification}, along
with an example of a ``non-canonical'' class, which does not satisfy this
ramification property.

The author thanks Tasho Kaletha for numerous interesting discussions on rigid
inner forms and for his encouragement.

\section{Notation}
\label{s:notation}

Let $F$ be a number field. We denote by $\A$ the ring of adèles for $F$. Let
$\overline{F}$ be an algebraic closure of $F$. All algebraic extensions of $F$
considered will be subextensions of $\overline{F}$. If $E$ is an algebraic
extension of $F$, let $\mathcal{O}(E)$ be its ring of integers, $\A_E = E
\otimes_F \A$, $I(E) = \A_E^{\times}$ the group of idèles and $C(E) =
I(E)/E^{\times}$ the group of idèle classes. Let $V$ be the set of all places of
$F$. If $S \subset V$ and $E$ is an algebraic extension of $F$, denote by $S_E$
the set of places of $E$ above $S$. If $S$ is a set of places of $F$ or $E$
containing all archimedean places, let $I(E,S)$ be the subgroup of $I(E)$
consisting of idèles which are integral units away from $S$, and
$\mathcal{O}(E,S)$ the ring of $S$-integral elements of $E$. For $S \subset V$
let $\overline{F}_S$ be the maximal subextension of $\overline{F}/F$ unramified
outside $S$, and $\mathcal{O}_S = \mathcal{O}(\overline{F}_S, S)$. For $E$ an
algebraic extension of $F$ and $u \in V_E$, we will denote by $\mathrm{pr}_u$
the projection $\A_E \rightarrow E_u$. For $v \in V$ we will denote by
$\mathrm{pr}_v$ the projection $\A_{\overline{F}} \rightarrow \overline{F}
\otimes_F F_v$.

As in \cite{Kalgri} we fix a tower $(E_k)_{k \geq 0}$ of increasing finite
Galois extensions of $F$, with $E_0 = F$ and $\bigcup_k E_k = \overline{F}$.
Choose an increasing sequence $(S_k)_{k \geq 0}$ of finite subsets of $V$ such
that $S_0$ contains all archimedean places of $F$, $S_k$ contains all
non-archimedean places of $F$ ramifying in $E_k$, and $I(E_k, S_k)$ maps onto
$C(E_k)$. We also fix a set $\dot{V} \subset V_{\overline{F}}$ of
representatives for the action of $\mathrm{Gal}(\overline{F}/F)$, that is
$\dot{V}$ contains a place of $\overline{F}$ above every place of $F$. For $E$ a
Galois extension of $F$ and $S' \subset V$ let $\dot{S}'_E$ be the set of places
of $E$ below $\dot{V}$ and above $S'$, so that $\dot{S}'_E$ is a set of
representatives for the action of $\mathrm{Gal}(E/F)$ on $S'_E$. We can assume
that $\dot{V}$ is chosen so that for any finite Galois extension $E/F$ and
$\sigma \in \mathrm{Gal}(E/F)$, there exists $\dot{v} \in \dot{V}_E$ such that
$\sigma \cdot \dot{v} = \dot{v}$. In fact the sets of representatives
$\dot{V}_{E_k}$, for the action of $\mathrm{Gal}(E_k/F)$ on $V_{E_k}$, can be
chosen sequentially as $k$ increases. For $v \in V$ and $k \geq 0$ we will
denote by $\dot{v}_k$ the unique place in $\dot{V}_{E_k}$ above $v$. These
hypotheses on $(S_k)_{k \geq 0}$ are weaker than Conditions 3.3.1 in
\cite{Kalgri}, which are necessary to obtain surjectivity properties in
cohomology. For $v \in S$ let $\overline{F_v} = \varinjlim_k E_{k, \dot{v}_k}$,
an algebraic closure of $F_v$, so that we have a well-defined inclusion
$\mathrm{Gal}(\overline{F_v}/F_v) \subset \mathrm{Gal}(\overline{F}/F)$.

If $A$ is an abelian group and $N \geq 1$ is an integer, $A[N]$ denotes the
$N$-torsion subgroup of $A$. If $A$ is a finite abelian group, $\exp(A)$ is the
exponent of $A$, i.e.\ the smallest $N \geq 1$ such that $A[N]=A$. We will
denote the group law of most abelian groups multiplicatively, excepti notably
for groups of characters or cocharacters of tori. If $G$ is a group and $A$ a
$G$-module, $A^G \subset A$ is the subgroup of $G$-invariants. If in addition $G
= \mathrm{Gal}(E/F)$, we will write $N_{E/F}$ for the norm map, and
$A^{N_{E/F}}$ for the subgroup of elements killed by $N_{E/F}$.

\section{Preliminaries}
\subsection{A modification of the Akizuki-Witt map}
\label{s:AW}

Consider $G$ a finite group, $N$ a normal subgroup.  If $s : G/N \rightarrow G$
is a section such that $s(1)=1$ and $A$ is a $G$-module, with group law written
multiplicatively, then for $\alpha \in Z^2(G, A)$,
\begin{equation} \label{e:trueAW}
\widetilde{\mathrm{AW}}(\alpha) : (\sigma, \tau) \mapsto \prod_{n \in N} \frac{n\left(\alpha(s(\sigma), s(\tau))\right) \times \alpha(n, s(\sigma) s(\tau))}{\alpha(n, s(\sigma \tau))}
\end{equation}
defines an element of $Z^2(G/N, A^N)$, the cohomology class of which only
depends on that of $\alpha$ \cite[Ch.\ XIII, \S 3]{ArtinTate}, so that
$\widetilde{\mathrm{AW}}$ descends to a map $H^2(G,A) \rightarrow
H^2(G/N,A^N)$. We refer to \cite[Ch.\ XIII, \S 3]{ArtinTate} for the natural
interpretation of $\widetilde{\mathrm{AW}}$ in terms of central group
extensions. Using the $2$-cocycle relation for $\alpha$ at $(n, s(\sigma),
s(\tau))$ we can express \eqref{e:trueAW} as
\[ \prod_{n \in N} \frac{\alpha(n, s(\sigma)) \times \alpha(n s(\sigma), s(\tau))}{\alpha(n, s(\sigma \tau))} = \prod_{n \in N} \frac{\alpha(n, s(\sigma)) \times \alpha(\tilde{\sigma} n, s(\tau))}{\alpha(n, s(\sigma \tau))} \]
where $\tilde{\sigma} \in G$ is any lift of $\sigma$, not necessarily equal to
$s(\sigma)$.  Using the $2$-cocycle relation for $\alpha$ at $(\tilde{\sigma},
n, s(\tau))$ we can also rewrite this as
\[ \widetilde{\mathrm{AW}}(\alpha)(\sigma, \tau) = \prod_{n \in N} \left(\frac{\alpha(n, s(\sigma)) \times \tilde{\sigma} \left(\alpha(n, s(\tau)) \right)}{\alpha(n, s(\sigma \tau))} \times \frac{\alpha(\tilde{\sigma}, n s(\tau))}{\alpha(\tilde{\sigma}, n)} \right). \]

The following shows that with an appropriate choice of $\alpha$ in its
cohomology class, this expression simplifies.
\begin{lemm} \label{l:goodcocycle}
In any cohomology class in $H^2(G, A)$, there is a $2$-cocycle $\alpha$ such that for all $n \in N$ and $\sigma \in G/N$, $\alpha(n, s(\sigma)) = 1$.
\end{lemm}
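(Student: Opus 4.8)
The plan is to start with a normalized representative $\alpha$ of the given cohomology class (so that $\alpha(1,g) = \alpha(g,1) = 1$ for all $g \in G$, which is always possible), and then modify $\alpha$ within its class by an explicit coboundary $\partial\beta$ chosen so as to kill all the values $\alpha(n, s(\sigma))$ simultaneously. The key observation is that the multiplication map $N \times s(G/N) \to G$, $(n,\sigma) \mapsto n\, s(\sigma)$, is a bijection (since $s$ is a section); hence prescribing a $1$-cochain $\beta : G \to A$ is the same as prescribing the values $\beta(n\, s(\sigma))$ for all pairs $(n,\sigma) \in N \times (G/N)$ independently.

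Concretely, I would define $\beta(g) := \alpha(n, s(\sigma))$, where $g = n\, s(\sigma)$ is the unique such decomposition. One first checks this is consistent with normalization: on $N$ (the case $\sigma = 1$, using $s(1)=1$) it gives $\beta(n) = \alpha(n,1) = 1$, and on $s(G/N)$ (the case $n = 1$) it gives $\beta(s(\sigma)) = \alpha(1,s(\sigma)) = 1$; in particular $\beta(1) = 1$. Then, with the usual convention $(\partial\beta)(g,h) = \frac{g(\beta(h))\,\beta(g)}{\beta(gh)}$, one computes
\[ (\partial\beta)(n, s(\sigma)) = \frac{n\!\left(\beta(s(\sigma))\right)\,\beta(n)}{\beta(n\,s(\sigma))} = \frac{1 \cdot 1}{\alpha(n,s(\sigma))}, \]
so that $\alpha' := \alpha \cdot \partial\beta$ lies in the same cohomology class as $\alpha$, is again normalized, and satisfies $\alpha'(n, s(\sigma)) = 1$ for all $n \in N$ and $\sigma \in G/N$.

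Equivalently, and perhaps more transparently, one can argue via the central extension $1 \to A \to \widetilde{G} \to G \to 1$ attached to the class: a choice of $\alpha$ in the class corresponds to a choice of set-theoretic section $t : G \to \widetilde{G}$ with $t(1)=1$, via $\alpha(g,h) = t(g)t(h)t(gh)^{-1}$. Picking arbitrary lifts $\widetilde{n} \in \widetilde{G}$ of the elements $n \in N$ and $\widetilde{s(\sigma)} \in \widetilde{G}$ of the elements $s(\sigma)$, normalized so that both families lift $1$ to $1$, and setting $t(n\,s(\sigma)) := \widetilde{n} \cdot \widetilde{s(\sigma)}$, one obtains a well-defined section with $t(1)=1$ whose associated cocycle satisfies $\alpha(n, s(\sigma)) = 1$ by construction.

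There is no real obstacle here; the only point requiring a little care is the consistency of the definition of $\beta$ (equivalently, of the section $t$) at the ``boundary'' pairs with $n = 1$ or $\sigma = 1$, and this is exactly where one uses that $\alpha$ is normalized and that $s(1) = 1$.
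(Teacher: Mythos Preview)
Your proof is correct and takes essentially the same approach as the paper's: start from a normalized cocycle and define a $1$-cochain $\beta$ on the product decomposition $g = n\,s(\sigma)$ so that $\alpha\cdot\mathrm{d}\beta$ vanishes on all pairs $(n,s(\sigma))$. The paper allows $\beta$ to be chosen arbitrarily on $N\smallsetminus\{1\}$ and $s(G/N\smallsetminus\{1\})$ before forcing $\beta(n\,s(\sigma))=\alpha(n,s(\sigma))\,\beta(n)\,n(\beta(s(\sigma)))$, whereas you make the simplest such choice $\beta|_N=\beta|_{s(G/N)}=1$; this is the same argument, and your central-extension rephrasing is a nice bonus.
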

\begin{proof}
It is well-known that any cohomology class contains a $2$-cocycle $\alpha$ such that for all $\sigma \in G$, $\alpha(\sigma, 1) = 1 = \alpha(1, \sigma)$.
We choose such an $\alpha$, and we will construct $\beta : G \rightarrow A$ such that $\alpha \mathrm{d}(\beta)$ satisfies the required property.
Let $\beta(1)=1$, and choose the values of $\beta$ on $N \smallsetminus \{1\}$ and $s(G/N \smallsetminus \{1\})$ arbitrarily.
For $n \in N$ and $\sigma \in G/N$,
\[ \mathrm{d} \beta (n, s(\sigma)) = \frac{\beta(n) \times n (\beta(s(\sigma)))}{\beta(n s(\sigma))}, \]
and we are led to define $\beta(n s(\sigma)) = \alpha(n, s(\sigma)) \times
\beta(n) \times n \left( \beta(s(\sigma)) \right)$ for $n \in N \smallsetminus \{1\}$ and $\sigma \in G/N \smallsetminus \{1\}$.
Note that this equality also holds when $n=1$ or $\sigma=1$.
\end{proof}

This motivates to the following modification $\mathrm{AW}^2$ of the Akizuki-Witt
map $\widetilde{\mathrm{AW}}$.
\begin{defi}
Let $\Gamma$ be an extension of $G$, i.e.\ $\Gamma$ is a group endowed with a surjective morphism $\Gamma \rightarrow G$.
Let $A$ be a commutative group, with group law written multiplicatively.
For $\alpha : \Gamma \times G \rightarrow A$, define $\mathrm{AW}^2(\alpha) : \Gamma \times G/N \rightarrow A$ by
\[ \mathrm{AW}^2(\alpha)(\sigma, \tau) = \prod_{n \in N} \frac{\alpha(\sigma, n \tilde{\tau})}{\alpha(\sigma, n)} \]
where $\sigma \in \Gamma$, $\tau \in G/N$ and $\tilde{\tau} \in G$ is any lift of $\tau$.
\end{defi}
Although this coincides with the original Akizuki-Witt map a priori only for
classes $\alpha$ as in Lemma \ref{l:goodcocycle} (for $A$ a $G$-module and
$\Gamma = G$), this definition has the advantage that it does not require a
choice of section $s$, and will be more convenient for taking cup-products.
Moreover it is defined in a slightly more general setting, since it does not
involve an action of $G$ on $A$. This property will make ``extracting $N$-th
roots'' in section \ref{s:genTN} almost harmless. The definition has the
disadvantage that, even when $A$ is a $G$-module, $\Gamma = G$ and $\alpha \in
Z^2(G, A)$, it is not automatic that $\mathrm{AW}^2(\alpha)$ factors through
$G/N \times G/N$ or takes values in $A^N$.

\begin{prop} \label{p:propAW}
Let $\Gamma$ be an extension of $G$, i.e.\ $\Gamma$ is a group endowed with a
surjective morphism $\Gamma \rightarrow G$.  Let $A$ be a $\Gamma$-module.  For
$\alpha \in Z^{2,1}(\Gamma, G, A)$ (notation as in \cite[\S 4.3]{Kalri}), we
have $\mathrm{AW}^2(\alpha) \in Z^{2,1}(\Gamma, G/N, A)$.

Moreover, if $\Gamma = G$ then $\sigma \mapsto \prod_{n \in N} \alpha(n,
\sigma)$ descends to a map $G/N \rightarrow A/A^N$ mapping $1$ to $1$, and the
following are equivalent:
\begin{enumerate}
\item $\mathrm{AW}^2(\alpha)$ factors through $G/N \times G/N$,
\item for all $n \in N$ and $\tau \in G/N$, $\mathrm{AW}^2(\alpha)(n, \tau) = 1$,
\item for all $\sigma \in G$, $\prod_{n \in N} \alpha(n, \sigma) \in A^N$.
\end{enumerate}
If these conditions are satisfied, then $\mathrm{AW}^2(\alpha) \in
Z^2(G/N, A^N)$ belongs to the same  cohomology class as
$\widetilde{\mathrm{AW}}(\alpha)$ and we have a morphism of central extensions
\begin{alignat*}{2}
A \underset{\alpha}{\boxtimes} G & \longrightarrow & A^N
\underset{\mathrm{AW}^2(\alpha)}{\boxtimes} G/N \\
x \boxtimes \sigma & \longmapsto & \left( \prod_{n \in N} n(x) \alpha(n, \sigma)
\right) \boxtimes \overline{\sigma}.
\end{alignat*}
\end{prop}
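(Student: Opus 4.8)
The plan is to verify each assertion essentially by unwinding the definitions and using the cocycle relation, following the pattern already set in the preliminary computation preceding Lemma \ref{l:goodcocycle}. First I would establish that $\mathrm{AW}^2(\alpha) \in Z^{2,1}(\Gamma, G/N, A)$: one writes down the $2$-cocycle identity that an element of $Z^{2,1}(\Gamma, G/N, A)$ must satisfy and checks it by substituting the product formula $\mathrm{AW}^2(\alpha)(\sigma,\tau) = \prod_{n \in N} \alpha(\sigma, n\tilde\tau)/\alpha(\sigma,n)$. The key point is that for fixed $\sigma_1 \in \Gamma$, the map $n \mapsto \sigma_1 \cdot n$ (acting via the image of $\sigma_1$ in $G$ and left multiplication) permutes $N$, so that after reindexing the product the contributions telescope; the cocycle relation for $\alpha$ at triples of the form $(\sigma_1, \sigma_2, n\tilde\tau)$ and $(\sigma_1, \sigma_2, n)$ handles the interaction between the $\Gamma$-arguments, and the relation at $(\sigma_1\sigma_2, n, \tilde\tau)$-type triples takes care of the lift ambiguity, showing independence of the choice of $\tilde\tau$ as a byproduct. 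This is the most computational part but it is routine bookkeeping of the same flavour as the identities displayed just before Lemma \ref{l:goodcocycle}.

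Next, for $\Gamma = G$, I would show $\sigma \mapsto \prod_{n \in N}\alpha(n,\sigma)$ descends to $G/N \to A/A^N$ sending $1$ to $1$. The cocycle relation for $\alpha$ at $(n, m, \sigma)$ with $n, m \in N$ gives $\alpha(nm,\sigma) = \alpha(n, m\sigma)\,\cdot\, n(\alpha(m,\sigma))\,/\,\alpha(n,m)$; taking the product over $n \in N$ and comparing with the product defining the value at a translate $m\sigma$ shows the two values differ by an element of $A^N$ (the $\alpha(n,m)$ terms contribute a norm, and $\prod_n n(\alpha(m,\sigma))$ is visibly a norm element of the subgroup generated by the $N$-orbit). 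Normalizing $\alpha$ so $\alpha(1,\sigma)=\alpha(\sigma,1)=1$ gives the value $1$ at the identity. Then the equivalences (1)$\Leftrightarrow$(2)$\Leftrightarrow$(3): (1)$\Rightarrow$(2) is immediate since $n \equiv 1$ in $G/N$; (2)$\Rightarrow$(3) follows because $\mathrm{AW}^2(\alpha)(n,\tau) = \prod_{m\in N}\alpha(n, m\tilde\tau)/\alpha(n,m)$, and using the cocycle relation at $(n, m, \tilde\tau)$ one rewrites this product as $\big(\prod_{m}\alpha(m,\tilde\tau)\big)\,/\,n\big(\prod_m \alpha(m,\tilde\tau)\big)$ times coboundary-type terms that cancel, so triviality for all $n$ says exactly that $\prod_m\alpha(m,\tilde\tau) \in A^N$; and (3)$\Rightarrow$(1) reverses this, using that once the ``difference between $\mathrm{AW}^2(\alpha)(n\sigma,\tau)$ and $\mathrm{AW}^2(\alpha)(\sigma,\tau)$'' is controlled by the quantity in (3), it vanishes, and similarly in the second variable via the $Z^{2,1}$ property from the first part.

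Finally, assuming the equivalent conditions hold, I would identify the class of $\mathrm{AW}^2(\alpha)$ with that of $\widetilde{\mathrm{AW}}(\alpha)$ and exhibit the morphism of central extensions. For the cohomology class: by Lemma \ref{l:goodcocycle} we may pick a representative $\alpha$ in the given class with $\alpha(n, s(\sigma)) = 1$ for all $n \in N$, $\sigma \in G/N$; for such $\alpha$ the rewritten formula for $\widetilde{\mathrm{AW}}(\alpha)$ displayed before Lemma \ref{l:goodcocycle} collapses (the $\alpha(n,s(\sigma))$ and $\tilde\sigma(\alpha(n,s(\tau)))$ and $\alpha(n,s(\sigma\tau))$ factors all become $1$ once one also uses $\alpha(\tilde\sigma, n)$-type simplifications), leaving precisely $\prod_{n\in N}\alpha(\tilde\sigma, n s(\tau))/\alpha(\tilde\sigma,n) = \mathrm{AW}^2(\alpha)(\sigma,\tau)$ with $\tilde\sigma = s(\sigma)$; since changing $\alpha$ within its class changes $\mathrm{AW}^2(\alpha)$ by a coboundary (this follows from the $Z^{2,1}$ computation applied to $\alpha\cdot \mathrm{d}\beta$), the two classes agree in general. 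For the morphism $x \boxtimes \sigma \mapsto \big(\prod_{n\in N} n(x)\alpha(n,\sigma)\big)\boxtimes\overline\sigma$: one checks it is a group homomorphism by computing both $(x\boxtimes\sigma)(y\boxtimes\tau) = x\,\sigma(y)\,\alpha(\sigma,\tau)\boxtimes\sigma\tau$ and its image, then comparing with the product of the images, the discrepancy being exactly $\mathrm{AW}^2(\alpha)(\sigma,\tau)$ after a reindexing $n \mapsto \sigma^{-1}n$ and an application of the cocycle relation — this is the same identity that makes $\mathrm{AW}^2(\alpha)$ a cocycle, so consistency is automatic. Compatibility with the projections to $G$ and $G/N$ and with the inclusions of $A$ and $A^N$ is immediate.

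I expect the main obstacle to be the first part — verifying the $Z^{2,1}$-cocycle identity for $\mathrm{AW}^2(\alpha)$ — not because it is deep but because it requires carefully tracking several applications of the cocycle relation together with the reindexing $n \mapsto \sigma \cdot n$, and getting all the twists by the $\Gamma$-action placed correctly; the rest then reduces to specializations and reuses of that computation.
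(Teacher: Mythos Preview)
The paper explicitly omits the proof of this proposition (``We omit the proof, since we will not use this result in the paper''), so there is nothing to compare against. Your plan is sound and would yield a correct proof; the computations you outline are of exactly the expected kind.

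One point deserves more care. In the step comparing $[\mathrm{AW}^2(\alpha)]$ with $[\widetilde{\mathrm{AW}}(\alpha)]$ you write that ``changing $\alpha$ within its class changes $\mathrm{AW}^2(\alpha)$ by a coboundary''. Proposition~\ref{p:dAWcomm} gives $\mathrm{AW}^2(\mathrm{d}\gamma) = \mathrm{d}(\mathrm{AW}^1(\gamma))$, but $\mathrm{AW}^1(\gamma)$ a priori takes values in $A$, not $A^N$, so this is a coboundary in $B^2(G/N, A)$ rather than $B^2(G/N, A^N)$, and the two cohomology groups differ. The quickest fix is to bypass Lemma~\ref{l:goodcocycle} entirely: the displayed identity just before that lemma reads
\[
\widetilde{\mathrm{AW}}(\alpha)(\sigma,\tau)
= \frac{f(s(\sigma))\,\tilde\sigma\bigl(f(s(\tau))\bigr)}{f(s(\sigma\tau))}\;\cdot\;\mathrm{AW}^2(\alpha)(\tilde\sigma,\tau),
\]
where $f(\sigma) = \prod_{n\in N}\alpha(n,\sigma)$. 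Under condition (3) the map $f\circ s : G/N \to A^N$, so the first factor is $\mathrm{d}(f\circ s)(\sigma,\tau) \in B^2(G/N, A^N)$, and under condition (1) the second factor is $\mathrm{AW}^2(\alpha)(\sigma,\tau)$. This gives the desired equality of classes in $H^2(G/N, A^N)$ directly. (Alternatively, if you follow your original route, choose the cochain $\gamma$ in Lemma~\ref{l:goodcocycle} with $\gamma|_N = 1$ and $\gamma|_{s(G/N)} = 1$; then $\mathrm{AW}^1(\gamma)(\sigma) = f(s(\sigma)) \in A^N$ and the argument goes through.)
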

We omit the proof, since we will not use this result in the paper.

In order to investigate the effect on $\mathrm{AW}^2(\alpha)$ of the choice of
$\alpha$ in its cohomology class, let us define a second map $\mathrm{AW}^1$ on
$1$-cochains.

\begin{defi}
Let $A$ be a commutative group.
For $\beta : G \rightarrow A$, define $\mathrm{AW}^1(\beta) : G/N \rightarrow A$ by the formula $\mathrm{AW}^1(\beta)(\sigma) = \prod_{n \in N} \beta(n \tilde{\sigma}) / \beta(n)$, where $\tilde{\sigma} \in G$ is any lift of $\sigma \in G/N$.
\end{defi}

\begin{prop} \label{p:dAWcomm}
Suppose $\Gamma$ is an extension of $G$, and $A$ is a $\Gamma$-module.  For any
$\beta : G \rightarrow A$, we have $\mathrm{d}(\mathrm{AW}^1(\beta)) =
\mathrm{AW}^2(\mathrm{d}(\beta))$ in $Z^{2,1}(\Gamma, G/N, A)$.
\end{prop}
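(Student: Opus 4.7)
The plan is to prove the identity by direct, term-by-term comparison after unfolding the three definitions ($\mathrm{d}$, $\mathrm{AW}^1$, $\mathrm{AW}^2$) at an arbitrary pair $(\sigma, \tau) \in \Gamma \times G/N$. Concretely, I would fix a lift $\tilde\tau \in G$ of $\tau$, write $\sigma_G \in G$ for the image of $\sigma$ under $\Gamma \rightarrow G$, and use $\sigma_G \tilde\tau$ as the natural lift of $\sigma_{G/N} \tau$. With these choices all three $\mathrm{AW}^1$-products appearing in $\mathrm{d}(\mathrm{AW}^1(\beta))(\sigma, \tau)$ and both $\mathrm{d}\beta$-products appearing in $\mathrm{AW}^2(\mathrm{d}\beta)(\sigma, \tau)$ can be written explicitly as products over $N$.

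Then I would carry out two easy cancellations. In $\mathrm{AW}^2(\mathrm{d}\beta)(\sigma, \tau)$, the factor $\beta(\sigma_G)$ coming from the formula $\mathrm{d}\beta(\sigma, g) = \beta(\sigma_G)\,\sigma(\beta(g))/\beta(\sigma_G g)$ is independent of $n$ and cancels between numerator and denominator; what remains factors as $\sigma(\mathrm{AW}^1(\beta)(\tau))$ times $\prod_{n \in N} \beta(\sigma_G n)/\beta(\sigma_G n \tilde\tau)$. In $\mathrm{d}(\mathrm{AW}^1(\beta))(\sigma, \tau)$, the denominators $\beta(n)$ from the three $\mathrm{AW}^1$-products cancel pairwise, leaving $\sigma(\mathrm{AW}^1(\beta)(\tau))$ times $\prod_{n \in N} \beta(n \sigma_G)/\beta(n \sigma_G \tilde\tau)$.

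It remains to verify the equality
\[
\prod_{n \in N} \frac{\beta(\sigma_G n)}{\beta(\sigma_G n \tilde\tau)} \;=\; \prod_{n \in N} \frac{\beta(n \sigma_G)}{\beta(n \sigma_G \tilde\tau)},
\]
which is immediate from the change of variables $n \mapsto \sigma_G^{-1} n \sigma_G$, a bijection of $N$ precisely because $N$ is normal in $G$. The only step requiring genuine input is this last reindexing; everything else is bookkeeping, and the main thing to keep straight is the distinction between the $\Gamma$-action on $A$ (producing the external $\sigma(\cdot)$) and left multiplication inside $G$ (appearing as arguments of $\beta$). I would also briefly check that the resulting map indeed lies in $Z^{2,1}(\Gamma, G/N, A)$, which follows since it is a coboundary on the nose.
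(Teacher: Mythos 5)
Your proof is correct and follows essentially the same route as the paper's: unfold the definitions of $\mathrm{d}$, $\mathrm{AW}^1$, $\mathrm{AW}^2$ with the natural lifts $\bar\sigma$ and $\bar\sigma\tilde\tau$, cancel the common factors, and observe that the residual products $\prod_{n}\beta(\bar\sigma n)/\beta(\bar\sigma n\tilde\tau)$ and $\prod_{n}\beta(n\bar\sigma)/\beta(n\bar\sigma\tilde\tau)$ agree by the bijection $n\mapsto\bar\sigma^{-1}n\bar\sigma$ of $N$. The only difference from the paper is cosmetic: you explicitly name the normality-of-$N$ reindexing that the paper leaves implicit, and you factor out $\sigma(\mathrm{AW}^1(\beta)(\tau))$ on both sides before comparing, whereas the paper carries all four factors through to the end and stops once both sides are written in reduced form.
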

\begin{proof}
For $\sigma \in \Gamma$ and $\tau \in G/N$, denoting $\bar{\sigma}$ the image
of $\sigma$ in $G$, we have
\begin{align*}
\mathrm{d}(\mathrm{AW}^1(\beta))(\sigma, \tau) =& \prod_{n \in N} \frac{\beta(n \bar{\sigma})}{\beta(n)} \frac{\sigma\left( \beta(n \tilde{\tau}) \right)}{\sigma\left( \beta(n) \right)} \frac{\beta(n)}{\beta(n \bar{\sigma} \tilde{\tau})} \\
=& \prod_{n \in N} \frac{\beta(n \bar{\sigma}) \sigma\left( \beta(n \tilde{\tau}) \right)}{\beta(n \bar{\sigma} \tilde{\tau}) \sigma\left( \beta(n) \right)} \\
\end{align*}
and
\begin{align*}
\mathrm{AW}^2\left( \mathrm{d}(\beta) \right)(\sigma, \tau) =& \prod_{n \in N} \frac{\beta(\bar{\sigma}) \sigma\left( \beta(n \tilde{\tau}) \right)}{\beta(\bar{\sigma} n \tilde{\tau})} \frac{\beta(\bar{\sigma} n)}{\beta(\bar{\sigma}) \sigma\left( \beta(n) \right)} \\
=& \prod_{n \in N} \frac{\sigma\left( \beta(n \tilde{\tau}) \right)}{\beta(\bar{\sigma} n \tilde{\tau})} \frac{\beta(\bar{\sigma} n)}{\sigma\left( \beta(n) \right)}.
\end{align*}
\end{proof}

\begin{lemm} \label{l:surjAW}
The maps
\[ \left\{ \beta : G \rightarrow A\,\middle|\, \beta(1)=1 \right\} \rightarrow \left\{ \beta : G/N \rightarrow A\,\middle|\, \beta(1)=1 \right\} \]
induced by $\mathrm{AW}^1$ and
\[ \left\{ \alpha : \Gamma \times G \rightarrow A\,\middle|\,\alpha(\sigma,1)=1 \text{ for all } \sigma \in \Gamma \right\} \rightarrow \left\{ \alpha : \Gamma \times G/N \rightarrow A\,\middle|\,\alpha(\sigma,1)=1 \text{ for all } \sigma \in \Gamma \right\} \]
induced by $\mathrm{AW}^2$ are both surjective.
\end{lemm}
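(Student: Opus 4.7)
The plan is to construct a preimage directly, exploiting the freedom we have in defining $\beta$ (respectively $\alpha$) on coset representatives outside $N$. Fix once and for all a set-theoretic section $s : G/N \to G$ with $s(1)=1$, so that $G = \bigsqcup_{\tau \in G/N} N \cdot s(\tau)$.

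For the surjectivity of the map induced by $\mathrm{AW}^1$, given $\bar{\beta} : G/N \rightarrow A$ with $\bar{\beta}(1)=1$, I would define $\beta : G \rightarrow A$ by declaring $\beta(n) = 1$ for every $n \in N$, $\beta(s(\tau)) = \bar{\beta}(\tau)$ for every $\tau \in G/N \smallsetminus \{1\}$, and $\beta(n s(\tau)) = 1$ for every $\tau \in G/N \smallsetminus \{1\}$ and every $n \in N \smallsetminus \{1\}$. Then $\beta(1)=1$ is automatic, and for any $\tau \in G/N \smallsetminus \{1\}$, taking $\tilde{\tau} = s(\tau)$ in the definition gives
\[
\mathrm{AW}^1(\beta)(\tau) = \prod_{n \in N} \frac{\beta(n s(\tau))}{\beta(n)} = \beta(s(\tau)) = \bar{\beta}(\tau),
\]
while $\mathrm{AW}^1(\beta)(1)=1$ using the lift $\tilde{1}=1$.

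The argument for $\mathrm{AW}^2$ is entirely analogous, with a trivial additional parameter $\sigma \in \Gamma$. Given $\bar{\alpha} : \Gamma \times G/N \rightarrow A$ normalized in the second variable, I would set $\alpha(\sigma, n) = 1$ for all $\sigma \in \Gamma$ and $n \in N$, $\alpha(\sigma, s(\tau)) = \bar{\alpha}(\sigma, \tau)$ for $\tau \neq 1$, and $\alpha(\sigma, n s(\tau)) = 1$ for $\tau \neq 1$ and $n \neq 1$. The normalization $\alpha(\sigma,1)=1$ holds since $1 \in N$, and the same computation as above, with the lift $\tilde{\tau} = s(\tau)$, yields $\mathrm{AW}^2(\alpha)(\sigma, \tau) = \bar{\alpha}(\sigma, \tau)$.

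There is no real obstacle here: the definition of $\mathrm{AW}^i$ as a product indexed by $N$ divides $G$ into $N$-cosets, and on each coset $N \cdot s(\tau)$ with $\tau \neq 1$ we have free choice of the values of the preimage cochain; by concentrating all of the value $\bar{\beta}(\tau)$ (or $\bar{\alpha}(\sigma,\tau)$) at the single element $s(\tau)$ and forcing the remaining coset values to be trivial, we match the target. The only small point to check is that $\mathrm{AW}^i$ is visibly independent of the choice of lift $\tilde{\tau}$, since replacing $s(\tau)$ by $n_0 s(\tau)$ merely reindexes the product by $n \mapsto n n_0^{-1}$; this legitimizes our use of $s(\tau)$ in the verification.
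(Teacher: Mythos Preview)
Your proof is correct and is essentially the same as the paper's: both fix a section $s:G/N\to G$ with $s(1)=1$ and observe that restricting $\mathrm{AW}^i$ to cochains that vanish on $N$ and on $n s(\tau)$ for $n\neq 1$, $\tau\neq 1$ gives a bijection onto the target, with the inverse being precisely the construction you wrote down. The paper states this as a bijection after restriction while you write the preimage explicitly, but the content is identical.
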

\begin{proof}
Let $s : G/N \rightarrow G$ be a section such that $s(1)=1$.
Restricting $\mathrm{AW}^1$ to the set of $\beta : G \rightarrow A$ such that $\beta|_N = 1$ and $\beta(n s(\sigma))=1$ for $\sigma \in G/N \smallsetminus \left\{ 1 \right\}$ and $n \in N \smallsetminus \left\{ 1 \right\}$ yields a bijective map onto $\left\{ \beta : G/N \rightarrow A\,\middle|\, \beta(1)=1 \right\}$.

Similarly, restricting $\mathrm{AW}^2$ to the set of $\alpha : \Gamma \times G \rightarrow A$ such that
\begin{itemize}
\item for all $\sigma \in \Gamma$ and $n \in N$, $\alpha(\sigma, n) = 1$,
\item for all $\sigma \in \Gamma$, $n \in N \smallsetminus \{1\}$ and $\tau \in G/N \smallsetminus \{1\}$, $\alpha(\sigma, n s(\tau))=1$,
\end{itemize}
yields a bijective map onto $\left\{ \alpha : \Gamma \times G/N \rightarrow A\,\middle|\,\alpha(\sigma,1)=1 \text{ for all } \sigma \in \Gamma \right\}$.
\end{proof}

The following corollary is readily deduced from the previous Lemma and
Proposition \ref{p:dAWcomm}.

\begin{coro} \label{c:surjAW}
Suppose that $A$ is a $G$-module. Consider $c \in H^2(G, A)$, and let $\alpha_N
\in Z^2(G/N, A^N)$ be in the cohomology class of the image of $c$ under
$\widetilde{\mathrm{AW}}$. Assume that $\alpha_N(1,1)=1$.  Then there exists
$\alpha \in c$ such that $\alpha(1,1)=1$ and $\mathrm{AW}^2(\alpha) =
\alpha_N$.
\end{coro}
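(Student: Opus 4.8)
The plan is to deduce Corollary \ref{c:surjAW} from the surjectivity statement in Lemma \ref{l:surjAW} combined with the cochain-level compatibility $\mathrm{d} \circ \mathrm{AW}^1 = \mathrm{AW}^2 \circ \mathrm{d}$ from Proposition \ref{p:dAWcomm}. The point is that we are given a $2$-cocycle $\alpha_N$ representing $\widetilde{\mathrm{AW}}(c)$ with $\alpha_N(1,1)=1$, and we want to produce a representative $\alpha$ of $c$, also normalized at $(1,1)$, which maps to \emph{exactly} $\alpha_N$ under $\mathrm{AW}^2$, not merely to something cohomologous to it.

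First I would pick an arbitrary starting cocycle $\alpha_0 \in c$ with $\alpha_0(1,1)=1$; by Lemma \ref{l:goodcocycle} (or the weaker normalization used in its proof) such a representative exists, and moreover we may take $\alpha_0$ to satisfy $\alpha_0(n, s(\sigma))=1$ for $n \in N$, $\sigma \in G/N$, so that $\widetilde{\mathrm{AW}}(\alpha_0)$ coincides with the simplified expression $\mathrm{AW}^2(\alpha_0)$ — in particular $\mathrm{AW}^2(\alpha_0) \in Z^2(G/N, A^N)$ and it lies in the cohomology class of $\widetilde{\mathrm{AW}}(c)$. Hence $\mathrm{AW}^2(\alpha_0)$ and $\alpha_N$ are cohomologous in $Z^2(G/N, A^N)$, say $\alpha_N = \mathrm{AW}^2(\alpha_0) \cdot \mathrm{d}(\beta_N)$ for some $\beta_N : G/N \rightarrow A^N$, and after replacing $\beta_N$ by $\beta_N/\beta_N(1)$ we may assume $\beta_N(1)=1$ (this does not change $\mathrm{d}(\beta_N)$, and it is consistent with $\alpha_0$ and $\alpha_N$ both being normalized at $(1,1)$).

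Next, by the first surjectivity assertion of Lemma \ref{l:surjAW}, there is a $1$-cochain $\beta : G \rightarrow A$ with $\beta(1)=1$ and $\mathrm{AW}^1(\beta) = \beta_N$. Set $\alpha := \alpha_0 \cdot \mathrm{d}(\beta)$. Then $\alpha \in c$, and $\alpha(1,1) = \alpha_0(1,1) \cdot \mathrm{d}(\beta)(1,1) = 1$ since $\beta(1)=1$ forces $\mathrm{d}(\beta)(1,1)=1$. Finally, using that $\mathrm{AW}^2$ is a homomorphism on cochains (it is given by a product formula, hence multiplicative in $\alpha$) together with Proposition \ref{p:dAWcomm},
\[
\mathrm{AW}^2(\alpha) = \mathrm{AW}^2(\alpha_0) \cdot \mathrm{AW}^2(\mathrm{d}(\beta)) = \mathrm{AW}^2(\alpha_0) \cdot \mathrm{d}(\mathrm{AW}^1(\beta)) = \mathrm{AW}^2(\alpha_0) \cdot \mathrm{d}(\beta_N) = \alpha_N,
\]
as desired.

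The only genuinely delicate point is bookkeeping with the normalizations: one must ensure that the chosen $\alpha_0$ can be taken both normalized at $(1,1)$ \emph{and} of the special form making $\mathrm{AW}^2(\alpha_0)$ land in $Z^2(G/N,A^N)$ and represent $\widetilde{\mathrm{AW}}(c)$ (this is exactly what Lemma \ref{l:goodcocycle} and the discussion preceding Proposition \ref{p:propAW} provide), and that $\beta_N$ can be normalized to kill its value at $1$ without disturbing $\mathrm{d}(\beta_N)$. Everything else is formal: it is just the exactness of the cochain complex in low degrees transported through the multiplicative maps $\mathrm{AW}^1$, $\mathrm{AW}^2$, whose compatibility with $\mathrm{d}$ is Proposition \ref{p:dAWcomm} and whose surjectivity is Lemma \ref{l:surjAW}.
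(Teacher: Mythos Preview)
Your proof is correct and follows essentially the same approach as the paper, which simply states that the corollary ``is readily deduced from the previous Lemma and Proposition \ref{p:dAWcomm}.'' You have spelled out the details: start from a representative $\alpha_0$ as in Lemma \ref{l:goodcocycle} so that $\mathrm{AW}^2(\alpha_0)$ coincides with $\widetilde{\mathrm{AW}}(\alpha_0)$ and hence represents $\widetilde{\mathrm{AW}}(c)$, lift the correcting $1$-cochain $\beta_N$ via the surjectivity of $\mathrm{AW}^1$ in Lemma \ref{l:surjAW}, and conclude using multiplicativity of $\mathrm{AW}^2$ together with $\mathrm{AW}^2 \circ \mathrm{d} = \mathrm{d} \circ \mathrm{AW}^1$ from Proposition \ref{p:dAWcomm}.
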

Note that we have not imposed that $\alpha$ should satisfy the property in Lemma
\ref{l:goodcocycle}, and indeed it can happen that no such $\alpha$ maps to
$\alpha_N$ under $\mathrm{AW}^2$. A simple computation shows that if we fix a
section $s : G/N \rightarrow G$ as above, then for $\alpha, \alpha' \in c$ as in
Lemma \ref{l:goodcocycle}, $\mathrm{AW}^2(\alpha/\alpha') \in B^2(G/N, N_N(A))$
where
\[ N_N(A) = \{ \prod_{n \in N} n(x) \,|\, x \in A \}. \]

\subsection{Explicit Eckmann-Shapiro}
\label{s:ES}

Let $G$ be a finite group acting transitively on the left on a set $X$.
Choose $x_0 \in X$ and let $H$ be the stabilizer of $x_0$, so that $X \simeq G/H$.

Let $A$ be a left $H$-module.
Define
\[ \mathrm{ind}_H^G(A) = \{ f : G \rightarrow A \,|\, \forall h \in H, g \in G,\,f(hg)=h \cdot f(g) \}. \]
It is naturally a left $G$-module by defining $(g_1 \cdot f)(g_2) = f(g_2 g_1)$.
We can identify $A$ with the sub-$H$-module of $\mathrm{ind}_H^G(A)$ consisting of all functions whose support is contained in $H$, by evaluating such functions at $1$.
Choose $R$ a set of representatives for $G/H$.
Then $\mathrm{ind}_H^G(A) = \bigoplus_{r \in R} r \cdot A$.
For simplicity we assume that $1 \in R$.

If $A$ happens to be a $G$-module, then
\begin{equation} \label{e:GAmodiso}
f \mapsto (g \mapsto g \cdot f(g^{-1}))
\end{equation}
defines an isomorphism of $G$-modules $\phi^G_H$ between $\mathrm{ind}_H^G(A)$ and $\mathrm{Maps}(X, A)$.
The sub-$H$-module $A$ of $\mathrm{ind}_H^G(A)$ corresponds to functions supported on $x_0$ under this isomorphism.
\begin{prop} \label{p:explicitES}
For $c \in Z^2(H, A)$, the following formula defines a $2$-cocycle $\mathrm{ES}^2_R(c)$ for the $G$-module $\mathrm{ind}_H^G(A)$:
\[ \mathrm{ES}^2_R(c)(r_1 h_1 r_2^{-1}, r_2 h_2 r_3^{-1})(h_3 r_1^{-1}) = h_3 \left( c(h_1, h_2) \right) \]
where $r_1, r_2, r_3 \in R$ and $h_1, h_2, h_3 \in H$.
The morphism $\mathrm{ES}^2_R : Z^2(H, A) \rightarrow Z^2(G, \mathrm{ind}_H^G(A))$ induces an isomorphism $H^2(H, A) \rightarrow H^2(G, \mathrm{ind}_H^G(A))$.

If $A$ happens to be a $G$-module, then using the identification \eqref{e:GAmodiso} we can write
\begin{equation} \label{e:explicitESphi}
\phi^G_H\left(\mathrm{ES}^2_R(c)(r_1 h_1 r_2^{-1}, r_2 h_2 r_3^{-1})\right)(r_1 \cdot x_0) = r_1 \left( c(h_1, h_2) \right).
\end{equation}
\end{prop}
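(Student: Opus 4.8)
The plan is to verify directly that the proposed formula for $\mathrm{ES}^2_R(c)$ is well-defined and satisfies the $2$-cocycle identity, then identify the induced map in cohomology with the classical Eckmann-Shapiro isomorphism. First I would check that the formula
\[ \mathrm{ES}^2_R(c)(r_1 h_1 r_2^{-1}, r_2 h_2 r_3^{-1})(h_3 r_1^{-1}) = h_3\left(c(h_1, h_2)\right) \]
unambiguously determines an element of $\mathrm{ind}_H^G(A)$: given $g_1, g_2 \in G$, the decomposition $g_1 = r_1 h_1 r_2^{-1}$ with $r_1, r_2 \in R$ and $h_1 \in H$ forces $r_1$ (the representative of the coset $g_1 H$... more precisely of $g_1 r_2 H$), and then writing $g_2 = r_2' h_2 r_3^{-1}$ one must observe that the value of the cocycle depends only on $g_1, g_2$, not on the intermediate choice — this is where the specific shape $r_1 h_1 r_2^{-1}$, $r_2 h_2 r_3^{-1}$ with a shared $r_2$ is doing the work: any $g \in G$ can be written as $r_2 h_2 r_3^{-1}$ for each choice of $r_2$, with $h_2, r_3$ then determined, so the middle representative is a free parameter and one checks the formula is insensitive to it. I would also check the defining equivariance $f(hg) = h\cdot f(g)$ for the function $h_3 r_1^{-1} \mapsto h_3(c(h_1,h_2))$, which is immediate from the appearance of $h_3$ on both sides.

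Next I would verify the $2$-cocycle relation for $\mathrm{ES}^2_R(c)$ as an element of $Z^2(G, \mathrm{ind}_H^G(A))$. Writing three group elements as $r_1 h_1 r_2^{-1}$, $r_2 h_2 r_3^{-1}$, $r_3 h_3 r_4^{-1}$ and evaluating the cocycle identity at an argument of the form $h_4 r_1^{-1}$, the $G$-action on $\mathrm{ind}_H^G(A)$ translates (via $(g_1 \cdot f)(g_2) = f(g_2 g_1)$) into a shift of the evaluation point, and the whole identity collapses to the $2$-cocycle relation for $c$ at $(h_1, h_2, h_3)$ after applying $h_4$. This is a bookkeeping computation; the one subtlety is tracking how the $G$-action reindexes the representatives, but the product-of-representatives form is arranged precisely so that products telescope. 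For the claim that $\mathrm{ES}^2_R$ induces an isomorphism on $H^2$, I would invoke the standard Eckmann-Shapiro lemma (Shapiro's lemma), noting that the inclusion $A \hookrightarrow \mathrm{ind}_H^G(A)$ of functions supported on $H$ is $H$-equivariant and that restriction along $H \subset G$ followed by this identification recovers the classical isomorphism $H^\bullet(G, \mathrm{ind}_H^G A) \cong H^\bullet(H, A)$; concretely, restricting $\mathrm{ES}^2_R(c)$ to $H \times H$ (i.e. $r_1 = r_2 = r_3 = 1$) and evaluating at $1$ gives back $c$, so the composite with the Shapiro isomorphism is the identity on $H^2(H,A)$, hence $\mathrm{ES}^2_R$ is an isomorphism on cohomology.

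Finally, when $A$ is a $G$-module, I would transport everything through $\phi^G_H : \mathrm{ind}_H^G(A) \xrightarrow{\sim} \mathrm{Maps}(X, A)$, $f \mapsto (g \mapsto g\cdot f(g^{-1}))$. Applying $\phi^G_H$ to $\mathrm{ES}^2_R(c)(r_1 h_1 r_2^{-1}, r_2 h_2 r_3^{-1})$ and evaluating at $r_1 \cdot x_0 \in X$: by definition this equals $g\cdot \mathrm{ES}^2_R(c)(\dots)(g^{-1})$ for any $g$ with $g \cdot x_0 = r_1 \cdot x_0$, e.g. $g = r_1$, so $g^{-1} = r_1^{-1}$ which has the required form $h_3 r_1^{-1}$ with $h_3 = 1$; thus the value is $r_1\left(\mathrm{ES}^2_R(c)(\dots)(r_1^{-1})\right) = r_1\left(c(h_1, h_2)\right)$, giving formula \eqref{e:explicitESphi}. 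One should check this is independent of the choice of $g$ in the coset $g H_{x_0}$... since $H = H_{x_0}$ and $\mathrm{ES}^2_R(c)(\dots)$ is $H$-equivariant, replacing $r_1$ by $r_1 h$ shifts both the $h_3$-component and the outer action consistently, leaving the answer unchanged.

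The main obstacle I expect is the well-definedness check in the first step — confirming that the value $h_3(c(h_1,h_2))$ genuinely depends only on the pair $(g_1, g_2) = (r_1 h_1 r_2^{-1}, r_2 h_2 r_3^{-1})$ and the evaluation point, and in particular is independent of the auxiliary representative $r_2$, which must cancel. Everything after that is a matter of organizing the $G$-action bookkeeping and quoting Shapiro's lemma.
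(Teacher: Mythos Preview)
The paper does not give a proof of this proposition; immediately after the statement it says ``This isomorphism in cohomology is well-known. We will need these explicit formulas for cocycles in the sequel.'' Your sketch is a correct and natural verification of the explicit formula, and is exactly what the paper leaves to the reader.

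One small simplification: your worry about the ``free parameter'' $r_2$ is unnecessary. The formula is to be read as specifying, for each pair $(g_1,g_2)\in G^2$, a function $G\to A$. Given an evaluation point $g\in G$, write $g=h_3 r_1^{-1}$ with $r_1\in R$, $h_3\in H$ uniquely; then $g_1^{-1}r_1\in r_2 H$ determines $r_2$ and hence $h_1$ uniquely, and $g_2^{-1}r_2\in r_3 H$ determines $r_3$ and $h_2$ uniquely. So once the evaluation point is fixed there is no redundancy in the presentation $(r_1 h_1 r_2^{-1}, r_2 h_2 r_3^{-1})$, and the only well-definedness check is the $H$-equivariance $f(hg)=h\cdot f(g)$, which as you note is immediate. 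The rest of your argument (the reduction of the $2$-cocycle identity to that of $c$, the appeal to Shapiro's lemma via restriction to $H$ with $r_1=r_2=r_3=1$, and the transport through $\phi^G_H$) is correct as written.
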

This isomorphism in cohomology is well-known.
We will need these explicit formulas for cocycles in the sequel.

\begin{prop}
\label{p:explicitESdegree1}
For $c \in C^1(H, A)$, define $\mathrm{ES}_R^1(c) \in C^1(G, \mathrm{ind}_H^G(A))$ by
\[ \mathrm{ES}_R^1(c)(r_1 h_1 r_2^{-1})(h_2 r_1^{-1}) = h_2 \left( c(h_1) \right) \]
for $r_1, r_2 \in R$ and $h_1, h_2 \in H$.
Then $\mathrm{d}(\mathrm{ES}_R^1(c)) = \mathrm{ES}_R^2(\mathrm{d}(c))$.
\end{prop}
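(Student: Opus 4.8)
The plan is to verify the identity $\mathrm{d}(\mathrm{ES}_R^1(c)) = \mathrm{ES}_R^2(\mathrm{d}(c))$ by evaluating both sides at an arbitrary pair of elements of $G$, decomposed via the representatives $R$, and at an arbitrary point of $X \cong G/H$, also written via $R$. Concretely, I would write a general element of $G$ as $r h r'^{-1}$ with $r, r' \in R$ and $h \in H$ (this decomposition is not unique, but since both sides are already known to be well-defined maps, any choice of decomposition will do; alternatively one works with the chosen $R$ and the unique $h$ determined by a pair $(r, r')$ that actually occurs). Then I evaluate the function $\mathrm{d}(\mathrm{ES}_R^1(c))(g_1, g_2)$ at a test argument of the form $h_3 r_1^{-1}$, matching the normalization used in Proposition~\ref{p:explicitES} and Proposition~\ref{p:explicitESdegree1}.

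The key computation is as follows. Write $g_1 = r_1 h_1 r_2^{-1}$ and $g_2 = r_2 h_2 r_3^{-1}$ (so that $g_1 g_2 = r_1 (h_1 h_2) r_3^{-1}$); it suffices to check equality on such ``aligned'' pairs since a general pair $g_1, g_2$ can be brought to this form by inserting $r_2 r_2^{-1}$ for the appropriate $r_2 \in R$ representing the coset $g_1^{-1} r_1 H$, and both formulas only depend on $g_1, g_2$ through their values as group elements. By definition of the differential in $C^1(G, \mathrm{ind}_H^G(A))$,
\[
\mathrm{d}(\mathrm{ES}_R^1(c))(g_1, g_2) = \bigl(g_1 \cdot \mathrm{ES}_R^1(c)(g_2)\bigr) \cdot \mathrm{ES}_R^1(c)(g_1 g_2)^{-1} \cdot \mathrm{ES}_R^1(c)(g_1),
\]
where the $G$-action on $\mathrm{ind}_H^G(A)$ is $(g \cdot f)(g') = f(g' g)$. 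Evaluating at $h_3 r_1^{-1}$: the first factor gives $\mathrm{ES}_R^1(c)(g_2)(h_3 r_1^{-1} g_1) = \mathrm{ES}_R^1(c)(r_2 h_2 r_3^{-1})(h_3 h_1 r_2^{-1}) = h_3 h_1(c(h_2))$; the middle factor gives $\mathrm{ES}_R^1(c)(r_1 (h_1 h_2) r_3^{-1})(h_3 r_1^{-1})^{-1} = \bigl(h_3(c(h_1 h_2))\bigr)^{-1}$; the last factor gives $\mathrm{ES}_R^1(c)(r_1 h_1 r_2^{-1})(h_3 r_1^{-1}) = h_3(c(h_1))$. Multiplying, and using that $h_3$ acts as a group homomorphism on $A$, the value at $h_3 r_1^{-1}$ is
\[
h_3\!\left( h_1(c(h_2)) \cdot c(h_1 h_2)^{-1} \cdot c(h_1) \right) = h_3\bigl( \mathrm{d}(c)(h_1, h_2) \bigr),
\]
which is precisely $\mathrm{ES}_R^2(\mathrm{d}(c))(r_1 h_1 r_2^{-1}, r_2 h_2 r_3^{-1})(h_3 r_1^{-1})$ by the defining formula in Proposition~\ref{p:explicitES}.

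The only genuinely delicate point — and the place where I expect to have to be careful rather than merely mechanical — is bookkeeping the non-uniqueness of the decomposition $g = r h r'^{-1}$ and checking that the test arguments $h_3 r_1^{-1}$ range over all of $G$ as $h_3 \in H$ and $r_1 \in R$ vary, so that agreement on these arguments forces equality of the two functions in $\mathrm{ind}_H^G(A)$. One should also confirm that when $g_1 g_2$ is re-expressed in the form $r h r'^{-1}$ the representative $r$ on the left is forced to equal $r_1$ (because $g_1 g_2$ and $g_1$ lie in the same left coset $r_1 H$), which is what makes the middle and outer factors combine cleanly; this is exactly the alignment condition $g_1 = r_1 h_1 r_2^{-1}$, $g_2 = r_2 h_2 r_3^{-1}$ that I impose at the outset. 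Once this indexing is set up correctly the computation above closes the proof.
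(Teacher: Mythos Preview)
Your proof is correct: the direct evaluation of both sides at aligned pairs $(r_1 h_1 r_2^{-1}, r_2 h_2 r_3^{-1})$ and test points $h_3 r_1^{-1}$ works exactly as you wrote, and your remark that any pair $(g_1,g_2)$ and any $r_1 \in R$ admit such an aligned decomposition (via $r_1^{-1} g_1 = h_1 r_2^{-1}$, $r_2^{-1} g_2 = h_2 r_3^{-1}$, each unique) handles the bookkeeping you flagged. The paper itself omits the proof as a routine verification, so there is nothing to compare against beyond noting that this straightforward unwinding is precisely what is implicitly intended.
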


\section{Construction of Tate cocycles in a tower}

Recall that Tate \cite{Tatecohotori} constructed a Tate-Nakayama isomorphism,
which in particular describes $H^1(E_k/F, T)$ in elementary terms, for any $k
\geq 0$ and any torus $T$ defined over $F$ and split by $E_k$.  This isomorphism
is obtained by taking cup-products with the Tate class, called $\alpha$ in
\cite{Tatecohotori}.

Our first goal is to construct a \emph{compatible} family of Tate
\emph{cocycles}
\[ \alpha_k \in Z^2(E_k/F, \mathrm{Maps}(V_{E_k}, I(E_k))) \]
for the Galois extensions $E_k/F$.  We will give a precise meaning to technical
notion of ``compatibility'' in Theorem \ref{t:constructionTate}. For now
we simply mention that this compatibility is the global analogue of \cite[Lemma
4.4]{Kalri}.

Recall that for $k$ fixed, Tate cocycles $\alpha_k$ are obtained as follows.
\begin{enumerate}
\item
Choose a representative $\overline{\alpha}_k \in Z^2(E_k/F, C(E_k))$ of the
fundamental class for $E_k/F$.
\item
For each place $v$ of $F$, choose a representative $\alpha_{k,v} \in Z^2(
E_{k,\dot{v}} / F_v, E_{k,\dot{v}}^{\times})$ of the fundamental class for
$E_{k,\dot{v}}/F_v$. Let $\alpha_k' \in
Z^2(E_k/F, \mathrm{Maps}(V_{E_k}, I(E_k)))$ be such that for any $v \in V$, the
$2$-cocycle
\begin{align*}
\mathrm{Gal}(E_{k,\dot{v}}/F_v)^2 \longrightarrow & I(E_k) \\
(\sigma, \tau) \longmapsto & \alpha_k'(\sigma, \tau)(\dot{v}_k)
\end{align*}
is cohomologous to $\alpha_{k,v}$ composed with $j_{k,v} :
E_{k,\dot{v}}^{\times} \hookrightarrow I(E_k)$.  Explicitly, $\alpha_k'$ can be
obtained from $(\alpha_{k,v})_{v \in V}$ using \eqref{e:explicitESphi}.
\item
Choose $\overline{\beta}_k \in C^1( E_k/F , \mathrm{Maps}(V_{E_k}, C(E_k)) )$
such that $\overline{\alpha}_k / \overline{\alpha'_k} =
\mathrm{d}(\overline{\beta}_k)$, where $\overline{\alpha}_k$ is seen as taking
values in the set of constant maps $V_{E_k} \rightarrow C(E_k)$ and
$\overline{\alpha'_k}$ is the composition of $\alpha'_k$ with the natural map
$\mathrm{Maps}(V_{E_k}, I(E_k)) \rightarrow \mathrm{Maps}(V_{E_k}, C(E_k))$.
\item
Lift $\overline{\beta}_k$ to $\beta_k \in C^1( E_k/F , \mathrm{Maps}(V_{E_k},
I(E_k)) )$ arbitrarily, and define $\alpha_k = \alpha'_k \times
\mathrm{d}(\beta_k)$.
\end{enumerate}
In this section we will show that each step can be done compatibly with
Akizuki-Witt-like maps.  For cocycles $\alpha_{k,v}$ this was done in
\cite[Lemma 4.4]{Kalri}, we will however give a slightly different
construction, using Corollary \ref{c:surjAW}. The case of
$\overline{\alpha}_k$ is very similar. A key point of the construction will be
the definition (see \ref{d:AWES2}) of an ``Akizuki-Witt-Eckmann-Shapiro'' map
relating the maps $\mathrm{AW}$ for local and global Galois groups, and formula
\eqref{e:explicitESphi} (see Lemma \ref{l:AWadelic}).

\subsection{Global fundamental cocycles}
\label{s:globfundcocy}

For any $k \geq 0$, the image of the fundamental class in $H^2(E_{k+1}/F,
C(E_{k+1}))$ under the Akizuki-Witt map \eqref{e:trueAW} (for the normal
subgroup $\mathrm{Gal}(E_{k+1}/E_k)$, and any choice of section) is the
fundamental class in $H^2(E_k/F, C(E_k))$. This is a direct consequence of
\cite[Ch.\ XIII, Theorem 6]{ArtinTate}. For $i \in \{1,2\}$ write
$\mathrm{AW}^i_k$ for the maps $\mathrm{AW}^i$ defined in section \ref{s:AW},
for the normal subgroup $\mathrm{Gal}(E_{k+1}/E_k)$ of
$\mathrm{Gal}(E_{k+1}/F)$. Using Corollary \ref{c:surjAW} we see that there
exists a family $\left( \overline{\alpha}_k \right)_{k \geq 0}$ where each
$\overline{\alpha}_k \in Z^2(E_k/F, C(E_k))$ represents the fundamental class,
and such that for all $k \geq 0$ we have $\overline{\alpha}_k =
\mathrm{AW}^2_k(\overline{\alpha}_{k+1})$.

\begin{rema} \label{r:altAW}
Alternatively, one could construct such a family using a method similar to
\cite[\S 4.4]{Kalri} (and so \cite[\S VI.1]{Langlandsdebuts}), that is by
choosing sections $\mathrm{Gal}(E_{k+1} / E_k) \rightarrow W_{E_k}$, where
$W_{E_k}$ is the Weil group of $E_k$, and multiplying them to produce sections
$\mathrm{Gal}(E_k / F) \rightarrow W_{E_k / F}$, yielding fundamental cocycles
compatible with $\mathrm{AW}^2_k$.

A third way would be to use a compacity argument and Lemma \ref{l:goodcocycle},
as in the proof of Theorem \ref{t:constructionTate} (using $2$-cochains
instead of $1$-cochains). The details for this last alternative are left to the
reader.
\end{rema}

\subsection{Local and adèlic fundamental classes}
\label{s:locfundcocyc}

Fix $v \in V$. For $i \in \{1,2\}$ write $\mathrm{AW}^i_{k,v}$ for the maps
$\mathrm{AW}^i$ defined in section \ref{s:AW}, for the normal subgroup
$\mathrm{Gal}(E_{k+1, \dot{v}}/E_{k, \dot{v}})$ of $\mathrm{Gal}(E_{k+1,
\dot{v}}/F_v)$. As in the global case we can use Corollary \ref{c:surjAW}
inductively to produce a family $\left( \alpha_{k,v} \right)_{k \geq 0}$ where
$\alpha_{k,v} \in Z^2(E_{k, \dot{v}}/F_v, E_{k, \dot{v}}^{\times})$ represents
the fundamental class and for all $k \geq 0$, we have $\alpha_{k,v} =
\mathrm{AW}^2_{k,v}(\alpha_{k+1,v})$. Alternatively we could simply use
\cite[Lemma 4.4]{Kalri}: see Remark \ref{r:altAW}.

For each $k \geq 1$, choose representatives for $\mathrm{Gal}(E_k/E_{k-1}) /
\mathrm{Gal}(E_{k,\dot{v}}/E_{k-1, \dot{v}})$, and choose lifts of these
representatives in $\mathrm{Gal}(\overline{F} / E_{k-1})$ to obtain a finite set
$R_{k, v} \subset \mathrm{Gal}(\overline{F} / E_{k-1})$.  We can and do assume
that $1 \in R_{k, v}$.  For convenience we also define $R_{0,v} = \{1\} \subset
\mathrm{Gal}(\overline{F} / F)$.  For any $k \geq 0$, $R'_{k,v} := R_{0,v}
R_{1,v} \dots R_{k,v} \subset \mathrm{Gal}(\overline{F} / F)$ projects to a set
of representatives for $\mathrm{Gal}(E_k/F) / \mathrm{Gal}(E_{k,\dot{v}}/F_v)$.
For $v \in V$ and $k \geq 0$ let $\zeta_{k,v} : \{v\}_{E_k} \rightarrow
\{v\}_{E_{k+1}}$ be the section such that for all $r \in R'_{k, v}$,
$\zeta_{k,v}(r \cdot \dot{v}_k) = r \cdot \dot{v}_{k+1}$.  Define $j_{k,v} :
E_{k, \dot{v}}^{\times} \hookrightarrow I(E_k)$ by $(j_{k,v}(x))_{\dot{v}_k} =
x$ and $(j_{k,v}(x))_w = 1$ for $w \neq \dot{v}_k$.  We have natural inclusions
$E_{k, \dot{v}}^{\times} \subset E_{k+1, \dot{v}}^{\times}$ and for $x \in E_{k,
\dot{v}}^{\times}$ we have
\[ j_{k,v}(x) = \prod_{r \in R_{k+1,v}} r \left( j_{k+1,v}(x) \right). \]

Following Proposition \ref{p:explicitES} define, for all $k \geq 0$, $\alpha'_k
\in Z^2(E_k/F, \mathrm{Maps}(V_{E_k}, I(E_k)))$ by
\[ \alpha'_k(r_1 \sigma r_2^{-1}, r_2 \tau r_3^{-1})(r_1 \cdot \dot{v}_k) = r_1
\left( j_{k, v} (\alpha_{k,v}(\sigma, \tau)) \right) \]
for $v \in V$, $\sigma, \tau \in \mathrm{Gal}(E_{k, \dot{v}}/F_v)$ and $r_1,
r_2, r_3 \in R'_{k,v}$.
That is, $\alpha'_k$ is obtained by aggregating
\[ \phi_{ \mathrm{Gal}( E_{k, \dot{v}}/F_v) }^{ \mathrm{Gal}(E_k/F) } (
\mathrm{ES}^2_{R'_{k,v}}( j_{k,v}(\alpha_{k,v}))) \in Z^2(E_k/F, \mathrm{Maps}(
\{v\}_{E_k}, I(E_k))) \]
for $v \in V$.

\begin{defi} \label{d:AWES2}
Suppose that $A$ is a commutative group. For $k \geq 0$ and $\alpha :
\mathrm{Gal}(\overline{F} / F) \times \mathrm{Gal}(E_{k+1} / F) \rightarrow
\mathrm{Maps}(V_{E_{k+1}}, A)$, define $\mathrm{AWES}^2_k(\alpha) :
\mathrm{Gal}(\overline{F} / F) \times \mathrm{Gal}(E_k / F) \rightarrow
\mathrm{Maps}(V_{E_k},A)$ by
\[ \mathrm{AWES}^2_k(\alpha)(\sigma, \tau)(\sigma_k \tau \cdot w) := \prod_{n
\in \mathrm{Gal}(E_{k+1}/E_k)} \frac{\alpha(\sigma, n \tilde{\tau})(\sigma_{k+1}
n \tilde{\tau} \cdot \zeta_{k,v}(w)) }{\alpha(\sigma, n)(\sigma_{k+1} n \cdot
\zeta_{k,v}(\tau \cdot w))}. \]
In this formula $\sigma \in \mathrm{Gal}(\overline{F} / F)$ has image
$\sigma_{k+1}$ in $\mathrm{Gal}(E_{k+1} / F)$ and $\sigma_k$ in
$\mathrm{Gal}(E_k / F)$, $\tau \in \mathrm{Gal}(E_k / F)$ and $\tilde{\tau} \in
\mathrm{Gal}(E_{k+1} / F)$ is any lift of $\tau$, $v \in V$ and $w \in
\{v\}_{E_k}$.
\end{defi}

Note that $\mathrm{AWES}^2_k$ depends on the choice of representatives
$R'_{k,v}$ only via $\zeta_{k,v}$.

\begin{lemm} \label{l:AWadelic}
For all $k \geq 0$ we have $\mathrm{AWES}^2_k(\alpha'_{k+1}) = \alpha'_k$.
\end{lemm}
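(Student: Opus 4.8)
The plan is to unwind both sides of the claimed identity $\mathrm{AWES}^2_k(\alpha'_{k+1}) = \alpha'_k$ directly from the definitions, reducing everything to the single-place contributions $\phi^{\mathrm{Gal}(E_k/F)}_{\mathrm{Gal}(E_{k,\dot v}/F_v)}(\mathrm{ES}^2_{R'_{k,v}}(j_{k,v}(\alpha_{k,v})))$, and then using the known compatibility $\alpha_{k,v} = \mathrm{AW}^2_{k,v}(\alpha_{k+1,v})$ from section \ref{s:locfundcocyc} together with the explicit formula \eqref{e:explicitESphi}. Fix $v \in V$, a place $w \in \{v\}_{E_k}$, and write $w = r_1 \cdot \dot v_k$ with $r_1 \in R'_{k,v}$; this $r_1$ is uniquely determined modulo $\mathrm{Gal}(E_{k,\dot v}/F_v)$ acting on the right, and by construction $R'_{k,v} = R'_{k-1,v} R_{k,v}$, while $R'_{k+1,v} = R'_{k,v} R_{k+1,v}$, so the sections $\zeta_{k,v}$ are compatible with these coset decompositions. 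First I would evaluate $\alpha'_k(\sigma_k, \tau)(r_1\cdot\dot v_k)$ by writing $\sigma_k = r_1 \rho_\sigma r_2^{-1}$ and $\tau = r_2 \rho_\tau r_3^{-1}$ for suitable $r_i \in R'_{k,v}$ and $\rho_\sigma, \rho_\tau \in \mathrm{Gal}(E_{k,\dot v}/F_v)$; the answer is $r_1(j_{k,v}(\alpha_{k,v}(\rho_\sigma,\rho_\tau)))$ evaluated at the place $w$, i.e. it is the $\dot v_k$-component $r_1(\alpha_{k,v}(\rho_\sigma,\rho_\tau))$ (all other components being trivial).

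Next I would compute the right-hand side $\mathrm{AWES}^2_k(\alpha'_{k+1})(\sigma,\tau)(\sigma_k\tau\cdot w)$ from Definition \ref{d:AWES2}: it is a product over $n \in \mathrm{Gal}(E_{k+1}/E_k)$ of ratios $\alpha'_{k+1}(\sigma, n\tilde\tau)(\cdot)/\alpha'_{k+1}(\sigma, n)(\cdot)$ evaluated at the places $\sigma_{k+1}n\tilde\tau\cdot\zeta_{k,v}(w)$ and $\sigma_{k+1}n\cdot\zeta_{k,v}(\tau\cdot w)$ respectively. Here the key structural point is that, because $\alpha'_{k+1}$ is built place-by-place via Eckmann–Shapiro, each factor $\alpha'_{k+1}(\sigma, \text{--})$ evaluated at a place in $\{v\}_{E_{k+1}}$ is supported on the single global $\mathrm{Gal}(\overline F/F)$-orbit of $v$ and there picks out $r(\alpha_{k+1,v}(\text{--},\text{--}))$ where $r \in R'_{k+1,v}$ is the representative of the relevant coset. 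The section $\zeta_{k,v}$ is precisely designed so that $\zeta_{k,v}(r\cdot\dot v_k) = r\cdot\dot v_{k+1}$ for $r \in R'_{k,v} \subset R'_{k+1,v}$, so that tracking which representative $r \in R'_{k+1,v}$ indexes each factor amounts to exactly the bookkeeping $R'_{k+1,v} = R'_{k,v}R_{k+1,v}$ with $n$ ranging over (a set mapping onto) $\mathrm{Gal}(E_{k+1}/E_k)$, and the residual $E_{k+1,\dot v}/E_{k,\dot v}$-part of $n$ getting absorbed into the $\mathrm{AW}^2_{k,v}$ product. After this identification, the product over $n$ collapses, via \eqref{e:explicitESphi} and the definition of $\mathrm{AW}^2_{k,v}$, to $r_1(\mathrm{AW}^2_{k,v}(\alpha_{k+1,v})(\rho_\sigma,\rho_\tau)) = r_1(\alpha_{k,v}(\rho_\sigma,\rho_\tau))$, which matches the left-hand side computed above.

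I expect the main obstacle to be the careful verification that the place-indices appearing in Definition \ref{d:AWES2}, namely $\sigma_{k+1}n\tilde\tau\cdot\zeta_{k,v}(w)$ and $\sigma_{k+1}n\cdot\zeta_{k,v}(\tau\cdot w)$, are exactly the places at which the Eckmann–Shapiro formula \eqref{e:explicitESphi} for $\alpha'_{k+1}$ returns the $\alpha_{k+1,v}$-values indexed by the representatives needed to assemble $\mathrm{AW}^2_{k,v}$ — in other words, that the twisting by $\sigma_{k+1}$ and by $\tilde\tau$ in the place-argument is precisely cancelled against the $G$-action in the definition of $\mathrm{ind}$ and $\phi^G_H$, leaving a coset representative in $R'_{k+1,v}$ to which $\mathrm{AW}^2_{k,v}$ is then applied. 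This is the step where the somewhat opaque-looking choice of arguments in Definition \ref{d:AWES2} is vindicated; once the index bookkeeping is set up correctly (using $1 \in R_{k,v}$ for all $k$ and the sequential choice of the sets $\dot V_{E_k}$), the remaining manipulation is a formal application of Proposition \ref{p:explicitES} and the definition of $\mathrm{AW}^2$, with no further input. A clean way to organize the bookkeeping is to first treat the case $\sigma = 1$ (so that no global twisting occurs and one sees the $\mathrm{AW}^2_{k,v}$ product emerge transparently), and then check that a general $\sigma \in \mathrm{Gal}(\overline F/F)$ only relabels representatives consistently on both sides.
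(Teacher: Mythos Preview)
Your proposal is correct and follows essentially the same approach as the paper: both unwind the Eckmann--Shapiro formula \eqref{e:explicitESphi} for $\alpha'_{k+1}$, parametrize $\mathrm{Gal}(E_{k+1}/E_k)$ as $R_{k+1,v} \times \mathrm{Gal}(E_{k+1,\dot v}/E_{k,\dot v})$, track how $\sigma$ permutes the $R_{k+1,v}$-factor, and reduce to $\mathrm{AW}^2_{k,v}(\alpha_{k+1,v}) = \alpha_{k,v}$. One bookkeeping slip to watch: you fix $w = r_1\cdot\dot v_k$ and then compute $\alpha'_k(\sigma_k,\tau)$ at $r_1\cdot\dot v_k$, but the $\mathrm{AWES}^2_k$ formula outputs the value at $\sigma_k\tau\cdot w$, so your $r_1$ is playing the role of the paper's $\gamma$ (the representative for the \emph{input} place $w$), not the representative for the \emph{output} place $\sigma_k\tau\cdot w$ at which both sides must be compared; the paper introduces a separate $r_1$ via $\sigma_k r_2 = r_1 g_1$ (with $\tau\gamma = r_2 g_2$) for exactly this purpose.
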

Note that a priori the left hand side is only a map $\mathrm{Gal}(E_{k+1} / F)
\times \mathrm{Gal}(E_k / F) \rightarrow \mathrm{Maps}(V_{E_k}, I(E_{k+1}))$.
The lemma implies that is is inflated from a map $\mathrm{Gal}(E_k / F)^2
\rightarrow \mathrm{Maps}(V_{E_k}, I(E_k))$.
\begin{proof}
Fix $\sigma \in \mathrm{Gal}(E_{k+1}/F)$, $\tau \in \mathrm{Gal}(E_k/F)$ and
$\gamma \in R'_{k,v}$.  In $\mathrm{Gal}(E_k/F)$ write $\tau \gamma = r_2 g_2$
where $r_2 \in R'_{k,v}$ and $g_2 \in \mathrm{Gal}(E_{k,\dot{v}}/F_v)$.  Let
$\widetilde{g_2} \in \mathrm{Gal}(E_{k+1, \dot{v}} / F_v)$ be any lift of $g_2$.
Note that
\begin{align*}
 \{ n \tilde{\tau} \,|\, n \in \mathrm{Gal}(E_{k+1}/E_k) \} =& \{ r_2 u n_v \widetilde{g_2} \gamma^{-1} \,|\, u \in R_{k+1,v}, \, n_v \in \mathrm{Gal}(E_{k+1,\dot{v}}/E_{k,\dot{v}}) \}, \\
\mathrm{Gal}(E_{k+1}/E_k) =& \{ r_2 u n_v r_2^{-1} \,|\, u \in R_{k+1,v}, \, n_v \in \mathrm{Gal}(E_{k+1,\dot{v}}/E_{k,\dot{v}}) \}.
\end{align*}
In $\mathrm{Gal}(E_k/F)$ write $\sigma_k r_2 = r_1 g_1$ where $r_1 \in R'_{k,v}$
and $g_1 \in \mathrm{Gal}(E_{k,\dot{v}}/F_v)$.  Choose $\widetilde{g_1} \in
\mathrm{Gal}(E_{k+1,\dot{v}}/F_v)$ lifting $g_1$.  For every $u \in R_{k+1,v}$
we can decompose $\sigma r_2 u \in \mathrm{Gal}(E_{k+1}/F)$ as follows: $\sigma
r_2 u = r_1 u' \widetilde{g_1} x_v$ where $u' \in R_{k+1,v}$ and $x_v \in
\mathrm{Gal}(E_{k+1,\dot{v}}/E_{k,\dot{v}})$ depend on $u$.  Moreover $u \mapsto
u'$ realises a bijection from $R_{k+1,v}$ to itself: $r_1^{-1} \sigma r_2 \in
\mathrm{Gal}(E_{k+1} / E_k)$ induces a permutation of the set of places of
$E_{k+1}$ lying above $\dot{v}_k$.
\begin{align*}
\mathrm{AWES}^2_k(\alpha'_{k+1})(\sigma, \tau)(\sigma_k \tau \gamma \cdot \dot{v}_k) =& \prod_{n \in \mathrm{Gal}(E_{k+1}/E_k)} \frac{\alpha'_{k+1}(\sigma, n \tilde{\tau})(\sigma n \tilde{\tau} \gamma \cdot \dot{v}_{k+1}) }{\alpha'_{k+1}(\sigma, n)(\sigma n r_2 \cdot \dot{v}_{k+1})} \\
=& \prod_{u, n_v} \frac{\alpha'_{k+1}(r_1 u' \widetilde{g_1} x_v (r_2 u)^{-1}, r_2 u n_v \widetilde{g_2} \gamma^{-1})(r_1 u' \cdot \dot{v}_{k+1})}{\alpha'_{k+1}(r_1 u' \widetilde{g_1} x_v (r_2 u)^{-1}, r_2 u n_v r_2^{-1})(r_1 u' \cdot \dot{v}_{k+1})} \\
=& \prod_u r_1u'\left( \prod_{n_v} \frac{j_{k+1,v}\left(\alpha_{k+1,v}(\widetilde{g_1} x_v, n_v \widetilde{g_2})\right)}{j_{k+1,v}\left(\alpha_{k+1,v}(\widetilde{g_1} x_v, n_v) \right)} \right) \\
=& \prod_u r_1u'\left( j_{k+1, v}\left(\alpha_{k,v}(g_1,g_2)\right) \right) \\
=& r_1\left( j_{k,v}(\alpha_{k,v}(g_1,g_2)) \right) \\
=& \alpha_k'(r_1 g_1 r_2^{-1}, r_2 g_2 \gamma^{-1})(r_1 \cdot \dot{v}_k) \\
=& \alpha_k'(\sigma, \tau)(\sigma \tau \gamma \cdot \dot{v}_k).
\end{align*}
\end{proof}

\begin{rema}
One could define $\mathrm{AWES}^2$ axiomatically, as we did for $\mathrm{AW^2}$
in Section \ref{s:AW}, for general quadruples $(G, N, H, R_{G/N}, R_N)$ where
$G$ is a finite group, $N$ a normal subgroup of $G$, $H$ a subgroup of $G$,
$R_{G/N} \subset G$ a set of representatives for $G/HN = (G/N) / (HN/N)$ such
that $1 \in R_{G/N}$, and $R_N \subset N$ a set of representatives for $N/(N
\cap H)$ such that $1 \in R_N$. One could also state the generalization of Lemma
\ref{l:AWadelic} in this context, with an identical proof. Note that it would
apply to $2$-cocycles $\alpha'$ taking values in a \emph{twice} induced module,
that is $\Z[G/H] \otimes_{\Z} \mathrm{ind}_H^G(A)$ for some $H$-module $A$. We
will not need this generality, however.
\end{rema}

\subsection{Properties of $\mathrm{AWES}^2_k$}

To establish the analogue of Proposition \ref{p:dAWcomm}, we introduce variants
of $\mathrm{AWES}^2_k$ in degrees $0$ and $1$.
\begin{defi} \label{d:AWES01}
Fix $k \geq 0$.
\begin{enumerate}
\item
Suppose that $A$ is a commutative group.  For $\beta : \mathrm{Gal}(E_{k+1}/F)
\rightarrow \mathrm{Maps}(V_{E_{k+1}}, A)$, define $\mathrm{AWES}^1_k(\beta) :
\mathrm{Gal}(E_k/F) \rightarrow \mathrm{Maps}(V_{E_k}, A)$ by
\[ \mathrm{AWES}^1_k(\beta)(\sigma)(\sigma \cdot w) = \prod_{n \in
\mathrm{Gal}(E_{k+1}/E_k)} \frac{\beta(n \tilde{\sigma})(n \tilde{\sigma} \cdot
\zeta_{k,v}(w)) }{\beta(n)(n \cdot \zeta_{k,v}(\sigma \cdot w))} \]
for $\sigma \in \mathrm{Gal}(E_k/F)$ and $w \in \{v\}_{E_k}$.  In this formula
$\tilde{\sigma} \in \mathrm{Gal}(E_{k+1}/F)$ is any lift of $\sigma$.
\item
Suppose that $A$ is a $\mathrm{Gal}(E_{k+1}/E_k)$-module.
For $\beta \in \mathrm{Maps}(V_{E_{k+1}}, A)$ define $\mathrm{AWES}^0_k(\beta) \in \mathrm{Maps}(V_{E_k}, A^{\mathrm{Gal}(E_{k+1}/E_k)})$ by
\[ \mathrm{AWES}^0_k(\beta)(w) = N_{E_{k+1}/E_k}(\beta(\zeta_{k,v}(w)) \]
for $w \in \{v\}_{E_k}$.
\end{enumerate}
\end{defi}

\begin{lemm} \label{l:dAWES}
Fix $k \geq 0$.
\begin{enumerate}
\item
Suppose that $A$ is a $\mathrm{Gal}(\overline{F} / F)$-module. For $\beta :
\mathrm{Gal}(E_{k+1}/F) \rightarrow \mathrm{Maps}(V_{E_{k+1}}, A)$, we have the
equality of maps $\mathrm{Gal}(\overline{F}/F) \times \mathrm{Gal}(E_k/F)
\rightarrow \mathrm{Maps}(V_{E_k}, A)$
\[ \mathrm{AWES}^2_k(\mathrm{d}(\beta)) = \mathrm{d}(\mathrm{AWES}^1_k(\beta)).
\]
\item
Suppose that $A$ is a $\mathrm{Gal}(E_{k+1} / F)$-module. For $\beta \in
\mathrm{Maps}(V_{E_{k+1}}, A)$, we have the equality of maps
$\mathrm{Gal}(E_{k+1}/F) \rightarrow \mathrm{Maps}(V_{E_k}, A)$
\[ \mathrm{AWES}^1_k(\mathrm{d}(\beta)) = \mathrm{d}(\mathrm{AWES}^0_k(\beta)).
\]
The right hand side is a map $\mathrm{Gal}(E_k/F) \rightarrow \mathrm{Maps}(
V_{E_k}, N_{E_{k+1}/E_k}(A) )$.
\end{enumerate}
\end{lemm}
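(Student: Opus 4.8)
The plan is to verify both identities by direct computation, expanding the definitions of $\mathrm{d}$, $\mathrm{AWES}^2_k$, $\mathrm{AWES}^1_k$ and $\mathrm{AWES}^0_k$, and checking that the products telescope exactly as in the proof of Proposition \ref{p:dAWcomm}. The point is that $\mathrm{AWES}^i_k$ is, up to the bookkeeping of the place-variable $w$ and the section $\zeta_{k,v}$, a place-indexed version of $\mathrm{AW}^i$; so the two identities here are the place-indexed analogues of $\mathrm{d}(\mathrm{AW}^1(\beta)) = \mathrm{AW}^2(\mathrm{d}(\beta))$ and of the statement that $\mathrm{AW}^1 \circ \mathrm{d}$ is the differential of the norm map.

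For part (1), I would fix $\sigma \in \mathrm{Gal}(\overline{F}/F)$, $\tau \in \mathrm{Gal}(E_k/F)$, $v \in V$ and $w \in \{v\}_{E_k}$, and compute both sides evaluated at $(\sigma,\tau)$ on the place $\sigma_k \tau \cdot w$. Writing out $\mathrm{d}(\mathrm{AWES}^1_k(\beta))(\sigma,\tau) = \mathrm{AWES}^1_k(\beta)(\sigma_k) \cdot \sigma\bigl(\mathrm{AWES}^1_k(\beta)(\tau)\bigr) \cdot \mathrm{AWES}^1_k(\beta)(\sigma_k\tau)^{-1}$ and expanding each factor through its defining product over $n \in \mathrm{Gal}(E_{k+1}/E_k)$ (using an appropriate lift $\tilde\tau$ and being careful that the place-arguments of $\beta$ involve $\zeta_{k,v}$ evaluated at the right places), one obtains a product over $n$ of six terms; regrouping, exactly as in Proposition \ref{p:dAWcomm}, two of the six terms cancel after re-indexing $n \mapsto n'$ in the middle factor (the image of the section $\sigma$ permutes $\mathrm{Gal}(E_{k+1}/E_k)$ and the associated places above $\dot v_k$, just as in the proof of Lemma \ref{l:AWadelic}), leaving precisely the four-term product defining $\mathrm{AWES}^2_k(\mathrm{d}(\beta))(\sigma,\tau)$. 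One then checks that the place at which this is evaluated matches on both sides. The fact that the left-hand side is automatically inflated from $\mathrm{Gal}(E_k/F) \times \mathrm{Gal}(E_k/F)$, i.e.\ independent of the chosen lift $\tilde\tau$, falls out of the computation (or can be read off from Lemma \ref{l:AWadelic}'s remark that $\mathrm{AWES}^2_k$ of a cocycle coming from an $E_k$-level object is again $E_k$-level).

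For part (2), with $A$ now a $\mathrm{Gal}(E_{k+1}/F)$-module and $\beta \in \mathrm{Maps}(V_{E_{k+1}},A)$, I would expand $\mathrm{AWES}^1_k(\mathrm{d}(\beta))(\sigma)(\sigma \cdot w)$ as the product over $n \in \mathrm{Gal}(E_{k+1}/E_k)$ of $\dfrac{(\mathrm{d}\beta)(n\tilde\sigma)(n\tilde\sigma\cdot\zeta_{k,v}(w))}{(\mathrm{d}\beta)(n)(n\cdot\zeta_{k,v}(\sigma\cdot w))}$, substitute $(\mathrm{d}\beta)(g)(x) = \beta(x)\cdot g(\beta(g^{-1}x))^{-1}$ wait --- more precisely $(\mathrm{d}\beta)(g) = \beta \cdot g(\beta)^{-1}$ in $\mathrm{Maps}$, so $(\mathrm{d}\beta)(g)(x) = \beta(x)/g\bigl(\beta(g^{-1}\cdot x)\bigr)$, and then telescope the resulting product over $n$; the terms arrange so that what survives is $N_{E_{k+1}/E_k}$ applied to $\beta$ at the appropriate places, namely $\mathrm{AWES}^0_k(\beta)$ evaluated with the differential in $\mathrm{Gal}(E_k/F)$, using $N_{E_{k+1}/E_k}(\beta(\zeta_{k,v}(w))) = \prod_{n} n(\beta(\zeta_{k,v}(w)))$ and the compatibility $n \cdot \zeta_{k,v}(w)$ running over the fiber above $w$. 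That the right-hand side lands in $N_{E_{k+1}/E_k}(A)$-valued maps is then immediate since it is literally a quotient of norms. The main obstacle in both parts is purely combinatorial: keeping track of which copy of $\mathrm{Gal}(E_{k+1}/E_k)$ and which place in the fiber $\zeta_{k,v}$ selects at each stage, so that the re-indexing $n \mapsto n'$ in part (1) and the telescoping in part (2) are legitimate; once the indexing conventions of Definitions \ref{d:AWES2} and \ref{d:AWES01} are unwound carefully this is bookkeeping, with no new idea beyond the proof of Proposition \ref{p:dAWcomm}.
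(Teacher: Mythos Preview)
Your approach is the same as the paper's: expand both sides by the definitions and verify equality by direct manipulation, paralleling Proposition \ref{p:dAWcomm}. One point deserves more care than you give it. In part (1), after you re-index $n \mapsto \sigma n \sigma^{-1}$ (which is a genuine bijection of $N = \mathrm{Gal}(E_{k+1}/E_k)$, since $N$ is normal) and rewrite the factors as $\mathrm{d}\beta(\sigma,\cdot)$, you pick up auxiliary terms of the form $\beta(\sigma)(\sigma n \tilde{\tau}\cdot\zeta_{k,v}(w))$ and $\beta(\sigma)(\sigma n\cdot\zeta_{k,v}(\tau\cdot w))$. These do not cancel via a bijection of $N$; rather, the two maps $n \mapsto n\tilde{\tau}\cdot\zeta_{k,v}(w)$ and $n \mapsto n\cdot\zeta_{k,v}(\tau\cdot w)$ from $N$ to $\{v\}_{E_{k+1}}$ have the same fibre cardinalities over each place $u$ (both are surjections onto the $N$-orbit above $\tau\cdot w$, with fibres equal to decomposition groups of the same order), so the products over $n$ of $\beta(\sigma)$ evaluated at these places agree. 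The paper states this cardinality fact explicitly and uses it in both parts; your phrase ``permutes \ldots\ the associated places'' gestures at it but does not quite capture that it is a multiset equality rather than a re-indexing. The same argument is needed in part (2) to cancel the $\beta(u)$ terms after expanding $\mathrm{d}\beta$. Also, your convention $(\mathrm{d}\beta)(g) = \beta \cdot g(\beta)^{-1}$ is inverted relative to the paper's $\mathrm{d}\beta(\sigma) = \sigma(\beta)/\beta$; harmless, but worth fixing for consistency.
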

\begin{proof}
\begin{enumerate}
\item
Let $v \in S$, $w \in \{w\}_k$, $\sigma \in \mathrm{Gal}(E_{k+1}/F)$ and $\tau
\in \mathrm{Gal}(E_k/F)$. Let $\overline{\sigma}$ be the image of $\sigma$ in
$\mathrm{Gal}(E_k/F)$, and fix $\tilde{\tau} \in \mathrm{Gal}(E_{k+1}/F)$
lifting $\tau$. We have
\begin{align*}
\mathrm{d}(\mathrm{AWES}^1_k(\beta))(\sigma, \tau)(\overline{\sigma} \tau \cdot w) & = \frac{\mathrm{AWES}^1_k(\beta)(\overline{\sigma})(\overline{\sigma} \tau \cdot w) \sigma(\mathrm{AWES}^1_k(\beta)(\tau))(\overline{\sigma} \tau \cdot w)}{\mathrm{AWES}^1_k(\beta)(\overline{\sigma} \tau)(\overline{\sigma} \tau \cdot w)} \\
& = \prod_{n \in \mathrm{Gal}(E_{k+1}/E_k)} \frac{\beta(n \sigma)(n \sigma \cdot \zeta_{k,v}(\tau \cdot w)) }{\beta(n)(n \cdot \zeta_{k,v}(\overline{\sigma} \tau \cdot w))} \\
&\ \ \ \ \ \ \ \ \times \sigma\left(\frac{\beta(n \tilde{\tau})(n \tilde{\tau} \cdot \zeta_{k,v}(w)) }{\beta(n)(n \cdot \zeta_{k,v}(\tau \cdot w))}\right) \\
&\ \ \ \ \ \ \ \ \times \frac{\beta(n)(n \cdot \zeta_{k,v}(\overline{\sigma} \tau \cdot w)) }{\beta(n \sigma \tilde{\tau})(n \sigma \tilde{\tau} \cdot \zeta_{k,v}(w))} \\
& = \prod_{n \in \mathrm{Gal}(E_{k+1}/E_k)} \frac{\sigma\left(\beta(n \tilde{\tau})(n \tilde{\tau} \cdot \zeta_{k,v}(w))\right)}{\beta(n \sigma \tilde{\tau})(n \sigma \tilde{\tau} \cdot \zeta_{k,v}(w))} \\
&\ \ \ \ \ \ \ \ \times \frac{\beta(n \sigma)(n \sigma \cdot \zeta_{k,v}(\tau \cdot w))}{\sigma\left(\beta(n)(n \cdot \zeta_{k,v}(\tau \cdot w)) \right)} \\
& = \prod_{n \in \mathrm{Gal}(E_{k+1}/E_k)} \frac{\sigma\left(\beta(n \tilde{\tau})(n \tilde{\tau} \cdot \zeta_{k,v}(w))\right)}{\beta(\sigma n \tilde{\tau})(\sigma n \tilde{\tau} \cdot \zeta_{k,v}(w))} \\
&\ \ \ \ \ \ \ \ \times \frac{\beta(\sigma n)(\sigma n \cdot \zeta_{k,v}(\tau \cdot w))}{\sigma\left(\beta(n)(n \cdot \zeta_{k,v}(\tau \cdot w)) \right)} \\
& = \prod_{n \in \mathrm{Gal}(E_{k+1}/E_k)} \frac{\mathrm{d} \beta(\sigma, n \tilde{\tau})(\sigma n \tilde{\tau} \cdot \zeta_{k,v}(w))}{\beta(\sigma)(\sigma n \tilde{\tau} \cdot \zeta_{k,v}(w))} \\
&\ \ \ \ \ \ \ \ \times \frac{\beta(\sigma)(\sigma n \cdot \zeta_{k,v}(\tau \cdot w)))}{\mathrm{d} \beta (\sigma, n)(\sigma n \cdot \zeta_{k,v}(\tau \cdot w))} \\
& = \prod_{n \in \mathrm{Gal}(E_{k+1}/E_k)} \frac{\mathrm{d} \beta(\sigma, n \tilde{\tau})(\sigma n \tilde{\tau} \cdot \zeta_{k,v}(w))}{\mathrm{d} \beta (\sigma, n)(\sigma n \cdot \zeta_{k,v}(\tau \cdot w))} \\
& = \mathrm{AWES}^2_k(d\beta)(\sigma, \tau)(\overline{\sigma} \tau \cdot w).
\end{align*}
We have used the fact that for any $u \in \{v\}_{E_{k+1}}$,
\[ \mathrm{card} \left\{ n \in \mathrm{Gal}(E_{k+1}/E_k) \,\middle|\, n
\tilde{\tau} \cdot \zeta_{k,v}(w) = u \right\} = \mathrm{card} \left\{ n \in
\mathrm{Gal}(E_{k+1}/E_k) \,\middle|\, n \cdot \zeta_{k,v}(\tau \cdot w)) = u
\right\} \]
that implies
\[ \prod_{n \in \mathrm{Gal}(E_{k+1}/E_k)} \beta(\sigma)(\sigma n \tilde{\tau}
\cdot \zeta_{k,v}(w)) = \prod_{n \in \mathrm{Gal}(E_{k+1}/E_k)}
\beta(\sigma)(\sigma n \cdot \zeta_{k,v}(\tau \cdot w))). \]
\item
Let $v \in S$ and $w \in \{v\}_{E_k}$. Let $\sigma \in \mathrm{Gal}(E_k/F)$ and
fix $\tilde{\sigma} \in \mathrm{Gal}(E_{k+1}/F)$ lifting $\sigma$.
\begin{align*}
\mathrm{d}(\mathrm{AWES}^0_k(\beta))(\sigma)(\sigma \cdot w) & = \frac{\sigma (\mathrm{AWES}^0_k(\beta))(\sigma \cdot w)}{\mathrm{AWES}^0_k(\beta)(\sigma  \cdot w)} \\
& = \prod_{n \in \mathrm{Gal}(E_{k+1}/E_k)} \frac{\tilde{\sigma} n (\beta(\zeta_{k,v}(w)))}{n(\beta(\zeta_{k,v}(\sigma \cdot w)))} \\
& = \prod_{n \in \mathrm{Gal}(E_{k+1}/E_k)} \frac{n \tilde{\sigma} (\beta(\zeta_{k,v}(w)))}{n(\beta(\zeta_{k,v}(\sigma \cdot w)))} \\
& = \prod_{n \in \mathrm{Gal}(E_{k+1}/E_k)} \frac{\mathrm{d} \beta(n \tilde{\sigma})(n \tilde{\sigma} \cdot \zeta_{k,v}(w)) \times \beta(n \tilde{\sigma} \cdot \zeta_{k,v}(w))}{\mathrm{d} \beta(n)(n \cdot \zeta_{k,v}(\sigma \cdot w)) \times \beta(n \cdot \zeta_{k,v}(\sigma \cdot w))} \\
& = \prod_{n \in \mathrm{Gal}(E_{k+1}/E_k)} \frac{\mathrm{d} \beta(n \tilde{\sigma})(n \tilde{\sigma} \cdot \zeta_{k,v}(w))}{\mathrm{d} \beta(n)(n \cdot \zeta_{k,v}(\sigma \cdot w))} \\
& = \mathrm{AWES}^1_k(\mathrm{d} \beta)(\sigma)(\sigma \cdot w).
\end{align*}
Again we have used the fact that for any $u \in \{v\}_{E_{k+1}}$,
\[ \mathrm{card} \left\{ n \in \mathrm{Gal}(E_{k+1}/E_k) \,\middle|\, n
\tilde{\sigma} \cdot \zeta_{k,v}(w) = u \right\} = \mathrm{card} \left\{ n \in
\mathrm{Gal}(E_{k+1}/E_k) \,\middle|\, n \cdot \zeta_{k,v}(\sigma \cdot w)) = u
\right\} \]
that implies
\[ \prod_{n \in \mathrm{Gal}(E_{k+1}/E_k)} \beta(n \tilde{\sigma} \cdot
\zeta_{k,v}(w)) = \prod_{n \in \mathrm{Gal}(E_{k+1}/E_k)} \beta(n \cdot
\zeta_{k,v}(\sigma \cdot w))). \]
\end{enumerate}
\end{proof}

\begin{coro} \label{c:AWESfactor}
Fix $k \geq 0$, and suppose that $A$ is a $\mathrm{Gal}(E_{k+1} / F)$-module.
\begin{enumerate}
\item
Let $\beta : \mathrm{Gal}(E_{k+1} / F) \rightarrow \mathrm{Maps}(V_{E_{k+1}},
A)$ be such that $\mathrm{AWES}^1_k(\mathrm{d}(\beta))$ factors through
$\mathrm{Gal}(E_k / F)^2$. Then $\mathrm{AWES}^1_k(\beta)$ takes values in
$\mathrm{Maps}\left(V_{E_k}, A^{\mathrm{Gal}(E_{k+1} / E_k)}\right)$.
\item
If $\beta \in Z^1\left( \mathrm{Gal}(E_{k+1} / F), \mathrm{Maps}(V_{E_{k+1}},
A)\right)$ then
\[ \mathrm{AWES}^1_k(\beta) \in Z^1\left(\mathrm{Gal}(E_k / F),
\mathrm{Maps}\left(V_{E_k}, A^{\mathrm{Gal}(E_{k+1} / E_k)}\right)\right) . \]
\end{enumerate}
\end{coro}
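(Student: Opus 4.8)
The plan is to deduce both assertions from the degree-one compatibility relation of Lemma~\ref{l:dAWES}(1) together with the elementary normalization of $\mathrm{AWES}^1_k$. Inflating the $\mathrm{Gal}(E_{k+1}/F)$-module $A$ to a $\mathrm{Gal}(\overline{F}/F)$-module (equivalently, restricting attention to the first argument lying in $\mathrm{Gal}(E_{k+1}/F)$), Lemma~\ref{l:dAWES}(1) gives
\[ \mathrm{d}\bigl(\mathrm{AWES}^1_k(\beta)\bigr) = \mathrm{AWES}^2_k\bigl(\mathrm{d}(\beta)\bigr) \]
as maps $\mathrm{Gal}(E_{k+1}/F) \times \mathrm{Gal}(E_k/F) \rightarrow \mathrm{Maps}(V_{E_k}, A)$; so the hypothesis in the first assertion amounts to $\mathrm{d}(\mathrm{AWES}^1_k(\beta))$ factoring through $\mathrm{Gal}(E_k/F)^2$. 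I would also record, directly from Definition~\ref{d:AWES01}(1), that $\mathrm{AWES}^1_k(\beta)(1)$ is the constant map with value $1$ (independence of the chosen lift of $1$ follows by reindexing the product over $\mathrm{Gal}(E_{k+1}/E_k)$).

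For the first assertion, fix $n \in \mathrm{Gal}(E_{k+1}/E_k)$, $\tau \in \mathrm{Gal}(E_k/F)$ and $w \in V_{E_k}$. Since $n$ maps to $1$ in $\mathrm{Gal}(E_k/F)$ it acts trivially on $V_{E_k}$, and since $\mathrm{AWES}^1_k(\beta)(1) = 1$, expanding the multiplicative coboundary yields
\[ \mathrm{d}\bigl(\mathrm{AWES}^1_k(\beta)\bigr)(n, \tau)(w) = n\bigl(\mathrm{AWES}^1_k(\beta)(\tau)(w)\bigr) \cdot \mathrm{AWES}^1_k(\beta)(\tau)(w)^{-1}, \]
whereas $\mathrm{d}(\mathrm{AWES}^1_k(\beta))(1, \tau)(w) = 1$. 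As $n$ and $1$ have the same image in $\mathrm{Gal}(E_k/F)$, the factoring hypothesis forces these to agree, so $\mathrm{AWES}^1_k(\beta)(\tau)(w)$ is fixed by $\mathrm{Gal}(E_{k+1}/E_k)$; $\tau$ and $w$ being arbitrary, $\mathrm{AWES}^1_k(\beta)$ is valued in $\mathrm{Maps}(V_{E_k}, A^{\mathrm{Gal}(E_{k+1}/E_k)})$.

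For the second assertion, if $\beta \in Z^1$ then $\mathrm{d}(\beta) = 1$, hence $\mathrm{AWES}^2_k(\mathrm{d}(\beta)) = 1$ straight from Definition~\ref{d:AWES2}, which trivially factors through $\mathrm{Gal}(E_k/F)^2$; the first assertion then places $\mathrm{AWES}^1_k(\beta)$ in $\mathrm{Maps}(V_{E_k}, A^{\mathrm{Gal}(E_{k+1}/E_k)})$, which is a $\mathrm{Gal}(E_k/F)$-module because $\mathrm{Gal}(E_{k+1}/E_k)$ is normal in $\mathrm{Gal}(E_{k+1}/F)$. Moreover $\mathrm{d}(\mathrm{AWES}^1_k(\beta)) = \mathrm{AWES}^2_k(\mathrm{d}(\beta)) = 1$ by Lemma~\ref{l:dAWES}(1), and since both $\mathrm{AWES}^1_k(\beta)$ and the coefficient action factor through $\mathrm{Gal}(E_k/F)$, this vanishing descends to the $\mathrm{Gal}(E_k/F)$-coboundary; hence $\mathrm{AWES}^1_k(\beta) \in Z^1\bigl(\mathrm{Gal}(E_k/F), \mathrm{Maps}(V_{E_k}, A^{\mathrm{Gal}(E_{k+1}/E_k)})\bigr)$.

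I do not anticipate a genuine obstacle: the whole argument parallels the ``Moreover'' part of Proposition~\ref{p:propAW} and Corollary~\ref{c:surjAW}. The only point needing care is the bookkeeping of the two Galois actions on $\mathrm{Maps}(V_{E_k}, A)$ — an element of $\mathrm{Gal}(E_{k+1}/E_k)$ acts trivially on the index set $V_{E_k}$ but possibly nontrivially on $A$ — since it is precisely this that makes $\mathrm{d}(\mathrm{AWES}^1_k(\beta))$ evaluated at $(n,\tau)$ measure the failure of $\mathrm{AWES}^1_k(\beta)(\tau)$ to be $\mathrm{Gal}(E_{k+1}/E_k)$-invariant.
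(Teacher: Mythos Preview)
Your proof is correct and follows essentially the same route as the paper: both invoke Lemma~\ref{l:dAWES}(1) to rewrite the hypothesis as $\mathrm{d}(\mathrm{AWES}^1_k(\beta))$ factoring through $\mathrm{Gal}(E_k/F)^2$, then specialize the first argument to an element $n \in \mathrm{Gal}(E_{k+1}/E_k)$ and compare with the value at $1$ to extract $\mathrm{Gal}(E_{k+1}/E_k)$-invariance of $\mathrm{AWES}^1_k(\beta)(\tau)(w)$. The paper's proof quotes the explicit three-term quotient from the proof of Lemma~\ref{l:dAWES}, whereas you recompute the coboundary directly after recording $\mathrm{AWES}^1_k(\beta)(1)=1$; this is a cosmetic difference only, and your treatment of part~(2) is identical to the paper's one-line deduction.
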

\begin{proof}
\begin{enumerate}
\item
Recall that a priori $\mathrm{AWES}^1_k(\beta) : \mathrm{Gal}(E_k / F)
\rightarrow \mathrm{Maps}(V_{E_k}, A)$. By the previous lemma, for all $w \in
V_{E_k}$, $\sigma \in \mathrm{Gal}(E_{k+1} / F)$ and $\tau \in \mathrm{Gal}(E_k
/ F)$, the quotient
\[ \frac{\mathrm{AWES}^1_k(\beta)(\overline{\sigma})(\overline{\sigma} \tau
\cdot w) \times \sigma\left(\mathrm{AWES}^1_k(\beta)(\tau)(\tau \cdot w)\right)
}{ \mathrm{AWES}^1_k(\beta)(\overline{\sigma} \tau)(\overline{\sigma} \tau \cdot
w) } \]
depends on $\sigma$ only via its image $\overline{\sigma} \in
\mathrm{Gal}(E_k/F)$. Taking $\sigma \in \mathrm{Gal}(E_{k+1}/E_k)$ shows that
$\mathrm{AWES}^1_k(\beta)(\tau)(\tau \cdot w)$ is invariant under
$\mathrm{Gal}(E_{k+1}/E_k)$. \item This follows directly from the first point
and a second application of the previous lemma.
\end{enumerate}
\end{proof}

We now establish the analogue of Lemma \ref{l:surjAW} for $\mathrm{AWES}^1_k$
and $\mathrm{AWES}^2_k$.

\begin{lemm} \label{l:surjAWES}
Let $k \geq 0$. Suppose that $A$ is a commutative group.
\begin{enumerate}
\item
The map
\begin{multline*}
\left\{ \beta : \mathrm{Gal}(E_{k+1}/F) \rightarrow \mathrm{Maps}(V_{E_{k+1}}, A) \,\middle|\, \beta(1)=1 \right\} \\
\rightarrow \left\{ \beta : \mathrm{Gal}(E_k/F) \rightarrow \mathrm{Maps}(V_{E_k}, A) \,\middle|\, \beta(1)=1 \right\}
\end{multline*}
induced by $\mathrm{AWES}^1_k$ is surjective.
\item
Let $K \subset \overline{F}$ be a Galois extension of $F$ containing $E_{k+1}$.
The map
\begin{multline*}
\left\{ \alpha : \mathrm{Gal}(K/F) \times \mathrm{Gal}(E_{k+1}/F) \rightarrow \mathrm{Maps}(V_{E_{k+1}}, A) \,\middle|\, \forall \sigma \in \mathrm{Gal}(K/F) , \, \alpha(\sigma,1)=1 \right\} \\
\rightarrow \left\{ \alpha : \mathrm{Gal}(K/F) \times \mathrm{Gal}(E_k/F) \rightarrow \mathrm{Maps}(V_{E_k}, A) \,\middle|\, \forall \sigma \in \mathrm{Gal}(K/F) , \, \alpha(\sigma,1)=1 \right\}
\end{multline*}
induced by $\mathrm{AWES}^2_k$ is surjective.
\end{enumerate}
\end{lemm}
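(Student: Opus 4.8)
The plan is to imitate the proof of Lemma~\ref{l:surjAW}: for each of the two maps I will exhibit an explicit set-theoretic section by restricting $\mathrm{AWES}^1_k$ (resp.\ $\mathrm{AWES}^2_k$) to a subset of ``sparsely supported'' cochains on which it becomes a bijection onto the target. Fix once and for all a section $s : \mathrm{Gal}(E_k/F) \to \mathrm{Gal}(E_{k+1}/F)$ of the natural projection with $s(1)=1$; since the values of $\mathrm{AWES}^1_k$ and $\mathrm{AWES}^2_k$ do not depend on the chosen lift $\tilde{\tau}$ (a reindexing of the product over $\mathrm{Gal}(E_{k+1}/E_k)$), I may compute them using $\tilde{\tau} = s(\tau)$. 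I will also use that the maps $\zeta_{k,v}$, as $v$ ranges over the places of $F$, assemble into an injection $V_{E_k} \hookrightarrow V_{E_{k+1}}$, each $\zeta_{k,v}$ being a section of $\{v\}_{E_{k+1}} \to \{v\}_{E_k}$ and distinct $v$ giving disjoint fibres.

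For part~(1), let $\mathcal{S}_1$ be the set of $\beta : \mathrm{Gal}(E_{k+1}/F) \to \mathrm{Maps}(V_{E_{k+1}}, A)$ such that $\beta(g)(u) = 1$ unless $g = s(\tau)$ for some $\tau \in \mathrm{Gal}(E_k/F) \smallsetminus \{1\}$ and $u = g \cdot \zeta_{k,v}(w)$ for some $w \in \{v\}_{E_k}$. Every $\beta \in \mathcal{S}_1$ satisfies $\beta(1) = 1$, and, using that $\zeta$ is injective and $s(\tau)$ acts bijectively on $V_{E_{k+1}}$, one checks that $\beta \mapsto (\gamma_\sigma)_{\sigma \neq 1}$ with $\gamma_\sigma(w) := \beta(s(\sigma))(s(\sigma) \cdot \zeta_{k,v}(w))$ is a bijection from $\mathcal{S}_1$ onto the maps $\mathrm{Gal}(E_k/F) \smallsetminus \{1\} \to \mathrm{Maps}(V_{E_k}, A)$. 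Now a direct computation with the support condition collapses the product defining $\mathrm{AWES}^1_k(\beta)(\sigma)$: since $\beta$ is trivial on $\mathrm{Gal}(E_{k+1}/E_k)$ the denominator $\prod_n \beta(n)(n \cdot \zeta_{k,v}(\sigma \cdot w))$ is $1$, and since $n s(\sigma) \in s(\mathrm{Gal}(E_k/F))$ only for $n=1$, the numerator reduces to its $n = 1$ term; hence $\mathrm{AWES}^1_k(\beta)(\sigma)(\sigma \cdot w) = \gamma_\sigma(w)$ for $\sigma \neq 1$ and $\mathrm{AWES}^1_k(\beta)(1) = 1$. Composing with the reparametrisation $\beta_0(\sigma)(\sigma \cdot w) = \gamma_\sigma(w)$ shows that $\mathrm{AWES}^1_k$ restricts to a bijection $\mathcal{S}_1 \xrightarrow{\ \sim\ } \{\beta_0 \mid \beta_0(1)=1\}$, which gives the asserted surjectivity.

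For part~(2), I run the same argument carrying the extra variable $\sigma \in \mathrm{Gal}(K/F)$, whose image in $\mathrm{Gal}(E_{k+1}/F)$ I denote $\sigma_{k+1}$. Let $\mathcal{S}_2$ be the set of $\alpha : \mathrm{Gal}(K/F) \times \mathrm{Gal}(E_{k+1}/F) \to \mathrm{Maps}(V_{E_{k+1}}, A)$ such that $\alpha(\sigma, g)(u) = 1$ unless $g = s(\tau)$ for some $\tau \neq 1$ and $u = \sigma_{k+1} g \cdot \zeta_{k,v}(w)$ for some $w \in \{v\}_{E_k}$. Then $\alpha(\sigma,1)=1$ for all $\sigma$, and as above $\alpha \mapsto (\gamma_{\sigma,\tau})$ with $\gamma_{\sigma,\tau}(w) := \alpha(\sigma, s(\tau))(\sigma_{k+1} s(\tau) \cdot \zeta_{k,v}(w))$ identifies $\mathcal{S}_2$ with the maps $\mathrm{Gal}(K/F) \times (\mathrm{Gal}(E_k/F) \smallsetminus \{1\}) \to \mathrm{Maps}(V_{E_k}, A)$. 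The support condition again kills the denominator in the formula for $\mathrm{AWES}^2_k(\alpha)$ (it involves $\alpha(\sigma,n)$ with $n \in \mathrm{Gal}(E_{k+1}/E_k)$) and reduces the numerator to its $n=1$ term, giving $\mathrm{AWES}^2_k(\alpha)(\sigma,\tau)(\sigma_k\tau \cdot w) = \gamma_{\sigma,\tau}(w)$ for $\tau \neq 1$ and $= 1$ for $\tau = 1$. Hence $\mathrm{AWES}^2_k$ restricts to a bijection of $\mathcal{S}_2$ onto $\{\alpha_0 \mid \alpha_0(\sigma,1)=1\ \forall\,\sigma\}$, proving surjectivity.

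I do not expect a serious obstacle here: this is a bookkeeping-heavy but essentially routine analogue of Lemma~\ref{l:surjAW}. The only two points demanding a little care are (i) that the sparsely supported cochains in $\mathcal{S}_i$ are genuinely well-defined as functions, which is exactly where the injectivity of the collection $(\zeta_{k,v})_v$ on $V_{E_k}$ is needed, and (ii) that the product over $\mathrm{Gal}(E_{k+1}/E_k)$ in the definition of $\mathrm{AWES}^i_k$ telescopes to a single factor for cochains in $\mathcal{S}_i$ — which follows from triviality on $\mathrm{Gal}(E_{k+1}/E_k)$ together with the fact that $n s(\tau) \in s(\mathrm{Gal}(E_k/F))$ forces $n = 1$.
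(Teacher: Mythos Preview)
Your proof is correct and follows the same strategy as the paper: exhibit a subset of ``sparsely supported'' cochains on which $\mathrm{AWES}^i_k$ restricts to a bijection onto the target, using a fixed section $s$ and the injectivity of the assembled $\zeta_{k,v}$. Your support conditions for $\mathcal{S}_1$ and $\mathcal{S}_2$ coincide with the paper's, and you give a slightly fuller justification of why the product over $\mathrm{Gal}(E_{k+1}/E_k)$ collapses to a single term; in fact your formulation of the support condition in part~(2), with the place twisted by $\sigma_{k+1} s(\tau)$, is more accurate than the paper's stated condition, which appears to contain a typo.
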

\begin{proof}
As in the proof of Lemma \ref{l:surjAW}, in each case we exhibit a subset of the source such that restricting to this subset yields a bijection.
Choose a section $s : \mathrm{Gal}(E_k / F) \rightarrow \mathrm{Gal}(E_{k+1} / F)$ such that $s(1)=1$.
\begin{enumerate}
\item
Restrict to the set of $\beta : \mathrm{Gal}(E_{k+1}/F) \rightarrow \mathrm{Maps}(V_{E_{k+1}}, A)$ such that for $n \in \mathrm{Gal}(E_{k+1}/E_k)$, $\sigma \in \mathrm{Gal}(E_k / F)$, $v \in V$ and $u \in \{v\}_{E_{k+1}}$, $\beta(n s(\sigma))(n s(\sigma) \cdot u) = 1$ unless $n=1$, $\sigma \neq 1$ and $u$ belongs to the image of $\zeta_{k,v} : \{v\}_{E_k} \rightarrow \{v\}_{E_{k+1}}$.
\item
Restrict to the set of $\alpha : \mathrm{Gal}(K/F) \times \mathrm{Gal}(E_{k+1}/F) \rightarrow \mathrm{Maps}(V_{E_{k+1}}, A)$ such that for $\sigma \in \mathrm{Gal}(K/F)$, $n \in \mathrm{Gal}(E_{k+1}/E_k)$, $\tau \in \mathrm{Gal}(E_k / F)$, $v \in V$ and $u \in \{v\}_{E_{k+1}}$, $\alpha(\sigma, n s(\tau))(n s(\sigma) \cdot u) = 1$ unless $n=1$, $\tau \neq 1$ and $u$ belongs to the image of $\zeta_{k,v} : \{v\}_{E_k} \rightarrow \{v\}_{E_{k+1}}$.
\end{enumerate}
\end{proof}

\subsection{Tate cocycles}
\label{s:Tatecocycles}

Recall that for every $k \geq 0$ the kernel $C(E_k)^1$ of the surjective norm
map $\lVert\cdot\rVert_k : C(E_k) \rightarrow \R_{>0}$ is compact, and that
these norm maps commute with the norm maps for the Galois action
$N_{E_{k+1}/E_k} : C(E_{k+1}) \rightarrow C(E_k)$, that is $\lVert x
\rVert_{k+1} = \lVert N_{E_{k+1}/E_k}(x) \rVert_k$ for all $x \in C(E_{k+1})$.
In this section we will see the fundamental cocycles $\overline{\alpha}_k \in
\Z^2(E_k/F, C(E_k))$ defined in Section \ref{s:globfundcocy} as taking values in
$\mathrm{Maps}(V_{E_k}, C(E_k))$, by seeing elements of $C(E_k)$ as constant
functions $V_{E_k} \rightarrow C(E_k)$.

\begin{lemm} \label{l:constructionTatemod}
There exists a family $\left(\overline{\beta}_k^{(0)} \right)_{k \geq 0}$, where
$\overline{\beta}_k^{(0)} : \mathrm{Gal}(E_k/F) \rightarrow
\mathrm{Maps}(V_{E_k}, C(E_k))$, such that:
\begin{enumerate}
\item For any $k \geq 0$ we have $\overline{\alpha}_k / \overline{\alpha'_k} =
\mathrm{d}\left( \overline{\beta}_k^{(0)} \right)$, where $\overline{\alpha'_k}
:= \alpha'_k \mod E_k^{\times}$.
\item For any $k \geq 0$ we have
\[ \mathrm{AWES}^1_k\left( \overline{\beta}_{k+1}^{(0)} \right) \in
\mathrm{Maps}(\mathrm{Gal}(E_k/F), \mathrm{Maps}(V_{E_k}, C(E_k))). \]
\item For any $k \geq 0$ we have $\left\lVert \mathrm{AWES}^1_k\left(
\overline{\beta}_{k+1}^{(0)} \right) \right\rVert_k = \left\lVert
\overline{\beta}_k^{(0)} \right\rVert_k$, as functions $\mathrm{Gal}(E_k/F)
\times V_{E_k} \rightarrow \R_{>0}$.
\end{enumerate}
\end{lemm}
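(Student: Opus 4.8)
The plan is to construct the family $\left(\overline{\beta}_k^{(0)}\right)_{k \geq 0}$ by a compactness argument, building approximations that satisfy conditions (1)--(3) up to level $k$ and extracting a limit. First, at each level $k$ we need to produce \emph{some} $\overline{\beta}_k^{(0)}$ satisfying (1) alone: since $\overline{\alpha}_k$ and $\overline{\alpha'_k}$ both represent the fundamental class in $H^2(E_k/F, C(E_k))$ (the latter because the adèlic fundamental class localizes to the local fundamental classes, by the construction of $\alpha'_k$ via $\mathrm{ES}^2$ in Section \ref{s:locfundcocyc}, combined with the compatibility of local and global fundamental classes), their ratio is a coboundary $\mathrm{d}(\overline{\beta}_k^{(0)})$ for some cochain valued in the constant functions, hence in $\mathrm{Maps}(V_{E_k}, C(E_k))$; we may normalize $\overline{\beta}_k^{(0)}(1)=1$. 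The real content is achieving compatibility under $\mathrm{AWES}^1_k$ together with the norm conditions (2) and (3).

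The key step is to run an inductive/limiting construction. Using Lemma \ref{l:dAWES}(1), $\mathrm{AWES}^2_k(\overline{\alpha}_{k+1}/\overline{\alpha'_{k+1}}) = \mathrm{d}(\mathrm{AWES}^1_k(\overline{\beta}_{k+1}^{(0)}))$; and one checks that $\mathrm{AWES}^2_k$ applied to the ratio of the level-$(k+1)$ fundamental cocycles gives a cocycle cohomologous to the ratio at level $k$ — indeed $\mathrm{AWES}^2_k(\overline{\alpha}_{k+1})$ is cohomologous to $\overline{\alpha}_k$ (Section \ref{s:globfundcocy}, where we already arranged the $\overline{\alpha}_k$ compatibly, or at least cohomologously) and $\mathrm{AWES}^2_k(\overline{\alpha'_{k+1}}) = \overline{\alpha'_k}$ by Lemma \ref{l:AWadelic} (reduced mod $E_k^\times$, using that $\mathrm{AWES}^0$ on the $E_k^\times$-part realizes the norm). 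Therefore $\mathrm{AWES}^1_k(\overline{\beta}_{k+1}^{(0)})$ and $\overline{\beta}_k^{(0)}$ differ by a cocycle in $Z^1(\mathrm{Gal}(E_k/F), \mathrm{Maps}(V_{E_k}, C(E_k)))$; since $\mathrm{Maps}(V_{E_k}, C(E_k))$ is a co-induced-type module where $H^1$ can be controlled (or simply, Shapiro reduces it to $H^1$ of local idèle class groups), we can \emph{adjust} $\overline{\beta}_{k+1}^{(0)}$ by the image of a coboundary to force $\mathrm{AWES}^1_k(\overline{\beta}_{k+1}^{(0)}) = \overline{\beta}_k^{(0)}$ exactly — here Lemma \ref{l:surjAWES}(1) (surjectivity of $\mathrm{AWES}^1_k$) and Corollary \ref{c:AWESfactor} are used to realize the needed adjustment and to see that $\mathrm{AWES}^1_k(\overline{\beta}_{k+1}^{(0)})$ automatically takes values in the invariants, giving (2). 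Condition (3) is then arranged by noting that $\mathrm{AWES}^1_k$ is compatible with the norm maps $\lVert\cdot\rVert$ via $\mathrm{AWES}^0$ and $N_{E_{k+1}/E_k}$ on $\R_{>0}$ (which is the identity there up to the compatibility $\lVert x\rVert_{k+1} = \lVert N_{E_{k+1}/E_k}(x)\rVert_k$), so once (1) and (2) hold we may further correct $\overline{\beta}_{k+1}^{(0)}$ by a cochain valued in the compact part $C(E_{k+1})^1$ — which does not affect $\mathrm{AWES}^1$-compatibility — to match norms.

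Because each correction at level $k+1$ might disturb compatibility with levels $\leq k$, the clean way to organize this is a standard compactness (inverse limit) argument: for each $N$, build a tuple $(\overline{\beta}_0^{(0)}, \dots, \overline{\beta}_N^{(0)})$ satisfying (1)--(3) for $k < N$ and with $\overline{\beta}_k^{(0)}(1) = 1$; the set of such tuples is nonempty by the inductive step above and is compact once we fix the finitely-many-coordinates freedom modulo the compact group $C(E_k)^1$ at each finite level (the norm-$\R_{>0}$ component being pinned down by (3), and the values on $\mathrm{Gal}(E_k/F) \times V_{E_k}$ being finitely many coordinates); the transition maps "truncate the last coordinate" are continuous, so the inverse limit is nonempty and yields the desired family. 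The main obstacle I anticipate is exactly the bookkeeping in this last step: ensuring that the successive corrections can be made to live in the compact subgroups so that the relevant spaces of partial tuples are genuinely compact, and verifying that $\mathrm{AWES}^1_k$ interacts correctly with the decomposition $C(E_k) = C(E_k)^1 \times \R_{>0}$ and with reduction mod $E_k^\times$ of Lemma \ref{l:AWadelic}; the cohomological vanishing needed to pass from "cohomologous" to "equal" is routine via Shapiro's lemma and the structure of $\mathrm{Maps}(V_{E_k}, -)$, but must be stated carefully at the level of cocycles, not just classes. This is precisely the kind of argument alluded to in Remark \ref{r:altAW} (third method), carried out here with $1$-cochains.
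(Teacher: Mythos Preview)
You have conflated this Lemma with the subsequent Theorem~\ref{t:constructionTate}. The Lemma does \emph{not} ask for exact equality $\mathrm{AWES}^1_k(\overline{\beta}_{k+1}^{(0)}) = \overline{\beta}_k^{(0)}$; it asks only that the \emph{norms} agree (condition~(3)). The paper's proof is a straightforward forward induction with no compactness: start with any family satisfying~(1) (condition~(2) then follows automatically from Corollary~\ref{c:AWESfactor}, as you correctly observe); then for $k=0,1,2,\ldots$ write $\mathrm{AWES}^1_k(\overline{\beta}_{k+1}^{(0)})/\overline{\beta}_k^{(0)} = \mathrm{d}(b_k)$ using the vanishing of $H^1(\mathrm{Gal}(E_k/F),\mathrm{Maps}(V_{E_k},C(E_k)))$, choose $\tilde{b}_k \in \mathrm{Maps}(V_{E_{k+1}},C(E_{k+1}))$ with $\lVert\mathrm{AWES}^0_k(\tilde{b}_k)\rVert_k = \lVert b_k\rVert_k$ (possible because $\lVert\cdot\rVert_{k+1}$ is surjective onto $\R_{>0}$), and replace $\overline{\beta}_{k+1}^{(0)}$ by $\overline{\beta}_{k+1}^{(0)}\cdot\mathrm{d}(\tilde{b}_k)$. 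This achieves~(3) at index~$k$ and does not disturb indices $<k$, so the induction closes.

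Your attempt to force exact equality has a genuine gap. The adjustment you make at level $k+1$ must be a $1$-cocycle (to preserve~(1)), hence by $H^1=0$ a coboundary $\mathrm{d}(\tilde{b})$; but then $\mathrm{AWES}^1_k(\mathrm{d}(\tilde{b})) = \mathrm{d}(\mathrm{AWES}^0_k(\tilde{b}))$ by Lemma~\ref{l:dAWES}, and $\mathrm{AWES}^0_k$ lands in $\mathrm{Maps}(V_{E_k}, N_{E_{k+1}/E_k}(C(E_{k+1})))$, which is a \emph{proper} subgroup of $\mathrm{Maps}(V_{E_k},C(E_k))$ since $N_{E_{k+1}/E_k}$ is not surjective on id\`ele class groups. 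So you cannot in general hit the required $\mathrm{d}(b_k)^{-1}$. Lemma~\ref{l:surjAWES}(1) gives surjectivity only on arbitrary cochains, not on cocycles. This is exactly why the paper separates the two steps: the Lemma handles the $\R_{>0}$-part (where divisibility makes the lifting trivial), and only then does Theorem~\ref{t:constructionTate} invoke compactness, working inside the compact kernels $C(E_k)^1$ with norms already pinned down by the Lemma. Your compactness argument is therefore circular as written: the spaces of partial tuples are not compact until you have fixed the $\R_{>0}$-components, which is precisely the content of the Lemma you are trying to prove.
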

\begin{proof}
For a given $k$, the existence of $\overline{\beta}_k^{(0)}$ satisfying the
first condition is a consequence of compatibility between local and global
fundamental classes (see \cite{Tatecohotori}). Note that if
$\overline{\beta}_{k+1}^{(0)}$ is such that $\overline{\alpha}_{k+1} /
\overline{\alpha'_{k+1}} = \mathrm{d} \left( \overline{\beta}_{k+1}^{(0)}
\right)$, then by Lemma \ref{l:dAWES}
\begin{equation} \label{e:dAWTate}
\mathrm{d} \left( \mathrm{AWES}^1_k \left( \overline{\beta}_{k+1}^{(0)} \right)
\right) = \mathrm{AWES}^2_k \left( \mathrm{d} \left(
\overline{\beta}_{k+1}^{(0)} \right) \right) =
\mathrm{AWES}^2_k(\overline{\alpha}_{k+1}) /
\overline{\mathrm{AWES}^2_k(\alpha'_{k+1})} = \overline{\alpha}_k /
\overline{\alpha'_k}
\end{equation}
factors through $\mathrm{Gal}(E_k/F)^2$, and by Corollary \ref{c:AWESfactor}
$\mathrm{AWES}^1_k \left( \overline{\beta}_{k+1}^{(0)} \right)$ takes values in
$\mathrm{Maps}(V_{E_k}, C(E_k))$. So the second condition in the lemma is a
consequence of the first one.

Let us start with a family $\left(\overline{\beta}_k^{(0)} \right)_{k \geq 0}$
satisfying the first condition, and show that we can inductively multiply
$\overline{\beta}_k^{(0)}$, $k \geq 1$, by a $1$-coboundary so that the third
condition is also satisfied. By \eqref{e:dAWTate} we know that
\[ \mathrm{AWES}^1_k \left( \overline{\beta}_{k+1}^{(0)} \right) /
\overline{\beta}_k^{(0)} \in Z^1(\mathrm{Gal}(E_k/F), \mathrm{Maps}(V_{E_k},
C(E_k)))  \]
and by vanishing of $H^1(\mathrm{Gal}(E_k/F), \mathrm{Maps}(V_{E_k}, C(E_k)))$
there exists $b_k : V_{E_k} \rightarrow C(E_k)$ such that $\mathrm{AWES}^1_k
\left( \overline{\beta}_{k+1}^{(0)} \right) / \overline{\beta}_k^{(0)} =
\mathrm{d}( b_k )$.  Choose $\tilde{b}_k : V_{E_{k+1}} \rightarrow C(E_{k+1})$
such that for any $\tau \in \mathrm{Gal}(E_k/F)$, $\left\lVert
\tilde{b}_k(s_{k,v}(\tau) \cdot \dot{v}_{k+1}) \right\rVert_{k+1} = \lVert
b_k(\tau \cdot \dot{v}_k) \rVert_k$.  Equivalently, $\left\lVert
\mathrm{AWES}^0_k (\tilde{b}_k) \right\rVert_k = \lVert b_k \rVert_k$.
Substituting $\overline{\beta}_{k+1}^{(0)} \times \mathrm{d}(\tilde{b}_k)$ for
$\overline{\beta}_{k+1}^{(0)}$, the third condition is satisfied.
\end{proof}

\begin{theo} \label{t:constructionTate}
There exists a family $\left(\beta_k \right)_{k \geq 0}$ with $\beta_k \in C^1(
E_k/F, \mathrm{Maps}(V_{E_k}, I(E_k, S_k)) )$ such that
\begin{enumerate}
\item For any $k \geq 0$ we have $\overline{\alpha}_k / \overline{\alpha'_k} =
\mathrm{d}\left( \overline{\beta_k} \right)$.
\item For any $k \geq 0$ we have $\mathrm{AWES}^1_k(\beta_{k+1}) = \beta_k$.
\end{enumerate}
Therefore, the family $\left( \alpha_k \right)_{k \geq 0}$ defined by $\alpha_k
= \alpha'_k \times \mathrm{d}(\beta_k)$ is a family of Tate cocycles, compatible
in the sense that $\mathrm{AWES}^2_k(\alpha_{k+1}) = \alpha_k$ for all $k \geq
0$.
\end{theo}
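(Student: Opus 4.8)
The strategy is to upgrade Lemma \ref{l:constructionTatemod}, which produces a compatible family of modulo-$E_k^\times$ data $\overline{\beta}_k^{(0)}$, to a compatible family of genuine idèlic $1$-cochains $\beta_k$ landing in $I(E_k,S_k)$. I would proceed in two stages, exactly as the paper structured the preliminaries. First, lift each $\overline{\beta}_k^{(0)}$ to \emph{some} idèlic cochain $\beta_k^{(0)} \in C^1(E_k/F, \mathrm{Maps}(V_{E_k}, I(E_k)))$ with $\beta_k^{(0)}(1)=1$; then $\overline{\alpha}_k/\overline{\alpha'_k} = \mathrm{d}(\overline{\beta_k^{(0)}})$ forces $\alpha_k^{(0)} := \alpha'_k \times \mathrm{d}(\beta_k^{(0)})$ to reduce mod $E_k^\times$ to the global fundamental cocycle, so $\alpha_k^{(0)}$ is a representative of the Tate class up to the ambiguity of the lift. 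The essential tool for matching these across $k$ is Lemma \ref{l:surjAWES}(1): given $\beta_k^{(0)}$, there exists $\gamma_{k+1}$ with $\mathrm{AWES}^1_k(\gamma_{k+1}) = \beta_k^{(0)}$ and $\gamma_{k+1}(1)=1$, but $\gamma_{k+1}$ need not satisfy condition (1) of the theorem.

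The core of the argument is a \emph{compactness / diagonal argument} to reconcile the two requirements. I would set up, for each fixed level $k$, the (nonempty, by Lemma \ref{l:constructionTatemod} and the surjectivity of $I(E_k,S_k) \to C(E_k)$) set $B_k$ of $\beta_k \in C^1(E_k/F, \mathrm{Maps}(V_{E_k}, I(E_k,S_k)))$ with $\beta_k(1)=1$ satisfying condition (1). The reduction $\beta_k \mapsto \overline{\beta_k}$ differs from $\overline{\beta}_k^{(0)}$ by a $1$-cocycle valued in $\mathrm{Maps}(V_{E_k}, C(E_k))$ which is a coboundary; pinning down the precise coset and controlling the $\R_{>0}$-norm (using condition (3) of Lemma \ref{l:constructionTatemod}) one sees that $B_k$, modulo the norm-one part, is a torsor under a compact group — concretely the $C(E_k)^1$-valued cochains modulo coboundaries, intersected with the congruence condition away from $S_k$, which is compact since $C(E_k)^1$ is compact and $V_{E_k}$-finitely-supported data over the finite group $\mathrm{Gal}(E_k/F)$ live in a finite product. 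The maps $\mathrm{AWES}^1_k : B_{k+1} \to B_k$ are continuous (they are finite products and quotients of evaluations, and preserve the norm data by design of $\overline{\beta}_{k+1}^{(0)}$ via condition (3)); hence $\varprojlim_k B_k$ is an inverse limit of nonempty compact spaces and is therefore nonempty. Any element of it is the desired family $(\beta_k)_{k\ge 0}$.

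The main obstacle — and the reason Lemma \ref{l:constructionTatemod} cannot simply be lifted termwise — is that naively lifting $\overline{\beta}_k^{(0)}$ to idèles independently at each level destroys compatibility under $\mathrm{AWES}^1_k$: the lift involves arbitrary choices of idèlic representatives of idèle classes, and $\mathrm{AWES}^1_k$ applied to a lift at level $k+1$ need not equal a chosen lift at level $k$; it only agrees modulo $E_k^\times$. One must therefore make the lifts \emph{simultaneously}, and the only leverage for doing so through infinitely many levels is compactness. The two technical points requiring care are (i) that $B_k$ is genuinely compact, which rests on $C(E_k)^1$ compact together with keeping the idèles $S_k$-integral units away from $S_k$ (so that, modulo $E_k^\times$, the remaining freedom at the finitely many places of $S_k$ and the archimedean data is compact once the norm is fixed by condition (3) of Lemma \ref{l:constructionTatemod}); and (ii) that $\mathrm{AWES}^1_k$ indeed maps $B_{k+1}$ into $B_k$, which is precisely the content of \eqref{e:dAWTate} in the proof of Lemma \ref{l:constructionTatemod} combined with Corollary \ref{c:AWESfactor}. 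Once $(\beta_k)$ is obtained, the final assertion is immediate: $\mathrm{AWES}^2_k(\alpha_{k+1}) = \mathrm{AWES}^2_k(\alpha'_{k+1}) \times \mathrm{AWES}^2_k(\mathrm{d}(\beta_{k+1})) = \alpha'_k \times \mathrm{d}(\mathrm{AWES}^1_k(\beta_{k+1})) = \alpha'_k \times \mathrm{d}(\beta_k) = \alpha_k$, using Lemma \ref{l:AWadelic} for the first factor and Lemma \ref{l:dAWES}(1) for the second. (The paper promises a second, more hands-on proof as preparation for the algorithm of Section \ref{s:effloc}; the compactness proof above is the conceptually cleanest route to mere existence.)
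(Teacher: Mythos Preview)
Your overall architecture---compactness to produce a compatible family, then invoke Lemmas \ref{l:AWadelic} and \ref{l:dAWES} for the final assertion---matches the paper. But you have misplaced the compactness, and as stated the argument does not go through.

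The sets $B_k$ of id\`elic cochains $\beta_k \in C^1(E_k/F,\mathrm{Maps}(V_{E_k}, I(E_k,S_k)))$ satisfying condition~(1) are \emph{not} compact. Condition~(1) only constrains the image $\overline{\beta_k}$ in $\mathrm{Maps}(V_{E_k}, C(E_k))$; the fibre of $B_k$ over a fixed $\overline{\beta_k}$ is a torsor under $1$-cochains valued in $\mathcal{O}(E_k,S_k)^{\times}$, which is a discrete infinite group (Dirichlet's $S$-unit theorem). Fixing the norm $\lVert\cdot\rVert_k$ does not help: the norm is already trivial on $\mathcal{O}(E_k,S_k)^{\times}$. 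So your claim that ``$B_k$ is genuinely compact'' fails, and the inverse-limit argument collapses.

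The paper's proof separates the two issues you tried to merge. First it runs the compactness argument at the id\`ele-\emph{class} level: the spaces
\[
X_k = \left\{\overline{\beta}_k : \mathrm{Gal}(E_k/F) \to \mathrm{Maps}(V_{E_k}, C(E_k)) \ \middle|\ \lVert\overline{\beta}_k\rVert_k = \lVert\overline{\beta}_k^{(0)}\rVert_k,\ \mathrm{d}(\overline{\beta}_k) = \overline{\alpha}_k/\overline{\alpha'_k}\right\}
\]
are honestly compact (closed subsets of a finite product of copies of $C(E_k)^1$), nonempty by Lemma~\ref{l:constructionTatemod}, and $\mathrm{AWES}^1_k$ maps $X_{k+1}\to X_k$ continuously; hence $\varprojlim_k X_k \neq \emptyset$, giving a compatible family $(\overline{\beta}_k)_k$. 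Only \emph{then} does one lift to id\`eles, and this is done \emph{inductively} rather than by compactness: given a lift $\beta_k$ of $\overline{\beta}_k$ and any lift $\beta$ of $\overline{\beta}_{k+1}$, the discrepancy $\beta_k/\mathrm{AWES}^1_k(\beta)$ takes values in $S$-units, and Lemma~\ref{l:surjAWES}(1) furnishes a correction $\nu$ with $\mathrm{AWES}^1_k(\nu)$ equal to this discrepancy, so $\beta_{k+1} := \beta\cdot\nu$ works. The point is that the non-compact $S$-unit ambiguity is handled by surjectivity of $\mathrm{AWES}^1_k$ on cochains, not by compactness.
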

\begin{proof}
Let $\left(\overline{\beta}_k^{(0)} \right)_{k \geq 0}$ be a family as in the
previous Lemma. The space
\[ X_k := \left\{ \overline{\beta}_k : \mathrm{Gal}(E_k/F) \rightarrow
\mathrm{Maps}(V_{E_k}, C(E_k)) \,\middle|\, \lVert \overline{\beta}_k \rVert_k =
\lVert \overline{\beta}_k^{(0)} \rVert_k \text{ and } \overline{\alpha}_k /
\overline{\alpha'_k} = \mathrm{d}\left( \overline{\beta}_k \right) \right\} \]
is compact for the topology induced by the product topology on
\[ \mathrm{Maps}(\mathrm{Gal}(E_k/F), \mathrm{Maps}(V_{E_k}, C(E_k))). \]
Moreover $\overline{\beta}_k^{(0)} \in X_k$. The inverse system $\left( (X_k)_{k
\geq 0}, \left(\mathrm{AWES}^1_k : X_{k+1} \rightarrow X_k \right)_{k \geq 0}
\right)$ consists of non-empty compact topological spaces and continuous maps
between them, therefore $\varprojlim_{k \geq 0} X_k \neq \emptyset$. Choose
$\left(\overline{\beta}_k \right)_k \in \varprojlim X_k$. Such a family
satisfies the two conditions in the proposition, but note that
$\overline{\beta}_k$ takes values in $C(E_k)$.

Let us inductively choose lifts $\beta_k$ of $\overline{\beta}_k$ such that
$\mathrm{AWES}^1_k(\beta_{k+1}) = \beta_k$. Note that this imposes $\beta_k(1) =
1$ for all $k$. Choose any $\beta_0$ lifting $\overline{\beta}_0$ such that
$\beta_0(1)=1$. Suppose that $\beta_k$ is given. If $\beta$ is any lift of
$\overline{\beta}_{k+1}$ such that $\beta(1)=1$, then $\beta_k /
\mathrm{AWES}^1_k(\beta)$ is a mapping $\mathrm{Gal}(E_{k}/F)\rightarrow
\mathrm{Maps}(V_{E_k}, \mathcal{O}(E_{k+1}, S_{k+1}))$. By Lemma
\ref{l:surjAWES}, there exists $\nu : \mathrm{Gal}(E_{k+1}/F) \rightarrow
\mathrm{Maps}(V_{E_{k+1}}, \mathcal{O}(E_{k+1}, S_{k+1}))$ such that $\nu(1)=1$
and $\beta_k / \mathrm{AWES}^1_k(\beta) = \mathrm{AWES}^1_k(\nu)$, and we let
$\beta_{k+1} = \beta \times \nu$.
\end{proof}

\begin{rema}
This result solves two problems at once:
\begin{enumerate}
\item Constructing a family of Tate cocycles $(\alpha_k)_{k \geq 0}$ compatible
with respect to $\mathrm{AWES}^2_k$, which will be useful to compare
(generalized) Tate-Nakayama isomorphisms in the tower $(E_k)_{k \geq 0}$, by
taking cup-products (Lemma \ref{l:cupproductglobal} and Proposition
\ref{p:towerglobal}).
\item Constructing a family $(\beta_k)_{k \geq 0}$ compatible with respect to
$\mathrm{AWES}^1_k$ and realizing local-global compatibility, which will be
useful to compare local and global (generalized) Tate-Nakayama isomorphisms
(Lemmas \ref{l:cupproductlocalglobal} and \ref{l:cupproductdeltabeta} and
Propositions \ref{p:klocalglobal} and \ref{p:towerlocalglobal}).
\end{enumerate}
The proof suggests that it is not possible to solve the first problem
separately from the second.  One can show that if families $(\alpha_{k,v})_{k
\geq 0, v \in V}$, $(R_{k,v})_{k \geq 0, v \in V}$ and
$(\overline{\alpha}_k)_{k \geq 0}$ as above are fixed, then
$(\overline{\beta_k})_{k \geq 0}$ is determined up to
\[ B^1\left(\mathrm{Gal}(\overline{F}/F),\ \varprojlim_{k \geq 0} C(E_k)^0
\right) \]
where $C(E_k)^0$ is the connected component of $1$ in $C(E_k)$, i.e.\ the
closure of $(\R \otimes_{\Q} E_k)^{\times,0}$ in $C(E_k)$, where $(\R
\otimes_{\Q} E_k)^{\times, 0}$ is the connected component of $1$ in $(\R
\otimes_{\Q} E_k)^{\times}$.
\end{rema}

Note that while $\alpha_{k,v}$, $\alpha_k$ and $R_{k,v}$ can simply be chosen
sequentially as $k$ grows, the existence of a family $(\beta_k)_{k \geq 0}$ in
Theorem \ref{t:constructionTate} follows from a compacity argument. Let us give
an alternative, constructive but more intricate argument for the existence of
$(\beta_k)_{k \geq 0}$. For simplicity we assume that for any $k \geq 0$,
$E_{k+1}$ contains the narrow Hilbert class field of $E_k$, i.e.\
$N_{E_{k+1}/E_k}(C(E_{k+1}))$ is contained in the image of $(\R \otimes_{\Q}
E_k)^{\times, 0} \times \widehat{\mathcal{O}(E_k)}^{\times}$ in $C(E_k)$. This
can be achieved by discarding some of the $E_k$'s.  Choose
$\overline{\beta}_1^{(0)}$ such that $\mathrm{d} \left(
\overline{\beta}_1^{(0)} \right) = \overline{\alpha}_1 / \overline{\alpha'_1}$.
Note that $\overline{\beta}_0^{(1)} :=
\mathrm{AWES}_0^1(\overline{\beta}_1^{(0)}) = 1$.  For good measure let
$\beta_0^{(1)} = 1$ and $\alpha_0 = 1$.
We now proceed to inductively construct $\overline{\beta}_{k+1}^{(0)}$,
$\beta_k^{(1)}$ and $\zeta_{k-1}$ for $k \geq 1$, satisfying the following
properties.
\begin{enumerate}
\item $\overline{\beta}_{k+1}^{(0)} : \mathrm{Gal}(E_{k+1}/F) \rightarrow
\mathrm{Maps}(V_{E_{k+1}}, C(E_{k+1}))$ is such that $\overline{\alpha'_{k+1}}
\times \mathrm{d}\! \left( \overline{\beta}_{k+1}^{(0)} \right) =
\overline{\alpha}_{k+1}$.
\item $\beta_k^{(1)} : \mathrm{Gal}(E_k/F) \rightarrow \mathrm{Maps}(V_{E_k},
I(E_k, S_k))$ is a lift of
$\mathrm{AWES}^1_k\left(\overline{\beta}_{k+1}^{(0)}\right)$ such that
$\beta_k^{(1)}(1) = 1$.
\item $\zeta_k \in \mathrm{Maps}(V_{E_k}, \widehat{\mathcal{O}(E_k)}^{\times})$
is such that $\mathrm{AWES}^1_{k-1}(\beta_k^{(1)}) = \beta_{k-1}^{(1)}
\mathrm{d}(\zeta_{k-1})$.
\end{enumerate}
Let $k \geq 0$, assume that $\overline{\beta}_{k+1}^{(0)}$ and $\beta_k^{(1)}$
are constructed.  First choose any $\overline{\beta}_{k+2}^{(0)} :
\mathrm{Gal}(E_{k+2}/F) \rightarrow \mathrm{Maps}(V_{E_{k+2}}, C(E_{k+2}))$
such that $\overline{\alpha'_{k+2}} \times \mathrm{d} \left(
\overline{\beta}_{k+2}^{(0)} \right) = \overline{\alpha}_{k+2}$.  As we saw in
the proof of Lemma \ref{l:constructionTatemod}, there exists
$\overline{z}_{k+1} \in \mathrm{Maps}(V_{E_{k+1}}, C(E_{k+1}))$ such that
$\mathrm{AWES}^1_{k+1}(\overline{\beta}_{k+2}^{(0)}) =
\overline{\beta}_{k+1}^{(0)} \times \mathrm{d}(\overline{z}_{i+1})$.  Applying
$\mathrm{AWES}^1_k$, we get 
\[ \mathrm{AWES}^1_k \circ \mathrm{AWES}^1_{k+1} \left(
\overline{\beta}_{k+2}^{(0)} \right) = \mathrm{AWES}^1_k\left(
\overline{\beta}_{k+1}^{(0)} \right) \times \mathrm{d}\left(
\mathrm{AWES}^0_k\left( \overline{z}_{k+1} \right) \right) \]
and we would like to let $\zeta_k \in \mathrm{Maps}(V_{E_k}, (\R \otimes_{\Q}
E_k)^{\times, 0} \times \widehat{\mathcal{O}(E_k)}^{\times})$ be a lift of
$\mathrm{AWES}^0_k\left( \overline{z}_{k+1} \right)$, which exists thanks to
the hypothesis that $E_{k+1}$ contains the narrow Hilbert class field of $E_k$.
This is not quite right, since we want $\zeta_k \in \mathrm{Maps}(V_{E_k},
\widehat{\mathcal{O}(E_k)}^{\times})$.  By surjectivity of
\[ \mathrm{AWES}^0_k \circ \mathrm{AWES}^0_{k+1} : \mathrm{Maps}(V_{E_{k+2}},
(\R \otimes_{\Q} E_{k+2})^{\times, 0}) \rightarrow \mathrm{Maps}(V_{E_k}, (\R
\otimes_{\Q} E_k)^{\times, 0}) \]
we see that up to dividing $\overline{\beta}_{k+2}^{(0)}$ by an element of
$B^1(\mathrm{Gal}(E_{k+2}/F), \mathrm{Maps}(V_{E_{k+2}}, (\R \otimes_{\Q}
E_{k+2})^{\times, 0}))$, we can find $\zeta_k \in \mathrm{Maps}(V_{E_k},
\widehat{\mathcal{O}(E_k)}^{\times})$.  Now let $\beta_k^{(2)} = \beta_k^{(1)}
\times \mathrm{d}(\zeta_k)$, and as we saw in the proof of Theorem
\ref{t:constructionTate}, there exists $\beta_{k+1}^{(1)} :
\mathrm{Gal}(E_{k+1}/F) \rightarrow \mathrm{Maps}(V_{E_{k+1}}, I(E_{k+1},
S_{k+1}))$ a lift of
$\mathrm{AWES}^1_{k+1}\left(\overline{\beta}_{k+2}^{(0)}\right)$ such that
$\beta_{k+1}^{(1)}(1) = 1$ and $\mathrm{AWES}^1_k(\beta_{k+1}^{(1)}) =
\beta_k^{(2)}$.  This concludes the construction of $\left(
\overline{\beta}_{k+2}^{(0)}, \beta_{k+1}^{(1)}, \zeta_k \right)$.

Define inductively $\beta_k^{(i+1)} = \mathrm{AWES}^1_k \left(
\beta_{k+1}^{(i)} \right)$ for $i \geq 0$. Then for all $i > k \geq 0$, we
have
\[ \beta_k^{(i+2-k)} = \beta_k^{(i+1-k)} \times \mathrm{d}\left(
\mathrm{AWES}^0_k \circ \dots \circ \mathrm{AWES}^0_{i-1}(\zeta_i) \right) \]
and since $\mathrm{AWES}^0_k \circ \dots \circ \mathrm{AWES}^0_{i-1}(\zeta_i)
\in \mathrm{Maps}\left(V_{E_k}, N_{E_i/E_k} \left(
\widehat{\mathcal{O}(E_i)}^{\times} \right) \right)$, by the existence theorem
in local class field theory and Krasner's lemma the sequences
$(\beta_k^{(i)})_{i>0}$ converge and we can define $\beta_k = \lim_{i
\rightarrow + \infty} \beta_k^{(i)}$.

\section{Generalized Tate-Nakayama morphisms}
\label{s:genTN}

In this section we will construct $N$-th roots of the cochains
$(\alpha_{k,v})_{v \in V}$, $\alpha'_k$, $\beta_k$ and $\alpha_k$ for all $N
\geq 1$ and $k \geq 0$.  This is necessary to establish the global analogue of
\cite[\S 4.5]{Kalri}, i.e.\ to make explicit the morphism $\iota_{\dot{V}}$ of
\cite[Theorem 3.7.3]{Kalgri} for the tower $(E_k)_{k \geq 0}$, and to study the
localization map \cite[(3.19)]{Kalgri}.

\subsection{Choice of $N$-th roots}

\begin{prop} \label{p:Nthlocal}
For any $v \in V$, there exists a family $\left(\sqrt[N]{\alpha_{k,v}}
\right)_{N \geq 1, k \geq 0}$ where $\sqrt[N]{\alpha_{k,v}} :
\mathrm{Gal}(E_{k,\dot{v}}/F_v)^2 \rightarrow \overline{F_v}^{\times}$ such that
\begin{enumerate}
\item for all $k \geq 0$, $\sqrt[1]{\alpha_{k,v}} = \alpha_{k,v}$,
\item for all $k \geq 0$ and $N,N' \geq 1$ such that $N$ divides $N'$,
 $\sqrt[N']{\alpha_{k,v}}^{N'/N} = \sqrt[N]{\alpha_{k,v}}$,
\item for all $k \geq 0$ and $N \geq 1$,
 $\mathrm{AW}^2_{k,v}(\sqrt[N]{\alpha_{k+1,v}}) = \sqrt[N]{\alpha_{k,v}}$.
\end{enumerate}
\end{prop}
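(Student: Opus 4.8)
The plan is to construct the $N$-th roots by downward induction on the divisibility order of $N$ combined with an inverse limit / compatibility argument in $k$, exactly mirroring the structure already used for the cocycles $\alpha_{k,v}$ themselves. First I would fix $v$ and work entirely with the local tower $(E_{k,\dot v}/F_v)_{k\geq 0}$, which is a tower of finite Galois extensions of the local field $F_v$; write $\Gamma_{k,v}=\mathrm{Gal}(E_{k,\dot v}/F_v)$ and $\Gamma_v=\varprojlim_k\Gamma_{k,v}=\mathrm{Gal}(\overline{F_v}/F_v)$. Since $\overline{F_v}^{\times}$ is a divisible group (as $F_v$ is a complete non-archimedean or archimedean local field, $\overline{F_v}$ is algebraically closed, hence $\overline{F_v}^{\times}$ is divisible), any cochain valued in $\overline{F_v}^{\times}$ admits pointwise $N$-th roots; the content of the proposition is to choose these roots coherently in $N$ and compatibly with $\mathrm{AW}^2_{k,v}$.

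The key steps, in order, are: (1) Reduce from arbitrary $N$ to prime powers: it suffices to build, for each prime $\ell$, a compatible system $\bigl(\sqrt[\ell^m]{\alpha_{k,v}}\bigr)_{m\geq 0,k\geq 0}$ with $\bigl(\sqrt[\ell^{m+1}]{\alpha_{k,v}}\bigr)^{\ell}=\sqrt[\ell^m]{\alpha_{k,v}}$ and $\mathrm{AW}^2_{k,v}$-compatibility, since a general $N=\prod \ell_i^{m_i}$ root is then obtained by the usual product formula and these are mutually compatible under divisibility. (2) For a fixed exponent $\ell^m$, observe that the obstruction to extracting an $\ell$-th root of $\sqrt[\ell^{m-1}]{\alpha_{k,v}}$ lying in $Z^2(\Gamma_{k,v},\overline{F_v}^{\times})$ is governed by $H^2(\Gamma_{k,v},\mu_{\ell})$ via the Kummer sequence $1\to\mu_\ell\to\overline{F_v}^{\times}\xrightarrow{\ell}\overline{F_v}^{\times}\to 1$, but we do \emph{not} need the root to be a cocycle — the proposition only asks for a \emph{map} $\Gamma_{k,v}^2\to\overline{F_v}^{\times}$ — so pointwise extraction always works and the only real constraint is compatibility with $\mathrm{AW}^2_{k,v}$ as $k$ varies. (3) Set up the compatibility in $k$: exactly as in the construction of $(\alpha_{k,v})_k$ via Corollary \ref{c:surjAW}, use the surjectivity of the $\mathrm{AW}^1$-induced maps (Lemma \ref{l:surjAW}) to correct an arbitrary choice of $\ell^m$-th root at level $k+1$ by a coboundary-type term $\mathrm{d}(\beta)$ with $\beta$ valued in $\mu_{\ell^m}(\overline{F_v})$, so that $\mathrm{AW}^2_{k,v}$ of the corrected root equals the previously-chosen root at level $k$; here one uses Proposition \ref{p:dAWcomm} ($\mathrm{d}\circ\mathrm{AW}^1=\mathrm{AW}^2\circ\mathrm{d}$) and the fact that $\mathrm{AW}^2$ is multiplicative, so raising an $\ell^m$-th root to the $\ell$-th power commutes with $\mathrm{AW}^2_{k,v}$. (4) Assemble: build the system by a double induction, the inner one on $m$ (increasing the $\ell$-power) and the outer one on $k$, at each stage solving a surjectivity problem for the relevant $\mathrm{AW}$ map with $\mu_{\ell^m}$-coefficients; then take the product over primes.

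The main obstacle I anticipate is not any single hard lemma but the \emph{bookkeeping of simultaneous compatibility in two directions} — divisibility of $N$ and level $k$ — while keeping the roots genuine cochains (not merely cohomology classes). The cleanest way to handle this is probably to fix, once and for all, a compatible system of primitive roots of unity $(\zeta_N)_{N}$ in $\overline{F_v}^{\times}$ (i.e.\ $\zeta_{N'}^{N'/N}=\zeta_N$), and at each inductive step choose the correcting $1$-cochain $\beta$ to take values in the corresponding finite cyclic group $\langle\zeta_{\ell^m}\rangle$; the surjectivity statement of Lemma \ref{l:surjAW} applies verbatim with $A=\langle\zeta_{\ell^m}\rangle$, which is all that is needed. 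One should also note that the $\mathrm{AW}^2_{k,v}$-compatibility constraint is automatically consistent with the divisibility constraint because both are expressed by multiplicative identities and $x\mapsto x^{N'/N}$ commutes with $\mathrm{AW}^2_{k,v}$ (as the latter is a product of quotients of values); verifying this compatibility of the two inductive constraints is the point that most deserves care.
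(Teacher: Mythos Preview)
Your overall strategy---reduce to prime powers, then inductively in $k$ correct the $\ell^m$-th root at level $k+1$ by a $\mu_{\ell^m}$-valued cochain so that $\mathrm{AW}^2_{k,v}$ sends it to the chosen root at level $k$---is exactly the paper's approach. However, your step (3) contains a genuine slip that, if followed literally, would not work.

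You propose to correct $\sqrt[\ell^m]{\alpha_{k+1,v}}$ by a \emph{coboundary} $\mathrm{d}(\beta)$ with $\beta:\Gamma_{k+1,v}\to\mu_{\ell^m}$, invoking the surjectivity of $\mathrm{AW}^1$ together with Proposition~\ref{p:dAWcomm}. But the discrepancy
\[
\gamma \;=\; \mathrm{AW}^2_{k,v}\bigl(\sqrt[\ell^m]{\alpha_{k+1,v}}\bigr)\big/\sqrt[\ell^m]{\alpha_{k,v}}
\;:\;\Gamma_{k+1,v}\times\Gamma_{k,v}\longrightarrow\mu_{\ell^m}
\]
is an arbitrary $\mu_{\ell^m}$-valued map (the roots are not cocycles, as you yourself observe), and there is no reason for $\gamma$ to be a coboundary. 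Correcting by $\mathrm{d}(\beta)$ only changes $\mathrm{AW}^2_{k,v}(\sqrt[\ell^m]{\alpha_{k+1,v}})$ by $\mathrm{d}(\mathrm{AW}^1(\beta))$, which cannot kill a general $\gamma$. The paper instead multiplies $\sqrt[\ell^m]{\alpha_{k+1,v}}$ by an \emph{arbitrary} $\mu_{\ell^m}$-valued $2$-cochain $\delta$ on $\Gamma_{k+1,v}^2$ and uses the surjectivity of $\mathrm{AW}^2$ (the second half of Lemma~\ref{l:surjAW}, with $A=\mu_{\ell^m}$) to arrange $\mathrm{AW}^2_{k,v}(\delta)=\gamma^{-1}$. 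Your own step (4), ``solving a surjectivity problem for the relevant $\mathrm{AW}$ map with $\mu_{\ell^m}$-coefficients'', is in fact this correct move; it is step (3) that mis-specifies it.

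One further point the paper makes explicit and you leave implicit: the correction $\delta$ can be chosen \emph{uniformly in $m$}, i.e.\ compatibly with raising to the $\ell$-th power. This is because the proof of Lemma~\ref{l:surjAW} exhibits an explicit one-sided inverse to $\mathrm{AW}^2$ (depending only on a choice of section $s:\Gamma_{k,v}\to\Gamma_{k+1,v}$), and applying that same inverse for every $m$ preserves the divisibility relations among the $\gamma$'s. Your remark about fixing a compatible system of roots of unity is helpful for the pointwise extraction but does not by itself guarantee this uniformity of the correcting cochains; the fixed-section argument is what does.
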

\begin{proof}
Using Bézout identities, we see that it is enough to construct families
$\left(\sqrt[\ell^m]{\alpha_{k,v}} \right)_{m \geq 0, k \geq 0}$ for all primes
$\ell$. So fix a prime number $\ell$. For a fixed $k \geq 0$, there exists a
family $\left(\sqrt[\ell^m]{\alpha_{k,v}} \right)_{m \geq 0}$ satisfying the
first two conditions in the proposition, and such that for all $m \geq 0$ and
$\sigma \in \mathrm{Gal}(E_{k,\dot{v}}/F_v)$,
$\sqrt[\ell^m]{\alpha_{k,v}}(\sigma, 1)=1$. If we choose two such families for
$k$ and $k+1$, the last condition might not be satisfied, i.e.\ 
\[ \frac{ \mathrm{AW}^2_{k,v}(\sqrt[\ell^m]{\alpha_{k+1,v}}) }{
	\sqrt[\ell^m]{\alpha_{k,v}} } : \mathrm{Gal}(E_{k+1,\dot{v}}/F_v) \times
\mathrm{Gal}(E_{k,\dot{v}}/F_v) \rightarrow \mu_{\ell^m} \]
could be non-trivial.
Thanks to the fact that $\mathrm{AW}^2_{k,v}(\alpha_{k+1,v}) = \alpha_{k,v}$,
for a fixed $m \geq 0$ Lemma \ref{l:surjAW} applied with $A=\mu_{\ell^m}$
implies that $\sqrt[\ell^m]{\alpha_{k+1,v}}$ can be chosen so that
$\mathrm{AW}^2_{k,v}(\sqrt[\ell^m]{\alpha_{k+1,v}}) =
\sqrt[\ell^m]{\alpha_{k,v}}$. The proof of Lemma \ref{l:surjAW} shows that this
can be done uniformly in $m \geq 0$, by using the same section $s :
\mathrm{Gal}(E_{k,\dot{v}}/F_v) \rightarrow \mathrm{Gal}(E_{k+1,\dot{v}}/F_v)$.
\end{proof}

Fix such a family for each $v \in V$. Note that even if $v(N)=0$ and
$E_{k,\dot{v}} / F_v$ is unramified, $\sqrt[N]{\alpha_{k,v}}$ \emph{cannot} be
chosen to take values in an unramified extension of $F_v$.

Recall the embedding $j_{k, v} : E_{k,\dot{v}}^{\times} \hookrightarrow I(E_k)$.
We now want to extend to $j_{k,v} : \overline{F_v}^{\times} \hookrightarrow
I(\overline{F})$. For $x \in \overline{F_v}^{\times}$, there exists $i \geq 0$
such that $x \in E_{k+i, \dot{v}}^{\times}$. Define \[ j_{k,v}(x) = \prod_{r \in
R_{k+1,v} \dots R_{k+i,v}} r(j_{k+i,v}(x)) \] which does not depend on the
choice of a big enough $i$. These extended embeddings $j_{k,v}$ also satisfy a
compatibility formula: for any $x \in \overline{F_v}^{\times}$ we have
\begin{equation} \label{e:compatj}
j_{k,v}(x) = \prod_{r \in R_{k+1,v}} r \left( j_{k+1,v}(x) \right).
\end{equation}
For $N \geq 1$ define $\sqrt[N]{\alpha'_k} : \mathrm{Gal}(E_k/F)^2 \rightarrow
\mathrm{Maps}(V_{E_k}, I(\overline{F}))$ by
\[ \sqrt[N]{\alpha'_k}(r_1 \sigma r_2^{-1}, r_2 \tau r_3^{-1})(r_1 \cdot
\dot{v}_k) = r_1 \left( j_{k, v} (\sqrt[N]{\alpha_{k,v}}(\sigma, \tau)) \right)
\]
for $r_1,r_2,r_3 \in R'_{k,v}$ and $\sigma, \tau \in \mathrm{Gal}(E_{k, \dot{v}}
/ F_v)$. Obviously $\sqrt[1]{\alpha'_k} = \alpha'_k$ and whenever $N$ divides
$N'$, $\sqrt[N']{\alpha'_k}^{N'/N} = \sqrt[N]{\alpha'_k}$. By the same proof as
Lemma \ref{l:AWadelic}, thanks to \eqref{e:compatj}, we have
\[ \mathrm{AWES}^2_k\left(\sqrt[N]{\alpha'_{k+1}}\right) = \sqrt[N]{\alpha'_k}. \]
Note that for any $k \geq 0$ and $v \in V$, there exists $i \geq 0$ such that
$\sqrt[N]{\alpha_{k,v}}$ takes values in $E_{k+i, \dot{v}}^{\times}$ and so for
any $w \in \{v\}_{E_k}$, $\sqrt[N]{\alpha'_k}(-,-)(w)$ takes values in
$\A_{E_{k+i}}^{\times}$.

We now want to construct $N$-th roots $\sqrt[N]{\alpha_k}$ of the Tate classes
$\alpha_k$ constructed in Section \ref{s:Tatecocycles}.  For this it is
necessary to take $N$-th roots of id\`eles, which may not be id\`eles.  For $S'$
a finite subset of $V$, let $\mathcal{I}(F, S') \subset \prod_{v \in V} \left(
\overline{F} \otimes_F F_v \right)^{\times}$ be the set of families $(x_v)_v$
such that for any $v \not\in S'$, there exists a finite Galois extension $K/F$
unramified above $v$ such that $x_v \in \left( \mathcal{O}_K
\otimes_{\mathcal{O}_F} \mathcal{O}_{F_v} \right)^{\times} = \prod_{w |v}
\mathcal{O}_{K_w}^{\times}$.  Let $\mathcal{I}(F) = \varinjlim_{S'}
\mathcal{I}(F, S')$.  Recall (Theorem \ref{t:constructionTate}) that $\alpha_k :
\mathrm{Gal}(E_k/F)^2 \rightarrow \mathrm{Maps}(V_{E_k}, I(E_k))$ has the
following properties:
\begin{itemize}
\item for all $\sigma, \tau \in \mathrm{Gal}(E_k/F)$ and $w_1,w_2 \in V_{E_k}$,
$\alpha_k(\sigma, \tau)(w_1) / \alpha_k(\sigma, \tau)(w_2) \in E_k^{\times}$,
\item for all $\sigma, \tau \in \mathrm{Gal}(E_k/F)$, $v \in V$ and $w \in
\{v\}_{E_k}$, $\alpha_k(\sigma, \tau)(w) \in I(E_k)$ is a unit away from
$S_{k,E_k} \cup \{v\}_{E_k}$.
\end{itemize}
It is crucial for $\sqrt[N]{\alpha_k}$ to enjoy similar properties.
\begin{prop} \label{p:NthTate}
There exists a family $\left(\sqrt[N]{\alpha_k} \right)_{N \geq 1, k \geq 0}$
where $\sqrt[N]{\alpha_k} : \mathrm{Gal}(E_k/F)^2 \rightarrow \mathcal{I}(F)$
such that
\begin{enumerate}
\item for all $k \geq 0$, $\sqrt[1]{\alpha_k} = \alpha_k$,
\item for all $k \geq 0$ and $N,N' \geq 1$ such that $N$ divides $N'$, $\sqrt[N']{\alpha_k}^{N'/N} = \sqrt[N]{\alpha_k}$,
\item for all $k \geq 0$ and $N \geq 1$, $\mathrm{AWES}^2_k(\sqrt[N]{\alpha_{k+1}}) = \sqrt[N]{\alpha_k}$,
\item for all $k \geq 0$, $N \geq 1$, $\sigma, \tau \in \mathrm{Gal}(E_k/F)$ and $w_1,w_2 \in V_{E_k}$, $\sqrt[N]{\alpha_k}(\sigma, \tau)(w_1) / \sqrt[N]{\alpha_k}(\sigma, \tau)(w_2) \in \overline{F}^{\times}$,
\item for all $k \geq 0$, $N \geq 1$, $\sigma, \tau \in \mathrm{Gal}(E_k/F)$, $v \in V$ and $w \in \{v\}_{E_k}$, $\sqrt[N]{\alpha_k}(\sigma, \tau)(w) \in \mathcal{I}(F, S_k \cup \{v\} \cup N)$.
\end{enumerate}
\end{prop}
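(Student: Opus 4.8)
By the Bézout‑identity argument already used in the proof of Proposition \ref{p:Nthlocal}, it suffices to produce, for each prime $\ell$, a family $\left(\sqrt[\ell^m]{\alpha_k}\right)_{m\geq0,\,k\geq0}$ with $\sqrt[\ell^m]{\alpha_k}:\mathrm{Gal}(E_k/F)^2\to\mathcal{I}(F)$ satisfying the analogues of (1)--(5) and normalised by $\sqrt[\ell^m]{\alpha_k}(\sigma,1)=1$; the family for general $N$ is then obtained by a Bézout combination over the primes dividing $N$, using that $\mathrm{AWES}^2_k$ is a homomorphism on cochains compatible with taking powers. Fix $\ell$, fix once and for all an archimedean place $v_0\in S_0$, and for each $k$ put $w_{0,k}:=\dot v_{0,k}$, so that $w_{0,k}\in(S_k)_{E_k}$ and $\zeta_{k,v_0}(w_{0,k})=w_{0,k+1}$. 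Using property (4) of $\alpha_k$ from Theorem \ref{t:constructionTate}, write $\alpha_k(\sigma,\tau)(w)=\delta_k(\sigma,\tau)(w)\,c_k(\sigma,\tau)$ with $c_k(\sigma,\tau):=\alpha_k(\sigma,\tau)(w_{0,k})\in I(E_k,S_k)$ (independent of $w$) and $\delta_k(\sigma,\tau)(w):=\alpha_k(\sigma,\tau)(w)/c_k(\sigma,\tau)\in E_k^\times$; since $w_{0,k}$ lies above a place of $S_k$, the idèle $c_k(\sigma,\tau)$ is an $S_k$‑unit, and $\delta_k(\sigma,\tau)(w)$ is an $(S_k\cup\{v\})$‑unit whenever $w$ lies above $v$.

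The first step is to build, for each $k$ \emph{separately}, a ``raw'' family $\gamma_k^{(m)}:=\sqrt[\ell^m]{\delta_k}\cdot\sqrt[\ell^m]{c_k}$, where $\sqrt[\ell^m]{\delta_k}(\sigma,\tau)(w)\in\overline F^\times$ is obtained by fixing, for each element of $E_k^\times$ that occurs, a compatible system of $\ell$‑power roots in $\overline F^\times$, and $\sqrt[\ell^m]{c_k}(\sigma,\tau)\in\mathcal{I}(F)$ is obtained by fixing, place by place in $\overline F\otimes_F F_v$, a compatible system of $\ell$‑power roots of the units making up $c_k(\sigma,\tau)$. Because extracting $\ell$‑power roots of a unit at a place $v'\nmid\ell$ produces an extension unramified at $v'$ (Kummer theory), one checks that $\gamma_k^{(m)}$ satisfies (1), (2), (4) and the integrality statement (5) --- here the point of choosing the reference place $w_{0,k}$ inside $S_k$ is exactly that the extra ramification is then confined to places above $\ell$, i.e.\ above $N$ --- that $(\gamma_k^{(m)})^{\ell^m}=\alpha_k$, and that $\gamma_k^{(m)}(\sigma,1)=1$. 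It need not satisfy (3).

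The core of the proof is an induction on $k$, starting from $\sqrt[\ell^m]{\alpha_0}:=\gamma_0^{(m)}$, producing $\sqrt[\ell^m]{\alpha_k}$ (for all $m$) with $\mathrm{AWES}^2_k(\sqrt[\ell^m]{\alpha_{k+1}})=\sqrt[\ell^m]{\alpha_k}$ while retaining (1), (2), (4), (5) and the normalisation. Granting $\sqrt[\ell^m]{\alpha_k}$, set $\rho_m:=\mathrm{AWES}^2_k(\gamma_{k+1}^{(m)})\big/\sqrt[\ell^m]{\alpha_k}$. One verifies directly from Definition \ref{d:AWES2} that $\mathrm{AWES}^2_k$ is a power‑compatible homomorphism on cochains, that it carries cochains with property (4) to cochains with property (4) (any ratio of the values of $\mathrm{AWES}^2_k(\alpha)$ at two places of $E_k$ being a product of such ratios for $\alpha$), and that on place‑constant cochains it coincides with $\mathrm{AW}^2_k$ and stays place‑constant. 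Hence $\rho_m$ has property (4) and, using $\mathrm{AWES}^2_k(\alpha_{k+1})=\alpha_k$ and $(\gamma_{k+1}^{(m)})^{\ell^m}=\alpha_{k+1}$, satisfies $\rho_m^{\ell^m}=1$. An elementary argument then shows that any $f:V_{E_k}\to\mathcal{I}(F)$ with $f^{\ell^m}=1$ and all ratios $f(w_1)/f(w_2)$ in $\overline F^\times$ is the product of a $\mu_{\ell^m}$‑valued function (with $\mu_{\ell^m}\subset\overline F^\times$) and a place‑constant $\mathcal{I}(F)[\ell^m]$‑valued function; so $\rho_m=g_m\cdot h_m$ accordingly, with $g_m,h_m$ compatible in $m$ and trivial on $(\sigma,1)$. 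Lifting $g_m$ by Lemma \ref{l:surjAWES}(2) (with finite coefficients $\mu_{\ell^m}$, using one section uniformly in $m$) and $h_m$ by the $\mathrm{AW}^2_k$ half of Lemma \ref{l:surjAW} (with coefficients $\mathcal{I}(F)[\ell^m]$, viewing $h_m$ as place‑constant) yields $\widetilde\rho_m$ on level $k+1$ with $\mathrm{AWES}^2_k(\widetilde\rho_m)=\rho_m$, having property (4), valued in $\mathcal{I}(F)[\ell^m]\subseteq\mathcal{I}(F,\ell^m)$, compatible in $m$, trivial on $(\sigma,1)$. Then $\sqrt[\ell^m]{\alpha_{k+1}}:=\gamma_{k+1}^{(m)}/\widetilde\rho_m$ satisfies (1)--(5) at level $k+1$ and $\mathrm{AWES}^2_k(\sqrt[\ell^m]{\alpha_{k+1}})=\sqrt[\ell^m]{\alpha_k}$, completing the induction; one then reassembles over all primes by Bézout.

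The step I expect to be the main obstacle is precisely this $\mathrm{AWES}^2_k$‑correction: the corrections must simultaneously preserve property (4) (rationality over $\overline F$ of ratios at distinct places) \emph{and} property (5) (control of ramification), whereas the surjectivity lemmas only furnish \emph{some} lift. The point that makes it go through is that the error term $\rho_m$ is $\ell^m$‑torsion, which automatically splits it into a ``diagonal $\overline F^\times$‑valued'' part --- liftable with finite coefficients via $\mathrm{AWES}^2_k$ (Lemma \ref{l:surjAWES}) --- and a ``place‑constant idèle‑valued'' part --- liftable via the plain $\mathrm{AW}^2_k$ (Lemma \ref{l:surjAW}); both lifts are built out of roots of unity, hence manifestly respect (4) and introduce ramification at most above $\ell$.
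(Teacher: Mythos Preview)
Your argument is correct and follows essentially the same strategy as the paper: reduce to a fixed prime $\ell$, fix an archimedean reference place to split $\alpha_k$ into a place-constant idelic factor and an $\overline{F}^\times$-valued ratio, take compatible $\ell$-power roots of each separately, and then correct the discrepancy under $\mathrm{AWES}^2_k$ inductively using the surjectivity lemmas. Your explicit decomposition $\rho_m = g_m\cdot h_m$ of the torsion error into a $\mu_{\ell^m}$-valued piece (lifted via Lemma~\ref{l:surjAWES}) and a place-constant piece (lifted via Lemma~\ref{l:surjAW}) spells out precisely why the correction preserves properties (4) and (5), a point the paper leaves implicit by its terse reference to ``(the proof of) Lemma~\ref{l:surjAWES}''; the only other difference is cosmetic, your fixed reference place $w_{0,k}$ versus the paper's $(\sigma,\tau)$-dependent $\sigma\tau\cdot\dot u_k$.
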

\begin{proof}
It will be convenient to fix an archimedean place $u$ of $F$, so that in particular $\dot{u}_k \in S_{k,E_k}$ for all $k \geq 0$.
As in the proof of Proposition \ref{p:Nthlocal} it is enough to restrict to powers of a fixed prime $\ell$.

First we show how to construct a family $\left(\sqrt[\ell^m]{\alpha_k}\right)_{m \geq 0}$ for a fixed $k \geq 0$.
For $m \geq 0$ and $\sigma, \tau \in \mathrm{Gal}(E_k/F)$ choose roots $\sqrt[\ell^m]{\alpha_k}(\sigma, \tau)(\sigma \tau \cdot \dot{u}_k) \in \mathcal{I}(F, S_k \cup \ell)$ such that $\sqrt[\ell^{m+1}]{\alpha_k}(\sigma, \tau)(\sigma \tau \cdot \dot{u}_k)^{\ell} = \sqrt[\ell^m]{\alpha_k}(\sigma, \tau)(\sigma \tau \cdot \dot{u}_k)$.
We can further impose that $\sqrt[\ell^m]{\alpha_k}(\sigma, 1)(\sigma \cdot \dot{u}_k) = 1$ for all $\sigma \in \mathrm{Gal}(E_k / F)$.
Then choose, for $\sigma, \tau \in \mathrm{Gal}(E_k/F)$, $v \in V$ and $w \in \{v\}_{E_k} \smallsetminus \{ \sigma \tau \cdot \dot{u}_k \}$, $\ell^m$-th roots of $\alpha_k(\sigma, \tau)(w) / \alpha_k(\sigma, \tau)(\sigma \tau \cdot \dot{u}_k)$ in $(\overline{F}_{S_k \cup \{v\} \cup \ell})^{\times}$, and define $\sqrt[\ell^m]{\alpha_k}(\sigma, \tau)(w)$ as the products of these $\ell^m$-th roots with $\sqrt[\ell^m]{\alpha_k}(\sigma, \tau)(\sigma \tau \cdot \dot{u}_k)$.
This can be done compatibly as $m$ varies.
Again we can impose $\sqrt[\ell^m]{\alpha_k}(\sigma, 1)(w) = 1$ for all $\sigma \in \mathrm{Gal}(E_k / F)$.
We obtain a family $\left(\sqrt[\ell^m]{\alpha_k}\right)_{m \geq 0}$ satisfying all conditions in the proposition except for the third one.

The fact that these choices can be made compatibly as $k$ varies, i.e.\ in such a way that the third condition is also satisfied, can be proved as in Proposition \ref{p:Nthlocal}, using the fact that $\mathrm{AWES}^2_k(\alpha_{k+1}) = \alpha_k$ and (the proof of) Lemma \ref{l:surjAWES} instead of Lemma \ref{l:surjAW}.
\end{proof}
Fix a family $\left(\sqrt[N]{\alpha_k} \right)_{N \geq 1, k \geq 0}$ as in the proposition.
We want to compare $\sqrt[N]{\alpha_k'}$ and $\sqrt[N]{\alpha_k}$.
Recall (Theorem \ref{t:constructionTate}) that $\alpha_k = \alpha_k' \mathrm{d}(\beta_k)$, where $\beta_k : \mathrm{Gal}(E_k / F) \rightarrow \mathrm{Maps}(V_{E_k}, I(E_k, S_k))$.
\begin{prop} \label{p:Nthbeta}
There exists a family $\left(\sqrt[N]{\beta_k} \right)_{N \geq 1, k \geq 0}$
where $\sqrt[N]{\beta_k} : \mathrm{Gal}(E_k/F) \rightarrow \mathrm{Maps}
\left(V_{E_k}, \mathcal{I}(F, S_k \cup N) \right)$ such that
\begin{enumerate}
\item for all $k \geq 0$, $\sqrt[1]{\beta_k} = \beta_k$,
\item for all $k \geq 0$ and $N,N' \geq 1$ such that $N$ divides $N'$, $\sqrt[N']{\beta_k}^{N'/N} = \sqrt[N]{\beta_k}$,
\item for all $k \geq 0$ and $N \geq 1$, $\mathrm{AWES}^1_k(\sqrt[N]{\beta_{k+1}}) = \sqrt[N]{\beta_k}$.
\end{enumerate}
\end{prop}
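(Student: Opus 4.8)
The plan is to follow the template of the proofs of Propositions \ref{p:Nthlocal} and \ref{p:NthTate}, substituting the surjectivity statement Lemma \ref{l:surjAWES}(1) for Lemma \ref{l:surjAW} and using the compatibility $\mathrm{AWES}^1_k(\beta_{k+1}) = \beta_k$ established in Theorem \ref{t:constructionTate}. First, as in Proposition \ref{p:Nthlocal}, Bézout identities reduce the problem, for each prime $\ell$, to the construction of a family $\bigl(\sqrt[\ell^m]{\beta_k}\bigr)_{m \geq 0,\, k \geq 0}$ satisfying the three conditions with $N$ running over powers of $\ell$; the general family is then reassembled from these. So fix a prime $\ell$.

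Next I would handle a fixed $k$. The values $\beta_k(\sigma)(w)$ lie in $I(E_k, S_k)$, hence are units at every place not above $S_k$; since $E_k/F$ is unramified at such a place $v$ and the residue characteristic is prime to $\ell$ when $v \nmid \ell$, every $\ell^m$-th root of $\beta_k(\sigma)(w)_v$ is a unit in an unramified extension of $F_v$. Consequently one may choose, compatibly as $m$ grows, roots $\sqrt[\ell^m]{\beta_k}(\sigma)(w) \in \mathcal{I}(F, S_k \cup \ell)$, with moreover $\sqrt[\ell^m]{\beta_k}(1) = 1$. (Since the places above $\ell^m$ are the same as those above $\ell$, this is the target $\mathcal{I}(F, S_k \cup \ell^m)$ demanded by the proposition.) This produces, for each $k$ separately, families satisfying the first two conditions.

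The substantive point is to arrange the third condition, compatibility under $\mathrm{AWES}^1_k$. Since $\mathrm{AWES}^1_k$ is built from values and their inverses it is homogeneous, $\mathrm{AWES}^1_k(\gamma^{\ell^m}) = \mathrm{AWES}^1_k(\gamma)^{\ell^m}$; together with $\mathrm{AWES}^1_k(\beta_{k+1}) = \beta_k$ this shows that for the families chosen above at levels $k$ and $k+1$ the discrepancy
\[ \frac{\mathrm{AWES}^1_k\!\left(\sqrt[\ell^m]{\beta_{k+1}}\right)}{\sqrt[\ell^m]{\beta_k}} : \mathrm{Gal}(E_k/F) \longrightarrow \mathrm{Maps}(V_{E_k}, \mu_{\ell^m}) \]
is $\ell^m$-torsion, hence $\mu_{\ell^m}$-valued, and that the level-$(m+1)$ discrepancy raised to the $\ell$-th power is the level-$m$ one. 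Applying Lemma \ref{l:surjAWES}(1) with $A = \mu_{\ell^m}$ gives $\nu_m : \mathrm{Gal}(E_{k+1}/F) \to \mathrm{Maps}(V_{E_{k+1}}, \mu_{\ell^m})$ with $\nu_m(1)=1$ mapping to this discrepancy, and replacing $\sqrt[\ell^m]{\beta_{k+1}}$ by $\sqrt[\ell^m]{\beta_{k+1}}\,\nu_m^{-1}$ restores the third condition; since $\mu_{\ell^m} \subset \mathcal{I}(F, \ell)$ this does not disturb the first two. Exactly as in Propositions \ref{p:Nthlocal} and \ref{p:NthTate}, the proof of Lemma \ref{l:surjAWES} shows the $\nu_m$ can be chosen uniformly in $m$, using one fixed section $s : \mathrm{Gal}(E_k/F) \to \mathrm{Gal}(E_{k+1}/F)$, so that $\nu_{m+1}^{\ell} = \nu_m$ and power-compatibility survives. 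Iterating over $k$ yields the family.

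The main obstacle to keep an eye on is the coefficient bookkeeping: one must verify at each step that $\ell^m$-th roots of the $S_k$-unit idèles $\beta_k(\sigma)(w)$ genuinely land in $\mathcal{I}(F, S_k \cup \ell)$ without forcing ramification at places outside $S_k \cup \ell$ --- this is precisely where the hypothesis that $\beta_k$ is valued in $I(E_k, S_k)$, rather than merely $I(E_k)$, is essential --- and that the correcting cochains $\nu_m$ stay $\mu_{\ell^m}$-valued so as not to spoil conditions (1) and (2). Once that is granted, the homogeneity of $\mathrm{AWES}^1_k$, the identity $\mathrm{AWES}^1_k(\beta_{k+1}) = \beta_k$, and the surjectivity Lemma \ref{l:surjAWES} do all the work, just as $\mathrm{AW}^2$-compatibility and Lemma \ref{l:surjAW} did in the local statement.
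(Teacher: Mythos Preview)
Your proof is correct and follows exactly the approach the paper indicates: reduce to prime powers via B\'ezout, choose compatible roots at each fixed level, then use the surjectivity Lemma \ref{l:surjAWES}(1) together with $\mathrm{AWES}^1_k(\beta_{k+1}) = \beta_k$ to inductively correct the $\ell^m$-torsion discrepancies, uniformly in $m$ by fixing one section. One small notational point: the discrepancy is $\ell^m$-torsion in $\mathcal{I}(F, S_{k+1} \cup \ell)$ rather than literally valued in $\mu_{\ell^m}(\overline{F})$ embedded diagonally, but since Lemma \ref{l:surjAWES}(1) applies for any abelian group $A$ this does not affect the argument.
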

\begin{proof}
Only the third condition is non-trivial, and the proof proceeds as in Propositions \ref{p:Nthlocal} and \ref{p:NthTate}.
\end{proof}
Fix a family $\left(\sqrt[N]{\beta_k} \right)_{N \geq 1, k \geq 0}$ as in the proposition.
Note that $\mathrm{d}\left(\!\!\sqrt[N]{\beta_k} \right) : \mathrm{Gal}(\overline{F}_{S_k \cup N} / F) \times \mathrm{Gal}(E_k/F) \rightarrow \mathrm{Maps}\left(V_{E_k}, \mathcal{I}(F, S_k \cup N) \right)$.

\begin{defi} \label{d:delta}
For $k \geq 0$ and $N \geq 1$, let
\[ \delta_k(N) = \frac{ \sqrt[N]{\alpha_k} }{ \sqrt[N]{\alpha_k'} \  \mathrm{d}\left(\!\!\sqrt[N]{\beta_k} \right)} : \mathrm{Gal}(\overline{F}_{S_k \cup N} / F) \times \mathrm{Gal}(E_k/F) \rightarrow \mathrm{Maps}\left(V_{E_k}, \mathcal{I}(F)[N] \right) \]
where $\mathcal{I}(F)[N]$ is the subgroup of $N$-torsion elements in $\mathcal{I}(F)$.
\end{defi}
By construction, we have:
\begin{itemize}
\item For all $k \geq 0$, $N \geq 1$ and $w \in V_{E_k}$, there exists a finite Galois extension $K$ of $F$ containing $E_k$ such that $\delta_k(N)$ factors through $\mathrm{Gal}(K/F) \times \mathrm{Gal}(E_k/F)$.
\item
For all $k \geq 0$, $N \geq 1$, $\sigma \in \mathrm{Gal}(\overline{F}_{S_k \cup N} / F)$, $\tau \in \mathrm{Gal}(E_k / F)$, $v \in V$ and $w \in \{v\}_{E_k}$,
\[ \delta_k(N)(\sigma, \tau)(w) \in \mathcal{I}(F, S_k \cup \{v\} \cup N)[N]. \]
\item
For all $k \geq 0$ and $N, N' \geq 1$ such that $N$ divides $N'$, we have $\delta_k(N')^{N'/N} = \delta_k(N)$.
\item
For all $k \geq 0$ and $N \geq 1$, $\mathrm{AWES}^2_k(\delta_{k+1}(N)) = \delta_k(N)$.
\end{itemize}

\subsection{Generalized Tate-Nakayama morphism for the global tower}

Using the compatible families of cochains constructed in the previous section,
we now want to recast several of Kaletha's constructions in cohomology, but for
actual cochains. First we describe the extension $P_{\dot{V}} \rightarrow
\mathcal{E}_{\dot{V}} \rightarrow \mathrm{Gal}(\overline{F} / F)$ explicitly as
a projective limit of extensions $P(E_k, \dot{S}'_{E_k}, N) \rightarrow
\mathcal{E}_k(S',N) \rightarrow \mathrm{Gal}(\overline{F}_{S' \cup N}/F)$
constructed using $\sqrt[N]{\alpha_k}$, for varying $k, S', N$.  This is the
global analogue of \cite[\S 4.5]{Kalri}. Then we make explicit the morphism
$\iota_{\dot{V}}$ of \cite[Theorem 3.7.3]{Kalgri} using this projective limit.
To avoid repeating similar calculations we deduce these two constructions from
Lemma \ref{l:cupproductglobal} below.

Let us recall notation from Kaletha's paper.  Suppose that $S' \subset V$.  If
$M$ is an abelian group, define $!_k : M[S'_{E_k}] \rightarrow M[S'_{E_{k+1}}]$
by $!_k(\Lambda)(\zeta_{k,v}(w)) = \Lambda(w)$ for $v \in S'$ and $w \in
\{v\}_{E_k}$, and $!_k(\Lambda)(u) = 0$ if $u \not\in \left\{ \zeta_{k,v}(w)
\,\middle|\, v \in S',\, w \in \{v\}_{E_k} \right\}$.

\begin{lemm} \label{l:cupproductglobal}
Let $T$ be a torus defined over $F$.  Denote $Y = X_*(T)$. Let $k$ be big enough
so that $E_k$ splits $T$. Let $N \geq 1$ be an integer. Let $S'$ be a finite
subset of $V$ containing $S_{k+1}$. Let $\Lambda \in Y[S'_{E_k}]_0^{N_{E_k/F}}$.
Then we have an equality of maps $\mathrm{Gal}(\overline{F}_{S' \cup N} / F)
\rightarrow T(\mathcal{O}_{S' \cup N})$:
\[ \sqrt[N]{\alpha_k} \underset{E_k / F}{\sqcup} \Lambda  = \sqrt[N]{\alpha_{k+1}} \underset{E_{k+1} / F}{\sqcup} !_k(\Lambda). \]
\end{lemm}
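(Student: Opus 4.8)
The plan is to unwind both sides as explicit cup products of cochains and to use the compatibility $\mathrm{AWES}^2_k(\sqrt[N]{\alpha_{k+1}}) = \sqrt[N]{\alpha_k}$ established after Proposition \ref{p:NthTate}, together with the defining property $!_k(\Lambda)(\zeta_{k,v}(w)) = \Lambda(w)$ of the map $!_k$. Concretely, for a cocharacter datum $\Lambda$, the cup product $\sqrt[N]{\alpha_k} \sqcup_{E_k/F} \Lambda$ sends $\sigma \in \mathrm{Gal}(\overline{F}_{S' \cup N}/F)$ to an element of $T(\mathcal{O}_{S' \cup N})$ obtained by pairing, over pairs $(\tau, w)$ with $\tau \in \mathrm{Gal}(E_k/F)$ and $w \in S'_{E_k}$, the value $\sqrt[N]{\alpha_k}(\sigma, \tau)(w)$ (viewed in the split torus $T$ over $\overline{F}$) against the cocharacter $\Lambda(w)$, suitably Galois-twisted and aggregated. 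I would first write this pairing formula explicitly on each side, being careful about the Galois twist coming from $\sigma$ acting on $T(\overline{F})$ and the fact that $\Lambda$ is supported on $S'_{E_k}$ (resp.\ $!_k(\Lambda)$ on the image of $\zeta_{k,v}$ inside $S'_{E_{k+1}}$).

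The key computation is then to expand the right-hand side $\sqrt[N]{\alpha_{k+1}} \sqcup_{E_{k+1}/F} !_k(\Lambda)$ and collapse the sum over $\mathrm{Gal}(E_{k+1}/F)$ into a sum over $\mathrm{Gal}(E_k/F)$. Writing $\tilde\tau$ for a lift of $\tau \in \mathrm{Gal}(E_k/F)$ and running $n$ over $\mathrm{Gal}(E_{k+1}/E_k)$, the terms indexed by $n\tilde\tau$ group exactly as in the definition of $\mathrm{AWES}^2_k$ (Definition \ref{d:AWES2}): the numerator collects $\sqrt[N]{\alpha_{k+1}}(\sigma, n\tilde\tau)$ evaluated at places $\sigma_{k+1} n\tilde\tau \cdot \zeta_{k,v}(w)$, paired against $!_k(\Lambda)$ at $\zeta_{k,v}(w)$, which by definition of $!_k$ equals $\Lambda(w)$; the denominator terms $\sqrt[N]{\alpha_{k+1}}(\sigma, n)$ evaluated at $\zeta_{k,v}(\tau \cdot w)$ pair against $!_k(\Lambda)$ at those same places, again reducing to $\Lambda(w)$ since $\tau$ permutes $S'_{E_k}$. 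Because $\Lambda \in Y[S'_{E_k}]_0^{N_{E_k/F}}$, the ``$N$-norm-fixed, degree zero'' condition guarantees that the extra contributions at places not in the image of $\zeta_{k,v}$ — where $!_k(\Lambda)$ vanishes — contribute nothing, so the pairing really only sees the $\zeta_{k,v}$-image places. After this regrouping the right-hand side becomes $\bigl(\mathrm{AWES}^2_k(\sqrt[N]{\alpha_{k+1}})\bigr) \sqcup_{E_k/F} \Lambda = \sqrt[N]{\alpha_k} \sqcup_{E_k/F} \Lambda$, which is the left-hand side.

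A secondary point to check is that both sides genuinely land in $T(\mathcal{O}_{S' \cup N})$ and not merely in $T(\overline{F}_{S' \cup N})$: this uses property (5) of Proposition \ref{p:NthTate}, namely $\sqrt[N]{\alpha_k}(\sigma,\tau)(w) \in \mathcal{I}(F, S_k \cup \{v\} \cup N)$, together with $S_{k+1} \subset S'$, so that pairing against cocharacters of the $E_k$-split torus $T$ produces $S' \cup N$-integral points; this integrality is preserved under the $\mathrm{AWES}^2_k$ regrouping since $\mathrm{AWES}^2_k(\sqrt[N]{\alpha_{k+1}}) = \sqrt[N]{\alpha_k}$ already has this property. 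I would also remark that the cup product is well-defined on the nose (not just up to coboundary) because we are working with the fixed cochains $\sqrt[N]{\alpha_k}$, so no cohomological ambiguity intervenes.

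I expect the main obstacle to be purely bookkeeping: matching the places $\sigma_{k+1} n\tilde\tau \cdot \zeta_{k,v}(w)$ appearing in Definition \ref{d:AWES2} with the places at which $!_k(\Lambda)$ is supported, and verifying that the cardinality-counting identities used in the proof of Lemma \ref{l:dAWES} (that $n\tilde\tau \cdot \zeta_{k,v}(w)$ and $n \cdot \zeta_{k,v}(\tau \cdot w)$ hit each place of $E_{k+1}$ above $v$ the same number of times) make the denominator pairings against $!_k(\Lambda)$ cancel correctly. Once that combinatorial dictionary is set up, the identity $\mathrm{AWES}^2_k(\sqrt[N]{\alpha_{k+1}}) = \sqrt[N]{\alpha_k}$ does all the real work, and the rest is a formal manipulation of cup products entirely parallel to the local computation in \cite[\S 4.5]{Kalri}.
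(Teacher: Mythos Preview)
Your overall strategy matches the paper's: expand both sides explicitly, use the compatibility $\mathrm{AWES}^2_k(\sqrt[N]{\alpha_{k+1}}) = \sqrt[N]{\alpha_k}$, and show the leftover terms cancel. However, you have misidentified \emph{which} hypothesis kills which term, and this would cause trouble when you try to carry out the computation.

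The vanishing of $!_k(\Lambda)$ at places outside the image of $\zeta_{k,v}$ is immediate from the definition of $!_k$ and has nothing to do with the norm or degree-zero conditions on $\Lambda$. Likewise, the cardinality identities from Lemma~\ref{l:dAWES} play no role here. What actually happens is this: once you expand the left-hand side via $\mathrm{AWES}^2_k$ and match its ``numerator'' factors with the right-hand side (exactly as you describe), the discrepancy between the two sides is the residual ``denominator'' product
\[
\prod_{\tau \in \mathrm{Gal}(E_k/F)} \prod_{v \in S'} \prod_{w \in \{v\}_{E_k}} \prod_{n \in \mathrm{Gal}(E_{k+1}/E_k)} \sqrt[N]{\alpha_{k+1}}(\sigma, n)\bigl(\sigma n \cdot \zeta_{k,v}(\tau \cdot w)\bigr) \otimes \sigma\tau\bigl(\Lambda(w)\bigr).
\]
The paper then substitutes $u = \tau \cdot w$, so that the cocharacter exponent attached to each fixed pair $(u,n)$ becomes
\[
\sigma\Bigl(\sum_{\tau \in \mathrm{Gal}(E_k/F)} \tau\bigl(\Lambda(\tau^{-1} \cdot u)\bigr)\Bigr) = \sigma\bigl(N_{E_k/F}(\Lambda)(u)\bigr) = 0,
\]
and this is precisely where the hypothesis $N_{E_k/F}(\Lambda)=0$ enters. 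So the cancellation is an honest application of the norm-vanishing after a change of variables, not a combinatorial place-counting argument.

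A minor point on integrality: to see that the cup product lands in $T(\mathcal{O}_{S' \cup N})$ rather than merely $T(\mathcal{I}(F, S' \cup N))$, the paper uses the degree-zero condition $\sum_w \Lambda(w) = 0$ together with property~(4) of Proposition~\ref{p:NthTate} (that ratios $\sqrt[N]{\alpha_k}(\sigma,\tau)(w_1)/\sqrt[N]{\alpha_k}(\sigma,\tau)(w_2)$ lie in $\overline{F}^\times$), not only property~(5).
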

Note that if $S_k \subset S'' \subset S'$ and the support of $\Lambda$ is
contained in $S''_{E_k}$, then the left hand side is inflated from a map
$\mathrm{Gal}(\overline{F}_{S'' \cup N} / F) \rightarrow T(\mathcal{O}_{S'' \cup
N})$.
\begin{proof}
For $\sigma \in \mathrm{Gal}(\overline{F}_{S' \cup N} / F)$ we have
\begin{align*}
\left( \sqrt[N]{\alpha_k} \sqcup_{E_k / F} \Lambda_k \right)(\sigma) =& \prod_{\tau \in \mathrm{Gal}(E_k / F)} \sqrt[N]{\alpha_k}(\sigma, \tau) \otimes \sigma \tau ( \Lambda ) \\
=& \prod_{\tau \in \mathrm{Gal}(E_k / F)} \prod_{w \in S'_{E_k}} \sqrt[N]{\alpha_k}(\sigma, \tau)(w) \otimes \sigma \tau (\Lambda)(w).
\end{align*}
Note that in this last expression, the tensor products land in $T(\mathcal{I}(F,
S' \cup N))$, but the product over $S'_{E_k}$ belongs to $T(\mathcal{O}_{S' \cup
N})$ because $\sum_{w \in S'_{E_k}} \Lambda(w) = 0$, using the third condition
in Proposition \ref{p:NthTate}. Compare with the pairing \cite[(3.24)]{Kalgri}.
Recall that $\sqrt[N]{\alpha_k} = \mathrm{AWES}^2_k(\sqrt[N]{\alpha_{k+1}})$,
so that
\begin{multline*}
\left( \sqrt[N]{\alpha_{k+1}} \underset{E_{k+1} / F}{\sqcup} !_k(\Lambda) \right)(\sigma) / \left( \sqrt[N]{\alpha_k} \underset{E_k / F}{\sqcup} \Lambda \right)(\sigma) = \\
\prod_{\tau \in \mathrm{Gal}(E_k / F)} \prod_{v \in S'} \prod_{w \in \{v\}_{E_k}} \prod_{n \in \mathrm{Gal}(E_{k+1} / E_k)} \sqrt[N]{\alpha_{k+1}}(\sigma, n)(\sigma n \cdot \zeta_{k,v}(\tau \cdot w)) \otimes \sigma \tau \left( \Lambda(w) \right).
\end{multline*}
In this expression we use the change of variable $u = \tau \cdot w$ to get
\[ \prod_{\substack{v \in S' \\ n \in \mathrm{Gal}(E_{k+1} / E_k)}} \prod_{u \in \{v\}_{E_k}} \sqrt[N]{\alpha_{k+1}}(\sigma, n)(\sigma n \cdot \zeta_{k,v}(u)) \otimes \sigma \left( \sum_{\tau \in \mathrm{Gal}(E_k / F)} \tau \left(\Lambda(\tau^{-1} \cdot u)\right) \right) \]
and the sum vanishes since $N_{E_k / F}(\Lambda)=0$.
\end{proof}

Let $k \geq 0$ and $N \geq 1$, and let $S'$ be a finite subset of $V$ containing
$S_k$. Recall the finite sub-$\mathrm{Gal}(E_k / F)$-module $M(E_k,
\dot{S}'_{E_k}, N)$ of $\mathrm{Maps}(\mathrm{Gal}(E_k / F) \times S'_{E_k},
\frac{1}{N} \Z / \Z)$ defined in \cite[\S 3.3]{Kalgri}, and the finite
diagonalizable group $P(E_k, \dot{S}'_{E_k}, N)$ such that $X^*(P(E_k,
\dot{S}'_{E_k}, N)) = M(E_k, \dot{S}'_{E_k}, N)$. For any finite algebraic group
$Z$ over $F$ such that $\exp(Z) | N$ and the Galois action on $A := X^*(Z)$
factors through $\mathrm{Gal}(E_k/F)$, we have an identification $\Psi(E_k,S',N)
: \mathrm{Hom}(P(E_k, \dot{S}'_{E_k}, N), Z) \simeq A^{\vee}[ \dot{S}'_{E_k}
]_0^{N_{E_k / F}}$ (see \cite[Lemma 3.3.2]{Kalgri}). Recall also the
$2$-cocycle $\xi_k \in Z^2(\mathrm{Gal}(\overline{F}_{S' \cup N} / F), P(E_k,
\dot{S}'_{E_k}, N))$ from \cite[(3.5)]{Kalgri}, defined using an unbalanced
cup-product:
\begin{equation} \label{e:defxi} \xi_k(S', N) = \mathrm{d}\left(\!\!\sqrt[N]{\alpha_k}  \right) \underset{E_k / F}{\sqcup} c_{\mathrm{univ}}(E_k, S', N) \end{equation}
where $c_{\mathrm{univ}}(E_k, S', N) \in M(E_k, \dot{S}'_{E_k},
N)^{\vee}[\dot{S}'_{E_k}]_0^{N_{E_k / F}}$ is the image of $\mathrm{Id}_{M(E_k,
\dot{S}'_{E_k}, N)}$ under $\Psi(E_k, S', N)$. Explicitly, for $v \in S'$ and $f
\in M(E_k, \dot{S}'_{E_k}, N)$, $c_{\mathrm{univ}}(E_k, S', N)(w)(f) = f(1,w)$.
The restriction of $\mathrm{d}\left(\!\!\sqrt[N]{\alpha_k}  \right)$ to
$S'_{E_k}$ is a $3$-cocycle
\[ \mathrm{Gal}(\overline{F}_{S' \cup N } / F) \times \mathrm{Gal}(E_k / F)^2
\rightarrow \mathrm{Maps}(S'_{E_k}, \mathcal{I}(F, S' \cup N)[N]) \]
such that
\[ \frac{ \mathrm{d}\left(\!\!\sqrt[N]{\alpha_k}  \right)(\sigma_1, \sigma_2,
\sigma_3)(w_1) }{ \mathrm{d}\left(\!\!\sqrt[N]{\alpha_k}  \right)(\sigma_1,
\sigma_2, \sigma_3)(w_2) } \in \mu_N(\overline{F}) \subset \mathcal{I}(F, S'
\cup N)[N]. \]
This property allows to pair $\mathrm{d}\left(\!\!\sqrt[N]{\alpha_k}
\right)(\sigma_1, \sigma_2, \sigma_3)$ with an element of $M(E_k,
\dot{S}'_{E_k}, N)^{\vee}[\dot{S}'_{E_k}]_0$ to get an element of $P(E_k,
\dot{S}'_{E_k}, N)$, as in \cite[Fact 3.2.3]{Kalgri}. This is the pairing used
in the definition of $\xi_k(S', N)$ \eqref{e:defxi}.  The $2$-cocycle
$\xi_k(S', N)$ is universal in the sense that for any morphism of algebraic
groups $f : P(E_k, \dot{S}'_{E_k}, N) \rightarrow Z$ over $F$ we have
\begin{equation} \label{e:univxik}
f_*(\xi_k(S', N)) = \mathrm{d}\left(\!\!\sqrt[N]{\alpha_k}  \right) \underset{E_k / F}{\sqcup} \Psi(E_k,S',N)(f).
\end{equation}

\begin{defi} \label{d:extk}
Let $k \geq 0$ and $N \geq 1$, and let $S'$ be a finite subset of $V$ containing
$S_k$. Define $\mathcal{E}_k(S', N)$ as the central extension $P(E_k,
\dot{S}'_{E_k}, N) \underset{\xi_k(S', N)}{\boxtimes} \mathrm{Gal}(
\overline{F}_{S' \cup N} / F)$.
\end{defi}
Recall that set-theoretically this is $P(E_k, \dot{S}'_{E_k}, N) \times
\mathrm{Gal}(\overline{F}_{S' \cup N} / F)$, with group law
\[ (x \boxtimes \sigma)(y \boxtimes \tau) = x \sigma(y) \xi_k(S', N)(\sigma,
\tau) \boxtimes \sigma \tau. \]

Suppose $Z \hookrightarrow T$ is an injective morphism of algebraic groups over
$F$ with $Z$ finite, $\exp(Z)|N$ and $T$ a torus split by $E_k$.  Denote $Y =
X_*(T)$ and $\overline{Y} = X_*(T/Z)$.  As shown in \cite[Proposition
3.7.8]{Kalgri}, we have a morphism
\begin{align*}
\iota_k(S',N) : \overline{Y}[S'_{E_k}, \dot{S}'_{E_k}]_0^{N_{E_k / F}}
\longrightarrow & Z^1(P(E_k, \dot{S}'_{E_k}, N) \rightarrow \mathcal{E}_k(S',
N), Z \rightarrow T(\mathcal{O}_{S' \cup N})) \\
\Lambda \longmapsto & \left( x \boxtimes \sigma \mapsto \Psi(E_k, S',
N)^{-1}([\Lambda])(x) \times \Big(\sqrt[N]{\alpha_k} \underset{E_k / F}{\sqcup}
N \Lambda \Big)(\sigma) \right)
\end{align*}
where $[\Lambda]$ is the image of $\Lambda$ in $A^{\vee}[ \dot{S}'_{E_k}
]_0^{N_{E_k / F}}$. As explained in the proof of \cite[Proposition
3.7.8]{Kalgri}, the fact that $\iota_k(S',N)(\Lambda)$ is a $1$-cocycle is
essentially equivalent to
\begin{equation} \label{e:cupTZ}
\mathrm{d}\left(\!\!\sqrt[N]{\alpha_k}  \right) \underset{E_k / F}{\sqcup} N
\Lambda = \mathrm{d}\left(\!\!\sqrt[N]{\alpha_k}  \right) \underset{E_k /
F}{\sqcup} [\Lambda].
\end{equation}
Note that different pairings are used to form cup-products in this equality:
\cite[(3.24)]{Kalgri} on the left, \cite[(3.3)]{Kalgri} on the right. To be
rigorous we should point out that \cite[Proposition 3.7.8]{Kalgri} is stated
with additional assumptions on $S'$, but it is easy to check that the first
point in this proposition does not use these assumptions.

As $N$ and $S'$ vary, there are natural morphisms between the extensions
$\mathcal{E}_k(S', N)$, compatible with $\iota_k(S',N)$. Verifying this is
purely formal, so we omit this verification.

The more challenging and interesting compatibility is when $k$ varies. This is
the main goal of this paper, and we can finally harvest the fruit of our
labour. Assume that $S'$ also contains $S_{k+1}$. Recall (\cite[(3.7)]{Kalgri})
the natural injection $M(E_k, \dot{S}'_{E_k}, N) \hookrightarrow M(E_{k+1},
\dot{S}'_{E_{k+1}}, N)$ mapping $f$ to
\[ (\sigma, w) \mapsto
\begin{cases} f(\sigma, w) & \text{ if } \sigma^{-1} \cdot w \in
\dot{V}_{E_{k+1}}, \\ 0 & \text{  otherwise.} \end{cases} \]
and the dual surjective morphism $\rho_k(S', N) : P(E_{k+1},
\dot{S}'_{E_{k+1}}, N) \rightarrow P(E_k, \dot{S}'_{E_k}, N)$.

It is formal to check that for any finite algebraic group $Z$ over $F$ such that
$\exp(Z) | N$ and the Galois action on $A := X^*(Z)$ factors through
$\mathrm{Gal}(E_k / F)$ and any finite $s' \subset V$, the following diagram is
commutative.
\begin{equation} \label{e:commPsiglobal}
\begin{tikzcd}[column sep=6em]
\mathrm{Hom}(P(E_k, \dot{S}'_{E_k}, N), Z) \arrow[r, "{\Psi(E_k, S', N)}"]
\arrow[d, "{\rho_k(S', N)^*}"] & A^{\vee}[\dot{S}'_{E_k}]_0^{N_{E_k / F}}
\arrow[d, "!_k"] \\
\mathrm{Hom}(P(E_{k+1}, \dot{S}'_{E_{k+1}}, N), Z) \arrow[r, "{\Psi(E_{k+1}, S',
N)}"] & A^{\vee}[\dot{S}'_{E_{k+1}}]_0^{N_{E_{k+1} / F}}
\end{tikzcd}
\end{equation}

\begin{prop} \label{p:towerglobal}
Let $k \geq 0$ and $N \geq 1$, and let $S'$ be a finite subset of $V$ containing
$S_{k+1}$.
\begin{enumerate}
\item Composition with $\rho_k(S', N)$ maps $\xi_{k+1}(S', N)$ to $\xi_k(S', N)$.
In particular, we have a natural morphism of extensions
\begin{alignat*}{2}
\mathcal{E}_{k+1}(S', N) & \longrightarrow & \mathcal{E}_k(S', N) \\
x \boxtimes \sigma & \longmapsto & \rho_k(S', N)(x) \boxtimes \sigma
\end{alignat*}
\item
Let $Z \hookrightarrow T$ be an injective morphism of algebraic groups over $F$
with $Z$ finite and $T$ a torus split by $E_k$. Assume that $\exp(Z)|N$. Let $Y
= X_*(T)$ and $\overline{Y} = X_*(T/Z)$. Then the following diagram commutes

\begin{tikzcd}[column sep=5em]
\overline{Y}[S'_{E_k}, \dot{S}'_{E_k}]_0^{N_{E_k / F}} \arrow[r, "{\iota_k(S',
N)}"] \arrow[d, "!_k"] & Z^1(P(E_k, \dot{S}'_{E_k}, N) \rightarrow
\mathcal{E}_k(S', N), Z \rightarrow T(\mathcal{O}_{S' \cup N})) \arrow[d] \\
\overline{Y}[S'_{E_{k+1}}, \dot{S}'_{E_{k+1}}]_0^{N_{E_{k+1} / F}} \arrow[r,
"{\iota_{k+1}(S', N)}"] & Z^1(P(E_{k+1}, \dot{S}'_{E_{k+1}}, N) \rightarrow
\mathcal{E}_{k+1}(S', N), Z \rightarrow T(\mathcal{O}_{S' \cup N}))
\end{tikzcd}

where the right vertical map is the inflation map induced by the morphism of extensions defined above.
\end{enumerate}
\end{prop}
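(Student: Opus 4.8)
The plan is to deduce both parts from three inputs already in hand: the universal property \eqref{e:univxik} of the cocycles $\xi_k$, the commutative square \eqref{e:commPsiglobal}, and the relation $\mathrm{AWES}^2_k(\sqrt[N]{\alpha_{k+1}}) = \sqrt[N]{\alpha_k}$ of Theorem~\ref{t:constructionTate} and Proposition~\ref{p:NthTate} — the latter being what powers Lemma~\ref{l:cupproductglobal} and the mild variant of it needed below.

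\emph{Part 1.} The key observation is that $\rho_k(S',N) : P(E_{k+1},\dot{S}'_{E_{k+1}},N) \to Z := P(E_k,\dot{S}'_{E_k},N)$ is itself an admissible test morphism for the level-$(k{+}1)$ instance of \eqref{e:univxik}: one has $\exp(Z)\mid N$ and the Galois action on $X^*(Z) = M(E_k,\dot{S}'_{E_k},N)$ already factors through $\mathrm{Gal}(E_k/F)$. Hence
\[ \rho_k(S',N)_*\bigl(\xi_{k+1}(S',N)\bigr) \;=\; \mathrm{d}\!\left(\sqrt[N]{\alpha_{k+1}}\right)\underset{E_{k+1}/F}{\sqcup}\Psi(E_{k+1},S',N)\bigl(\rho_k(S',N)\bigr), \]
and evaluating \eqref{e:commPsiglobal} at $\mathrm{Id}_{P(E_k,\dot{S}'_{E_k},N)}$ identifies $\Psi(E_{k+1},S',N)(\rho_k(S',N)) = {!}_k\bigl(c_{\mathrm{univ}}(E_k,S',N)\bigr)$. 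Comparing with $\xi_k(S',N) = \mathrm{d}(\sqrt[N]{\alpha_k})\sqcup_{E_k/F} c_{\mathrm{univ}}(E_k,S',N)$, Part 1 reduces to the identity
\[ \mathrm{d}\!\left(\sqrt[N]{\alpha_{k+1}}\right)\underset{E_{k+1}/F}{\sqcup}{!}_k(c) \;=\; \mathrm{d}\!\left(\sqrt[N]{\alpha_k}\right)\underset{E_k/F}{\sqcup}c, \qquad c := c_{\mathrm{univ}}(E_k,S',N). \]
This is exactly Lemma~\ref{l:cupproductglobal} with the $2$-cochain $\sqrt[N]{\alpha_k}$ replaced by the $3$-cochain $\mathrm{d}(\sqrt[N]{\alpha_k})$ and the cocharacter $\Lambda$ replaced by $c \in M(E_k,\dot{S}'_{E_k},N)^{\vee}[\dot{S}'_{E_k}]_0^{N_{E_k/F}}$. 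I would prove it by the same bookkeeping as there: expand the cup-product as a product over $\mathrm{Gal}(E_{k+1}/F)$, substitute $\sqrt[N]{\alpha_k} = \mathrm{AWES}^2_k(\sqrt[N]{\alpha_{k+1}})$ throughout the four factors of $\mathrm{d}(\sqrt[N]{\alpha_k})$, carry out the change of variables $u = \tau\cdot w$ exactly as in the proof of Lemma~\ref{l:AWadelic}, and kill the resulting error term using $N_{E_k/F}(c) = 0$ as in the proof of Lemma~\ref{l:cupproductglobal} (the relevant degree-raised bookkeeping being the one indicated in the remark following Lemma~\ref{l:AWadelic}). Once $\rho_k(S',N)_*\xi_{k+1}(S',N) = \xi_k(S',N)$ is known, the group laws of $\mathcal{E}_{k+1}(S',N)$ and $\mathcal{E}_k(S',N)$ are intertwined by $x\boxtimes\sigma\mapsto\rho_k(S',N)(x)\boxtimes\sigma$, which gives the asserted morphism of extensions.

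\emph{Part 2.} With this morphism fixed, the right-hand vertical map is pullback of $1$-cocycles along it, and I would verify commutativity of the square by evaluating both composites at an arbitrary $y\boxtimes\sigma \in \mathcal{E}_{k+1}(S',N) = P(E_{k+1},\dot{S}'_{E_{k+1}},N)\times\mathrm{Gal}(\overline{F}_{S'\cup N}/F)$ and matching separately the two factors in the defining formula for $\iota$. For the finite factor: $[{!}_k\Lambda] = {!}_k[\Lambda]$ by compatibility of ${!}_k$ with the coefficient map $\overline{Y}\to A^{\vee}$, and reading \eqref{e:commPsiglobal} backwards gives $\Psi(E_{k+1},S',N)^{-1}([{!}_k\Lambda]) = \rho_k(S',N)^*\bigl(\Psi(E_k,S',N)^{-1}([\Lambda])\bigr)$; evaluating at $y$ reproduces $\Psi(E_k,S',N)^{-1}([\Lambda])\bigl(\rho_k(S',N)(y)\bigr)$, which is the finite factor of the pulled-back cocycle — and this also shows the pullback lands in the asserted $Z^1$, its restriction to $P(E_{k+1},\dot{S}'_{E_{k+1}},N)$ being that same morphism. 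For the $\mathrm{Gal}$ factor: since $\exp(Z)\mid N$ we have $N\Lambda\in Y[S'_{E_k}]_0^{N_{E_k/F}}$ and ${!}_k(N\Lambda) = N\,{!}_k\Lambda$, so Lemma~\ref{l:cupproductglobal} applied to $N\Lambda$ yields $\sqrt[N]{\alpha_k}\sqcup_{E_k/F}N\Lambda = \sqrt[N]{\alpha_{k+1}}\sqcup_{E_{k+1}/F}N\,{!}_k\Lambda$. Hence the two composites coincide.

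\emph{Main obstacle.} Part 2 and the reduction step of Part 1 are formal. The only real content is the $3$-cochain version of Lemma~\ref{l:cupproductglobal}: the computation itself is routine, but it is exactly here that the work of Section~4 gets used — the compatible families of Tate cocycles and of their $N$-th roots, and above all the relation $\mathrm{AWES}^2_k(\sqrt[N]{\alpha_{k+1}}) = \sqrt[N]{\alpha_k}$. This is the sense in which the proposition harvests the fruit of the earlier sections.
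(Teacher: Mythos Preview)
Your Part 2 is exactly what the paper does: apply Lemma~\ref{l:cupproductglobal} to $N\Lambda \in Y[S'_{E_k}]_0^{N_{E_k/F}}$ for the Galois factor, and use \eqref{e:commPsiglobal} for the finite factor.

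For Part 1 you take a genuinely different route from the paper. You reduce to the $3$-cochain identity
\[ \mathrm{d}\!\left(\sqrt[N]{\alpha_{k+1}}\right)\underset{E_{k+1}/F}{\sqcup}{!}_k(c) \;=\; \mathrm{d}\!\left(\sqrt[N]{\alpha_k}\right)\underset{E_k/F}{\sqcup}c \]
and propose to prove it by a direct computation ``by the same bookkeeping'' as Lemma~\ref{l:cupproductglobal}. The paper instead deliberately \emph{avoids} this direct computation (the proof says so explicitly): it embeds $Z = P(E_k,\dot{S}'_{E_k},N)$ in a torus $T$ with $X_*(T)$ a free $\Z[\mathrm{Gal}(E_k/F)]$-module, uses cohomological triviality of $Y[S'_{E_k}]_0$ to lift $c$ to some $\Lambda \in \overline{Y}[S'_{E_k},\dot{S}'_{E_k}]_0^{N_{E_k/F}}$ with $[\Lambda]=c$, applies Lemma~\ref{l:cupproductglobal} verbatim to $N\Lambda \in Y[S'_{E_k}]_0^{N_{E_k/F}}$, takes $\mathrm{d}$, and then invokes \eqref{e:cupTZ} on both sides to pass from the pairing \cite[(3.24)]{Kalgri} to the pairing \cite[(3.3)]{Kalgri}. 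This reduces everything to lemmas already on the shelf.

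Your direct approach is plausible but not quite ``the same bookkeeping'': the cup products defining $\xi_k$ use the pairing \cite[(3.3)]{Kalgri} with $A^{\vee}$-coefficients and rely on the $\mu_N$-valued ratio property of $\mathrm{d}(\sqrt[N]{\alpha_k})$, whereas Lemma~\ref{l:cupproductglobal} uses the pairing \cite[(3.24)]{Kalgri} with $Y$-coefficients; and no $\mathrm{AWES}^3$ has been set up, so ``substituting $\mathrm{AWES}^2_k(\sqrt[N]{\alpha_{k+1}})$ throughout the four factors of $\mathrm{d}(\sqrt[N]{\alpha_k})$'' is a longer unwinding than in the degree-$1$ case. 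The remark after Lemma~\ref{l:AWadelic} does not cover this. So what the paper's torus-embedding trick buys is precisely bypassing both of these issues: it reuses Lemma~\ref{l:cupproductglobal} and \eqref{e:cupTZ} as black boxes rather than redoing the computation one degree up with a different pairing.
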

\begin{proof}
\begin{enumerate}
\item
We use an argument similar to the proof of \cite[Lemma 3.2.8]{Kalgri}. We will
apply Lemma \ref{l:cupproductglobal}. This way we avoid explicit computations
with $3$-cocycles $\mathrm{d}\left(\!\!\sqrt[N]{\alpha_k} \right)$. Denote $Z =
P(E_k, \dot{S}'_{E_k}, N)$ and $A = X^*(Z)$.  Fix a surjective morphism $X
\rightarrow A$ where $X$ is a free $\Z[\mathrm{Gal}(E_k / F)]$-module, and let
$\overline{X}$ be the kernel.  Associated to $X, \overline{X}$ are tori $T,
\overline{T}$ and a short exact sequence $1 \rightarrow Z \rightarrow T
\rightarrow \overline{T} \rightarrow 1$.  Let $Y = X_*(T) =
\mathrm{Hom}_{\Z}(X, \Z)$ and $\overline{Y} = X_*(\overline{T}) =
\mathrm{Hom}_{\Z}(\overline{X}, \Z)$.  We have a short exact sequence $ 0
\rightarrow Y[S'_{E_k}]_0 \rightarrow \overline{Y}[S'_{E_k}]_0 \rightarrow
A^{\vee}[S'_{E_k}]_0 \rightarrow 0 $, where $A = \mathrm{Hom}(X^*(Z), \Q /
\Z)$. The $\mathrm{Gal}(E_k / F)$-modules $Y$ and $Y[S'_{E_k}]$ are
cohomologically trivial (for Tate cohomology) and we have a short exact
sequence $0 \rightarrow Y[S'_{E_k}]_0 \rightarrow Y[S'_{E_k}] \rightarrow Y
\rightarrow 0$, therefore $Y[S'_{E_k}]_0$ is also cohomologically trivial.
This implies in particular that there exists $\Lambda \in
\overline{Y}[S'_{E_k}]_0^{N_{E_k / F}}$ mapping to the class of
$c_{\mathrm{univ}}(E_k, S', N)$ in $A^{\vee}[S'_{E_k}]_0^{N_{E_k / F}} / I_{E_k
/ F} \left( A^{\vee}[S'_{E_k}]_0 \right)$.  Since $I_{E_k / F} \left(
\overline{Y}[S'_{E_k}]_0 \right)$ surjects to $I_{E_k / F} \left(
A^{\vee}[S'_{E_k}]_0 \right)$, we can even assume that the image $[\Lambda]$ of
$\Lambda$ in $A^{\vee}[S'_{E_k}]_0^{N_{E_k / F}}$ equals
$c_{\mathrm{univ}}(E_k, S', N)$.  Then $\Lambda \in \overline{Y}[S'_{E_k},
\dot{S}'_{E_k}]_0^{N_{E_k / F}}$, and applying Lemma \ref{l:cupproductglobal}
to $N \Lambda \in Y[S_{E_k}']_0^{N_{E_k/F}}$ and taking the coboundary, we
obtain the identity between $2$-cocycles taking values in $Z$
\[ \mathrm{d}\left(\!\!\sqrt[N]{\alpha_k}\right) \underset{E_k / F}{\sqcup} N \Lambda  = \mathrm{d}\left(\!\!\sqrt[N]{\alpha_{k+1}}\right) \underset{E_{k+1} / F}{\sqcup} !_k(N\Lambda). \]
Using identity \eqref{e:cupTZ} on both sides, we obtain
\[ \xi_k(S', N) = \mathrm{d}\left(\!\!\sqrt[N]{\alpha_{k+1}}\right) \underset{E_{k+1} / F}{\sqcup} [!_k(\Lambda)]. \]
Moreover $[!_k(\Lambda)] = !_k([\Lambda]) = !_k(c_{\mathrm{univ}}(E_k, S', N)) =
!_k \left( \Psi(E_k, S', N) \left( \mathrm{Id}_{P(E_k, \dot{S}'_{E_k}, N)}
\right)\right)$ equals $\Psi(E_{k+1}, S', N) \left( \rho_k(S', N) \right)$ by
commutativity of diagram \eqref{e:commPsiglobal}. Therefore
\begin{align*}
\xi_k(S', N) =& \mathrm{d}\left(\!\!\sqrt[N]{\alpha_{k+1}}\right) \underset{E_{k+1} / F}{\sqcup} \Psi(E_{k+1}, S', N)(\rho_k(S', N)) \\
=& \rho_k(S', N)_*(\xi_{k+1}(S', N)).
\end{align*}
\item
This is a direct consequence of Lemma \ref{l:cupproductglobal} applied to $N
\Lambda$, using also the commutative diagram \eqref{e:commPsiglobal} for
$[\Lambda]$ and \eqref{e:univxik}.
\end{enumerate}
\end{proof}

Let $Z \hookrightarrow T$ be an injective morphism of algebraic groups over $F$ with $Z$ finite and $T$ a torus.
Let $Y = X_*(T)$ and $\overline{Y} = X_*(T/Z)$, and denote
\[ \overline{Y}[V_{\overline{F}}, \dot{V}]_0^{N_{/F}} = \varinjlim_{k,S'} \overline{Y}[S'_{E_k}, \dot{S}'_{E_k}]_0^{N_{E_k/F}} \]
where the limit is over pairs $k,S'$ such that $E_k$ splits $T$ and $S' \supset S_k$.

\begin{coro} \label{c:spliceglobal}
Let $Z \hookrightarrow T$ be an injective morphism of algebraic groups over $F$
with $Z$ finite and $T$ a torus.  Let $\overline{T} = T / Z$ and let $Y =
X_*(T)$, $\overline{Y} = X_*(\overline{T})$.  Then the morphisms
$\left(\iota_k(S',N)\right)_{k,S',N}$, for $k,S',N$ such that $E_k$ splits $T$,
$\exp(Z) | N$ and $S' \supset S_k$, splice into a morphism
\[ \iota : \overline{Y}[V_{\overline{F}}, \dot{V}]_0^{N_{/F}} \rightarrow Z^1(P \rightarrow \mathcal{E}, Z \rightarrow T(\overline{F})) \]
lifting the cohomological morphism $\iota_{\dot{V}}$ of \cite[Theorem
3.7.3]{Kalgri}.
\end{coro}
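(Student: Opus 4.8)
The corollary will follow formally from Theorem \ref{t:constructionTate}, Lemma \ref{l:cupproductglobal} and Proposition \ref{p:towerglobal} by passing to a direct limit on both sides. The plan is first to exhibit both the source and the target as direct limits over the directed set $\mathcal{T}$ of triples $(k,S',N)$ with $E_k$ splitting $T$, $\exp(Z)\mid N$, and $S'\supset S_k$ a finite subset of $V$; directedness is clear, since given two triples one takes $k$ large enough that $E_k$ contains the compositum of the two fields and splits $T$, $S'$ the union of the two sets together with $S_k$, and $N$ the least common multiple of the two integers and $\exp(Z)$. On the source side this is, up to enlarging the index set by the (irrelevant) parameter $N$, the definition $\overline{Y}[V_{\overline{F}},\dot{V}]_0^{N_{/F}} = \varinjlim_{k,S'} \overline{Y}[S'_{E_k},\dot{S}'_{E_k}]_0^{N_{E_k/F}}$ recalled just before the statement. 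On the target side one uses that $P\to\mathcal{E}\to\mathrm{Gal}(\overline{F}/F)$ is the projective limit over $\mathcal{T}$ of the extensions $P(E_k,\dot{S}'_{E_k},N)\to\mathcal{E}_k(S',N)\to\mathrm{Gal}(\overline{F}_{S'\cup N}/F)$ of Definition \ref{d:extk} --- the transition maps being those of Proposition \ref{p:towerglobal}(1) as $k$ grows and the formal ones recalled before that proposition as $(S',N)$ grows --- together with $T(\overline{F})=\varinjlim_{S',N} T(\mathcal{O}_{S'\cup N})$. Since a continuous $1$-cocycle for $(P\to\mathcal{E},\,Z\to T(\overline{F}))$ is inflated from some finite level $\mathcal{E}_k(S',N)$ with values in $T(\mathcal{O}_{S'\cup N})$, this gives
\[ Z^1(P\to\mathcal{E},\, Z\to T(\overline{F})) = \varinjlim_{(k,S',N)\in\mathcal{T}} Z^1\big(P(E_k,\dot{S}'_{E_k},N)\to\mathcal{E}_k(S',N),\, Z\to T(\mathcal{O}_{S'\cup N})\big). \]

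Next I would define $\iota$ on a class represented by $\Lambda\in\overline{Y}[S'_{E_k},\dot{S}'_{E_k}]_0^{N_{E_k/F}}$ by choosing any $N$ with $\exp(Z)\mid N$, so that $(k,S',N)\in\mathcal{T}$, and letting $\iota(\Lambda)$ be the image of $\iota_k(S',N)(\Lambda)$ under the structure map into the direct limit. Well-definedness is exactly compatibility of the $\iota_k(S',N)$ with the transition maps of $\mathcal{T}$: the independence of the choice of $N$ and the invariance under enlarging $S'$ are the formal compatibilities recalled before Proposition \ref{p:towerglobal}, while increasing $k$ is precisely the commutative square of Proposition \ref{p:towerglobal}(2), whose right-hand vertical arrow is the inflation along $\mathcal{E}_{k+1}(S',N)\to\mathcal{E}_k(S',N)$. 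Since each $\iota_k(S',N)$ is a group homomorphism, so is $\iota$.

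Finally, to see that $\iota$ lifts $\iota_{\dot{V}}$: each $\iota_k(S',N)$ lifts the cohomological morphism of \cite[Proposition 3.7.8]{Kalgri} at the corresponding finite level (the content of the first point of loc.\ cit., which is the identity \eqref{e:cupTZ} together with \cite[Proposition 3.7.8]{Kalgri}), and $\iota_{\dot{V}}$ of \cite[Theorem 3.7.3]{Kalgri} is by construction the direct limit of these maps; since forming $H^1$ commutes with the filtered colimits above, passing $\iota$ to cohomology classes yields $\iota_{\dot{V}}$.

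I do not expect a serious obstacle: all the analytic content has already been spent in Theorem \ref{t:constructionTate} and in the cup-product computation of Proposition \ref{p:towerglobal}, and what remains is bookkeeping --- identifying $\mathcal{E}$ with the projective limit of the $\mathcal{E}_k(S',N)$ and checking that the $(S',N)$-transition maps intertwine the $\iota$'s --- which is the reason the paper declares these verifications "purely formal" and omits them. The one point requiring a little care is the claim that every continuous cocycle for $(P\to\mathcal{E},\,Z\to T(\overline{F}))$ is inflated from a finite level with values in $T(\mathcal{O}_{S'\cup N})$; this follows from the projective-limit description of $P\to\mathcal{E}$ together with the finite generation of the image of a cocycle, and is implicit in the setup of \cite[\S 3.6]{Kalgri}.
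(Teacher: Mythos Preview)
Your argument is correct and is exactly the formal verification the paper leaves implicit by stating the result as a corollary without proof: compatibility as $k$ varies is Proposition \ref{p:towerglobal}(2), compatibility as $(S',N)$ varies is the ``purely formal'' check the paper omits just before that proposition, and passing to the limit gives $\iota$. One small over-complication: you do not need to identify the target $Z^1(P\to\mathcal{E},Z\to T(\overline{F}))$ with a direct limit (the point you flag as requiring care); it suffices that inflation gives a map \emph{from} each finite-level cocycle group into it, and the compatibilities ensure the resulting composites agree on the source colimit.
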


\subsection{Generalized Tate-Nakayama morphism for the local towers}

In this section we fix $v \in V$.  We want to study the relation of the map
$\iota$ defined in Corollary \ref{c:spliceglobal} with the localization map
$\mathrm{loc}_v$ defined in \cite[\S 3.6]{Kalgri}. This will necessitate
defining $\mathrm{loc}_v$ (for varying $k,S',N$) for cochains rather than in
cohomology. The first step is to recall several constructions from
\cite{Kalri}. We choose notation similar to the global case instead of notation
used in \cite{Kalri}. For $k \geq 0$ and $N \geq 1$, we have a central
extension
\[ P(E_{k, \dot{v}}, N) \rightarrow \mathcal{E}_{k,v}(N) \rightarrow
\mathrm{Gal}(\overline{F_v} / F_v) \]
where $P(E_{k, \dot{v}}, N) := \mathrm{Res}_{E_{k, \dot{v}} / F_v} (\mu_N) /
\mu_N$.  In particular $M(E_{k, \dot{v}}, N) := X^*(P(E_{k, \dot{v}}, N))$ can
be identified with $\Z/N\Z[\mathrm{Gal}(E_{k, \dot{v}}/F_v)]_0$.  The central
extension
\[ \mathcal{E}_{k,v}(N) := P(E_{k, \dot{v}}, N)
\underset{\xi_{k,v}(N)}{\boxtimes} \mathrm{Gal}(\overline{F_v} / F_v) \]
is defined using the $2$-cocycle
\[ \xi_{k,v}(N) := \mathrm{d}\left(\!\!\sqrt[N]{\alpha_{k,v}}\right)
\underset{E_{k, \dot{v}} / F_v}{\sqcup} c_{\mathrm{univ}}(E_{k, \dot{v}}, N) \]
where $c_{\mathrm{univ}}(E_{k, \dot{v}}, N) \in X^*(P(E_{k, \dot{v}},
N))^{\vee}$ is killed by $N_{E_{k, \dot{v}} / F_v}$, and is defined as $f
\mapsto f(1)$.

Suppose $Z \hookrightarrow T$ is an injective morphism of algebraic groups over
$F_v$ with $Z$ finite, $\exp(Z)|N$ and $T$ a torus split by $E_{k,\dot{v}}$.
Denote $Y = X_*(T)$ and $\overline{Y} = X_*(T/Z)$.  We have a morphism
\begin{align*}
\iota_{k,v}(N) : \overline{Y}^{N_{E_{k,\dot{v}}/F_v}} \longrightarrow &
Z^1(P(E_{k, \dot{v}}, N) \rightarrow \mathcal{E}_{k,v}(N), Z \rightarrow
T(\overline{F_v})) \\
\Lambda \longmapsto & \left( x \boxtimes \sigma \mapsto \Psi(E_{k,\dot{v}},
N)^{-1}([\Lambda])(x) \times \Big(\sqrt[N]{\alpha_{k,v}} \underset{E_{k,\dot{v}}
/ F_v}{\sqcup} N \Lambda \Big)(\sigma) \right)
\end{align*}

The following lemma and proposition, using a formulation analogous to Lemma
\ref{l:cupproductglobal} and Proposition \ref{p:towerglobal}, are essentially
proved in \cite[Lemma 4.5 and Lemma 4.7]{Kalri}. Note that we have arranged for
the $1$-cochain denoted $\alpha_k$ in \cite[Lemma 4.5]{Kalri} to be trivial.
This slightly simplifies formulae.

\begin{lemm} \label{l:cupproductlocal}
Let $T$ be a torus defined over $F_v$.
Denote $Y = X_*(T)$.
Let $k$ be big enough so that $E_{k,\dot{v}}$ splits $T$.
Let $N \geq 1$ be an integer.
Let $\Lambda \in Y^{N_{E_{k,\dot{v}}/F_v}}$.
Then we have an equality of maps $\mathrm{Gal}(\overline{F_v} / F_v) \rightarrow T(\overline{F_v})$:
\[ \sqrt[N]{\alpha_{k,v}} \underset{E_{k,\dot{v}} / F_v}{\sqcup} \Lambda  = \sqrt[N]{\alpha_{k+1,v}} \underset{E_{k+1,\dot{v}} / F_v}{\sqcup} \Lambda. \]
\end{lemm}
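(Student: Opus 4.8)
The plan is to follow the proof of Lemma~\ref{l:cupproductglobal}, simplified by the fact that in the local setting there is no Eckmann--Shapiro bookkeeping and $\mathrm{AWES}^2_k$ degenerates to the map $\mathrm{AW}^2_{k,v}$. First I would unwind the definition of the cup-product: for $\sigma \in \mathrm{Gal}(\overline{F_v}/F_v)$, with images $\sigma_k$ and $\sigma_{k+1}$ in $\mathrm{Gal}(E_{k,\dot{v}}/F_v)$ and $\mathrm{Gal}(E_{k+1,\dot{v}}/F_v)$ respectively,
\[ \Big( \sqrt[N]{\alpha_{k,v}} \underset{E_{k,\dot{v}} / F_v}{\sqcup} \Lambda \Big)(\sigma) = \prod_{\tau \in \mathrm{Gal}(E_{k,\dot{v}}/F_v)} \sqrt[N]{\alpha_{k,v}}(\sigma_k, \tau) \otimes \sigma_k\tau(\Lambda), \]
and write the analogous expression for $\sqrt[N]{\alpha_{k+1,v}}$ over $E_{k+1,\dot{v}}/F_v$, with $\tau$ now ranging over $\mathrm{Gal}(E_{k+1,\dot{v}}/F_v)$. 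Since $E_{k,\dot{v}}$ splits $T$, the Galois action on $Y$ already factors through $\mathrm{Gal}(E_{k,\dot{v}}/F_v)$.

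Next I would insert the compatibility $\sqrt[N]{\alpha_{k,v}} = \mathrm{AW}^2_{k,v}(\sqrt[N]{\alpha_{k+1,v}})$ supplied by the third condition of Proposition~\ref{p:Nthlocal}. Fixing a section $s : \mathrm{Gal}(E_{k,\dot{v}}/F_v) \to \mathrm{Gal}(E_{k+1,\dot{v}}/F_v)$ with $s(1)=1$ and writing $N' = \mathrm{Gal}(E_{k+1,\dot{v}}/E_{k,\dot{v}})$, this reads
\[ \sqrt[N]{\alpha_{k,v}}(\sigma_k, \tau) = \prod_{n \in N'} \frac{\sqrt[N]{\alpha_{k+1,v}}(\sigma_{k+1}, n\,s(\tau))}{\sqrt[N]{\alpha_{k+1,v}}(\sigma_{k+1}, n)}. \]
On the other side, parametrizing $\mathrm{Gal}(E_{k+1,\dot{v}}/F_v)$ bijectively as $\{\,n\,s(\tau) \mid n \in N',\ \tau \in \mathrm{Gal}(E_{k,\dot{v}}/F_v)\,\}$ and using that $n\,s(\tau)$ acts on $\Lambda$ as $\tau$ (as $N'$ acts trivially on $Y$) while $\sigma_{k+1}$ acts as $\sigma_k$, the right-hand cup-product becomes $\prod_{\tau}\prod_{n \in N'} \sqrt[N]{\alpha_{k+1,v}}(\sigma_{k+1}, n\,s(\tau)) \otimes \sigma_k\tau(\Lambda)$.

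Dividing the two expressions, the factors indexed by $n\,s(\tau)$ cancel and what survives is
\[ \prod_{\tau \in \mathrm{Gal}(E_{k,\dot{v}}/F_v)} \prod_{n \in N'} \sqrt[N]{\alpha_{k+1,v}}(\sigma_{k+1}, n) \otimes \sigma_k\tau(\Lambda). \]
Since $\sqrt[N]{\alpha_{k+1,v}}(\sigma_{k+1}, n)$ does not depend on $\tau$, this equals $\prod_{n \in N'} \big( \sqrt[N]{\alpha_{k+1,v}}(\sigma_{k+1}, n) \otimes \sigma_k N_{E_{k,\dot{v}}/F_v}(\Lambda) \big)$, which is trivial because $\Lambda \in Y^{N_{E_{k,\dot{v}}/F_v}}$ means $N_{E_{k,\dot{v}}/F_v}(\Lambda) = 0$. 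Hence the two cup-products coincide. I expect no genuine obstacle here: this is the local, hence strictly easier, counterpart of Lemma~\ref{l:cupproductglobal}, and the only care required is the bookkeeping of the section $s$ together with the fact that the cocharacter lattice $Y$ is already a $\mathrm{Gal}(E_{k,\dot{v}}/F_v)$-module, which replaces the change of variable $u = \tau \cdot w$ and the vanishing of $N_{E_k/F}(\Lambda)$ used in the global proof.
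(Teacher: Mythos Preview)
Your proof is correct. The paper does not spell out its own argument here, deferring instead to \cite[Lemma~4.5 and Lemma~4.7]{Kalri} and noting that the statement is the local analogue of Lemma~\ref{l:cupproductglobal}; your proof is precisely that analogue, carried out directly, and matches the intended approach.
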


As in the global case, there are natural morphisms $\rho_{k,v}(N) : P(E_{k+1, \dot{v}}, N) \rightarrow P(E_{k,\dot{v}}, N)$, denoted $p$ in \cite[(3.2)]{Kalri}.
There are also natural morphisms as $N$ varies, which we do not bother to name.
As in the global case \eqref{e:commPsiglobal}, for any finite algebraic group $Z$ over $F_v$ such that $\exp(Z) | N$ and the Galois action on $A := X^*(Z)$ factors through $\mathrm{Gal}(E_{k,\dot{v}} / F_v)$, we have a commutative diagram:
\begin{equation} \label{e:commPsilocal}
\begin{tikzcd}[column sep=6em]
\mathrm{Hom}(P(E_{k,\dot{v}}, N), Z) \arrow[r, "{\Psi(E_{k,\dot{v}}, N)}"] \arrow[d, "{\rho_{k,v}(N)^*}"] & (A^{\vee})^{N_{E_{k,\dot{v}} / F_v}} \arrow[d, hook] \\
\mathrm{Hom}(P(E_{k+1,\dot{v}}, N), Z) \arrow[r, "{\Psi(E_{k+1, \dot{v}}, N)}"] & (A^{\vee})^{N_{E_{k+1,\dot{v}} / F_v}}
\end{tikzcd}
\end{equation}

\begin{prop} \label{p:towerlocal}
Let $k \geq 0$ and $N \geq 1$.
\begin{enumerate}
\item
Composition with $\rho_{k,v}(N)$ maps $\xi_{k+1,v}(N)$ to $\xi_{k,v}(N)$.
In particular, we have a natural morphism of extensions
\begin{align*}
\mathcal{E}_{k+1,v}(N) \longrightarrow & \mathcal{E}_{k,v}(N) \\
x \boxtimes \sigma \longmapsto & \rho_{k,v}(N)(x) \boxtimes \sigma.
\end{align*}
\item
Let $Z \hookrightarrow T$ be an injective morphism of algebraic groups over $F_v$ with $Z$ finite and $T$ a torus split by $E_{k,\dot{v}}$.
Assume that $\exp(Z)|N$.
Let $Y = X_*(T)$ and $\overline{Y} = X_*(T/Z)$.
Then the following diagram commutes
\[
\begin{tikzcd}[column sep=4em]
\overline{Y}^{N_{E_{k,\dot{v}} / F_v}} \arrow[r, "{\iota_{k,v}(N)}"] \arrow[d, equal] & Z^1(P(E_{k,\dot{v}}, N) \rightarrow \mathcal{E}_{k,v}(N), Z \rightarrow T(\overline{F_v})) \arrow[d] \\
\overline{Y}^{N_{E_{k+1,\dot{v}} / F_v}} \arrow[r, "{\iota_{k+1,v}(N)}"] & Z^1(P(E_{k+1,v}, N) \rightarrow \mathcal{E}_{k+1,v}(N), Z \rightarrow T(\overline{F_v}))
\end{tikzcd}
\]
where the right vertical map is inflation for the morphism of extensions defined above.
\end{enumerate}
\end{prop}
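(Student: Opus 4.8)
The plan is to mirror, step by step, the proof of Proposition~\ref{p:towerglobal}, replacing each global ingredient by its local counterpart: Lemma~\ref{l:cupproductlocal} in place of Lemma~\ref{l:cupproductglobal}, the commutative square \eqref{e:commPsilocal} in place of \eqref{e:commPsiglobal}, and the evident local versions of \eqref{e:univxik} and \eqref{e:cupTZ} (which, as already noted, are essentially \cite[Lemmas 4.5 and 4.7]{Kalri}). In particular no new manipulation of the $3$-cochains $\mathrm{d}(\sqrt[N]{\alpha_{k,v}})$ is needed; the substantive work is already done. Since the base of the local tower is a decomposition group rather than a full Galois group, all the ``$S'$'' and ``$!_k$'' bookkeeping of the global case simply disappears.

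For the first assertion I would run the argument of \cite[Lemma 3.2.8]{Kalgri} verbatim. Put $Z = P(E_{k,\dot{v}}, N)$, $A = X^*(Z)$, choose a surjection $X \twoheadrightarrow A$ with $X$ a finitely generated free $\Z[\mathrm{Gal}(E_{k,\dot{v}}/F_v)]$-module, let $\overline{X}$ be its kernel, and let $Z \hookrightarrow T$, $\overline{T} = T/Z$ be the associated tori, so that with $Y = X_*(T) = \Hom_{\Z}(X,\Z)$ and $\overline{Y} = X_*(\overline{T}) = \Hom_{\Z}(\overline{X},\Z)$ one has a short exact sequence $0 \to Y \to \overline{Y} \to A^\vee \to 0$ of $\mathrm{Gal}(E_{k,\dot{v}}/F_v)$-modules. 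As $Y$ is free over the group ring it is cohomologically trivial, so the Tate cohomology sequence gives surjectivity of $\widehat{H}^{-1}(\overline{Y}) \to \widehat{H}^{-1}(A^\vee)$; combined with surjectivity of $I_{E_{k,\dot{v}}/F_v}\overline{Y} \to I_{E_{k,\dot{v}}/F_v}A^\vee$ this produces $\Lambda \in \overline{Y}^{N_{E_{k,\dot{v}}/F_v}}$ whose image $[\Lambda]$ in $(A^\vee)^{N_{E_{k,\dot{v}}/F_v}}$ is exactly $c_{\mathrm{univ}}(E_{k,\dot{v}}, N)$. Since $\exp(Z)\mid N$ one has $N\Lambda \in Y^{N_{E_{k,\dot{v}}/F_v}}$, so applying Lemma~\ref{l:cupproductlocal} to $N\Lambda$ and taking coboundaries yields the identity of $Z$-valued $2$-cocycles $\mathrm{d}(\sqrt[N]{\alpha_{k,v}}) \sqcup_{E_{k,\dot{v}}/F_v} N\Lambda = \mathrm{d}(\sqrt[N]{\alpha_{k+1,v}}) \sqcup_{E_{k+1,\dot{v}}/F_v} N\Lambda$. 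The local form of \eqref{e:cupTZ} rewrites the left side as $\mathrm{d}(\sqrt[N]{\alpha_{k,v}}) \sqcup [\Lambda] = \xi_{k,v}(N)$ and the right side as $\mathrm{d}(\sqrt[N]{\alpha_{k+1,v}}) \sqcup [\Lambda]$, where on the right $[\Lambda]$ now denotes the image of $\Lambda$ in $(A^\vee)^{N_{E_{k+1,\dot{v}}/F_v}}$; by \eqref{e:commPsilocal} applied to $\mathrm{Id}_Z$ this image equals $\Psi(E_{k+1,\dot{v}}, N)(\rho_{k,v}(N))$, so by the local form of \eqref{e:univxik} the right side is $\rho_{k,v}(N)_*(\xi_{k+1,v}(N))$. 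Hence $\xi_{k,v}(N) = \rho_{k,v}(N)_*(\xi_{k+1,v}(N))$, which is precisely the statement that $x \boxtimes \sigma \mapsto \rho_{k,v}(N)(x) \boxtimes \sigma$ is a morphism of extensions.

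The second assertion I would obtain by unwinding both composites. Given $\Lambda \in \overline{Y}^{N_{E_{k,\dot{v}}/F_v}}$, inflating $\iota_{k,v}(N)(\Lambda)$ along the morphism of the first part sends $x \boxtimes \sigma$ to $\Psi(E_{k,\dot{v}},N)^{-1}([\Lambda])(\rho_{k,v}(N)(x)) \times \bigl(\sqrt[N]{\alpha_{k,v}} \sqcup_{E_{k,\dot{v}}/F_v} N\Lambda\bigr)(\sigma)$, whereas $\iota_{k+1,v}(N)(\Lambda)$ sends it to $\Psi(E_{k+1,\dot{v}},N)^{-1}([\Lambda])(x) \times \bigl(\sqrt[N]{\alpha_{k+1,v}} \sqcup_{E_{k+1,\dot{v}}/F_v} N\Lambda\bigr)(\sigma)$. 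The two cup-product factors coincide by Lemma~\ref{l:cupproductlocal}, and the two $\Psi$-factors coincide because $\Psi(E_{k+1,\dot{v}},N)^{-1}([\Lambda]) = \Psi(E_{k,\dot{v}},N)^{-1}([\Lambda]) \circ \rho_{k,v}(N)$, which is the commutativity of \eqref{e:commPsilocal}. Here one uses that $\overline{Y}$ is torsion-free, so that $N_{E_{k+1,\dot{v}}/F_v}$ and $[E_{k+1,\dot{v}}:E_{k,\dot{v}}]\cdot N_{E_{k,\dot{v}}/F_v}$ have the same kernel on $\overline{Y}$ and the left vertical map of the diagram really is the identity.

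I do not expect a genuine obstacle: the argument is entirely parallel to the global case, which is already done. The only points needing a little attention are the cohomological choice of $\Lambda$ in the first part (resting, as in \cite[Lemma 3.2.8]{Kalgri}, on the cohomological triviality of the free module $Y$) and checking that the appropriate local versions of \eqref{e:univxik} and \eqref{e:cupTZ} hold verbatim — both of which are contained in \cite[\S 4.5]{Kalri} and in the discussion around \cite[Proposition 3.7.8]{Kalgri}, transcribed from the tower $(E_k)$ to the tower $(E_{k,\dot{v}})$.
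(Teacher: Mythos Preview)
Your proposal is correct and follows exactly the approach the paper intends: the paper's own proof simply reads ``The proof is similar to that of Proposition~\ref{p:towerglobal}, in fact slightly easier, so we omit it,'' and you have carried out precisely that parallel argument, replacing Lemma~\ref{l:cupproductglobal} and diagram~\eqref{e:commPsiglobal} by their local counterparts Lemma~\ref{l:cupproductlocal} and~\eqref{e:commPsilocal}. The simplifications you note (no $S'$ or $!_k$ bookkeeping) are exactly why the paper calls the local case easier.
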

\begin{proof}
The proof is similar to that of Proposition \ref{p:towerglobal}, in fact slightly easier, so we omit it.
\end{proof}

Let $Z \hookrightarrow T$ be an injective morphism of algebraic groups over $F_v$ with $Z$ finite and $T$ a torus.
Let $Y = X_*(T)$ and $\overline{Y} = X_*(T/Z)$.
Denote $\overline{Y}^{N_{/F_v}} = \overline{Y}^{N_{E_{k, \dot{v}}/F_v}}$ for any $k$ such that $E_{k,\dot{v}}$ splits $T$.

\begin{coro} \label{c:splicelocal}
Let $Z \hookrightarrow T$ be an injective morphism of algebraic groups over $F_v$ with $Z$ finite and $T$ a torus.
Let $Y = X_*(T)$ and $\overline{Y} = X_*(T/Z)$.
Then the morphisms $\left(\iota_{k,v}(N)\right)_{k,N}$, for $k,N$ such that $E_{k,\dot{v}}$ splits $T$ and $\exp(Z) | N$, splice into a morphism
\[ \iota_v : \overline{Y}^{N_{/F_v}} \rightarrow Z^1(P_v \rightarrow \mathcal{E}_v, Z \rightarrow T(\overline{F_v})) \]
lifting the morphism in cohomology of \cite[Theorem 4.8]{Kalri}.
\end{coro}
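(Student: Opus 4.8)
The plan is to obtain $\iota_v$ by passing to the limit in the family $\bigl(\iota_{k,v}(N)\bigr)_{k,N}$, exactly as for the global morphism of Corollary \ref{c:spliceglobal}, the local case being slightly simpler. First I would recall from \cite[\S 4.5]{Kalri} that the central extension $P_v \rightarrow \mathcal{E}_v \rightarrow \Gal(\overline{F_v}/F_v)$ is, by construction, the projective limit over the pairs $(k,N)$ of the finite-level central extensions $P(E_{k,\dot{v}}, N) \rightarrow \mathcal{E}_{k,v}(N) \rightarrow \Gal(\overline{F_v}/F_v)$, the transition maps in the $k$-direction being those of Proposition \ref{p:towerlocal}(1) and those in the $N$-direction the natural morphisms $P(E_{k,\dot{v}}, N') \rightarrow P(E_{k,\dot{v}}, N)$ for $N \mid N'$ together with the induced maps on extensions. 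Dually, for a fixed injection $Z \hookrightarrow T$ with $Z$ finite, any $1$-cocycle in $Z^1(P_v \rightarrow \mathcal{E}_v, Z \rightarrow T(\overline{F_v}))$ restricts on $P_v$ to a morphism onto the finite group $Z$, hence factors through $P(E_{k,\dot{v}}, N)$ for suitable $(k,N)$, and then the cocycle itself is inflated from $\mathcal{E}_{k,v}(N)$; enlarging $(k,N)$ so that also $\exp(Z) \mid N$ and $E_{k,\dot{v}}$ splits $T$, one gets
\[
Z^1\bigl(P_v \rightarrow \mathcal{E}_v,\, Z \rightarrow T(\overline{F_v})\bigr) \;=\; \varinjlim_{k,N} Z^1\bigl(P(E_{k,\dot{v}}, N) \rightarrow \mathcal{E}_{k,v}(N),\, Z \rightarrow T(\overline{F_v})\bigr),
\]
the transition maps being inflation.

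Next I would observe that the source $\overline{Y}^{N_{/F_v}}$ is genuinely constant once $E_{k,\dot{v}}$ splits $T$: the $\Gal(\overline{F_v}/F_v)$-action on $\overline{Y}$ then factors through $\Gal(E_{k,\dot{v}}/F_v)$, so for $k' \geq k$ the map $N_{E_{k',\dot{v}}/F_v}$ equals $[E_{k',\dot{v}}:E_{k,\dot{v}}]$ times $N_{E_{k,\dot{v}}/F_v}$ on $\overline{Y}$, and since $\overline{Y}$ is torsion-free these two maps have the same kernel; it is also visibly independent of $N$. Thus $\bigl(\overline{Y}^{N_{/F_v}}\bigr)_{k,N}$ is a constant inductive system. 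Now Proposition \ref{p:towerlocal}(2) says precisely that $\iota_{k,v}(N)$ is compatible with the inflation maps in the $k$-direction, while compatibility in the $N$-direction, with respect to $P(E_{k,\dot{v}}, N') \rightarrow P(E_{k,\dot{v}}, N)$ and the corresponding inflation of cocycles, is purely formal (as in the global case, cf.\ the remark preceding Proposition \ref{p:towerglobal}). Hence $\bigl(\iota_{k,v}(N)\bigr)_{k,N}$ is a morphism of inductive systems, and passing to the colimit produces the desired morphism $\iota_v : \overline{Y}^{N_{/F_v}} \rightarrow Z^1(P_v \rightarrow \mathcal{E}_v, Z \rightarrow T(\overline{F_v}))$.

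Finally, for the lifting assertion: each $\iota_{k,v}(N)$ is a cochain-level lift of the corresponding cohomological morphism of \cite{Kalri}, and the morphism of \cite[Theorem 4.8]{Kalri} is by construction the colimit of those, so composing $\iota_v$ with the natural projection $Z^1(\,\cdot\,) \rightarrow H^1(\,\cdot\,)$ recovers it. I do not expect a genuine obstacle here: granting Proposition \ref{p:towerlocal}, the statement is a formal limit argument, and the only point that deserves attention is the identification of $Z^1$ with the inductive limit of its finite-level pieces, which rests on the finiteness of $Z$ and the pro-finite nature of $\mathcal{E}_v$.
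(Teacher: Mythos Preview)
Your proposal is correct and takes essentially the same approach as the paper, which gives no explicit proof for this corollary (nor for its global analogue, Corollary \ref{c:spliceglobal}), treating it as an immediate consequence of the compatibility in Proposition \ref{p:towerlocal} together with the formal compatibility as $N$ varies. One small remark: to \emph{define} $\iota_v$ you only need the inflation maps from the finite-level $Z^1$'s into $Z^1(P_v \rightarrow \mathcal{E}_v, Z \rightarrow T(\overline{F_v}))$, not the full identification of the latter with the colimit, so your final caveat about that identification, while correct, is not actually load-bearing.
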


\subsection{Localization}

In this section $v \in V$ is fixed. We want to study the relationship between
$\iota$ (Corollary \ref{c:spliceglobal}), $\iota_v$ (Corollary
\ref{c:splicelocal}) and $\mathrm{loc}_v$ (\cite[\S 3.6]{Kalgri}). We study it
for fixed $k \geq 0$ first.

Recall (\cite[(3.11)]{Kalgri}) the morphisms $\mathrm{loc}_{k,v}(S',N) :
P(E_{k, \dot{v}}, N) \rightarrow P(E_k, \dot{S}'_{E_k}, N)$. If $v \in S'$ it
is dual to $f \mapsto (\sigma \mapsto f(\sigma, \dot{v}))$ if $v \in S'$. We
define it to be trivial if $v \not\in S'$. It is $\mathrm{Gal}(E_{k, \dot{v}} /
F_v)$-equivariant, and there are obvious commuting diagrams as $S'$ and $N$
vary.

For $M$ a $\mathrm{Gal}(E_k/F)$-module, recall the morphism $l_{k,v} :
M[S'_{E_k}]^{N_{E_k / F}} \rightarrow M^{N_{E_{k, \dot{v}} / F_v}}$ (denoted
$l_v^k$ in \cite[Lemma 3.7.2]{Kalgri}) defined by
\[ l_{k,v}(\Lambda) = \sum_{r \in R'_{k,v}} r^{-1} \left( \Lambda(r \cdot \dot{v}_k) \right) \]
if $v \in S'$, and zero otherwise.

\begin{lemm} \label{l:cupproductlocalglobal}
Let $T$ be a torus defined over $F$. Denote $Y = X_*(T)$. Let $k$ be big
enough so that $E_k$ splits $T$. Let $N \geq 1$ be an integer.  Let $S'$ be a
finite subset of $V$ containing $S_k$. Let $\Lambda \in
Y[S'_{E_k}]_0^{N_{E_k/F}}$.

Let $i \geq 0$ be big enough so that $\sqrt[N]{\alpha_{k,v}}$ takes values in
$E_{k+i, \dot{v}}^{\times}$. Then we have an equality of maps $\mathrm{Gal}(
\overline{F} / F) \rightarrow T(\overline{F} \otimes_F F_v)$:
\begin{multline*}
\mathrm{pr}_v \left(\sqrt[N]{\alpha_k} \underset{E_k / F}{\sqcup} \Lambda
\right) =
\mathrm{ES}^1_{R'_{k+i,v}} \left( \sqrt[N]{\alpha_{k,v}} \underset{E_{k,
\dot{v}} / F_v}{\sqcup} l_{k,v}(\Lambda) \right)
\times \mathrm{d}\left(\mathrm{pr}_v(\sqrt[N]{\beta_k}) \underset{E_k /
F}{\sqcup} \Lambda \right) \\
\times \left( \mathrm{pr}_v(\delta_k(N)) \underset{E_k / F}{\sqcup} \Lambda
\right).
\end{multline*}

In particular, upon restriction to $\mathrm{Gal}(\overline{F_v} / F_v)$ and
projection to $T(\overline{F_v})$:
\begin{multline*}
\mathrm{pr}_{\dot{v}} \left(\sqrt[N]{\alpha_k} \underset{E_k / F}{\sqcup}
\Lambda \right) =
\left( \sqrt[N]{\alpha_{k,v}} \underset{E_{k, \dot{v}} / F_v}{\sqcup}
l_{k,v}(\Lambda) \right)
\times \mathrm{d}\left(\mathrm{pr}_{\dot{v}}(\sqrt[N]{\beta_k}) \underset{E_k /
F}{\sqcup} \Lambda \right) \\
\times \left( \mathrm{pr}_{\dot{v}}(\delta_k(N)) \underset{E_k / F}{\sqcup}
\Lambda \right).
\end{multline*}
\end{lemm}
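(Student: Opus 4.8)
The plan is to reduce everything to the defining factorization of $\sqrt[N]{\alpha_k}$. By Definition~\ref{d:delta} we have $\sqrt[N]{\alpha_k}=\sqrt[N]{\alpha'_k}\times\mathrm{d}\!\left(\sqrt[N]{\beta_k}\right)\times\delta_k(N)$, and the first step is to cup both sides with $\Lambda$ over $E_k/F$ and apply $\mathrm{pr}_v$. The pairing defining $\cdot\sqcup_{E_k/F}\Lambda$ is multiplicative in the coefficients, it is functorial for the coefficient homomorphism $\mathrm{pr}_v$, and since $N_{E_k/F}(\Lambda)=0$ it commutes with $\mathrm{d}$, i.e.\ $\mathrm{d}(a)\sqcup_{E_k/F}\Lambda=\mathrm{d}\!\left(a\sqcup_{E_k/F}\Lambda\right)$; this last point is precisely the reason for using the unbalanced cup-product and for having divided $\delta_k(N)$ out on the left rather than on the right (compare \cite[\S 3.2]{Kalgri}). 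Hence the $\sqrt[N]{\beta_k}$-term produces $\mathrm{d}\!\left(\mathrm{pr}_v(\sqrt[N]{\beta_k})\sqcup_{E_k/F}\Lambda\right)$ and the $\delta_k(N)$-term produces $\mathrm{pr}_v(\delta_k(N))\sqcup_{E_k/F}\Lambda$, which are the last two factors in the claimed identity, and it remains to identify $\mathrm{pr}_v\!\left(\sqrt[N]{\alpha'_k}\sqcup_{E_k/F}\Lambda\right)$ with $\mathrm{ES}^1_{R'_{k+i,v}}\!\left(\sqrt[N]{\alpha_{k,v}}\sqcup_{E_{k,\dot{v}}/F_v}l_{k,v}(\Lambda)\right)$.

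For that identification I would unwind the cup-product,
\[ \left(\sqrt[N]{\alpha'_k}\sqcup_{E_k/F}\Lambda\right)(\sigma)=\prod_{\tau\in\mathrm{Gal}(E_k/F)}\ \prod_{w\in V_{E_k}}\sqrt[N]{\alpha'_k}(\sigma,\tau)(\sigma\tau\cdot w)\otimes\sigma\tau(\Lambda(w)), \]
and use that, by the defining formula for $\sqrt[N]{\alpha'_k}$ built via \eqref{e:explicitESphi} out of the embeddings $j_{k,v'}$, the value $\sqrt[N]{\alpha'_k}(\sigma,\tau)(u)$ is, for $u$ above a place $v'$, a $\mathrm{Gal}$-translate of $j_{k,v'}$ of an element of $\overline{F_{v'}}^{\times}$, hence an id\`ele supported at places above $v'$. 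Therefore $\mathrm{pr}_v$ annihilates every contribution with $w\notin\{v\}_{E_k}$, and the product collapses to $w=\gamma\cdot\dot{v}_k$ with $\gamma\in R'_{k,v}$. Writing $\sigma\tau\gamma=r_1 g$ in $\mathrm{Gal}(E_k/F)$ with $r_1\in R'_{k,v}$ and $g\in\mathrm{Gal}(E_{k,\dot{v}}/F_v)$, and invoking \eqref{e:explicitESphi} together with the compatibility \eqref{e:compatj} of the $j_{k,v}$, each surviving factor is rewritten through $\sqrt[N]{\alpha_{k,v}}$ evaluated on the ``local components'' of $\sigma$ and $\tau$ relative to $R'_{k,v}$, taking values in $E_{k+i,\dot{v}}^{\times}$; the aggregation of the $\tau$-product over the coset representatives $\gamma\in R'_{k,v}$ is exactly what turns $\Lambda$ into $l_{k,v}(\Lambda)=\sum_{r\in R'_{k,v}}r^{-1}(\Lambda(r\cdot\dot{v}_k))$, and a comparison with the formula of Proposition~\ref{p:explicitESdegree1} identifies the result with $\mathrm{ES}^1_{R'_{k+i,v}}$ of the local cup-product. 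The index $k+i$ appears because $\sqrt[N]{\alpha_{k,v}}$ takes values in $E_{k+i,\dot{v}}^{\times}$, so that after $\mathrm{pr}_v$ the relevant induced module is built from the places of $E_{k+i}$ above $v$, with representatives $R'_{k+i,v}$. This bookkeeping with coset representatives --- the global counterpart of \cite[Lemma 4.5]{Kalri}, which in turn rests on the explicit Eckmann--Shapiro formulas of Section~\ref{s:ES} --- is the step I expect to be the main obstacle.

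Finally, for the ``in particular'' statement I would restrict $\sigma$ to $\mathrm{Gal}(\overline{F_v}/F_v)$ and apply $\mathrm{pr}_{\dot{v}}$, which factors through $\mathrm{pr}_v$. Taking $r_1=r_2=1$ in Proposition~\ref{p:explicitESdegree1} (legitimate as $1\in R'_{k+i,v}$) gives $\mathrm{ES}^1_{R'_{k+i,v}}(c)(\sigma)(h)=h(c(\sigma))$ for $c=\sqrt[N]{\alpha_{k,v}}\sqcup_{E_{k,\dot{v}}/F_v}l_{k,v}(\Lambda)$ and $\sigma\in\mathrm{Gal}(\overline{F_v}/F_v)$, while $\mathrm{pr}_{\dot{v}}$ extracts precisely the $h=1$ component, i.e.\ the one corresponding under \eqref{e:GAmodiso} to the $H$-submodule supported at the base point; hence $\mathrm{pr}_{\dot{v}}\!\left(\mathrm{ES}^1_{R'_{k+i,v}}(c)\right)$, restricted to $\mathrm{Gal}(\overline{F_v}/F_v)$, equals $c$. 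Since formation of coboundaries, restriction of maps and the projection $\mathrm{pr}_{\dot{v}}$ all commute with one another, the remaining two factors transform in the evident way and the second displayed equality follows from the first.
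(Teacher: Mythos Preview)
Your proposal is correct and follows essentially the same route as the paper's proof: decompose $\sqrt[N]{\alpha_k}=\sqrt[N]{\alpha'_k}\cdot\mathrm{d}(\sqrt[N]{\beta_k})\cdot\delta_k(N)$ via Definition~\ref{d:delta}, use \cite[Fact 4.3]{Kalri} to pass $\mathrm{d}$ through the cup-product for the $\beta_k$-term, and then carry out the explicit coset computation for the $\alpha'_k$-term to produce $l_{k,v}(\Lambda)$ and identify the result with the Eckmann--Shapiro expression. The paper's write-up of the $\alpha'_k$ step is slightly more explicit (it decomposes $\tau\gamma=r\tau'$ and $\sigma r=r'\sigma'$ and tracks the bijection $r\mapsto r'$ of $R'_{k,v}$), but your outline captures exactly this computation.
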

Note that the first equality implicitly uses the identification
\begin{align*}
\mathrm{ind}_{\mathrm{Gal}(E_{k+i,
\dot{v}}/F_v)}^{\mathrm{Gal}(E_{k+i}/F)}(E_{k+i, \dot{v}}^{\times}) &
\xlongrightarrow{\sim} (E_{k+i, \dot{v}} \otimes_F F_v)^{\times} \\
f & \longmapsto \prod_{g \in \mathrm{Gal}(E_{k+i, \dot{v}}/F_v)) \backslash
\mathrm{Gal}(E_{k+i}/F)} g^{-1}(f(g))
\end{align*}
to see $\mathrm{ES}^1_{R'_{k+i,v}} \left( \sqrt[N]{\alpha_{k,v}} \underset{E_{k,
\dot{v}} / F_v}{\sqcup} l_{k,v}(\Lambda) \right)$ as a map
$\mathrm{Gal}(E_{k+i}/F) \rightarrow T(E_{k+i, \dot{v}} \otimes_F F_v)$.
\begin{proof}
Recall that by definition of $\delta_k(N)$, we have $\sqrt[N]{\alpha_k} =
\sqrt[N]{\alpha'_k} \mathrm{d}(\!\!\sqrt[N]{\beta_k}) \delta_k(N)$, and we
compute unbalanced cup-products with these three terms separately. In the case
of $\delta_k(N)$ there is nothing to prove, so we first consider
$\mathrm{d}(\!\!\sqrt[N]{\beta_k})$. By \cite[Fact 4.3]{Kalri} we have
\[ \mathrm{d}(\!\!\sqrt[N]{\beta_k}) \underset{E_k / F}{\sqcup} \Lambda =
\mathrm{d}\left(\!\! \sqrt[N]{\beta_k} \underset{E_k / F}{\sqcup} \Lambda
\right) \]
and thus upon restriction to $\mathrm{Gal}(\overline{F_v} / F_v)$,
\[ \mathrm{pr}_{\dot{v}} \left( \mathrm{d}(\!\!\sqrt[N]{\beta_k}) \underset{E_k
/ F}{\sqcup} \Lambda \right) = \mathrm{d}\left(\mathrm{pr}_{\dot{v}} \left(\!\!
\sqrt[N]{\beta_k} \underset{E_k / F}{\sqcup} \Lambda \right) \right). \]

Let us now consider $\sqrt[N]{\alpha'_k}$. For $\sigma \in \mathrm{Gal}(E_{k} /
F)$ we have
\[ \mathrm{pr}_v \left( \left( \sqrt[N]{\alpha'_k} \sqcup_{E_k / F} \Lambda
\right)(\sigma) \right) = \prod_{\gamma \in R'_{k,v}} \prod_{\tau \in
\mathrm{Gal}(E_k / F)} \sqrt[N]{\alpha'_k}(\sigma, \tau)(\sigma \tau \gamma
\cdot \dot{v}_k) \otimes \sigma \tau \left( \Lambda(\gamma \cdot \dot{v}_k)
\right). \]
Write $\tau \gamma = r \tau'$ and $\sigma r = r' \sigma'$ where $r,r' \in
R'_{k,v}$ and $\tau', \sigma' \in \mathrm{Gal}(E_{k, \dot{v}} / F_v)$ are
functions of $(\sigma, \gamma, \tau)$. For $\sigma$ and $\gamma$ fixed the map
$\tau \mapsto (r, \tau')$ is bijective onto $R'_{k,v} \times \mathrm{Gal}(E_{k,
\dot{v}} / F_v)$. We obtain
\begin{align*}
& \mathrm{pr}_v \left( \left( \sqrt[N]{\alpha'_k} \sqcup_{E_k / F} \Lambda
\right)(\sigma) \right) \\
=& \prod_{\gamma \in R'_{k,v}} \prod_{r \in R'_{k,v}} \prod_{\tau' \in
\mathrm{Gal}(E_{k,\dot{v}} / F_v)} \sqrt[N]{\alpha'_k}(r' \sigma' r^{-1}, r
\tau' \gamma^{-1})(r' \cdot \dot{v}_k) \otimes r' \sigma' \tau' \gamma^{-1}
\left( \Lambda(\gamma \cdot \dot{v}_k) \right) \\
\end{align*}
where $r' \sigma' = \sigma r$, $r' \in R'_{k, v}$ and $\sigma' \in
\mathrm{Gal}(E_{k,\dot{v}} / F_v)$ being functions of $r$. Recall that by
definition,
\[ \sqrt[N]{\alpha'_k}(r' \sigma' r^{-1}, r \tau' \gamma^{-1})(r' \cdot
\dot{v}_k) = r' \left( j_{k,v} \left( \sqrt[N]{\alpha_{k,v}}(\sigma', \tau')
\right) \right). \]
Therefore
\begin{align*}
& \mathrm{pr}_v \left( \left( \sqrt[N]{\alpha'_k} \underset{E_k/F}{\sqcup}
\Lambda \right)(\sigma) \right) \\
=& \prod_{\gamma \in R'_{k,v}} \prod_{r \in R'_{k,v}} \prod_{\tau' \in
\mathrm{Gal}(E_{k,\dot{v}} / F_v)} r' \left( j_{k,v} \left(
\sqrt[N]{\alpha_{k,v}}(\sigma', \tau') \right) \otimes \sigma' \tau' \gamma^{-1}
\left( \Lambda(\gamma \cdot \dot{v}_k) \right) \right) \\
=& \prod_{r \in R'_{k,v}} r' \left( \prod_{\tau' \in \mathrm{Gal}(E_{k,\dot{v}}
/ F_v)} j_{k,v} \left( \sqrt[N]{\alpha_{k,v}}(\sigma', \tau') \right) \otimes
\sigma' \tau' \left( l_{k,v}(\Lambda) \right) \right).
\end{align*}
The map $r \mapsto r'$ from $R'_{k,v}$ to itself is bijective, so we can write
this as
\[ \prod_{r' \in R'_{k,v}} r' \left( \prod_{\tau' \in \mathrm{Gal}(E_{k,\dot{v}}
/ F_v)} j_{k,v} \left( \sqrt[N]{\alpha_{k,v}}(\sigma', \tau') \right) \otimes
\sigma' \tau' \left(l_{k,v}(\Lambda) \right) \right) \]
where $\sigma'$ depends on $r'$ and is the unique element of
$\mathrm{Gal}(E_{k,\dot{v}} / F_v)$ such that $\sigma^{-1} r' \sigma' \in
R'_{k,v}$.
\[ \prod_{r' \in R'_{k+i,v}} r' \left( \prod_{\tau' \in
\mathrm{Gal}(E_{k,\dot{v}} / F_v)} j_{k+i,v} \left(
\sqrt[N]{\alpha_{k,v}}(\sigma', \tau') \right) \otimes \sigma' \tau'
\left(l_{k,v}(\Lambda) \right) \right) \]
and it is easy to check that this is equal to
$\mathrm{ES}^1_{R'_{k+i,v}}\left(\sqrt[N]{\alpha_{k,v}} \underset{E_k/F}{\sqcup}
l_{k,v}(\Lambda)\right)(\sigma)$.
\end{proof}

It is formal to check that for any finite algebraic group $Z$ over $F$ such that
$\exp(Z) | N$ and the Galois action on $A := X^*(Z)$ factors through
$\mathrm{Gal}(E_k / F)$, and any finite set of places $S'$ of $F$ such that $S'
\supset S_k$, the following diagram is commutative.
\begin{equation} \label{e:commPsilocalglobal}
\begin{tikzcd}[row sep=3em, column sep=6em]
\mathrm{Hom}(P(E_k, \dot{S}'_{E_k}, N), Z) \arrow[r, "{\Psi(E_k, S', N)}"] \arrow[d, "{\left( \mathrm{loc}_{k,v}(S', N) \right)^*}"] & A^{\vee}[\dot{S}'_{E_k}]_0^{N_{E_k / F}} \arrow[d, "{l_{k,v}}"] \\
\mathrm{Hom}(P(E_{k, \dot{v}}, N), Z) \arrow[r, "{\Psi(E_{k, \dot{v}}, S', N)}"] & (A^{\vee})^{N_{E_{k, \dot{v}} / F_v}}
\end{tikzcd}
\end{equation}

\begin{defi} \label{d:eta}
For $k \geq 0$, $N \geq 1$ and $S'$ a finite subset of $V$ containing $S_k$, let
$\eta_{k,v}(S', N) : \mathrm{Gal}(\overline{F_v} / F_v) \rightarrow P(E_k,
\dot{S}'_{E_k}, N)$ be the restriction of $\mathrm{pr}_{\dot{v}}(\delta_k(N))
\underset{E_k / F}{\sqcup} c_{\mathrm{univ}}(E_k, S', N)$ to
$\mathrm{Gal}(\overline{F_v} / F_v)$.
\end{defi}

\begin{prop} \label{p:klocalglobal}
Let $k \geq0$, $N \geq 1$ and $S'$ a finite subset of $V$ containing $S_k$.
\begin{enumerate}
\item
The restriction of the $2$-cocycle $\xi_k(S', N)$ to
$\mathrm{Gal}(\overline{F_v} / F_v)$ equals
\[ \left( \mathrm{loc}_{k,v}(S', N) \right)_*(\xi_{k,v}(N)) \times
\mathrm{d}\left( \eta_{k,v}(S', N) \right) \]
and so the morphism $\mathrm{loc}_{k,v}(S', N) : P(E_{k,\dot{v}}, N) \rightarrow
P(E_k, \dot{S}'_{E_k}, N)$ can be extended to a morphism of extensions
\begin{align*}
\mathrm{loc}_{k,v}(S', N) : \mathcal{E}_{k, v}(N) & \longrightarrow
\mathcal{E}_k(S', N) \\
x \boxtimes \sigma & \longmapsto \frac{\mathrm{loc}_{k,v}(S', N)(x) }{
\eta_{k,v}(S', N)(\sigma)} \boxtimes \sigma.
\end{align*}
\item
Let $Z \hookrightarrow T$ be an injective morphism of algebraic groups over $F$
with $Z$ finite and $T$ a torus split by $E_k$. Assume that $\exp(Z)|N$. Let $Y
= X_*(T)$ and $\overline{Y} = X_*(T/Z)$. Then for any $\Lambda \in
\overline{Y}[S'_{E_k}, \dot{S}'_{E_k}]_0^{N_{E_k/F}}$, the following identity
holds in $Z^1(P(E_{k,\dot{v}}, N) \rightarrow \mathcal{E}_{k,v}(N), Z
\rightarrow T(\overline{F_v}))$:
\begin{equation} \label{e:lociotak}
\mathrm{pr}_{\dot{v}} \left( \iota_k(S', N)(\Lambda) \circ
\mathrm{loc}_{k,v}(S', N) \right) = \iota_{k,v}(N)(l_{k,v}(\Lambda)) \times
\mathrm{d}\left( \mathrm{pr}_{\dot{v}}(\sqrt[N]{\beta_k})
\underset{E_k/F}{\sqcup} N \Lambda \right).
\end{equation}
\end{enumerate}
\end{prop}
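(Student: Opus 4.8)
The plan is to deduce both parts from Lemma~\ref{l:cupproductlocalglobal}, in exactly the way Proposition~\ref{p:towerglobal} is deduced from Lemma~\ref{l:cupproductglobal}.

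\textbf{First point.} First I would realize $\xi_k(S',N)$ as an honest unbalanced cup-product against a cocharacter class. Following the proof of Proposition~\ref{p:towerglobal}(1): set $Z = P(E_k,\dot{S}'_{E_k},N)$, $A = X^*(Z)$, choose a surjection $X \twoheadrightarrow A$ from a free $\Z[\mathrm{Gal}(E_k/F)]$-module, let $\overline{X}$ be the kernel, dualize to get $Z \hookrightarrow T$, $\overline{T} = T/Z$, $Y = X_*(T)$, $\overline{Y} = X_*(\overline{T})$, and use cohomological triviality of $Y[S'_{E_k}]_0$ together with surjectivity of $I_{E_k/F}(\overline{Y}[S'_{E_k}]_0) \to I_{E_k/F}(A^{\vee}[S'_{E_k}]_0)$ to pick $\Lambda \in \overline{Y}[S'_{E_k},\dot{S}'_{E_k}]_0^{N_{E_k/F}}$ with $[\Lambda] = c_{\mathrm{univ}}(E_k,S',N)$. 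Then $\xi_k(S',N) = \mathrm{d}(\sqrt[N]{\alpha_k}) \underset{E_k/F}{\sqcup} N\Lambda = \mathrm{d}(\sqrt[N]{\alpha_k} \underset{E_k/F}{\sqcup} N\Lambda)$ by \eqref{e:defxi}, \eqref{e:cupTZ} and \cite[Fact 4.3]{Kalri}. Restricting to $\mathrm{Gal}(\overline{F_v}/F_v)$, applying $\mathrm{pr}_{\dot{v}}$, and invoking Lemma~\ref{l:cupproductlocalglobal} for $N\Lambda$ (the coboundary term it produces being killed by the outer $\mathrm{d}$) leaves
\[ \mathrm{pr}_{\dot{v}}(\xi_k(S',N))\big|_{\mathrm{Gal}(\overline{F_v}/F_v)} = \mathrm{d}\Bigl(\sqrt[N]{\alpha_{k,v}} \underset{E_{k,\dot{v}}/F_v}{\sqcup} l_{k,v}(N\Lambda)\Bigr) \times \mathrm{d}\Bigl(\mathrm{pr}_{\dot{v}}(\delta_k(N)) \underset{E_k/F}{\sqcup} N\Lambda\Bigr). \]
Using $l_{k,v}(N\Lambda) = N\,l_{k,v}(\Lambda)$, the local analogues (in \cite{Kalri}) of \eqref{e:cupTZ} and \eqref{e:univxik} for the morphism $\mathrm{loc}_{k,v}(S',N) : P(E_{k,\dot{v}},N) \to Z$, and diagram \eqref{e:commPsilocalglobal} (which identifies $\Psi(E_{k,\dot{v}},N)(\mathrm{loc}_{k,v}(S',N))$ with $l_{k,v}([\Lambda]) = [l_{k,v}(\Lambda)]$), the first factor is $(\mathrm{loc}_{k,v}(S',N))_*(\xi_{k,v}(N))$; since $\delta_k(N)$ is $N$-torsion valued, the $N$-torsion analogue of \eqref{e:cupTZ} rewrites the second factor as $\mathrm{d}(\mathrm{pr}_{\dot{v}}(\delta_k(N)) \underset{E_k/F}{\sqcup} c_{\mathrm{univ}}(E_k,S',N)) = \mathrm{d}(\eta_{k,v}(S',N))$. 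The displayed morphism of extensions then follows from this cocycle identity by the standard formal check.

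\textbf{Second point.} With the morphism $\mathrm{loc}_{k,v}(S',N) : \mathcal{E}_{k,v}(N) \to \mathcal{E}_k(S',N)$ available, I would fix $\Lambda \in \overline{Y}[S'_{E_k},\dot{S}'_{E_k}]_0^{N_{E_k/F}}$, evaluate $\iota_k(S',N)(\Lambda) \circ \mathrm{loc}_{k,v}(S',N)$ on $x \boxtimes \sigma$ with $\sigma \in \mathrm{Gal}(\overline{F_v}/F_v)$ using the explicit formula for $\mathrm{loc}_{k,v}(S',N)$, and simplify. By \eqref{e:commPsilocalglobal} the character contribution from $\mathrm{loc}_{k,v}(S',N)(x)$ is $\Psi(E_{k,\dot{v}},N)^{-1}([l_{k,v}(\Lambda)])(x)$; by universality of $c_{\mathrm{univ}}$ applied to $\Psi(E_k,S',N)^{-1}([\Lambda])$ together with the $N$-torsion property of $\delta_k(N)$, the contribution from $\eta_{k,v}(S',N)(\sigma)$ is $(\mathrm{pr}_{\dot{v}}(\delta_k(N)) \underset{E_k/F}{\sqcup} N\Lambda)(\sigma)^{-1}$; and the cocycle contribution $\mathrm{pr}_{\dot{v}}((\sqrt[N]{\alpha_k} \underset{E_k/F}{\sqcup} N\Lambda)(\sigma))$ expands, via Lemma~\ref{l:cupproductlocalglobal} for $N\Lambda$, into the product of $(\sqrt[N]{\alpha_{k,v}} \underset{E_{k,\dot{v}}/F_v}{\sqcup} N\,l_{k,v}(\Lambda))(\sigma)$, $\mathrm{d}(\mathrm{pr}_{\dot{v}}(\sqrt[N]{\beta_k}) \underset{E_k/F}{\sqcup} N\Lambda)(\sigma)$ and $(\mathrm{pr}_{\dot{v}}(\delta_k(N)) \underset{E_k/F}{\sqcup} N\Lambda)(\sigma)$. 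The last of these cancels the $\eta_{k,v}(S',N)$ contribution, and collecting terms one reads off $\iota_{k,v}(N)(l_{k,v}(\Lambda))(x \boxtimes \sigma) \times \mathrm{d}(\mathrm{pr}_{\dot{v}}(\sqrt[N]{\beta_k}) \underset{E_k/F}{\sqcup} N\Lambda)(\sigma)$, which is \eqref{e:lociotak}. Here $l_{k,v}(\Lambda) \in \overline{Y}^{N_{E_{k,\dot{v}}/F_v}}$ by \cite[Lemma 3.7.2]{Kalgri}, so the right-hand side makes sense.

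\textbf{Main obstacle.} The calculations are mechanical once the right facts are in place; the delicate part is the bookkeeping around the several unbalanced cup-products involved — \cite[(3.24)]{Kalgri} valued in tori, \cite[(3.3)]{Kalgri} valued in the finite groups $P(E_k,\dot{S}'_{E_k},N)$, and the $\mu_N$-valued pairing used for $\xi_k(S',N)$ and for the $\delta_k(N)$-products — and checking that the universality of $c_{\mathrm{univ}}$, namely $f_*(-\sqcup c_{\mathrm{univ}}) = -\sqcup\Psi(f)$, and the $N$-torsion analogue of \eqref{e:cupTZ} apply verbatim to the $\delta_k(N)$-cup-product and not only to $\mathrm{d}(\sqrt[N]{\alpha_k})$. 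The cohomological-triviality reduction in the first point and the passage from a cocycle identity to a morphism of central extensions are both routine given the corresponding passages in Proposition~\ref{p:towerglobal}.
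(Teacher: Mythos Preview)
Your proposal is correct and follows essentially the same approach as the paper: for part~(1) you embed $P(E_k,\dot{S}'_{E_k},N)$ in a torus, pick $\Lambda$ with $[\Lambda]=c_{\mathrm{univ}}(E_k,S',N)$, apply Lemma~\ref{l:cupproductlocalglobal} to $N\Lambda$ and take coboundary (the $\sqrt[N]{\beta_k}$ term dying under $\mathrm{d}^2$), then use $N$-torsion and diagram~\eqref{e:commPsilocalglobal} exactly as the paper does; for part~(2) the paper simply says it is a direct consequence of Lemma~\ref{l:cupproductlocalglobal} applied to $N\Lambda$ together with~\eqref{e:commPsilocalglobal}, which is what your explicit evaluation on $x\boxtimes\sigma$ unpacks.
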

\begin{proof}
The proof is similar to that of Proposition \ref{p:towerglobal}, and we will be
more concise.
\begin{enumerate}
\item
Let $Z = P(E_k, \dot{S}'_{E_k}, N)$ and $A = X^*(Z)$. As in the proof of
Proposition \ref{p:towerglobal} we can find an embedding $Z \hookrightarrow T$
where $T$ is a torus over $F$, split over $E_k$ and such that $Y := X_*(T)$ is a
free $\Z[\mathrm{Gal}(E_k/F)]$-module. Let $\overline{Y} = X_*(T/Z)$. There
exists $\Lambda \in \overline{Y}[S'_{E_k}, \dot{S}'_{E_k}]_0^{N_{E_k / F}}$ such
that its image $[\Lambda]$ in $A^{\vee}[\dot{S}'_{E_k}]_0^{N_{E_k / F}}$ equals
$c_{\mathrm{univ}}(E_k, S', N)$. Applying Lemma \ref{l:cupproductlocalglobal} to
$N \Lambda \in Y$ and taking the coboundary, we obtain the identity between
$2$-cocycles $\mathrm{Gal}(\overline{F_v}/F_v)^2 \rightarrow T(\overline{F_v})$
\[ \mathrm{d}\left(\!\!\sqrt[N]{\alpha_k}\right) \underset{E_k / F}{\sqcup} N
\Lambda = \left( \mathrm{d}\left(\!\!\sqrt[N]{\alpha_{k,v}}\right)
\underset{E_{k,\dot{v}} / F_v}{\sqcup} Nl_{k,v}(\Lambda) \right) \times
\mathrm{d}\left( \mathrm{pr}_{\dot{v}}(\delta_k(N)) \underset{E_k/F}{\sqcup} N
\Lambda \right). \]
Since $\mathrm{d}\left(\!\!\sqrt[N]{\alpha_k}\right)^N=1$,
$\mathrm{d}\left(\!\!\sqrt[N]{\alpha_{k,v}}\right)^N=1$ and $\delta_k(N)^N=1$
all three terms take values in $Z \subset T(\overline{F_v})$ and the equality
can be written
\[ \mathrm{d}\left(\!\!\sqrt[N]{\alpha_k}\right) \underset{E_k / F}{\sqcup}
[\Lambda] = \left( \mathrm{d}\left(\!\!\sqrt[N]{\alpha_{k,v}}\right)
\underset{E_{k,\dot{v}} / F_v}{\sqcup} l_{k,v}([\Lambda]) \right) \times
\mathrm{d}\left( \mathrm{pr}_{\dot{v}}(\delta_k(N)) \underset{E_k/F}{\sqcup}
[\Lambda] \right) \]
using the pairing $\mu_N \times A^{\vee} \rightarrow Z$. Using the fact that
\[ l_{k,v}(c_{\mathrm{univ}(E_k, S', N)}) = \Psi(E_{k, \dot{v}, S',
N})(\mathrm{loc}_{k,v}(S',N)) \]
thanks to \eqref{e:commPsilocalglobal}, we obtain the desired equality.
\item
This is a direct consequence of Lemma \ref{l:cupproductlocalglobal} applied to
$N \Lambda$, using also the commutative diagram \eqref{e:commPsilocalglobal}
with $[\Lambda]$ in the top right corner.
\end{enumerate}
\end{proof}

\begin{lemm} \label{l:cupproductdeltabeta}
Let $T$ be a torus defined over $F$. Denote $Y = X_*(T)$. Let $k$ be big enough
so that $E_k$ splits $T$. Let $N \geq 1$ be an integer. Let $S'$ be a finite
subset of $V$ containing $S_{k+1}$. Let $\Lambda \in Y[S'_{E_k}]_0^{N_{E_k/F}}$.
Then we have an equality of maps $\mathrm{Gal}(\overline{F}_{S' \cup N} / F)
\rightarrow Y \otimes_{\Z} \mathcal{I}(F, S' \cup N)[N]$:
\begin{equation} \label{e:deltacup}
\delta_k(N) \underset{E_k / F}{\sqcup} \Lambda = \delta_{k+1}(N)
\underset{E_{k+1} / F}{\sqcup} !_k(\Lambda)
\end{equation}
and an equality in $Y \otimes_{\Z} \mathcal{I}(F, S' \cup N)$:
\begin{equation} \label{e:betacup}
\sqrt[N]{\beta_k} \underset{E_k / F}{\sqcup} \Lambda = \sqrt[N]{\beta_{k+1}}
\underset{E_{k+1} / F}{\sqcup} !_k(\Lambda).
\end{equation}
\end{lemm}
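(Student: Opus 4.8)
The plan is to run, for each of the two identities, the same bookkeeping that proved Lemma~\ref{l:cupproductglobal}, the only new inputs being compatibilities already at hand: $\mathrm{AWES}^2_k(\delta_{k+1}(N)) = \delta_k(N)$ (recorded in the list of properties of $\delta_k(N)$ following Definition~\ref{d:delta}) for \eqref{e:deltacup}, and $\mathrm{AWES}^1_k(\sqrt[N]{\beta_{k+1}}) = \sqrt[N]{\beta_k}$ (Proposition~\ref{p:Nthbeta}) for \eqref{e:betacup}. In both cases the cup-product is the unbalanced cup-product of a cochain with the norm-zero $0$-cochain $\Lambda$, so that $\delta_k(N) \sqcup_{E_k/F} \Lambda$ is a map on $\mathrm{Gal}(\overline{F}_{S'\cup N}/F)$ and $\sqrt[N]{\beta_k} \sqcup_{E_k/F} \Lambda$ is an element of $Y \otimes_{\Z} \mathcal{I}(F, S'\cup N)$, as asserted; the relevant products over $w \in S'_{E_k}$ stay in $\mathcal{I}(F, S'\cup N)$ because $\delta_k(N)(\sigma,\tau)(w)$ is a unit outside $S_k \cup \{v\} \cup N \subseteq S'\cup N$ whenever $w \in \{v\}_{E_k}$ with $v \in S'$, and because $\sqrt[N]{\beta_k}$ takes values in $\mathcal{I}(F, S_k\cup N)$.

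For \eqref{e:deltacup} I would copy the computation in the proof of Lemma~\ref{l:cupproductglobal} verbatim, with $\sqrt[N]{\alpha_\bullet}$ replaced by $\delta_\bullet(N)$. Fixing $\sigma \in \mathrm{Gal}(\overline{F}_{S'\cup N}/F)$, one expands $\big(\delta_{k+1}(N) \sqcup_{E_{k+1}/F} !_k(\Lambda)\big)(\sigma)$ as a product over $\tau' \in \mathrm{Gal}(E_{k+1}/F)$, writes $\tau' = n\tilde\tau$ with $n \in \mathrm{Gal}(E_{k+1}/E_k)$ and $\tilde\tau$ a lift of $\tau \in \mathrm{Gal}(E_k/F)$, and uses that $!_k(\Lambda)$ is supported on the places $\zeta_{k,v}(w)$. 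Dividing by $\big(\delta_k(N) \sqcup_{E_k/F}\Lambda\big)(\sigma)$ and substituting $\delta_k(N) = \mathrm{AWES}^2_k(\delta_{k+1}(N))$, one finds that the resulting quotient equals, after the change of variable $u = \tau\cdot w$,
\[
\prod_{v\in S'}\ \prod_{n\in\mathrm{Gal}(E_{k+1}/E_k)}\ \prod_{u\in\{v\}_{E_k}} \delta_{k+1}(N)(\sigma, n)(\sigma n\cdot\zeta_{k,v}(u)) \otimes \sigma\!\left(\sum_{\tau\in\mathrm{Gal}(E_k/F)}\tau\big(\Lambda(\tau^{-1}\cdot u)\big)\right),
\]
whose inner sum is $N_{E_k/F}(\Lambda)(u) = 0$; hence \eqref{e:deltacup} holds.

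For \eqref{e:betacup} I would run the degree-one-lower version of the same computation, using now $\sqrt[N]{\beta_k} = \mathrm{AWES}^1_k(\sqrt[N]{\beta_{k+1}})$. After reindexing $u = \tau\cdot w$ one has
\[
\sqrt[N]{\beta_k}\underset{E_k/F}{\sqcup}\Lambda = \prod_{\tau\in\mathrm{Gal}(E_k/F)}\ \prod_{v\in S'}\ \prod_{w\in\{v\}_{E_k}} \sqrt[N]{\beta_k}(\tau)(\tau\cdot w)\otimes\tau\big(\Lambda(w)\big),
\]
and the definition of $\mathrm{AWES}^1_k$ rewrites $\sqrt[N]{\beta_k}(\tau)(\tau\cdot w)$ as $\prod_{n}\sqrt[N]{\beta_{k+1}}(n\tilde\tau)(n\tilde\tau\cdot\zeta_{k,v}(w))\big/\sqrt[N]{\beta_{k+1}}(n)(n\cdot\zeta_{k,v}(\tau\cdot w))$. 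The numerator contributions assemble, upon setting $\tau' = n\tilde\tau$ and using the support of $!_k(\Lambda)$, into $\sqrt[N]{\beta_{k+1}}\sqcup_{E_{k+1}/F}!_k(\Lambda)$; the denominator contributions, after the substitution $u = \tau\cdot w$, become $\prod_{v,n,u}\sqrt[N]{\beta_{k+1}}(n)(n\cdot\zeta_{k,v}(u))^{-1}\otimes\sum_{\tau}\tau\big(\Lambda(\tau^{-1}\cdot u)\big)$, which is trivial because $N_{E_k/F}(\Lambda)=0$. So the two sides of \eqref{e:betacup} agree.

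I do not expect a genuine obstacle: the statement is a bookkeeping re-run of Lemma~\ref{l:cupproductglobal} in two companion cases. The two points needing slight care are, first, that in \eqref{e:betacup} the map $\mathrm{AWES}^1_k$ is itself a ratio, so one must match its numerator against the degree-one-lower cup-product formula rather than substituting a single factor as in the $\delta_k(N)$-case above; and second, the ramification bookkeeping noted in the first paragraph, namely checking that the finite-ramification properties of $\delta_k(N)$ and $\sqrt[N]{\beta_k}$ survive the product over $S'_{E_k}$, which is where the hypotheses $\Lambda \in Y[S'_{E_k}]_0^{N_{E_k/F}}$ and $S' \supset S_{k+1}$ enter.
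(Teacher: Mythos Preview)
Your proposal is correct and matches the paper's approach: the paper's proof simply says that \eqref{e:deltacup} is proved identically to Lemma~\ref{l:cupproductglobal} and that \eqref{e:betacup} is similar and easier, and you have spelled out exactly these two computations, invoking the same key inputs ($\mathrm{AWES}^2_k(\delta_{k+1}(N)) = \delta_k(N)$ and $\mathrm{AWES}^1_k(\sqrt[N]{\beta_{k+1}}) = \sqrt[N]{\beta_k}$) and the same vanishing-of-norm step.
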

Note that in \eqref{e:betacup} the left hand side belongs to $Y \otimes_{\Z}
\mathcal{I}(F, S_k \cup N)$.
\begin{proof}
For \eqref{e:deltacup} the proof is identical to that of Lemma
\ref{l:cupproductglobal}. For \eqref{e:betacup} the proof is similar and easier,
so we omit it.
\end{proof}

The localization maps $l_{k,v}$ are compatible with increasing $k$, i.e.\
$l_{k+1, v} \,\circ\, !_k = l_{k,v}$. This is proved in \cite[Lemma
3.7.2]{Kalgri}. Thus for any embedding $Z \hookrightarrow T$ of algebraic
groups over $F$ with $Z$ finite and $T$ a torus, they splice into
\[ l_v : \overline{Y}[V_{\overline{F}}, \dot{V}]_0^{N_{/F}} \rightarrow
\overline{Y}^{N_{/F_v}} \]
where $\overline{Y} = X_*(T/Z)$.

The localization morphisms $\mathrm{loc}_{k,v}(S',N) : P(E_{k,\dot{v}},N)
\rightarrow P(E_k, \dot{S}'_{E_k}, N)$ are also compatible with varying $k$. We
formulate this compatibility, together with \eqref{e:commPsiglobal},
\eqref{e:commPsilocal} and \eqref{e:commPsilocalglobal}, using a commutative
cubic diagram below. For any finite algebraic group $Z$ over $F$ such that
$\exp(Z) | N$ and the Galois action on $A := X^*(Z)$ factors through
$\mathrm{Gal}(E_k / F)$, and any finite set of places $S'$ of $F$ such that $S'
\supset S_{k+1}$, the following cubic diagram is commutative.
\begin{equation} \label{e:commPsi}
\begin{tikzcd}
& \mathrm{Hom}(P(E_k, \dot{S}'_{E_k}, N), Z) \arrow[rr, "{\Psi(E_k,S',N)}"] \arrow[dd, "{\rho_k(S',N)^*}" near start] \arrow[dl, "{\mathrm{loc}_{k,v}(S',N)^*}"'] & & A^{\vee}[\dot{S}'_{E_k}]_0^{N_{E_k/F}} \arrow[dd, "!_k"] \arrow[dl, "{l_{k,v}}"] \\
   \mathrm{Hom}(P(E_{k,\dot{v}},N), Z) \arrow[rr, crossing over, "{\Psi(E_{k,\dot{v}},N)}"' near start] \arrow[dd, "{\rho_{k,v}(N)^*}"'] & & (A^{\vee})^{N_{E_{k,\dot{v}}/F_v}} \\
    & \mathrm{Hom}(P(E_{k+1}, \dot{S}'_{E_{k+1}}, N), Z)  \arrow[rr, "{\Psi(E_{k+1},S',N)}" near start] \arrow[dl, "{\mathrm{loc}_{k+1,v}(S',N)^*}"' near start] & & A^{\vee}[\dot{S}'_{E_{k+1}}]_0^{N_{E_{k+1}/F}} \arrow[dl, "{l_{k+1,v}}"] \\
    \mathrm{Hom}(P(E_{k+1,\dot{v}},N), Z) \arrow[rr, "{\Psi(E_{k+1,\dot{v}},N)}"] && (A^{\vee})^{N_{E_{k+1,\dot{v}}/F_v}} \ar[from=uu,crossing over,hook]
\end{tikzcd}
\end{equation}
In fact the commutativity of the left face follows from the commutativity of the
other faces and the fact that the morphisms $\Psi$ are isomorphisms.

\begin{prop} \label{p:towerlocalglobal}
\begin{enumerate}
\item
For any $k \geq 0$, $N \geq 1$ and $S'$ a finite subset of $V$ containing
$S_{k+1}$ we have $\eta_{k,v}(S', N) = \rho_k(S', N)_* \left( \eta_{k+1,v}(S',N)
\right)$, and a commutative diagram of central extensions
\begin{equation} \label{e:commextlocalglobal}
\begin{tikzcd}[column sep=6em]
\mathcal{E}_{k+1,v}(N) \arrow[r, "{\mathrm{loc}_{k+1,v}(S', N)}"] \arrow[d] &
\mathcal{E}_{k+1}(S', N) \arrow[d] \\
\mathcal{E}_{k,v}(N) \arrow[r, "{\mathrm{loc}_{k,v}(S', N)}"] &
\mathcal{E}_k(S', N)
\end{tikzcd}
\end{equation}
Therefore as $k,S',N$ vary, the morphisms $\mathrm{loc}_{k,v}(S', N)$ yield
$\mathrm{loc}_v : \mathcal{E}_v \rightarrow \mathcal{E}$.
\item
Let $Z \hookrightarrow T$ be an injective morphism of algebraic groups over $F$
with $Z$ finite and $T$ a torus. Let $Y = X_*(T)$ and $\overline{Y} = X_*(T/Z)$.
Let $\Lambda \in \overline{Y}[V_{\overline{F}}, \dot{V}]_0^{N_{/F}}$. For
$k,S',N$ such that $E_k$ splits $T$, $N \geq 1$ is divisible by $\exp(Z)$, $S'$
contains $S_k$ and $\Lambda$ comes from an element $\Lambda_k \in
\overline{Y}[S'_k, \dot{S}'_{E_k}]_0^{N_{E_k/F}}$, let $\kappa_v(\Lambda) =
\mathrm{pr}_{\dot{v}}(\sqrt[N]{\beta_k}) \underset{E_k/F}{\sqcup} N \Lambda_k
\in T(\overline{F_v})$. As the notation suggests, it does not depend on the
choice of $k,S',N$. Then the following identity holds in $Z^1(P_v \rightarrow
\mathcal{E}_v, Z \rightarrow T(\overline{F_v}))$:
\begin{equation} \label{e:lociotatower}
\mathrm{pr}_{\dot{v}} \left( \iota(\Lambda) \circ \mathrm{loc}_v \right) =
\iota_v(l_v(\Lambda)) \times \mathrm{d}\left( \kappa_v(\Lambda) \right).
\end{equation}
\end{enumerate}
\end{prop}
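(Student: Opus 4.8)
The plan is to obtain both assertions by passing to the limit over $k$, $S'$ and $N$ from the finite-level statements of Propositions \ref{p:towerglobal}, \ref{p:towerlocal} and \ref{p:klocalglobal}, the only genuinely new inputs being the compatibilities of the unbalanced cup-products with $\delta_k(N)$ and with $\sqrt[N]{\beta_k}$ recorded in Lemma \ref{l:cupproductdeltabeta}. I will first prove the cocycle identity $\eta_{k,v}(S',N) = \rho_k(S',N)_*(\eta_{k+1,v}(S',N))$, then deduce the commutativity of \eqref{e:commextlocalglobal} and the existence of $\mathrm{loc}_v$, and finally treat part (2).

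For the first assertion of (1), I would argue exactly as in the proof of Proposition \ref{p:towerglobal}(1), using identity \eqref{e:deltacup} in place of Lemma \ref{l:cupproductglobal}. Set $Z = P(E_k,\dot{S}'_{E_k},N)$, $A = X^*(Z)$, and choose a surjection of a free $\Z[\mathrm{Gal}(E_k/F)]$-module onto $A$; this produces a short exact sequence $1 \to Z \to T \to \overline{T} \to 1$ over $F$ with $Y := X_*(T)$ free, together with an element $\Lambda \in \overline{Y}[S'_{E_k},\dot{S}'_{E_k}]_0^{N_{E_k/F}}$ whose image $[\Lambda]$ in $A^{\vee}[\dot{S}'_{E_k}]_0^{N_{E_k/F}}$ equals $c_{\mathrm{univ}}(E_k,S',N)$. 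Applying \eqref{e:deltacup} to $N\Lambda \in Y[S'_{E_k}]_0^{N_{E_k/F}}$ and using that $\delta_k(N)$ is $N$-torsion to pass between $N\Lambda$ and $[\Lambda]$ (as in the proof of Proposition \ref{p:klocalglobal}(1)), one gets
\[ \mathrm{pr}_{\dot{v}}(\delta_k(N)) \underset{E_k/F}{\sqcup} [\Lambda] = \mathrm{pr}_{\dot{v}}(\delta_{k+1}(N)) \underset{E_{k+1}/F}{\sqcup} !_k([\Lambda]). \]
Since $!_k([\Lambda]) = !_k(c_{\mathrm{univ}}(E_k,S',N)) = \Psi(E_{k+1},S',N)(\rho_k(S',N))$ by commutativity of \eqref{e:commPsiglobal}, and the pairing used in $\eta$ is functorial, the right-hand side is $\rho_k(S',N)_*(\mathrm{pr}_{\dot{v}}(\delta_{k+1}(N)) \sqcup_{E_{k+1}/F} c_{\mathrm{univ}}(E_{k+1},S',N))$; restricting to $\mathrm{Gal}(\overline{F_v}/F_v)$ yields $\eta_{k,v}(S',N) = \rho_k(S',N)_*(\eta_{k+1,v}(S',N))$.

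Granting this, the commutativity of \eqref{e:commextlocalglobal} is a formal computation with the $\boxtimes$-group laws: unwinding the four morphisms — the horizontal $\mathrm{loc}$-maps carry the $\eta$-twists of Proposition \ref{p:klocalglobal}(1), while the vertical maps are the untwisted $\rho$-maps of Propositions \ref{p:towerlocal}(1) and \ref{p:towerglobal}(1) — the two composites $\mathcal{E}_{k+1,v}(N) \to \mathcal{E}_k(S',N)$ coincide provided $\mathrm{loc}_{k,v}(S',N) \circ \rho_{k,v}(N) = \rho_k(S',N) \circ \mathrm{loc}_{k+1,v}(S',N)$ on $P$-parts and $\eta_{k,v}(S',N) = \rho_k(S',N)_*(\eta_{k+1,v}(S',N))$ on $\mathrm{Gal}(\overline{F_v}/F_v)$; the former is the dual of the commutativity of the left face of the cube \eqref{e:commPsi}, and the latter was just established. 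Since $\mathcal{E}_v = \varprojlim \mathcal{E}_{k,v}(N)$ and $\mathcal{E} = \varprojlim \mathcal{E}_k(S',N)$, and the $\mathrm{loc}_{k,v}(S',N)$ are compatible with the transition maps in $k$ by \eqref{e:commextlocalglobal} and in $S',N$ by the functoriality noted before Definition \ref{d:eta}, they splice into $\mathrm{loc}_v : \mathcal{E}_v \to \mathcal{E}$. For (2), fix $\Lambda \in \overline{Y}[V_{\overline{F}},\dot{V}]_0^{N_{/F}}$ and $k,S',N$ as in the statement with $\Lambda$ coming from $\Lambda_k$; then \eqref{e:lociotak} of Proposition \ref{p:klocalglobal}(2) is exactly the level-$k$ form of \eqref{e:lociotatower}, so it suffices to check both sides are compatible with increasing $k$ and then take the colimit over $k$ (the $S',N$-dependence being harmless and formal). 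On the left this uses the inflation-compatibility of $\iota_k(S',N)$ (Proposition \ref{p:towerglobal}(2)) and of $\mathrm{loc}_{k,v}(S',N)$ (part (1) above); on the right it uses the inflation-compatibility of $\iota_{k,v}(N)$ (Proposition \ref{p:towerlocal}(2)), the identity $l_{k+1,v} \circ\ !_k = l_{k,v}$ of \cite[Lemma 3.7.2]{Kalgri}, and the fact that $\mathrm{pr}_{\dot{v}}(\sqrt[N]{\beta_k}) \sqcup_{E_k/F} N\Lambda_k$ is independent of $k$ by identity \eqref{e:betacup} — which is precisely what makes $\kappa_v(\Lambda)$ well defined. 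Passing to the limit, using that $\iota(\Lambda)$ and $\iota_v(l_v(\Lambda))$ restrict at level $k$ to $\iota_k(S',N)(\Lambda_k)$ and $\iota_{k,v}(N)(l_{k,v}(\Lambda_k))$, gives \eqref{e:lociotatower} in $Z^1(P_v \to \mathcal{E}_v, Z \to T(\overline{F_v}))$.

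The main obstacle will not be conceptual but a matter of bookkeeping: checking that \eqref{e:commextlocalglobal} commutes on the nose at the level of cocycles — that is, threading the $\eta$-twists of the two localization maps through the transition morphisms $\rho_k(S',N)$ and $\rho_{k,v}(N)$ and through the cube \eqref{e:commPsi} — and keeping track of the $S'$- and $N$-dependence precisely enough that the relevant families really form projective and inductive systems whose limits are $\mathcal{E}$, $\mathcal{E}_v$ and the cocycle groups. Each individual verification is routine given the explicit formulas in the earlier propositions, but the accumulated number of diagrams is the delicate point.
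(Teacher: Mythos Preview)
Your proposal is correct and follows essentially the same approach as the paper's proof: the $\eta$-identity comes from \eqref{e:deltacup} via the argument of Proposition \ref{p:towerglobal}(1), the commutativity of \eqref{e:commextlocalglobal} follows from this together with the left face of the cube \eqref{e:commPsi}, the well-definedness of $\kappa_v(\Lambda)$ comes from \eqref{e:betacup}, and \eqref{e:lociotatower} is precisely the finite-level identity \eqref{e:lociotak}. Your write-up is somewhat more explicit than the paper's in spelling out the limit process and the compatibilities needed to pass from level $k$ to the tower, but the ingredients and their roles are identical.
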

\begin{proof}
\begin{enumerate}
\item
The equality $\eta_{k,v}(S', N) = \rho_k(S', N)_* \left( \eta_{k+1,v}(S',N)
\right)$ follows from \eqref{e:deltacup} in Lemma \ref{l:cupproductdeltabeta},
using the same argument as in the proof of Proposition \ref{p:towerglobal}.
Commutativity of diagram \eqref{e:commextlocalglobal} follows from this equality
and the equality $\mathrm{loc}_{k,v}(S',N) \circ \rho_{k,v}(N) = \rho_k(S', N)
\circ \mathrm{loc}_{k+1,v}(S', N)$, which is equivalent to commutativity of the
left face of \eqref{e:commPsi} for $Z = P(E_k, \dot{S}'_{E_k}, N)$.
\item
The fact that $\kappa_v(\Lambda)$ does not depend on the choice of $k, S', N$
follows from \eqref{e:betacup} in Lemma \ref{l:cupproductdeltabeta}, and
\eqref{e:lociotatower} is \eqref{e:lociotak} in Proposition
\ref{p:klocalglobal}.
\end{enumerate}
\end{proof}

\subsection{Comparison with Kaletha's canonical class}

As in \cite{Kalgri} we consider the profinite
$\mathrm{Gal}(\overline{F}/F)$-module $P = \varprojlim_{k, S', N} P(E_k,
\dot{S}'_{E_k}, N)$. It is perhaps more natural to view $P$ as an inverse limit
of finite diagonalizable algebraic groups over $F$, but in any case we will only
use $P = P(\overline{F}) = P(\overline{F_v})$ (for any $v \in V$). The
$2$-cocycles $\xi_k(S', N)$ are compatible by Proposition \ref{p:towerglobal},
and we obtain a (continuous) $2$-cocycle $\xi \in Z^2(F, P)$. Let us check that
$\xi$ represents the canonical class in $H^2(\mathrm{Gal}(\overline{F}/F), P)$
defined in \cite[\S 3.5]{Kalgri}.

As in \cite[\S 3.3]{Kalgri}, fix a cofinal sequence $(N_k)_{k \geq 0}$ in
$\Z_{>0}$ (for the partial order defined by divisibility) with $N_0 = 1$ and
such that for any $k \geq 0$, $S_k$ contains all places dividing $N_k$ (this is
possible up to enlarging the finite sets $S_k$). To simplify notation we write
$P_k = P(E_k, \dot{S}_{k,E_k}, N_k)$, $\rho_k : P_{k+1} \twoheadrightarrow P_k$
and $c_{\mathrm{univ},k} = c_{\mathrm{univ}}(E_k, S_k, N_k)$.

First we need to go back to the construction of a resolution of $P$ by pro-tori
in \cite[Lemma 3.5.1]{Kalgri}.
\begin{lemm}
There exists a family of resolutions, for $k \geq 0$,
\[ 1 \rightarrow P_k \rightarrow T_k \rightarrow \overline{T}_k \rightarrow 1 \]
of $P_k$ by tori $T_k,\overline{T}_k$ defined over $F$ and split by $E_k$, and
morphisms $r_k : T_{k+1} \rightarrow T_k$ and $\overline{r}_k :
\overline{T}_{k+1} \rightarrow \overline{T}_k$, such that
\begin{enumerate}
\item For all $k \geq 0$, the diagram
\begin{equation}
\begin{tikzcd}
P_{k+1} \arrow[r] \arrow[d, "{\rho_k}"] & T_{k+1} \arrow[r] \arrow[d, "{r_k}"] & \overline{T}_{k+1} \arrow[d, "{\overline{r}_k}"] \\
P_k \arrow[r]               & T_k \arrow[r]               & \overline{T}_k
\end{tikzcd}
\end{equation}
is commutative and $r_k, \overline{r}_k$ are surjective with connected kernels.
\item Letting $Y_k = X_*(T_k)$ and $\overline{Y}_k = X_*(\overline{T}_k)$, there exists a family $(\Lambda_k)_{k \geq 0}$ where $\Lambda_k \in \overline{Y}_k[S_{k,E_k}, \dot{S}_{k,E_k}]_0^{N_{E_k/F}}$ maps to $c_{\mathrm{univ},k} \in M_k^{\vee}[\dot{S}_{k,E_k}]_0^{N_{E_k/F}}$ and $!_k(\Lambda_k) = \overline{r}_k(\Lambda_{k+1})$ in $\overline{Y}_k[S_{k+1,E_{k+1}}, \dot{S}_{k+1,E_{k+1}}]_0^{N_{E_{k+1}/F}}$.
\end{enumerate}
\end{lemm}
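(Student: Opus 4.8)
The plan is to construct the data $(T_k,\overline{T}_k,r_k,\overline{r}_k,\Lambda_k)$ by induction on $k$; this is essentially a repackaging of the construction behind \cite[Lemma 3.5.1]{Kalgri}, with the auxiliary cocharacters $\Lambda_k$ made explicit. Dualizing, I write $X_k=X^{*}(T_k)$, $Y_k=X_{*}(T_k)$, $\overline{X}_k=X^{*}(\overline{T}_k)$, $\overline{Y}_k=X_{*}(\overline{T}_k)$, $\Gamma_k=\mathrm{Gal}(E_k/F)$, and let $\mu_k\colon M_k\hookrightarrow M_{k+1}$ be the injection of \cite[(3.7)]{Kalgri}. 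I will arrange inductively that $X_k$ is a \emph{free} $\Z[\Gamma_k]$-module fitting in an exact sequence $0\to\overline{X}_k\to X_k\to M_k\to0$ (so dually $P_k\hookrightarrow T_k$ with quotient $\overline{T}_k$) and that $\Lambda_k\in\overline{Y}_k[S_{k,E_k},\dot{S}_{k,E_k}]_0^{N_{E_k/F}}$ lifts $c_{\mathrm{univ},k}$. The base case is trivial: since $N_0=1$, the group $P_0=P(F,\dot{S}_0,1)$ is trivial, and one takes $T_0=\overline{T}_0=1$, $X_0=0$, $\Lambda_0=0$.

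For the inductive step I first construct the tori as in \cite[Lemma 3.5.1]{Kalgri}. The inflated module $X_k$ embeds $\Gamma_{k+1}$-equivariantly, with torsion-free cokernel, into a free $\Z[\Gamma_{k+1}]$-module via the ``coset-sum'' map $\Z[\Gamma_k]\hookrightarrow\Z[\Gamma_{k+1}]$, $\overline{\sigma}\mapsto\sum_{\tau\mapsto\overline{\sigma}}\tau$, applied coordinate-wise; adjoining enough further free $\Z[\Gamma_{k+1}]$-summands one obtains a free $\Z[\Gamma_{k+1}]$-module $X_{k+1}$, a $\Gamma_{k+1}$-equivariant injection $X_k\hookrightarrow X_{k+1}$ with torsion-free cokernel, and a surjection $X_{k+1}\twoheadrightarrow M_{k+1}$ compatible via $\mu_k$ with $X_k\twoheadrightarrow M_k$. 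Putting $\overline{X}_{k+1}=\ker(X_{k+1}\to M_{k+1})$, a snake-lemma argument shows $\overline{X}_k\hookrightarrow\overline{X}_{k+1}$ also has torsion-free cokernel, so the dual tori $T_{k+1},\overline{T}_{k+1}$ (split by $E_{k+1}$) and dual morphisms $r_k,\overline{r}_k$ are surjective with connected kernels, and the square of the lemma commutes by construction. Since $X_{k+1}$ is free over $\Z[\Gamma_{k+1}]$, the $\Gamma_{k+1}$-modules $Y_{k+1}$, $Y_{k+1}[S_{k+1,E_{k+1}}]$ and $Y_{k+1}[S_{k+1,E_{k+1}}]_0$ are cohomologically trivial, exactly as in the proof of Proposition \ref{p:towerglobal}.

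It remains to choose $\Lambda_{k+1}$. Dualizing $0\to\overline{X}_{k+1}\to X_{k+1}\to M_{k+1}\to0$ over $\Z$ yields $0\to Y_{k+1}\to\overline{Y}_{k+1}\to M_{k+1}^{\vee}\to0$; passing to $[S_{k+1,E_{k+1}},\dot{S}_{k+1,E_{k+1}}]_0$ and then to $N_{E_{k+1}/F}$-kernels, cohomological triviality of $Y_{k+1}[S_{k+1,E_{k+1}}]_0$ gives (as in the proof of Proposition \ref{p:towerglobal}) that $\overline{Y}_{k+1}[S_{k+1,E_{k+1}},\dot{S}_{k+1,E_{k+1}}]_0^{N_{E_{k+1}/F}}$ surjects onto $M_{k+1}^{\vee}[\dot{S}_{k+1,E_{k+1}}]_0^{N_{E_{k+1}/F}}$, so $c_{\mathrm{univ},k+1}$ admits a lift $\Lambda_{k+1}^{(0)}$. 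By naturality of the classes $c_{\mathrm{univ},\bullet}$ under $\mu_k$ together with the commutative square just built, $\overline{r}_k(\Lambda_{k+1}^{(0)})$ and $!_k(\Lambda_k)$ have the same image in $M_k^{\vee}[\dot{S}_{k+1,E_{k+1}}]_0$, so their difference $\delta:=!_k(\Lambda_k)-\overline{r}_k(\Lambda_{k+1}^{(0)})$ lies in $Y_k[S_{k+1,E_{k+1}},\dot{S}_{k+1,E_{k+1}}]_0$. If I can lift $\delta$ through the surjection $r_k\colon Y_{k+1}[\ldots]_0\twoheadrightarrow Y_k[\ldots]_0$ to an element $y$ with $N_{E_{k+1}/F}\,y=0$, then $\Lambda_{k+1}:=\Lambda_{k+1}^{(0)}+y$ is the sought cocharacter.

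The crux is this last lifting step, and it is where I expect the real work to lie. Since $!_k(\Lambda_k)$ and $\overline{r}_k(\Lambda_{k+1}^{(0)})$ are both killed by $N_{E_{k+1}/F}$ and $Y_k[\ldots]_0$ is torsion-free, $\delta$ is in fact killed by $N_{E_k/F}$ as well. Choosing any $\Z$-splitting lift $y_0$ of $\delta$, the element $N_{E_{k+1}/F}\,y_0$ lies in $K:=\ker(r_k)$, is $\Gamma_{k+1}$-invariant (being a norm), and $\delta$ lifts to an $N_{E_{k+1}/F}$-killed element iff the class of $N_{E_{k+1}/F}\,y_0$ in $K/N_{E_{k+1}/F}(K)$ vanishes. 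From $0\to K\to Y_{k+1}[\ldots]_0\to Y_k[\ldots]_0\to0$ with the middle term cohomologically trivial over $\Gamma_{k+1}$ one gets $\widehat{H}^{0}(\Gamma_{k+1},K)\cong\widehat{H}^{-1}(\Gamma_{k+1},Y_k[\ldots]_0)$, which equals $\widehat{H}^{-1}(\Gamma_k,Y_k[\ldots]_0)$ because $Y_k[\ldots]_0$ is torsion-free and inflated from $\Gamma_k$ (so $N_{E_{k+1}/F}$ and $N_{E_k/F}$ have the same kernel on it and inflation identifies the augmentation ideals), and this vanishes since $Y_k$ is free over $\Z[\Gamma_k]$. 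Hence $K^{\Gamma_{k+1}}=N_{E_{k+1}/F}(K)$, so $N_{E_{k+1}/F}\,y_0\in K^{\Gamma_{k+1}}=N_{E_{k+1}/F}(K)$ and the obstruction vanishes. This yields the compatible family $(\Lambda_k)_{k\ge0}$, and the two assertions of the lemma then hold by construction.
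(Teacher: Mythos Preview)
Your inductive construction of the tori is reasonable, but the lifting argument for $\Lambda_{k+1}$ contains a genuine error. You assert that $Y_k[S_{k+1,E_{k+1}}]_0$ is ``inflated from $\Gamma_k$'', and use this to identify $\widehat{H}^{-1}(\Gamma_{k+1},Y_k[S_{k+1,E_{k+1}}]_0)$ with $\widehat{H}^{-1}(\Gamma_k,Y_k[S_{k+1,E_{k+1}}]_0)$. This is false: although the coefficient module $Y_k$ is inflated from $\Gamma_k$, the index set $S_{k+1,E_{k+1}}$ carries the $\Gamma_{k+1}$-action on places of $E_{k+1}$, which certainly does not factor through $\Gamma_k$. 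So the module $Y_k[S_{k+1,E_{k+1}}]_0$ is a genuine $\Gamma_{k+1}$-module, your identification of Tate cohomology groups fails, and the vanishing of $\widehat{H}^0(\Gamma_{k+1},K)$ is left unjustified. In fact in your coset-sum construction the kernel $Y'_k=\ker(Y_{k+1}\to Y_k)$ contains copies of the cocharacter lattice of a norm-one torus for $E_{k+1}/E_k$, which is typically not cohomologically trivial, so there is no reason to expect the obstruction to vanish.

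The paper's proof sidesteps this difficulty by a simpler choice of tori. Rather than embedding $X_k$ into a free $\Z[\Gamma_{k+1}]$-module via coset sums, it sets $X_{k+1}=X_k\oplus X'_{k+1}$ with $X'_{k+1}=\Z[\Gamma_{k+1}][M_{k+1}]$ a new free summand (so $X_k$ itself is not required to be $\Z[\Gamma_k]$-free, only the new piece $X'_k$ is). Dually this forces $\overline{Y}_{k+1}$ to be the fibre product $\overline{Y}_k\times_{M_k^{\vee}}\overline{Y}'_{k+1}$, so one can simply \emph{define} $\Lambda_{k+1}=\,!_k(\Lambda_k)\oplus\Upsilon_{k+1}$, where $\Upsilon_{k+1}\in\overline{Y}'_{k+1}[S_{k+1,E_{k+1}},\dot{S}_{k+1,E_{k+1}}]_0^{N_{E_{k+1}/F}}$ is any lift of $c_{\mathrm{univ},k+1}$ (which exists by the freeness of $X'_{k+1}$ and the argument of Proposition~\ref{p:towerglobal}). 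The compatibility $\overline{r}_k(\Lambda_{k+1})=\,!_k(\Lambda_k)$ then holds by construction, with no cohomological obstruction to analyse. The moral is that by choosing the resolution so that $r_k$ admits a canonical splitting, the lifting problem for $\Lambda_{k+1}$ becomes trivial.
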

\begin{proof}
For $k \geq 0$ let $X_k' = \Z[\mathrm{Gal(E_k/F)}][M_k]$, so that there is a
canonical surjective map of $\Z[\mathrm{Gal}(E_k/F)]$-modules $X_k' \rightarrow
M_k$. Let $X_0 = X'_0$, and for $k \geq 0$ let $X_{k+1} = X_k \oplus
X'_{k+1}$. We have a natural surjective morphism $X_k \rightarrow M_k$, which
for $k > 0$ is obtained as the sum of $X_{k-1} \rightarrow M_{k-1}
\hookrightarrow M_k$ and $X'_k \rightarrow M_k$. Let $T_k$ be the torus over
$F$ such that $X^*(T_k) = X_k$, and let $U_k = T_k/P_k$. Compared to the
construction in \cite[Lemma 3.5.1]{Kalgri}, the only difference is that
$X'_{k+1}$ is free with basis $M_{k+1}$ instead of $M_{k+1} \smallsetminus
M_k$. Let $Y_k = X_*(T_k)$ and $\overline{Y}_k = X_*(U_k)$, so that we have an
exact sequence
\[ 0 \rightarrow Y_k \rightarrow \overline{Y}_k \rightarrow M_k^{\vee} \rightarrow 0. \]
Let $\overline{X}'_k = \ker(X'_k \rightarrow M_k)$, $Y'_k = \Hom_{\Z}(X'_k,
\Z)$ and $\overline{Y}'_k = \Hom_{\Z}(\overline{X}'_k, \Z)$ Since $X'_k$ is a
free $\Z[\mathrm{Gal}(E_k/F]$-module, using the same argument as in Proposition
\ref{p:towerglobal} we can find $\Upsilon_k \in \overline{Y}'_k[S_{k,E_k},
\dot{S}_{k,E_k}]_0^{N_{E_k/F}}$ mapping to $c_{\mathrm{univ}, k}$. For all $k
\geq 0$ we can identify $\overline{Y}_{k+1}$ with the group of $f \oplus g \in
\overline{Y}_k \oplus \overline{Y}'_{k+1}$ such that $[f] = [g]$ in
$M_k^{\vee}$. We use these identifications to construct $\Lambda_k$
inductively from $\Upsilon_k$. Let $\Lambda_0 = \Upsilon_0$, and for $k \geq
0$ let $\Lambda_{k+1} = !_k(\Lambda_k) \oplus \Upsilon_{k+1} \in
(\overline{Y}_k \oplus \overline{Y}'_{k+1})[S_{k+1,E_{k+1}},
\dot{S}_{k+1,E_{k+1}}]_0^{N_{E_{k+1}/F}}$. Thanks to the equality
$!_k(c_{\mathrm{univ},k}) = \rho_k(c_{\mathrm{univ}, k+1})$, we have that
$\Lambda_{k+1} \in \overline{Y}_{k+1}[S_{k+1,E_{k+1}},
\dot{S}_{k+1,E_{k+1}}]_0^{N_{E_{k+1}/F}}$.
\end{proof}

\begin{prop} \label{p:xiiscanclass}
The $2$-cocycle $\xi$ belongs to the canonical class in $H^2(F, P)$ defined in
\cite[\S 3.5]{Kalgri}.
\end{prop}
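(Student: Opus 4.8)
The plan is to compare $[\xi]$ with the canonical class one level at a time. Recall that, after fixing resolutions $1 \to P_k \to T_k \to \overline{T}_k \to 1$ with $f_k : T_k \to \overline{T}_k$ and a compatible family $(\Lambda_k)_{k\ge 0}$ as in the preceding Lemma, the canonical class of \cite[\S 3.5]{Kalgri} has, at level $k$, image $\delta_k\big(\mathrm{TN}_k(\Lambda_k)\big)$ in $H^2(F, P_k)$, where $\mathrm{TN}_k$ is the generalized Tate--Nakayama map (sending $\Lambda_k$ into $H^1(E_k/F, \overline{T}_k(E_k))$) and $\delta_k : H^1(F, \overline{T}_k) \to H^2(F, P_k)$ is the connecting homomorphism of the resolution; Kaletha proves this is independent of all choices and assembles, over $k$, into a well-defined class in $H^2(F, P)$. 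Since our $\xi$ is $\varprojlim_k \xi_k$ with $\xi_k := \xi_k(S_k, N_k)$ (Proposition \ref{p:towerglobal}), it is enough to prove $[\xi_k] = \delta_k\big([\alpha_k \sqcup_{E_k/F} \Lambda_k]\big)$ for every $k$, together with the fact that $[\alpha_k \sqcup_{E_k/F} \Lambda_k] = \mathrm{TN}_k(\Lambda_k)$.

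Fix $k$ and set $N = N_k$. As $[\Lambda_k] = c_{\mathrm{univ},k}$ in $M_k^\vee[\dot{S}_{k,E_k}]_0^{N_{E_k/F}}$, identity \eqref{e:cupTZ} applied to $Z = P_k \hookrightarrow T_k$ and $\Lambda = \Lambda_k$, together with \eqref{e:defxi}, gives
\[ \xi_k \;=\; \mathrm{d}\big(\sqrt[N]{\alpha_k}\big) \underset{E_k / F}{\sqcup} c_{\mathrm{univ},k} \;=\; \mathrm{d}\big(\sqrt[N]{\alpha_k}\big) \underset{E_k / F}{\sqcup} N\Lambda_k . \]
By \cite[Fact 4.3]{Kalri}, the right-hand side is $\mathrm{d}(c_k)$ with $c_k := \sqrt[N]{\alpha_k} \underset{E_k / F}{\sqcup} N\Lambda_k : \mathrm{Gal}(\overline{F}_{S_k \cup N}/F) \to T_k$. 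Since $\sum_w (N\Lambda_k)(w) = 0$ and the ratios $\sqrt[N]{\alpha_k}(\sigma,\tau)(w_1)/\sqrt[N]{\alpha_k}(\sigma,\tau)(w_2)$ lie in $\overline{F}^\times$ --- in fact the cup product is $S$-integral away from the relevant places, exactly as in the proof of Lemma \ref{l:cupproductglobal} using Proposition \ref{p:NthTate} --- the cochain $c_k$ takes values in $T_k(\mathcal{O}_{S_k \cup N}) \subset T_k(\overline{F})$. Hence $f_k \circ c_k$ is a $1$-cocycle valued in $\overline{T}_k(\overline{F})$ (its coboundary is $f_k(\xi_k) = 1$, as $\xi_k$ is valued in $P_k = \ker f_k$), and the snake-lemma description of $\delta_k$ yields $\delta_k\big([f_k \circ c_k]\big) = [\mathrm{d}(c_k)] = [\xi_k]$.

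It remains to identify $f_k \circ c_k$. For $\mu \in \overline{Y}_k = X_*(\overline{T}_k)$ one has $(N\mu)(x) = \mu(x^N)$; since the image of $N\Lambda_k$ under $X_*(T_k) \hookrightarrow X_*(\overline{T}_k)$ is $N$ times the integral element $\Lambda_k$, and $(\sqrt[N]{\alpha_k})^N = \alpha_k$ by Proposition \ref{p:NthTate}, we obtain $f_k \circ c_k = \alpha_k \sqcup_{E_k/F} \Lambda_k$, the cup product now formed with the pairing $I(E_k) \times \overline{Y}_k \to \overline{T}_k$ and landing in $\overline{T}_k(E_k)$ because $\sum_w \Lambda_k(w) = 0$ and $\alpha_k$ is a genuine $2$-cocycle with $\alpha_k(\sigma,\tau)(w_1)/\alpha_k(\sigma,\tau)(w_2) \in E_k^\times$. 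By the construction of the Tate cocycles $\alpha_k$ in Section \ref{s:Tatecocycles} (following Tate \cite{Tatecohotori}), $[\alpha_k \sqcup_{E_k/F} \Lambda_k]$ is precisely $\mathrm{TN}_k(\Lambda_k)$. This proves $[\xi_k] = \delta_k(\mathrm{TN}_k(\Lambda_k))$ for all $k$; combined with the compatibility of the $\xi_k$ (Proposition \ref{p:towerglobal}) and of the $\Lambda_k$ (via $!_k$ and $\overline{r}_k$, preceding Lemma), this identifies $[\xi]$ with the canonical class of \cite[\S 3.5]{Kalgri}.

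The delicate part is the bookkeeping needed to match Kaletha's abstract construction: one has to know that the specific resolution produced by the preceding Lemma is admissible for it --- this is where the surjectivity of $r_k$, $\overline{r}_k$ with connected kernels, the freeness of the $X_*(T_k)$, and the relation $!_k(\Lambda_k) = \overline{r}_k(\Lambda_{k+1})$ enter, as does the independence of the canonical class from the chosen resolution and the compatibility of its finite levels (compare the argument behind \cite[Lemma 3.7.10]{Kalgri}) --- and one must confirm that our Tate cocycle $\alpha_k$ represents the same class as the one entering Kaletha's generalized Tate--Nakayama morphism (Tate's class of \cite{Tatecohotori}). Everything else is a formal manipulation of unbalanced cup products resting on \eqref{e:cupTZ} and \cite[Fact 4.3]{Kalri}.
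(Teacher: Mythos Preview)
Your argument has a genuine gap: you reduce to checking, for each $k$, that the image of $[\xi]$ in $H^2(F, P_k)$ equals $\delta_k(\mathrm{TN}_k(\Lambda_k))$. Even granting this, it only pins down the image of $[\xi]$ in $\varprojlim_k H^2(F, P_k)$, not $[\xi]$ itself in $H^2(F,P)$. The natural map $H^2(F,P) \to \varprojlim_k H^2(F, P_k)$ has kernel $\varprojlim_k^1 H^1(F, P_k)$, which is nonzero; indeed Proposition~\ref{p:examplebadclass} exhibits a class $\xi^{\mathrm{bad}}$ that agrees with the canonical class in $\varprojlim_k H^2(F,P_k)$ yet is not canonical. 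So a level-by-level check in cohomology cannot conclude.

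More fundamentally, your description of Kaletha's canonical class as ``the class whose image at level $k$ is $\delta_k(\mathrm{TN}_k(\Lambda_k))$'' is a property of that class, not its definition. The definition in \cite[\S 3.5]{Kalgri} is a local--global one: $\xi$ is canonical if, in $Z^2(\A, T)$, it equals the product over $v$ of the Eckmann--Shapiro corestrictions of the local classes $\mathrm{loc}_v(\xi_v)$ times $\mathrm{d}(a)$ for some $a \in C^1(\A, T)$ with $\overline{a} = \mathrm{d}(b)$ in $C^1(\A, U)$. The paper's proof verifies exactly this: it uses Lemma~\ref{l:cupproductlocalglobal} to identify $\sqrt[N_k]{\alpha'_k} \sqcup N_k\Lambda_k$ with the product of local Eckmann--Shapiro contributions, then sets $a_k = (\sqrt[N_k]{\alpha_k}/\sqrt[N_k]{\alpha'_k}) \sqcup N_k\Lambda_k$ and $b_k = \beta_k \sqcup \Lambda_k$, and crucially checks the cochain-level compatibilities $r_k(a_{k+1}) = a_k$ and $\overline{r}_k(b_{k+1}) = b_k$ via Lemma~\ref{l:cupproductdeltabeta}. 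It is this compatibility of actual cochains across $k$, not merely of cohomology classes, that bypasses the $\varprojlim^1$ obstruction. Your argument never engages with the local cocycles $\xi_{k,v}$ at all, so it cannot make contact with the definition.
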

\begin{proof}
We use the tower of resolutions and the family $(\Lambda_k)_{k \geq 0}$
constructed in the previous lemma. We are looking for a compatible family
$(a_k, b_k)_{k \geq 0}$ where $a_k \in C^1(F, T_k)$ and $b_k \in
\overline{T}_k(\A_{\overline{F}})$ are such that $\overline{a_k} =
\mathrm{d}(b_k)$ in $C^1(A, \overline{T}_k)$ and
\[ \xi_k = \prod_{v \in S_k} \mathrm{ES}^2_{R'_{k+i,v}}(\mathrm{loc}_{k,v}(\xi_{k,v})) \times \mathrm{d}(a_k) \]
in $Z^2(\A, T_k)$, for $i \geq 0$ big enough for the corestrictions to be
well-defined.

By Lemma \ref{l:cupproductlocalglobal} and thanks to the fact that $\Lambda_k$
has support in the finite set $S_{k,E_k}$, for $i \geq 0$ big enough we have
\[ \sqrt[N_k]{\alpha'_k} \underset{E_k/F}{\sqcup} N_k \Lambda_k = \prod_{v \in
S_k} \mathrm{ES}^1_{R'_{k+i,v}} \left( \sqrt[N_k]{\alpha_{k,v}} \underset{E_{k,
\dot{v}}/F_v}{\sqcup} N_k l_{k,v}(\Lambda_k) \right) \]
as maps $\mathrm{Gal}(E_k/F) \rightarrow T_k(\A_{E_{k+i}})$. Using an argument
similar to the proof of Proposition \ref{p:klocalglobal}, we deduce
\begin{align*}
\mathrm{d} \left( \sqrt[N_k]{\alpha'_k} \underset{E_k/F}{\sqcup} N_k \Lambda_k
\right) =& \prod_{v \in S_k} \mathrm{ES}^2_{R'_{k+i,v}} \left( \mathrm{d} \left(
\sqrt[N_k]{\alpha_{k,v}} \underset{E_{k, \dot{v}}/F_v}{\sqcup} N_k
l_{k,v}(\Lambda_k) \right) \right) \\
=& \prod_{v \in S_k} \mathrm{ES}^2_{R'_{k+i,v}}(\mathrm{loc}_{k,v}(\xi_{k,v}))
\end{align*}
in $Z^2(\mathrm{Gal}(\overline{F}/F), \ker(T_k(\A_{\overline{F}}) \rightarrow
\overline{T}_k(\A_{\overline{F}})))$. This leads us to define
\[ a_k = \frac{\sqrt[N_k]{\alpha_k}}{\sqrt[N_k]{\alpha'_k}}
\underset{E_k/F}{\sqcup} N_k \Lambda_k \in C^1(\mathrm{Gal}(E_k/F),
T_k(\A_{E_{k+i}})). \]
Then
\[ \overline{a_k} = \frac{\alpha_k}{\alpha'_k} \underset{E_k/F}{\sqcup}
\Lambda_k = \mathrm{d}(b_k) \]
where $b_k = \beta_k \underset{E_k/F}{\sqcup} \Lambda_k \in
\overline{T}(\A_{E_k})$.

The fact that $\overline{r}_k(b_{k+1}) = b_k$ for all $k \geq 0$ follows
directly from \eqref{e:betacup} in Lemma \ref{l:cupproductdeltabeta}. The fact
that $r_k(a_{k+1}) = a_k$ for all $k \geq 0$ follows from
$\overline{r}_k(\Lambda_{k+1}) = !_k(\Lambda_k)$ and
\[ a_k = \frac{\sqrt[N_{k+1}]{\alpha_k}}{\sqrt[N_{k+1}]{\alpha'_k}}
\underset{E_k/F}{\sqcup} N_{k+1} \Lambda_k \]
using the same argument as in Lemmas \ref{l:cupproductglobal} and
\ref{l:cupproductdeltabeta}.
\end{proof}

\section{On ramification}
\label{s:ramification}

\subsection{A ramification property}

We deduce a ramification property for Kaletha's generalized Galois cocycles
from our explicit construction. Such a property is important to state Arthur's
multiplicity formula in \cite[\S 4.5]{Kalgri}, namely to guarantee that the
global adèlic packets $\Pi_{\varphi}$ are well-defined: see \cite[Lemma
4.5.1]{Kalgri}.

\begin{prop} \label{p:unrae}
Let $G$ be a connected reductive group over $F$, and $Z$ a finite central
subgroup defined over $F$. For any $z \in Z^1(P \rightarrow \mathcal{E}, Z
\rightarrow G)$, there exists a finite subset $S'$ of $V$ containing all
archimedean places such that for any $v \in V \smallsetminus S'$,
$\mathrm{pr}_{\dot{v}}(z \circ \mathrm{loc}_v)$ is unramified, i.e.\ inflated
from an element of $Z^1(\mathrm{Gal}(K(v)/F_v), G(\mathcal{O}(K(v))))$ for some
finite unramified extension $K(v)/F_v$.
\end{prop}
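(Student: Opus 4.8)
The strategy is to reduce to the case of a torus via an embedding trick, and then exploit the explicit formulae proved earlier. First I would choose a maximal torus $T$ of $G$ defined over $F$ containing $Z$ (using that $Z$ is central), and replace the cocycle $z$ by one valued in $T$. More precisely, the inclusion $Z \hookrightarrow T$ gives an injection $Z^1(P \rightarrow \mathcal{E}, Z \rightarrow T(\overline{F})) \hookrightarrow Z^1(P \rightarrow \mathcal{E}, Z \rightarrow G(\overline{F}))$, but it is not clear a priori that $z$ lands in the image; however, it suffices to prove the ramification statement after composing with a map to $G(\overline{F})$, and by \cite[Theorem 4.3 / standard arguments]{Kalgri} (or directly from the description of $H^1(P \rightarrow \mathcal{E}, Z \rightarrow G)$ via tori) one can modify $z$ by a coboundary so that it is inflated from a cocycle valued in $T$ at finite level. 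After this reduction I would be looking at an element of $Z^1(P(E_k,\dot{S}'_{E_k},N_k) \rightarrow \mathcal{E}_k(S_k,N_k), Z \rightarrow T(\mathcal{O}_{S_k \cup N_k}))$, which by Corollary \ref{c:spliceglobal} lies in the image of $\iota_k(S_k, N_k)$ applied to some $\Lambda \in \overline{Y}[S'_{E_k}, \dot{S}'_{E_k}]_0^{N_{E_k/F}}$ for $\overline{Y} = X_*(T/Z)$.

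\textbf{Main step.} Now I would apply Proposition \ref{p:klocalglobal}(2), equation \eqref{e:lociotak}, which expresses $\mathrm{pr}_{\dot{v}}(\iota_k(S',N)(\Lambda) \circ \mathrm{loc}_{k,v}(S',N))$ as the product of $\iota_{k,v}(N)(l_{k,v}(\Lambda))$ and a coboundary $\mathrm{d}(\mathrm{pr}_{\dot{v}}(\sqrt[N]{\beta_k}) \sqcup_{E_k/F} N\Lambda)$. The coboundary term is harmless for ramification purposes since adding a coboundary does not change the ramification locus (or one can absorb it). For the genuine term $\iota_{k,v}(N)(l_{k,v}(\Lambda))$, I would observe two things: first, for all but finitely many $v$ the localization $l_{k,v}(\Lambda)$ vanishes — indeed $l_{k,v}(\Lambda) = 0$ whenever $v \notin S'$, by the definition of $l_{k,v}$ recalled just before Lemma \ref{l:cupproductlocalglobal}; and $\mathrm{loc}_{k,v}(S',N)$ itself is trivial when $v \notin S'$. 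So for $v \notin S'$ the whole localization $z \circ \mathrm{loc}_v$ reduces (up to the coboundary piece) to something valued in the image of the trivial morphism, hence is cohomologically trivial on $\mathrm{Gal}(\overline{F_v}/F_v)$.

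\textbf{From triviality to unramifiedness.} Cohomological triviality is not quite the claimed statement: I still need that the resulting cocycle is \emph{inflated from an unramified extension} and takes values in $G(\mathcal{O}(K(v)))$. For this I would track integrality through the formulae: $\sqrt[N_k]{\alpha_k}$ takes values in $\mathcal{I}(F, S_k \cup \{v\} \cup N_k)$ by Proposition \ref{p:NthTate}(5), $\sqrt[N_k]{\beta_k}$ in $\mathcal{I}(F, S_k \cup N_k)$ by Proposition \ref{p:Nthbeta}, and $\delta_k(N_k)$ in the $N_k$-torsion of $\mathcal{I}(F, S_k \cup \{v\} \cup N_k)$; thus for $v \notin S'$ with $S' \supset S_k$ and $v \nmid N_k$ (finitely many exceptions), all cup-products occurring are unramified and integral at $v$. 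Enlarging $S'$ to also contain all places where the chosen $T$, the chosen integral model, or the modifying coboundary fails to be integral/unramified, I conclude that $\mathrm{pr}_{\dot{v}}(z \circ \mathrm{loc}_v)$ is inflated from $Z^1(\mathrm{Gal}(K(v)/F_v), G(\mathcal{O}(K(v))))$ where $K(v)/F_v$ is the unramified extension cut out by $E_{k',\dot{v}}$ for the relevant $k'$. The main obstacle I anticipate is the initial reduction step: passing from an arbitrary $z \in Z^1(P \rightarrow \mathcal{E}, Z \rightarrow G)$, valued in the possibly non-abelian $G$, to a cohomologous cocycle valued in a torus at finite level, so that Corollary \ref{c:spliceglobal} and the explicit $\iota_k$ formulae apply — this requires knowing that the image of $\iota$ is cofinal enough, i.e. that every class in $H^1(P \rightarrow \mathcal{E}, Z \rightarrow G)$ comes from a torus, which is exactly the content underlying Kaletha's computation of these cohomology sets and must be invoked carefully.
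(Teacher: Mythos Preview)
Your approach is essentially the same as the paper's. Two clarifications: the reduction to a torus is handled directly by \cite[Lemma 3.6.2]{Kalgri} (the statement that any cocycle in $Z^1(P \rightarrow \mathcal{E}, Z \rightarrow G)$ is, up to a coboundary, inflated from one valued in a maximal torus at finite level), so your flagged ``main obstacle'' is not an obstacle; and your throwaway remark that ``adding a coboundary does not change the ramification locus'' is false as stated --- a coboundary $\mathrm{d}(c)$ with $c$ ramified is ramified --- but your subsequent paragraph correctly tracks that $\mathrm{pr}_{\dot{v}}(\sqrt[N]{\beta_k}) \sqcup_{E_k/F} N\Lambda$ lies in $T(\mathcal{O}(F_v^{\mathrm{nr}}))$ for $v \notin S'$ via Proposition~\ref{p:Nthbeta}, which is exactly what the paper checks (this is $\kappa_v(\Lambda)$), together with the unramifiedness of $\eta_{k,v}(S',N)$ coming from the ramification properties of $\delta_k(N)$.
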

\begin{proof}
Thanks to \cite[Lemma 3.6.2]{Kalgri} it is enough to prove it in the case where
$G$ is a torus $T$. As usual let $\overline{Y} = X_*(T/Z)$. We remark that this
reduction could force us to enlarge $S'$. Let $N=\exp(Z)$. There exists $k \geq
0$ and a finite $S' \subset V$ containing all places dividing $N$ and $S_k$
such that $z$ is inflated from a unique element of $Z^1(P(E_k, \dot{S}'_{E_k},
N) \rightarrow \mathcal{E}_k(S', N), Z \rightarrow T(\mathcal{O}_{S'}))$, which
we also denote by $z$. By \cite[Proposition 3.7.8]{Kalgri} we can assume that
$z = \iota_k(S', N)(\Lambda)$ for some $\Lambda \in \overline{Y}[S'_{E_k},
\dot{S}'_{E_k}]_0^{N_{E_k/F}}$, up to enlarging $S'$ so that Conditions 3.3.1
in \cite{Kalgri} are satisfied.

For $v \in V \smallsetminus S'$, the morphism $\mathrm{loc}_{k,v}(S', N) :
\mathcal{E}_{k,v}(N) \rightarrow \mathcal{E}_k(S', N)$ is trivial on $P(E_{k,
\dot{v}}, N)$ and so it factors through $\mathrm{Gal}(\overline{F_v}/F_v)$.
Thanks to ramification properties of $\delta_k(N)$ (see Definition
\ref{d:delta}) and by definition of $\eta_{k,v}(S', N)$ (see Definition
\ref{d:eta}), $\eta_{k,v}(S', N) : \mathrm{Gal}(\overline{F_v}/F_v) \rightarrow
P(E_k, \dot{S}'_{E_k}, N)$ factors through
$\mathrm{Gal}(F_v^{\mathrm{nr}}/F_v)$. By construction of $\sqrt[N]{\beta_k}$
in Proposition \ref{p:Nthbeta} and definition of $\kappa_v(\Lambda)$ in
Proposition \ref{p:towerlocalglobal}, $\kappa_v(\Lambda) \in
T(\mathcal{O}(F_v^{\mathrm{nr}}))$. The equality \eqref{e:lociotatower} in
Proposition \ref{p:towerlocalglobal}, which is inflated from \eqref{e:lociotak}
in Proposition \ref{p:klocalglobal}, shows that $\mathrm{pr}_{\dot{v}}(z \circ
\mathrm{loc}_v)$ is unramified.
\end{proof}

Note that it does not seem to be possible to choose $K(v) = K_v$ for some
finite extension $K/F$.

\subsection{Alternative proof}
\label{s:altproof}

As announced in the introduction to this paper, we now give an alternative proof
of Proposition \ref{p:unrae}, which relies solely on Kaletha's definition of the
canonical class, and not on constructions in the present paper.
\begin{proof}[Alternative proof of Proposition \ref{p:unrae}]
For $v \in V$ temporarily let $\xi_v \in Z^2(\mathrm{Gal}(\overline{F}/F), P_v)$
be \emph{any} element of $Z^2(\mathrm{Gal}(\overline{F_v}/F, P_v)$ representing
the class defined in \cite{Kalri}. Choose a tower of resolutions $\left( 1
\rightarrow P_k \rightarrow T_k \rightarrow U_k \rightarrow 1 \right)_{k \geq
0}$ as in \cite[Lemma 3.5.1]{Kalgri}, and following Kaletha let
$T(\overline{\A}) = \varprojlim_k T_k(\overline{\A})$ and $U(\overline{\A}) =
\varprojlim_k U_k(\overline{\A})$. Temporarily let $\xi$ be \emph{any} element
of $Z^2(\mathrm{Gal}(\overline{F}/F), P)$ representing the canonical class
defined in \cite[\S 3.5]{Kalgri}. Of course the $2$-cocycles constructed in
this paper are examples of elements of these cohomology classes, but we want to
emphasize that the present proof does not require constructions in previous
sections.

Let $R'_v = (R'_{k,v})_{k \geq 0}$. The Eckmann-Shapiro maps, for $k,i \geq0$,
\[ \mathrm{ES}^2_{R'_{k+i,v}} : C^2(E_{k+i,\dot{v}} / F_v, P_k) \rightarrow
C^2(E_{k+i}/F, P_k(E_{k+i} \otimes_F F_v) ) \]
are compatible and yield a pro-finite Eckmann-Shapiro map
\[ \mathrm{ES}^2_{R'_v} : C^2(F_v, P) \rightarrow  C^2(F, P(\overline{F}
\otimes_F F_v)) \]
where as in \cite[\S 3.5]{Kalgri}, $P(\overline{F} \otimes_F F_v)):=
\varprojlim_k P_k(\overline{F} \otimes_F F_v)$.  This is explained in
\cite[Appendix B]{Kalgri}, although notations differ: our set of \emph{right}
coset representatives $R'_{k,v}$ corresponds to the image of the composition in
\cite[Lemma B.1, 1.]{Kalgri}, by mapping $r \in R'_{k,v}$ to $r^{-1}$. By
definition of the canonical class there exists $a \in C^1(\A, T)$ and $b \in
U(\overline{\A})$ such that
\[ \xi = \prod_{v \in V} \mathrm{ES}^2_{R'_v}(\mathrm{loc}_v(\xi_v)) \times
\mathrm{d}(a) \]
in $Z^2(\A, T)$ and $\overline{a} = \mathrm{d}(b)$ in $C^1(\A, U)$. In
particular for any $v \in V$ we have
\[ \mathrm{res}_v(\xi) = \mathrm{loc}_v(\xi_v) \times \mathrm{d}(a_v) \]
where $\mathrm{res}_v$ denotes restriction to $\mathrm{Gal}(
\overline{F_v}/F_v)$ and $a_v = \mathrm{pr}_{\dot{v}}( \mathrm{res}_v(a))$. This
equality holds in $Z^2(F_v, T)$, but $\xi$ and $\mathrm{loc}_v(\xi_v)$ both take
values in $P$. Let $b_v = \mathrm{pr}_{\dot{v}}(b)$, and choose a lift
$\tilde{b}_v$ of $b_v$ in $T(\overline{F_v})$. This is possible thanks to the
surjectivity of all maps $P_{k+1} \rightarrow P_k$, by a simple diagram chasing
argument (or more conceptually using vanishing of $\varprojlim^1_k P_k$). Let
$a'_v = a_v / \mathrm{d}(\tilde{b}_v)$. Then $a'_v \in C^1(F_v, P)$, and we have
the equality
\[ \mathrm{res}_v(\xi) = \mathrm{loc}_v(\xi_v) \times \mathrm{d}(a_v') \]
in $Z^2(F_v, P)$.

Fix $k \geq 0$. For $v \in V$ denote by $a_{k,v}$ (resp.\ $b_{k,v}$,
$\tilde{b}_{k,v}$, $a'_{k,v}$) the image of $a_v$ (resp.\ $b_v$, $\tilde{b}_v$,
$a'_v$) in $C^1(F_v, T_k)$ (resp.\ $U_k(\overline{F_v})$, $T_k(\overline{F_v})$,
$C^1(F_v), P_k)$). Let us check that there is a finite set $S'$ of places of $F$
such that for all $v \not\in S'$, $a'_{k,v} \in C^1(F_v, P_k)$ is unramified.
There exists a finite set $S' \supset S_k$ and a finite extension $K$ of $E_k$
unramified outside $S'_{E_k}$ such that $a_k \in C^1(K/F, T_k(\A_K)_{S'})$
and $b_k \in U_k(\A_K)_{S'}$ where $T_k(\A_K)_{S'}$ is defined as $X_*(T_k)
\otimes_{\Z} I(K,S')$. So for $v \not\in S'$, $a_{k,v} \in
C^1(K_{\dot{v}}/F_v, T_k( \mathcal{O}( K_{\dot{v}} ) ))$ is unramified. For any
$v \in V$, $\tilde{b}_{k,v} \in T_k( \mathcal{O}( K_{\dot{v}} )^{(N_k)})$ where
$\mathcal{O}( K_{\dot{v}} )^{(N_k)}$ is the finite étale extension of
$\mathcal{O}( K_{\dot{v}} )$ obtained by adjoining all $N_k$-th roots of
elements in $\mathcal{O}( K_{\dot{v}} )^{\times}$. Here we used that $S_k$
contains all places above $N_k$. We conclude that for $v \not\in S'$, $a'_{k,v}
\in C^1(\mathrm{Gal}(\mathcal{O}( K_{\dot{v}} )^{(N_k)} / \mathcal{O}(F_v)),
P_k)$ and
\[ \mathrm{res}_v(\xi_k) = \mathrm{d}(a'_{k,v}) \]
in $Z^2(F_v, P_k)$, where $\xi_k$ is $\xi$ composed with the surjection $P
\rightarrow P_k$. This easily implies Proposition \ref{p:unrae}.
\end{proof}

Note that the fact that for a fixed $k$, $\mathrm{res}_v(\xi_k)$ is the
coboundary of an unramified $1$-cochain for almost all $v \in V$ is
straighforward from the definition. What the proof above shows is that the
cochain $a'_{k,v}$ coming from ``infinite level'', which is unique up
multiplication by a $1$-coboundary, is unramified for almost all $v \in V$.

\subsection{A non-canonical class failing the ramification property}
\label{s:noncanclass}

\begin{prop} \label{p:examplebadclass}
Assume that $N_1 = 2$ and that $S_1$ is big enough so that $P_1$ is
non-trivial. Then there exists $\xi^{\mathrm{bad}} \in Z^2(F, P)$ which
coincides with the canonical class in $\varprojlim_k H^2(F, P_k)$ and such that
for infinitely many places $v$ of $F$, the $1$-cochain $a_v \in C^1( F_v, P)$
such that $\mathrm{res}_v(\xi^{\mathrm{bad}}) = \mathrm{loc}_v(\xi_v)
\mathrm{d}(a_v)$ is such that its image $a_{1,v} \in C^1(F_v, P_1)$ is
ramified.
\end{prop}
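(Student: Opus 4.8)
The plan is to modify the canonical cocycle $\xi$ constructed in this paper at a single level (level $1$, since $N_1=2$) by multiplying it by a suitable coboundary, in such a way that the global cohomology class in $\varprojlim_k H^2(F,P_k)$ is unchanged but the localization behavior at infinitely many places is spoiled. Concretely, I would start from $\xi \in Z^2(F,P)$ as in Proposition \ref{p:xiiscanclass}, together with its image $\xi_1 \in Z^2(F,P_1)$, and the cochains $a'_v \in C^1(F_v,P)$ from the alternative proof of Proposition \ref{p:unrae} satisfying $\mathrm{res}_v(\xi) = \mathrm{loc}_v(\xi_v)\mathrm{d}(a'_v)$, with $a'_{1,v}$ unramified for almost all $v$. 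The idea is to choose a global $1$-cochain $\gamma : \mathrm{Gal}(\overline{F}/F) \rightarrow P$ such that $\gamma$ is a \emph{cocycle} modulo $P_1$ — i.e.\ the composite $\gamma_1 : \mathrm{Gal}(\overline{F}/F) \rightarrow P_1$ lies in $Z^1(F,P_1)$ — but such that $\gamma$ itself is not a cocycle; then $\xi^{\mathrm{bad}} := \xi \cdot \mathrm{d}(\gamma)$ still represents the canonical class in each $H^2(F,P_k)$ for $k \geq 1$ provided $\mathrm{d}(\gamma)$ becomes a coboundary after projecting to $P_k$, which it does since $\mathrm{d}(\gamma)_k = \mathrm{d}(\gamma_k)$. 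Wait — more carefully: $\mathrm{d}(\gamma)$ projected to $P_k$ is $\mathrm{d}(\gamma_k)$ which is a coboundary automatically, so $\xi^{\mathrm{bad}}$ and $\xi$ have the same image in $H^2(F,P_k)$ for every $k$; hence they agree in $\varprojlim_k H^2(F,P_k)$. Since $\gamma_1 \in Z^1(F,P_1)$, actually $\mathrm{d}(\gamma)_1 = \mathrm{d}(\gamma_1) = 1$, so $\xi^{\mathrm{bad}}_1 = \xi_1$ exactly at level $1$; this means we have not changed $\xi_1$ at all, so the level-$1$ discrepancy must come entirely from how the new global cochain forces $a_v$ to change.

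Let me reconsider the mechanism. The point is that $\xi$ and $\xi^{\mathrm{bad}}$ represent the same class in $H^2(F,P)$ if and only if $\gamma$ is a genuine $1$-cocycle, which we are deliberately avoiding; they only agree in $\varprojlim_k H^2(F,P_k)$, which is a \emph{weaker} statement because $H^2(F,P) \rightarrow \varprojlim_k H^2(F,P_k)$ need not be injective (this is exactly the $\varprojlim^1$ phenomenon invoked throughout). So the right setup is: choose $\gamma \in C^1(F,P)$ with $\gamma_1 \in Z^1(F,P_1)$ and $\gamma_1$ ramified at infinitely many places, such that for every $k \geq 1$ the projection $\gamma_k$ can be written as $\gamma_k = c_k \cdot (\text{something in }Z^1(F,P_k))$ — actually it suffices that $\mathrm{d}(\gamma)_k \in B^2(F,P_k)$, which as noted is automatic. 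Then set $\xi^{\mathrm{bad}} = \xi\,\mathrm{d}(\gamma)$. Localizing: $\mathrm{res}_v(\xi^{\mathrm{bad}}) = \mathrm{loc}_v(\xi_v)\,\mathrm{d}(a'_v)\,\mathrm{d}(\mathrm{res}_v(\gamma)) = \mathrm{loc}_v(\xi_v)\,\mathrm{d}(a'_v \cdot \mathrm{res}_v(\gamma))$, so the new localizing $1$-cochain is $a_v = a'_v \cdot \mathrm{res}_v(\gamma)$, with level-$1$ image $a_{1,v} = a'_{1,v}\cdot \mathrm{res}_v(\gamma_1)$. Since $a'_{1,v}$ is unramified for almost all $v$, and $\gamma_1$ is chosen to be ramified at infinitely many $v$, the product $a_{1,v}$ is ramified at infinitely many $v$, as desired.

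So the crux reduces to: \emph{construct $\gamma \in C^1(\mathrm{Gal}(\overline{F}/F), P)$ such that its projection $\gamma_1$ to $P_1$ lies in $Z^1(F,P_1)$ and is ramified at infinitely many places of $F$.} The plan here is first to produce a single $z_1 \in Z^1(F,P_1)$ that is ramified at infinitely many places; since $P_1$ is a nontrivial finite $2$-torsion (indeed $N_1=2$) diagonalizable group over $F$, split by $E_1$, the group $Z^1(\mathrm{Gal}(E_1/F)^{\mathrm{ab-ish}}, P_1(\overline{F}))$ is large and $H^1(F,P_1)$ is infinite — e.g.\ via Kummer theory, $H^1(F,\mu_2) = F^\times/(F^\times)^2$ is infinite and maps into $H^1(F,P_1)$ through any nonzero $\mu_2 \hookrightarrow P_1$ or $P_1 \twoheadrightarrow \mu_2$ dualized appropriately, and one can pick a class ramified at infinitely many primes (a square root of an element with infinitely many prime factors in its support). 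Then I would lift $z_1$ to a $1$-cochain $\gamma \in C^1(F,P)$ using surjectivity of all transition maps $P_{k+1}\twoheadrightarrow P_k$ (the same diagram-chase / $\varprojlim^1 = 0$ argument used in the alternative proof): pick compatible lifts $\gamma_k \in C^1(F,P_k)$ of $z_1$ level by level with no cocycle constraint imposed beyond level $1$, giving $\gamma = (\gamma_k)_k$. The main obstacle I anticipate is being careful about two things: (i) ensuring the resulting $\xi^{\mathrm{bad}}$ genuinely still lies in the canonical class \emph{at every finite level} — this needs the observation that $\mathrm{d}(\gamma)$ dies in every $H^2(F,P_k)$, which is immediate once one notes $\mathrm{d}(\gamma)_k = \mathrm{d}(\gamma_k)$ is a coboundary in $P_k$; and (ii) making precise ``ramified'' for $a_{1,v}$ and ``infinitely many $v$'': here I would pick $z_1$ coming from a fixed $\alpha \in F^\times$ whose absolute value/valuation is odd at infinitely many finite primes, so that its square-root extension is ramified there, and check that multiplying by the almost-everywhere-unramified $a'_{1,v}$ cannot repair ramification at all but finitely many of those primes. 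Neither step involves heavy computation; the argument is essentially a clean exploitation of the gap between $H^2(F,P)$ and $\varprojlim_k H^2(F,P_k)$, contrasting with the "infinite level" rigidity used in Proposition \ref{p:unrae}.
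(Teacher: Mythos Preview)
Your plan has a fundamental gap. Everything hinges on producing a global $\gamma_1 \in Z^1(F, P_1)$ (or even just a continuous cochain $\gamma_1 \in C^1(F, P_1)$) whose restriction to $\mathrm{Gal}(\overline{F_v}/F_v)$ is ramified for infinitely many $v$. This is impossible: $P_1$ is a \emph{finite} discrete group, so any continuous map $\mathrm{Gal}(\overline{F}/F) \to P_1$ factors through $\mathrm{Gal}(K/F)$ for some finite Galois extension $K/F$, hence is unramified at every place unramified in $K$, i.e.\ at all but finitely many places. Your concrete Kummer-theory suggestion, ``a square root of an element with infinitely many prime factors in its support'', fails for exactly this reason: an element of $F^{\times}$ has only finitely many prime divisors.

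More conceptually, your $\xi^{\mathrm{bad}} = \xi \cdot \mathrm{d}(\gamma)$ with $\gamma \in C^1(F,P)$ does not change the class in $H^2(F,P)$ at all, and the property ``$a_{1,v}$ is unramified for almost all $v$'' is in fact an invariant of the class in $H^2(F,P)$: if $\xi' = \xi \cdot \mathrm{d}(\eta)$ then the new $a_{1,v}$ differs from the old one by $\mathrm{res}_v(\eta_1)$, which by the argument above is unramified almost everywhere. So no modification by a global coboundary can produce the desired bad behaviour. The paper's proof instead constructs a \emph{nontrivial} element of $\ker\bigl(H^2(F,P) \to \varprojlim_k H^2(F,P_k)\bigr) = \varprojlim_k^1 H^1(F,P_k)$, via the isomorphism with $\varprojlim_k^1 H^1(\A, T_k \to U_k)$, and multiplies $\xi$ by it. The construction is diagonal: for each $k \geq 1$ one chooses a distinct place $v_k$ split in $E_k$ and a \emph{local} class $c_k^{(v_k)} \in Z^1(F_{v_k}, P_k)$ with ramified image in $H^1(F_{v_k}, P_1)$; the resulting sequence $(c_k)_k$ defines a $\varprojlim^1$-class. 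The key point is that ramification at $v_k$ is introduced only at levels $\geq k$, so at each fixed finite level only finitely many places are affected and everything remains continuous, while in the inverse limit infinitely many places carry ramification. This infinite-level diagonal trick is precisely what is missing from your single-level approach.
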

Note that $a_v$ is unique up to a $1$-coboundary by \cite[Proposition
3.4.5]{Kalgri}, and so the property ``$a_{1,v}$ is unramified'' is well-defined
at all places $v \in V \smallsetminus S_1$.
\begin{proof}
Fix a tower of resolutions $(T_k \rightarrow U_k)_{k \geq 0}$ of $P_k$ by tori
as in \cite[\S 3.5]{Kalgri}, and denote by $\pi_k$ the morphism $(T_{k+1}
\rightarrow U_{k+1}) \rightarrow (T_k \rightarrow U_k)$. Recall that the
following short sequences are exact:
\[ 1 \rightarrow \varprojlim_k {}^1 H^1(F, P_k) \rightarrow H^2(F, P)
\rightarrow \varprojlim_k H^2(F, P_k) \rightarrow 1 \]
\[ 1 \rightarrow \varprojlim_k {}^1 H^1(\A, T_k \rightarrow U_k) \rightarrow
H^2(\A, T \rightarrow U) \rightarrow \varprojlim_k H^2(\A, T_k \rightarrow U_k)
\rightarrow 1 \]
and by \cite[Lemma 3.5.3]{Kalgri} the natural morphism
\begin{equation} \label{e:projlim1} \varprojlim_k {}^1 H^1(F,P_k) \rightarrow
\varprojlim_k {}^1 H^1(\A, T_k \rightarrow U_k) \end{equation}
is an isomorphism. So let us first define a non-trivial element of
$\varprojlim_k {}^1 H^1(\A, T_k \rightarrow U_k)$. Choose, for any $k \geq 1$, a
place $v_k \in V \smallsetminus S_1$ such that $E_k/F$ is split above $v_k$
and the $v_k$'s are distinct. For any $k \geq 1$, the tori $T_k$, $U_k$, $T_1$
and $U_1$ are split over $F_{v_k}$, and the surjective morphism of tori $U_k
\rightarrow U_1$ splits over $F_{v_k}$ since it has connected kernel.
Therefore
\[ H^1(F_{v_k}, P_k) = H^1(F_{v_k}, T_k \rightarrow U_k) \simeq
U_k(F_{v_k})/T_k(F_{v_k}) \]
maps onto
\[ H^1(F_{v_k}, P_1) = H^1(F_{v_k}, T_1 \rightarrow U_1) \simeq
U_1(F_{v_k})/T_1(F_{v_k}). \]
Since we have assumed $N_1=2$, over $F_{v_k}$ the multiplicative group $P_1$ is
isomorphic to $\mu_2^r$ for some $r > 1$. For each $k \geq 1$ let $c_k^{(v_k)}
\in Z^1(F_{v_k}, P_k) \subset Z^1(F_{v_k}, T_k \rightarrow U_k)$ be such that
its image in $H^1(F_{v_k}, P_1)$ is ramified. Recall that $H^1(\A, T_k
\rightarrow U_k)$ decomposes as a restricted direct product \cite[Lemma
C.1.B]{KS}, in particular $H^1(F_{v_k}, T_k \rightarrow U_k)$ is a factor of
$H^1(\A, T_k \rightarrow U_k)$. Let $c_k \in Z^1(\A, T_k \rightarrow U_k)$ be an
element of the class defined by $c_k^{(v_k)}$ in $H^1(\A, T_k \rightarrow U_k)$.
Then $(c_k)_{k \geq 0}$ defines an element of $\varprojlim_k^1 H^1(\A, T_k
\rightarrow U_k)$, whose image in $H^2(\A, T \rightarrow U)$ is the class of the
convergent product $\prod_{k \geq 0} \mathrm{d}(\tilde{c}_k)$, where
$\tilde{c}_k \in C^1(\A, T \rightarrow U)$ is any lift of $c_k$.

By surjectivity of \eqref{e:projlim1}, there exists a family $(b_k)_{k \geq 0}$
with $b_k \in Z^1(\A, T_k \rightarrow U_k)$ such that for every $k \geq 1$, the
class of $c_k b_k / \pi_k(b_{k+1})$ belongs to the image of $H^1(F, T_k
\rightarrow U_k) \rightarrow H^1(\A, T_k \rightarrow U_k)$. Up to multiplying
$c_k$ by an element of $B^1(\A, T_k \rightarrow U_k)$, we can assume that for
every $k \geq 1$, $c_k b_k / \pi_k(b_{k+1}) \in Z^1(F, T_k \rightarrow U_k)$.
Choose lifts $\tilde{b}_k \in C^1(\A, T \rightarrow U)$ of $b_k$. Then we can
choose the lifts $\tilde{c}_k$ so that for every $k \geq 0$, $\tilde{c}_k
\tilde{b}_k / \tilde{b}_{k+1} \in C^1(F, T \rightarrow U)$. Let \[ q = \prod_{k
\geq 0} \mathrm{d} \left( \tilde{c}_k \frac{ \tilde{b}_k }{ \tilde{b}_{k+1} }
\right) \in Z^2(F, T \rightarrow U). \] In $Z^2(\A, T \rightarrow U)$, $q$
factors as $\mathrm{d}(\tilde{b}_0) \times \prod_{k \geq 0} \mathrm{d}(
\tilde{c}_k )$. Moreover $q$ defines a class in $H^2(F, T \rightarrow U) =
H^2(F, P)$. Fix $a^{(1)} \in C^1(F, T \rightarrow U)$ such that $q \times
\mathrm{d}(a^{(1)}) \in Z^2(F, P)$.

Let $\xi^{\mathrm{bad}} = \xi \times q \times \mathrm{d}(a^{(1)})$ in $Z^2(F,
P)$, where $\xi \in Z^2(F, P)$ belongs to the canonical class. For any $v \in
V$, there exists $a_v \in C^1(F_v, P)$ such that $\mathrm{res}_v(\xi) =
\mathrm{loc}_v(\xi_v) \times \mathrm{d}(a_v)$. We know a priori that for every
place $v$, $\mathrm{res}_v(q)$ is the trivial class in $H^2(F_v, P)$. The point
of the diagonal construction above is that we can write $\mathrm{res}_v(q)$ more
explicitly as a coboundary. Namely, there exists $a^{(2)} \in C^1(\A, T
\rightarrow U)$ such that for any place $v$,
\[ \mathrm{pr}_{\dot{v}}( \mathrm{res}_v(q) ) = \begin{cases}
\mathrm{d}(a^{(2)}_v) & \text{ if } v \not\in \{v_k | k \geq 1 \}, \\
\mathrm{d}(a^{(2)}_v \times c_k^{(v)}) & \text{ if } v = v_k \end{cases} \]
where $a^{(2)}_v = \mathrm{pr}_{\dot{v}}( \mathrm{res}_v( a^{(2)} ))$. Since
$\xi$ belongs to the canonical class, as in the alternative proof in section
\ref{s:altproof} there exists $a^{(3)} \in C^1(\A, T \rightarrow U)$ such that
for any place $v$, $\mathrm{res}_v(\xi) = \xi_v \times \mathrm{d}(a^{(3)}_v)$.
Therefore, letting $a = a^{(1)} a^{(2)} a^{(3)} \in C^1(\A, T \rightarrow U)$,
we have that for every place $v$,
\[ \mathrm{res}_v(\xi^{\mathrm{bad}}) / \xi_v = \begin{cases} \mathrm{d}(a_v) &
\text{ if } v \not\in \{ v_k | k \geq 1 \} \\ \mathrm{d}(a_v \times c_k^{(v)} )
& \text{ if } v = v_k. \end{cases} \]
By the same argument as in section \ref{s:altproof}, in this equality we can
replace $a_v \in C^1(F_v, T \rightarrow U)$ by $a_v' \in C^1(F_v, P)$, and for
almost all places $v$ the image $a'_{1,v}$ of $a'_v$ in $C^1(F_v, P_1)$ is
unramified. We conclude that for almost all $k \geq 1$,
$\mathrm{res}_{v_k}(\xi^{\mathrm{bad}}) / \xi_{v_k}$ is the coboundary of an
element of $C^1(F_{v_k}, P)$ whose image in $C^1(F_{v_k}, P_1)$ is ramified.
\end{proof}

This example shows that for \cite[Lemma 4.5.1]{Kalgri}, it is important to use
the canonical class to form adèlic packets $\Pi_{\varphi}$, otherwise it could
a priori happen that at infinitely many $v$ places of $F$, the local
representation $\pi_v$ which is the base point of the packet attached to
$\varphi_v$, is not unramified for the canonical hyperspecial maximal compact
subgroup. This seems to be the only place in \cite{Kalgri} where taking $\xi$
to be an arbitrary lift in $H^2(F, P)$ of the canonical element of
$\varprojlim_k H^2(F, P_k)$, rather than the canonical class, could be
problematic.

Note that there are examples of parameters $\varphi$ such that $\varphi_v$ is
unramified endoscopic for infinitely many $v$: by \cite{Elkiesss1} for any
elliptic curve $E$ over $\Q$, the conjectural associated Langlands parameter
$L_{\Q} \rightarrow \mathrm{PGL}_2(\C)$ is such that for infinitely many
unramified primes $p$, $\varphi(\mathrm{Frob}_p)$ is conjugated to
$\mathrm{diag}(-1,1)$. Unconditionally, for any inner form $\widetilde{G}$ of
$\mathrm{GL}_2$ over $\Q$ for which $E$ is relevant, there is a unique
automorphic cuspidal representation for $\widetilde{G}$ corresponding to $E$ (by
modularity of $E$ and Jacquet-Langlands). Restricting to the derived subgroup
$G$ of $\widetilde{G}$, we obtain a space of cuspidal automorphic forms for $G$
of infinite length. Interestingly, the algorithm in \cite{Elkiesss1} uses primes
which do \emph{not} split in certain quadratic extensions of $\Q$, while the
counter-example in \ref{p:examplebadclass} is constructed using primes split in
arbitrarily large extensions of the base field.

\section{Effective localization}
\label{s:effloc}

We conclude by explaining how the constructive proof of the existence of a
family of ``local-global compatibility'' cochains $(\beta_k)_{k \geq 0}$ at the
end of section \ref{s:Tatecocycles} allows one to explicitly compute all
localizations of a global rigidifying datum, as promised in the introduction to
this article.

\subsection{A general procedure}

Let $G^*$ be a quasi-split connected reductive group over $F$. Fix a global
Whittaker datum $\mathfrak{w}$ of $G^*$, i.e.\ choose a Borel subgroup $B^*$ of
$G$ defined over $F$, let $U$ be the unipotent radical of $B^*$, let $\chi$ be a
generic unitary character of $U(\A)/U(F)$, and let $\mathfrak{w}$ be the
$G(F)$-conjugacy class of $(B^*, \chi)$.  Let $T$ a maximal torus of $G^*$
defined over $F$, and $E$ a finite Galois extension of $F$ splitting $E$. Let
$S$ be a finite set of places of $F$ such that
\begin{enumerate}
\item $S$ contains all archimedean places of $F$ and all places of $F$ which
ramify in $E$, and the (always injective) morphism
$I(E,S)/\mathcal{O}(E,S)^{\times} \rightarrow C(E)$ is surjective (i.e.\
$\mathrm{Pic}(\mathcal{O}(E,S))=1$).
\item $G^*$ admits a reductive model $\underline{G^*}$ over $\mathcal{O}(F,S)$
in the sense of \cite[Exposé XIX, Définition 2.7]{SGA3-III} such that the
schematic closure $\underline{T}$ of $T$ in $\underline{G^*}$, which is a flat
group scheme over $\mathcal{O}(F,S)$ since this ring is Dedekind, is a torus in
the sense of \cite[Exposé IX, Définition 1.3]{SGA3-II}.
\item For any $v \not\in S$, the Whittaker datum $\mathfrak{w}$ is compatible
with the $G^*(F_v)$-conjugacy class of the hyperspecial maximal compact subgroup
$\underline{G}^*(\mathcal{O}(F_v))$, in the sense of \cite{CasSha}.
\end{enumerate}
Let $Z$ be a finite central subgroup of $G$, $N=\exp(Z)$ and $\overline{T} =
T/Z$. Let $\underline{Z}$ be the schematic closure of $Z$ in $\underline{T}$
(or $\underline{G}$), then $\underline{Z}$ is a group scheme of multiplicative
type over $\mathcal{O}(F,S)$. Moreover $\underline{\overline{T}} :=
\underline{T} / \underline{Z}$ is a maximal torus of the reductive group scheme
$\underline{G^*} / \underline{Z}$ (see \cite[Exposé XXII, Corollaire
4.3.2]{SGA3-III}). Let $\dot{S}_E$ be a set of representatives for the action
of $\mathrm{Gal}(E/F)$ on $S_E$. Finally, choose $\Lambda \in \overline{Y}[S_E,
\dot{S}_E]_0^{N_{E/F}}$. If
\[ \alpha_{E/F} \in Z^2\left(\mathrm{Gal}(E/F), \mathrm{Hom}(\Z[S_E]_0,
\mathcal{O}(E,S))^{\times}\right) \]
is any Tate cocycle (as in \cite{Tatecohotori}), then taking the cup-product of
$\alpha_{E/F}$ with $\Lambda$ yields 
\begin{equation} \label{e:zbarforeff}
\overline{z} \in Z^1(\mathrm{Gal}(\mathcal{O}(E,S)/\mathcal{O}(F,S)),
\underline{\overline{T}}(\mathcal{O}(E,S)))
\end{equation}
i.e.\ a \v{C}ech cocycle for the étale sheaf $\underline{\overline{T}}$ and the
covering $\mathrm{Spec}(\mathcal{O}(E,S)) \rightarrow
\mathrm{Spec}(\mathcal{O}(F,S))$. In particular we obtain a reductive group
$\underline{G}$ over $\mathcal{O}(F,S)$ by twisting $\underline{G^*}$ with the
image $\overline{\overline{z}}$ of $\overline{z}$ in
\[ Z^1 \left( \mathrm{Gal}(\mathcal{O}(E,S)/\mathcal{O}(F,S)),
\underline{G^*}_{\mathrm{ad}}(\mathcal{O}(E,S)) \right). \]
This realizes the generic fiber $G$ of $\underline{G}$ as an inner twist
$(\Xi, \overline{\overline{z}})$ of $G^*$.
\begin{rema}
The fact that any connected reductive group $G$ over $F$ arises in this way is a
consequence of \cite[Lemmas A.1 and 3.6.1]{Kalgri}.

More directly, that is without making use of \cite[Lemma A.1]{Kalgri},
Steinberg's theorem on rational conjugacy classes in quasi-split semisimple
simply connected algebraic groups (\cite{Steinberg}) implies that if we start
with a reductive group $G$ and a maximal torus $T$ of $G$, then it can be
realized as an inner twist $(G^*, \Xi, \overline{\overline{z}})$ with
$\overline{\overline{z}}$ taking values in
$\Xi^{-1}(T_{\mathrm{ad}}(\overline{F}))$.
\end{rema}

We now use the constructive proof of Theorem \ref{t:constructionTate} at the
end of section \ref{s:Tatecocycles}. Let $E_1 = E$ and $S_1 = S$ and choose a
finite Galois extension $E_2$ of $F$ which is totally complex and such that for
every $v \in S$ non-archimedean,
\[ N_{E_2/E}\left( \prod_{w | v} \mathcal{O}(E_{2,w})^{\times} \right) \]
is contained in the subgroup of $N$-th powers in $\prod_{w | v}
\mathcal{O}(E_w)^{\times}$. Finally, let $E_3$ be any finite Galois extension of
$F$ containing the Hilbert class field of $E_2$. Choose global fundamental
classes $\overline{\alpha}_1$, $\overline{\alpha}_2$, $\overline{\alpha}_3$ such
that $\overline{\alpha}_k = \mathrm{AW}^2_k(\overline{\alpha}_{k+1})$ for $k \in
\{1,2\}$ and $\overline{\alpha}_3$ is normalized, i.e.\
$\overline{\alpha}_3(1,1)=1$. Fix finite sets of places $S_3 \supset S_2 \supset
S$ as in section \ref{s:notation}. For each $v \in S_3$ fix a place $\dot{v}_3
\in S_{E_3}$. Choose local fundamental classes $\alpha_{k,v}$ for $v \in S$ and
$k \in \{1,2,3\}$. Choose sets of representatives $(R_{k,v})_{1 \leq k \leq 3, v
\in S}$ as in section \ref{s:locfundcocyc}, or rather, choose their image
$\overline{R}_{k,v}$ in $\mathrm{Gal}(E_3/F)$. These families $(S_k)_{k \leq
3}$, $(\overline{\alpha}_k)_{k \leq 3}$, $(\alpha_{k,v})_{k \leq 3, v \in S}$,
$(\overline{R}_{k,v})_{k \leq 3, v \in S}$ can be extended to $k \geq 0$ and $v
\in V$, as explained in sections \ref{s:globfundcocy}, \ref{s:locfundcocyc} and
\ref{s:Tatecocycles}. Moreover $\{ \dot{v}_3 \}_{v \in S}$ can be lifted and
extended to yield $\dot{V}$ as in section \ref{s:notation}.

Now choose $\overline{\beta}^{(0)}_3 : \mathrm{Gal}(E_3/F)
\rightarrow \mathrm{Maps}(S_{E_3}, C(E_3))$ such that
$\mathrm{d}(\overline{\beta}^{(0)}_3) = \overline{\alpha}_3 /
\overline{\alpha_3'}$. Choose $\beta^{(1)}_2 : \mathrm{Gal}(E_2/F) \rightarrow
\mathrm{Maps}(S_{E_2}, I(E_2, S_2))$ lifting
$\mathrm{AWES}_2^1(\overline{\beta}^{(0)}_3)$ such that $\beta^{(1)}_2(1) = 1$
and $\beta^{(2)}_1 := \mathrm{AWES}_1^1 \left( \beta^{(1)}_2 \right)$ takes
values in $\mathrm{Maps}(S_{E_1}, I(E, S))$. Let $\alpha_1 = \alpha'_1 \times
\mathrm{d}(\beta^{(2)}_1)$.  At the end of section \ref{s:Tatecocycles} we
constructed a family $(\beta_k)_{k \geq 0}$ such that there exists $\zeta_2 \in
\mathrm{Maps}(S_{E_2}, \widehat{\mathcal{O}(E_2)}^{\times})$ satisfying
$\beta_2|_{S_{E_2}} = \beta^{(1)}_2 \times \mathrm{d}(\zeta_2)$ and so
$\beta_1|_{S_E} = \mathrm{AWES}^1_1(\beta_2) = \beta^{(2)}_1 \times \mathrm{d}
(x)$ where $x=\mathrm{AWES}^0_1(\zeta_2)$ is a map
\[ S_E \rightarrow N_{E_2/E}\left( \widehat{\mathcal{O}(E_2)}^{\times} \right). \]
In particular for every non-archimedean $v \in S$ there exists a map $y_v : S_E
\rightarrow \prod_{w|v} \mathcal{O}(E_w)^{\times}$ such that $y_v^N =
\mathrm{pr}_v(x)$. For $v \in S$ archimedean, simply let $y_v=1$.
Recall that $N = \exp(Z)$.
Going back to the construction of $N'$-th roots in Propositions
\ref{p:Nthlocal}, \ref{p:NthTate} and \ref{p:Nthbeta}, we see that
for any choice of $N$-th root $\sqrt[N]{\beta_1^{(2)}} : \mathrm{Gal}(E/F)
\rightarrow \mathrm{Maps}(S_E, \mathcal{I}(E, S \cup N))$, we can choose the
$N$-th root $\sqrt[N]{\beta_1}$ so that for all $v \in S$,
\[ \mathrm{pr}_v\left(\sqrt[N]{\beta_1}\right)|_{S_E} =
\mathrm{pr}_v\left(\sqrt[N]{\beta^{(2)}_1}\right) \times
\mathrm{d}\left(y_v\right). \]

If $\alpha_1$ is chosen to form $\overline{z}$ in \eqref{e:zbarforeff}, the
generic fiber $G$ of $\underline{G}$ is endowed with a global rigidifying datum
$(G^*, \Xi, z, \mathfrak{w})$ where $z = \iota(\Lambda)$. For $v \in V$, the
localization of this rigidifying datum at $v$ is $(G^*_{F_v}, \Xi_v, z_v,
\mathfrak{w}_v)$ where $\Xi_v = \Xi_{\overline{F_v}}$ and $z_v =
\mathrm{pr}_{\dot{v}}(z \circ \mathrm{loc}_v)$.

Let $z'_v = \iota_v(l_v(\Lambda))$ and fix a rigid inner twist $(G'_v, \Xi'_v)$
of $G^*_{F_v}$ by $z'_v$, which is well-defined up to conjugation by
$G'_v(F_v)$ (see \cite[Fact 5.1]{Kalri}).  We now compare the rigid inner twists
$(G_{F_v}, \Xi_v)$ and $(G'_v, \Xi'_v)$ of $G^*_{F_v}$. Recall (Proposition
\ref{p:towerlocalglobal}) that
\[ \mathrm{pr}_{\dot{v}}(z \circ \mathrm{loc}_v) = \iota_v(l_v(\Lambda)) \times
\mathrm{d}(\kappa_v(\Lambda)) \]
where $\kappa_v(\Lambda) = \mathrm{pr}_{\dot{v}} \left( \sqrt[N]{\beta_1}
\right) \underset{E/F}{\sqcup} N \Lambda \in T(\overline{F_v})$. Therefore we
have an isomorphism of rigid inner twists of $G^*_{F_v}$
\[ \left( f_v, \kappa_v(\Lambda) \right) : (G_{F_v}, \Xi_v, z_v)
\xrightarrow{\sim} (G'_v, \Xi'_v, z'_v) \]
where $f_v$ is obtained from $\Xi'_v \circ \mathrm{Ad}(\kappa_v(\Lambda)) \circ
\Xi_v^{-1}$ by Galois descent. Thus $f_v : G_{F_v} \simeq G^*_{F_v}$ identifies
the rigidifying datum $(G^*_{F_v}, \Xi_v, z_v, \mathfrak{w}_v)$ for $G_{F_v}$
with the rigidifying datum $(G^*_{F_v}, \Xi'_v, z'_v, \mathfrak{w}_v)$ for
$G'_v$.
\begin{itemize}
\item For $v \in V \smallsetminus S$, $l_v(\Lambda_v)=0$ and we can simply take
$G'_v = G^*_{F_v}$ and $\Xi'_v = \mathrm{Id}$. In particular $G_{F_v}$ is
quasi-split and we can simply take as rigidifying datum the pull-back
$f_v^*(\mathfrak{w}_v)$ of the Whittaker datum $\mathfrak{w}_v$. The image
$\overline{\kappa}_v(\Lambda)$ of $\kappa_v(\Lambda)$ in
$\overline{T}(\overline{F_v})$ equals
\[ \mathrm{pr}_{\dot{v}} (\beta_1) \underset{E/F}{\sqcup} \Lambda \in
\underline{\overline{T}}(\mathcal{O}(E_{\dot{v}})) \]
and so $\mathrm{Ad}(\kappa_v(\Lambda))$ is an automorphism of the reductive
group scheme $\underline{G^*}_{\mathcal{O}(E_{\dot{v}})}$. Since $\Xi_v$ is
obtained as the generic fiber of an isomorphism
$\underline{G^*}_{\mathcal{O}(E_{\dot{v}})} \simeq
\underline{G}_{\mathcal{O}(E_{\dot{v}})}$, we see that $f_v$ descends from an
isomorphism $\underline{G}_{\mathcal{O}(E_{\dot{v}})} \simeq
\underline{G^*}_{\mathcal{O}(E_{\dot{v}})}$ and so $f_v$ can be extended to an
isomorphism of reductive models $\underline{G}_{\mathcal{O}(F_v)} \simeq
\underline{G^*}_{\mathcal{O}(F_v)}$. This shows that $f_v^*(\mathfrak{w}_v)$ is
compatible with the $G(F_v)$-conjugacy class of hyperspecial maximal compact
subgroups represented by $\underline{G}(\mathcal{O}(F_v))$. Note that this holds
even for $v \not\in S$ dividing $N$.
\item For $v \in S$, one can compute the element $\kappa_v(\Lambda)$ up to an
element of $T(F_v)$, since
\[ \mathrm{d}\left(y_v\right) \underset{E/F}{\sqcup} N \Lambda = N_{E/F} \left(
y_v \underset{E/F}{\sqcup} N \Lambda \right) \in T(F_v) \]
and so $\mathrm{d}(\kappa_v(\Lambda)) = \mathrm{d} (\kappa_v'(\Lambda))$ where
\[ \kappa_v'(\Lambda) = \mathrm{pr}_{\dot{v}} \left( \sqrt[N]{\beta_1^{(2)}}
\right) \underset{E/F}{\sqcup} N \Lambda \]
is computable. Thus $(f_v, \kappa'_v(\Lambda))$ is also an isomorphism of rigid
inner twists of $G^*_{F_v}$.  Note that to compute $f_v$ it is enough to
compute the image of $\kappa_v'(\Lambda)$ in $\overline{T}(\overline{F_v})$,
i.e.\
\[ \mathrm{pr}_{\dot{v}} \left( \beta_1^{(2)} \right) \underset{E/F}{\sqcup}
\Lambda \in \overline{T}(E_{\dot{v}}) \]
and so in practice it is not necessary to compute an $N$-th rooth of
$\beta_1^{(2)}$.
\end{itemize}

\subsection{A simple example}
\label{s:example}

Let us illustrate this on a simple example, where almost no computation of
cocycles is needed.

\subsubsection{Definition of the group $G$}

Let $F = \Q(s)$ with $s^2=3$.
Let $D$ be a quaternion algebra over $F$ such that $D$ is definite at both real
places of $F$, and split at all non-archimedean places of $F$.  Let $N_D \in
\mathrm{Sym}^2(D^*)$ be the reduced norm, and $G$ the reductive group scheme
over $F$ defined by
\[ G(R) = \left\{ x \in R \otimes_F D \,|\,N_D(x) = 1 \text{ in } R \right\} \]
for any $F$-algebra $R$.

\subsubsection{A reductive model of $G$}

The class group of $F$ is trivial, and the narrow class group of $F$ is
$\Z/2\Z$, corresponding to the totally complex and everywhere unramified
extension $E = F(\zeta)$ of $F$, where $\zeta^2 - s \zeta + 1 = 0$ ($\zeta$ is a
primitive $12$-th root of unity). The class group of $E$ is also trivial. Write
$\sigma$ for the non-trivial $\mathcal{O}(F)$-automorphism of $\mathcal{O}(E)$.
Let $S$ be the set of real places of $F$, so that $S= \{v_+, v_-\}$ where the
image of $s$ in $F_{v_+} = \R$ is positive. We still denote by $v_+$, $v_-$ the
unique complex places of $E$ above $v_+$, $v_-$. The group
$\mathcal{O}(E)^{\times}$ is generated by $\zeta$ and $\zeta-1$, which has
infinite order. The group $\mathcal{O}(F)^{\times}$ is generated by $-1$ and
$2-s = N_{E/F}(\zeta-1)$, which has infinite order.

Let $\underline{G^*} = \mathrm{SL}_2$ over $\mathcal{O}(F)$ and let
$\underline{T} \subset \underline{G^*}$ be the torus defined by
\[ \underline{T}(R) = \left\{ \begin{pmatrix} x & -y \\ y & x + sy \end{pmatrix}
\, \middle| \, x,y \in R,\, x^2+sxy+y^2=1 \right\} \]
for any $\mathcal{O}(F)$-algebra $R$. Then $\underline{T}$ splits over
$\mathcal{O}(E)$. Let $\underline{Z} \simeq \mu_2 $ be the center of
$\underline{G^*}$ and $\underline{\overline{T}} = \underline{T} /
\underline{Z}$. The element $(x=s, y=-2) \in \underline{T}(\mathcal{O}_F)$ maps
to the unique element of order $2$ in $\overline{T}(F)$, and so we have a
$1$-cocyle
\[\overline{z} : \sigma \mapsto \overline{(x=s, y=-2)} \in
\mathrm{PGL}_2(\mathcal{O}(F)). \]
Since $\mathrm{PGL}_2$ is also the automorphism group of the matrix algebra
$\mathrm{M}_2$, we obtain an Azumaya algebra $\mathcal{O}(D)$ over
$\mathcal{O}(F)$ by twisting $\mathrm{M}_2(\mathcal{O}(F))$ using
$\overline{z}$. Explicitly, it has basis $(1, Z, I, ZI)$ over $\mathcal{O}(F)$,
where
\[ Z = \begin{pmatrix} 0 & -1 \\ 1 & s \end{pmatrix},\ I = \begin{pmatrix} 0 & 2
\zeta - s \\ 2 \zeta - s & 0 \end{pmatrix}. \]
We have $Z^{12}=1$ and $I^2=-1$. Let $D = F \otimes_{\mathcal{O}(F)}
\mathcal{O}(D)$. Let $\underline{G}$ be the inner twist of
$\underline{G^*}$ by $\overline{z}$, so that
\[ \underline{G}(R) = \left\{ x \in R \otimes_{\mathcal{O}(F)} \mathcal{O}(D)
\middle| N_D(x) =1 \right\} \]
for any $\mathcal{O}(F)$-algebra $R$.

\subsubsection{The group $G$ as a rigid inner twist}

If we identify $Y = X_*(T)$ with $\Z$,
then $\overline{Y} = X_*(\overline{T})$ is identified with $\frac{1}{2}\Z$. Let
$\Lambda \in \overline{Y}[\dot{S}_E]_0^{N_{E/F}}$ be defined by $\Lambda(v_+) =
1/2$ and $\Lambda(v_-) =-1/2$. An easy computation shows that one can choose the
Tate cocycle $\alpha_1$ for $E/F$ such that
\[ \alpha_1(\sigma,\sigma)(v_+)/\alpha_1(\sigma,\sigma)(v_-) = -1 \]
and so $\overline{z} = \alpha_1 \underset{E/F}{\sqcup} \Lambda$. Using $z =
\iota(\Lambda)$, we obtain a realization of $G$ as a rigid inner twist $(\Xi,
z)$ of $G^*$.

\subsubsection{Choice(s) of Whittaker data}

Let $\psi$ be the unitary character of $\A_{\Q}/\Q$ such that $\psi_{\infty}(x)
= \exp(2i \pi x)$, so that for every prime $p$ we have $\ker(\psi_p) = \Z_p$.
Fortunately the different ideal of $F/\Q$ is principal, generated by $2s$, and
so for any choice of sign the global Whittaker datum $\mathfrak{w}$ for $G^*$
\begin{equation} \label{e:defwhittaker}
\begin{pmatrix} 1 & x \\ 0 & 1 \end{pmatrix} \in U(\A_F) \mapsto
\psi(\pm \mathrm{Tr}_{F/\Q}(x/(2s)))
\end{equation}
is compatible with the model $\underline{G^*}_{\mathcal{O}(F_v)}$ at
\emph{every} finite place $v$ of $F$. Therefore the global rigidifying datum
$\mathcal{D} = (G^*, \Xi, z, \mathfrak{w})$ for $G$ is such that for any finite
place $v$ of $F$, the localization $\mathcal{D}_v$ is unramified and compatible
with the $G(F_v)$-conjugacy class of hyperspecial maximal compact subgroups
$\underline{G}(\mathcal{O}(F_v))$.

\subsubsection{Real places}

At any real place $v$ of $F$, we could compute explicit coboundaries
expressing local-global compatibility, but this is not necessary since the
parametrization of Arthur-Langlands packets for the compact groups $G(F_v)
\simeq \mathrm{SU}(2)$ is simply determined by the Whittaker datum
$\mathfrak{w}_v$ and the \emph{cohomology class} of $z_v$ in $H^1(P_v
\rightarrow \mathcal{E}, Z \rightarrow T)$ (see \cite[\S 5.6]{Kalri} and
\cite[\S 3.2]{TaiMult}), which only depends on $l_v(\Lambda)$. This
simplification is particular to anisotropic real groups, for which Langlands
packets have at most one element.

In order to formulate the local Langlands correspondence at each real place $v$
of $F$ it is necessary to identify an algebraic closure of the base field $F_v$,
occurring in the definition of the Weil group $W_{F_v}$, with the coefficient
field $\C$. We have natural algebraic closures $E_{v_+}$ and $E_{v_-}$ of
$F_{v_+}$ and $F_{v_-}$. Choose $\tau_+ : \zeta \mapsto \exp(2 i \pi /12)$
(resp.\ $\tau_- : \zeta \mapsto \exp(5 \times 2 i \pi / 12)$) identifying
$E_{v_+}$ (resp.\ $E_{v_-}$) with $\C$. There is a natural identification
$\theta_+$ (resp.\ $\theta_-$) of $G^*_{F_{v_+}}$ (resp.\ $G^*_{F_{v_-}}$) with
the usual split group $\mathrm{SL}_2$ over $\R$, compatibly with the canonical
isomorphisms $F_{v_+} = \R$ and $F_{v_-} = \R$.  Let $\sqrt{3}$ be the positive
square root of $3$ in $\R$, so that $\tau_+(s) = \sqrt{3}$ and $\tau_-(s) =
-\sqrt{3}$. In particular for any choice of sign in \eqref{e:defwhittaker}, the
Whittaker data $(\theta_+)_*(\mathfrak{w}_{v_+})$ and
$(\theta_-)_*(\mathfrak{w}_{v_-})$ differ. Associated to $\mathfrak{w}_+$ is a
Borel subgroup $B_+$ of $G^*_{F_{v_+}} \times_{F_{v_+}} \C$ containing
$T_{F_{v_+}} \times_{F_{v_+}} \C$ (see \cite{TaiMult}), corresponding to the
generic discrete series representations of $G^*(F_{v_+})$. Using $\tau_+$ we see
$B_+$ as a Borel subgroup of $G^*_{E_{v_+}}$, and since $T$ is defined over $F$
and split over $E$ we see that $B_+$ comes from a well-defined Borel subgroup of
$G^*_E$ containing $T_E$, which we still denote by $B_+$. Similarly, we have a
Borel subgroup $B_-$ of $G^*_E$ containing $T_E$. Up to changing the sign in
\eqref{e:defwhittaker}, we can assume that $B_+$ is such that the unique root of
$T_E$ in $B_+$ is $\alpha_+ : (x,y) \mapsto (x+ \zeta y)^2$. Let us determine
$B_-$ using $\theta_+$ and $\theta_-$. For this we need to conjugate
$\theta_+(T_{F_{v_+}})$ and $\theta_-(T_{F_{v_-}})$ by an element of
$\mathrm{SL}_2(\R)$. The matrix
\[ g = \begin{pmatrix} 1 & -\sqrt{3} \\ 0 & 1 \end{pmatrix} \in
\mathrm{SL}_2(\R) \]
conjugates $\theta_-(T_{F_{v_-}})$ into $\theta_+(T_{F_{v_+}})$, mapping
$\theta_-(x,y)$ to $\theta_+(x-\sqrt{3}y,y)$. Since
$(\theta_+)_*(\mathfrak{w}_+)$ and $(\theta_-)_*(\mathfrak{w}_-)$ differ, the
root $\alpha_-$ of $T_E$ in $B_-$ is \emph{not} equal to
\[ (\tau_-)^{-1} \circ \tau_+ \circ \alpha_+ \circ \tau_+^{-1} \circ
(\theta_+)_{\C}^{-1} \circ \mathrm{Ad}(g) \circ (\theta_-)_{\C} \circ \tau_-, \]
which equals $\alpha_+$. Therefore $\alpha_- \neq \alpha_+$ and $B_- \neq B_+$.
Note that other choices for $\tau_+, \tau_-$ would lead to other Borel
subgroups, and some choices would give equal Borel subgroups.

Let us now consider Arthur-Langlands packets of unitary representations of
$G(F_{v_+})$ and $G(F_{v_-})$. We refer to \cite[\S 3.2.2]{TaiMult} for the
parametrization of ``cohomological'' Arthur-Langlands packets for inner forms of
symplectic or special orthogonal groups, following Shelstad, Adams-Johnson and
Kaletha. The present case is much simpler. Note also that since $G(F_{v_+})$ and
$G(F_{v_-})$ are anisotropic, any non-empty Arthur-Langlands packet is
``cohomological'', i.e.\ is a packet of Adams-Johnson representations.  For $v
\in \{v_+,v_-\}$ there is only one Arthur-Langlands parameter
\[ W_{F_v} \times \mathrm{SL}_2(\C) \rightarrow {}^L G \]
which is non-trivial on $\mathrm{SL}_2(\C)$ and yields a non-empty packet,
namely the principal representation
\[ \mathrm{SL}_2(\C) \rightarrow \widehat{G} \simeq \mathrm{PGL}_2(\C), \]
with corresponding packet containing the trivial representation with
multiplicity one.
Any other Arthur-Langlands parameter yielding a non-empty packet of
representations is tempered and discrete, and so up to conjugation by
$\widehat{G}$ it is of the form
\begin{alignat*}{2}
\varphi_{k_+} : W_{F_{v_+}} & \longrightarrow & \mathrm{PGL}_2(\C) \\
z \in E_{v_+}^{\times} & \longmapsto & \begin{pmatrix} \tau_+(z/\bar{z})^{k_++1}
& 0 \\ 0 & 1 \end{pmatrix} \\ j & \longmapsto & \begin{pmatrix} 0 & 1 \\ 1 & 0
\end{pmatrix}
\end{alignat*}
for some $k_+ \in \Z_{\geq 0}$, and similarly discrete tempered parameters for
$G_{F_{v_-}}$ are parametrized by integers $k_- \geq 0$, using $\tau_-$. Above
$j$ is any element of $W_{F_{v_+}} \smallsetminus E_{v_+}^{\times}$ such that
$j^2=-1$. Note that we have put $\varphi_{k_+}$ in dominant form for the
upper-triangular Borel subgroup $\mathcal{B}$ of $\widehat{G}$. Using $B_+$ we
have an identification between the group $\mathcal{T}$ of diagonal matrices in
$\mathrm{PGL}_2(\C)$ and $\widehat{T} = X^*(T) \otimes_{\Z} \C^{\times}$. So we
can identify $l_{v_+}(\Lambda) = \Lambda(v_+) \in X_*(\overline{T})^{N_{E/F}}$
with an element of $X^*(\overline{\mathcal{T}})$, where $\overline{\mathcal{T}}$
is the preimage of $\mathcal{T}$ in $\widehat{\overline{G}} =
\mathrm{SL}_2(\C)$. The preimage $\mathcal{S}_{\varphi_{k_+}}^+$ of
$\mathcal{S}_{\varphi_{k_+}} = \mathrm{Cent}(\varphi_{k_+}, \widehat{G})$ in
$\widehat{\overline{G}}$ has $4$ elements and is generated by
\[ \begin{pmatrix} i & 0 \\ 0 & -i \end{pmatrix} \in \overline{\mathcal{T}}. \]
The class of $l_{v_+}(\Lambda)$ modulo $(1-\sigma)X_*(T)$ defines a character
of $\mathcal{S}_{\varphi_{k_+}}^+$. There is a unique element $\pi_{v_+, k_+}$
in the Arthur-Langlands packet attached to (the $\widehat{G}$-conjugacy class
of) $\varphi_{k_+}$, that is the unique irreducible representation of
$G(F_{v_+})$ in dimension $k_++1$. The character $\langle \cdot, \pi_{v_+, k_+}
\rangle$ of $\mathcal{S}_{\varphi_{k_+}}^+$ is the one defined by
$l_v(\Lambda)$.

Similarly, each discrete series L-packet for $G_{F_{v_-}}$ has a unique element
$\pi_{v_-, k_-}$, and a character $\langle \cdot, \pi_{v_-, k_-} \rangle$ of
$\mathcal{S}_{\varphi_{k_-}}^+$ coming from the character $l_{v_-}(\Lambda) =
\Lambda(v_-)$ of $\overline{\mathcal{T}}$. Note that since $B_-$ and $B_+$
differ and $\Lambda(v_-) = -\Lambda(v_+)$, the characters of
$\overline{\mathcal{T}}$ corresponding to $\Lambda(v_+)$ and $\Lambda(v_-)$ are
equal.

\subsubsection{Automorphic representations}

To lighten notation we let $K = \underline{G}( \widehat{ \mathcal{O}(F) })$. We
can now formulate precisely the endoscopic decomposition of the space of $G(\R
\otimes_{\Q} F)$-finite functions on $G(F) \backslash G(\A_F) / K$, with
commuting actions of $G(\R \otimes_{\Q} F)$ and of the Hecke algebra in level
$K$. Let $V_+$ (resp.\ $V_-$) be the irreducible representation of $G(F_{v_+})$
(resp.\ $G(F_{v_-})$) of dimension $k_+ + 1$ (resp.\ $k_-+1$). Recall
\cite{GrossAlg} that we can cut out the $V_+ \otimes V_-$-isotypical subspace
inside the space of all automorphic forms for $G$, and define the space $M_{k_+,
k_-}(K)$ of automorphic forms of weight $(k_+, k_-)$ and level $K$ as the space
of $G(F)$-equivariant functions
\[ G(\mathbb{A}_{F,f}) / K \rightarrow V_+ \otimes V_- , \]
which is a finite-dimensional vector space over $\mathbb{C}$ endowed with a
semi-simple action of the commutative Hecke algebra in level $K$. Moreover it is
easy to check that $M_{k_+,k_-}(K)$ has a natural $E$-structure.

The automorphic multiplicity formula for $\mathrm{SL}_2$ and its inner forms was
proved in \cite{LabLan}, although at the time there was no general definition of
transfer factors, let alone Kaletha's normalization of transfer factors for
inner forms. Formally we can use the main result of \cite{TaiMult}, but of
course a careful reading of \cite{LabLan} and a comparison of transfer factors
with the later definition in \cite{LanShe} and \cite{Kalri}, \cite{Kalgri}
should give a more direct proof. In the present case, automorphic
representations for $G$ in level $K$ fall into three categories:
\begin{itemize}
\item the trivial representation,
\item representations corresponding to self-dual automorphic cuspidal
representations of $\mathrm{PGL}_3/F$ which are algebraic regular at both
infinite places and unramified at all finite places,
\item representations ``automorphically induced'' from certain algebraic Hecke
characters for $E$.
\end{itemize}
The multiplicity formula is non-trivial only in the third case. Making it
explicit allows one to enumerate representations in the (most interesting)
second case.

\subsubsection{Global endoscopic parameters}

Let $\chi : C(E) \rightarrow \C^{\times}$ be a continuous unitary character
which is trivial on $C(F) = C(E)^{\mathrm{Gal}(E/F)}$. In particular,
$\chi^{\sigma} = \chi^{-1}$. Using $\chi$ we can form the parameter
\begin{alignat*}{2}
\varphi_{\chi} : W_{E/F} & \longrightarrow & \mathrm{PGL}_2(\C) \\
z \in C(E) & \longmapsto & \begin{pmatrix} \chi(z) & 0
\\ 0 & 1 \end{pmatrix} \\
\tilde{\sigma} & \longmapsto & \begin{pmatrix} 0 & 1 \\ 1 & 0 \end{pmatrix}
\end{alignat*}
where $\tilde{\sigma} \in W_{E/F}$ is any lift of $\sigma \in
\mathrm{Gal}(E/F)$. The parameters $\varphi_{\chi}$ and $\varphi_{\chi^{-1}}$
are conjugated by $\mathrm{PGL}_2(\C)$. We only consider characters $\chi$ such
that the restriction of $\varphi_{\chi}$ to the Weil groups at both real places
of $F$ are discrete, i.e.\ we impose that $\chi_{v_+} =
\chi|_{E_{v_+}^{\times}}$ and $\chi_{v_-} = \chi|_{E_{v_-}^{\times}}$ are
non-trivial. Therefore there are $a_+,a_- \in \Z \smallsetminus \{0\}$ such that
\[ \chi_{v_+}(z) = \tau_+(z/\bar{z})^{a_+},\ \ \chi_{v_-}(z) =
\tau_-(z/\bar{z})^{a_-}. \]
Moreover we impose that $\chi$ is everywhere unramified, i.e.\ at every finite
place $w$ of $E$, $\chi_w$ is trivial on $\mathcal{O}(E_w)^{\times}$. Since $E$
has class number $1$ the map
\[ E_{v_+}^{\times} \times E_{v_-}^{\times} \times \prod_{w \text{ finite}}
\mathcal{O}(E_w)^{\times} \rightarrow C(E) \]
is surjective, and its kernel is $\mathcal{O}(E)^{\times}$. Thus for $a_+, a_-
\in \Z \smallsetminus \{0\}$ there is at most one everywhere unramified $\chi$
as above, and there exists one if and only if $\chi_{v_+} \times \chi_{v_-}$ is
trivial on $\mathcal{O}(E)^{\times}$, which is generated by $\zeta$ and
$\zeta-1$. A simple computation shows that this is equivalent to
\[ a_+ + 5a_- = 0 \mod 12. \]
For such a character $\chi$, at a finite place $w$ of $E$ we have:
\begin{itemize}
\item If $w$ is fixed by $\sigma$ (inert case), then there is a uniformizer
$\varpi_w \in \mathcal{O}(F)$, and so $\chi_w$ is trivial.
\item If $w$ is not fixed by $\sigma$ (split case), then if $\varpi_w \in
\mathcal{O}(E)$ is a uniformizer, we have
\[ \chi_w(\varpi_w) = \chi_{v_+}(\varpi_w)^{-1} \chi_{v_-}(\varpi_w)^{-1}. \]
\end{itemize}
This concludes the description of all endoscopic global parameters for $G$ which
are discrete at both real places and unramified at all finite places. They are
parametrized by pairs $(a_+, a_-) \in (\Z \smallsetminus \{0\})^2$ such that
$a_++5a_- = 0 \mod 12$, modulo $(a_+,a_-) \sim (-a_+, -a_-)$.

Let $\chi$ be a character as above.  Then the centralizer
$\mathcal{S}_{\varphi_{\chi}}$ of $\varphi_{\chi}$ is
\[ \left\{ \begin{pmatrix} \pm 1 & 0 \\ 0 & 1 \end{pmatrix} \right\} \subset
\mathcal{T} \]
and so it coincides with the local centralizers at $v_+, v_-$. Up to replacing
$\chi$ by $\chi^{-1}$, we are in exactly one of the following cases:
\begin{itemize}
\item $a_+>0$ and $a_->0$, i.e.\ $\chi_{v_+}(z) = \tau_+(z/\bar{z})^{k_++1}$ for
$k_+ \geq 0$ and $\chi_{v_-}(z) = \tau_-(z/\bar{z})^{k_-+1}$ for $k_- \geq 0$.
Then $\langle \cdot, \pi_{v_+} \rangle \times \langle \cdot, \pi_{v_-} \rangle$
is the non-trivial character of $\mathcal{S}_{\varphi_{\chi}}$.
\item $a_+>0$ and $a_-<0$, i.e.\ $\chi_{v_+}(z) = \tau_+(z/\bar{z})^{k_++1}$ for
$k_+ \geq 0$ and $\chi_{v_-}(z) = \tau_-(z/\bar{z})^{-k_--1}$ for $k_- \geq 0$.
Then $\langle \cdot, \pi_{v_+} \rangle \times \langle \cdot, \pi_{v_-} \rangle$
is the trivial character of $\mathcal{S}_{\varphi_{\chi}}$.
\end{itemize}

Thus in weight $(k_+, k_-)$ and level $\underline{G}(\widehat{\mathcal{O}(F)})$,
there is at most one endoscopic automorphic representation, and there is one if
and only if
\begin{equation} \label{e:exmultformula} (k_++1) - 5 (k_-+1) = 0 \mod 12.
\end{equation}
In low weight, we have computed Hecke operators for small primes and verified
this condition.

\subsubsection{Comments}

The class number
\[ \mathrm{card} \left( G(F) \backslash G(\A_{F,f}) / \underline{G}(
\widehat{\mathcal{O}(F)}) \right) = 1 \]
as one can check when computing a Hecke operator at any finite place, by strong
approximation. Note that $\underline{G}$ is \emph{not} the only reductive model
of $G$, even up to the action of $G_{\mathrm{ad}}(F)$. By splitting the Azumaya
algebra $\mathcal{O}(D)$ modulo $(2) = (s-1)^2$, we can compute an
$(s-1)$-Kneser neighbour of $\mathcal{O}(D)$, that is another maximal order
$\mathcal{O}'(D)$ of $D$, having basis over $\mathcal{O}(F)$
\[ 1, Z+sI, (1-s)(s+ZI), (1-s)^{-1}(1+I+sZI). \]
It gives rise to a second model $\underline{G}'$ of $G$, which is not isomorphic
to $\underline{G}$ since one can compute using reduction theory that
$\underline{G}(\mathcal{O}_F)$ is a dihedral group of order $24$ (generated by
$Z$ and $I$, with $IZI^{-1} = Z^{-1}$), whereas $\underline{G}'(\mathcal{O}(F))$
is isomorphic to $\mathrm{SL}_2(\mathbb{F}_3)$ (an isomorphism is given by
reduction modulo $s$). One can also check that the class number
\[ \mathrm{card} \left( G_{\mathrm{ad}}(F) \backslash G_{\mathrm{ad}}(\A_{F,f})
/ \underline{G}_{\mathrm{ad}}(\widehat{\mathcal{O}(F)}) \right) = 2, \]
and so $\underline{G}$ and $\underline{G}'$ are up to isomorphism the only two
reductive models of $G$ over $\mathcal{O}(F)$. So we have two distinct notions
of ``level one'' for automorphic representations for $G$, and although the
relevant Arthur-Langlands parameters are the same in both cases, the automorphic
multiplicities differ. More precisely, any algebraic Hecke character $\chi$ for
$E$ as above contributes an automorphic representation for $G$ either in level
$\underline{G}(\widehat{\mathcal{O}(F)})$ or in level
$\underline{G}'(\widehat{\mathcal{O}(F)})$.

\subsubsection{Higher rank}

Alternatively, one could explicitly compute the geometric transfer factors
defined in \cite{LabLan} for $G$ and the endoscopic group $H$ isomorphic to the
unique anisotropic torus over $F$ of dimension $1$ which is split by $E$.
Although one would lose the interpretation in terms of characters of
centralizers of Langlands parameters, this would probably lead to a proof that
the multiplicity formula for $G$ in level
$\underline{G}(\widehat{\mathcal{O}(F)})$ reduces to \eqref{e:exmultformula}.

Note however that the approach in the present paper generalizes easily to higher
rank. For example, using the embedding $(\mathrm{SL}_2)^n \hookrightarrow
\mathrm{Sp}_{2n}$, it is easy to generalize the above example to the case where
$G$ is the inner form of $G^* = \mathrm{Sp}_{2n}$ over $F$ which is definite
(i.e.\ $G(F \otimes_{\Q} \R)$ is compact) and split at all finite places. This
does not require additional computation, and so one can make explicit Arthur's
multiplicity formula (also known in this case, see \cite{TaiMult}) in ``level
one''. Moreover, using also \emph{pure} inner forms of quasi-split special
orthogonal groups, namely definite special orthogonal groups obtained using
copies of $(x,y) \mapsto x^2 + s xy + y^2$ and (in odd dimension) $x \mapsto
x^2$, it is possible to carry out the same inductive strategy as in
\cite{Taidimtrace}, but using definite groups as in \cite{ChRe}, which makes
explicit computations much simpler. Therefore the above example makes it
possible to explictly compute automorphic cuspidal self-dual representations for
general linear groups over $F$ which are unramified at all finite places and
algebraic regular at both real places.

\newpage

\bibliographystyle{amsalpha}
\bibliography{explicitgri}

\end{document}